\newtheorem{proposition}{Proposition}[section]
\newtheorem{lemma}[proposition]{Lemma}
\newtheorem{corollary}[proposition]{Corollary}
\newtheorem{theorem}[proposition]{Theorem}
\newtheorem{remark}[proposition]{Remark}
\theoremstyle{definition}
\newtheorem{definition}[proposition]{Definition}
\newcommand{\selabel}[1]{\label{se:#1}}
\def\<{\leqslant}
\def\>{\geqslant}
\def\a{\alpha}
\def\b{\beta}
\def\d{\delta}
\def\ol{\overline}
\def\s{\sigma}
\def\ot{\otimes}
\def\ra{\rightarrow}
\def\Cal{\mathcal{C}}
\def\bbm{\mathbf{m}}
\def\bbs{\mathbf{s}}
\def\whot{\widehat{\otimes} }
\date{}
\begin{document}
\title{Reconstruction of  tensor categories from their structure invariants}
\author{Hui-Xiang Chen}
\address{School of Mathematical Science, Yangzhou University,
Yangzhou 225002, China}
\email{hxchen@yzu.edu.cn}
%\author{Fred Van Oystaeyen}
%\address{Department of Mathematics and Computer
%Science, University of Antwerp, Middelheimlaan 1, B-2020 Antwerp,
%Belgium}
%\email{fred.vanoystaeyen@ua.ac.be}
\author{Yinhuo Zhang}
\address{Department of Mathematics $\&$  Statistics, University of Hasselt, Universitaire Campus, 3590 Diepeenbeek,Belgium }
\email{yinhuo.zhang@uhasselt.be}
\subjclass[2010]{18D10, 16T05}
\keywords{Green ring, Auslander algebra, associator, tensor category}

\begin{abstract}
In this paper, we study  tensor (or monoidal) categories of finite rank over an algebraically closed field $\mathbb F$.  Given a tensor category $\mathcal{C}$,  we have two structure invariants of $\mathcal{C}$:  the Green ring (or the representation ring) $r(\mathcal{C})$ and the Auslander algebra  $A(\mathcal{C})$ of $\mathcal{C}$.  We show that a Krull-Schmit  abelian tensor category $\mathcal{C}$ of finite rank is  uniquely determined (up to tensor equivalences)  by its two structure invariants and the associated associator system of $\mathcal{C}$.  In fact, we can reconstruct the tensor category $\mathcal{C}$ from  its two invarinats and the associator system.  More general,  given a  quadruple  $(R, A, \phi, a)$ satisfying certain conditions, where $R$ is a $\mathbb{Z}_+$-ring of rank $n$, $A$ is a finite dimensional $\mathbb F$-algebra with a complete set of $n$ primitive orthogonal idempotents, $\phi$ is an algebra map from $A\otimes_{\mathbb F}A$ to an algebra $M(R, A, n)$  constructed from $A$ and $R$, and $a=\{a_{i,j,l}|1\< i,j,l\<n\}$ is a family of ``invertible" matrices over $A$,  we  can construct a Krull-Schmidt and abelian  tensor category $\mathcal C$  over $\mathbb{F}$  such that $R$ is the Green ring of $\mathcal C$  and $A$ is the Auslander algebra of $\mathcal C$. In this case, $\mathcal C$ has  finitely many indecomposable objects  (up to isomorphisms) and finite dimensional Hom-spaces.  Moreover, we will  give a necessary and sufficient condition for such two tensor  categories to be tensor equivalent.
\end{abstract}

\maketitle
\section*{\bf Introduction}\selabel{1}

Motivated by the recent study of Green rings of Hopf algebras  \cite{Ch, CVZ, LZ, WLZ1, WLZ2}, we ask ourselves the question: to what extend a tensor (or monoidal) category is determined by its ring invariant, the Green ring? We do have examples of Hopf algebras whose Green rings are isomorphic; yet their representation categories are not tensor equivalent,  for example, the group algebras $\mathbb{F}Q_8$ and $\mathbb{F}D_8$.  On the other hand, the representation categories of non-semisimple Hopf algebras of dimension 8 are completely distinguished by their Green rings \cite{Wa}. Thus a natural question arises: apart from the Green ring of a tensor category $\mathcal{C}$, what  structure invariants else do we need in order to determine the category $\mathcal{C}$? In other words, can a tensor category be uniquely determined by some of its structure invariants? In this paper, we  explore the second structure invariant and an associator system associated to a tensor category $\mathcal{C}$ of finite rank which together with the Green ring can determine the tensor category $\mathcal{C}$.  In more detail, given a tensor category $\mathcal{C}$,  there exists a unique skeleton tensor category $\widehat{\Cal}$ (up to tensor equivalence) .  This tensor category   $\widehat{\Cal}$ is uniquely determined by  the two structure invariants, the Green ring $r(\mathcal{C})$ and the Auslander algebra $A(\Cal)$ and a couple $(\phi, a)$, where $\phi$ is an algebra map  from $A(\Cal)\otimes A(\Cal)$ to an algebra $M(\Cal)$ associated to $\Cal$ defining the tensor product of morphisms in  the  category $\widehat{\Cal}$; and $a$ is  a  family of invertible matrices $\{a_{ijk}\}$ over $A(\Cal)$  playing the role of the associativity constraint in the skeleton tensor category $\widehat{\Cal}$.  This new  tensor category $\widehat{\Cal}$ is tensor equivalent with the given tensor category $\Cal$, and is uniquely determined by the quadruple $(r(\Cal), A(\Cal), \phi, a)$. Thus, the study of the aforementioned structure invariants, on the one hand,   can help us to classify tensor categories;  and on the other hand,  it provides a solid basis to deform tensor categories. We say that a tensor category $\mathcal{D}$ is a deformation of a tensor category $\Cal$ if the Green rings of  the two tensor categories   are isomorphic, but the two categories are not tensor equivalent.  In this sense, one may change the Auslander algebra $A(\Cal)$ or the datum $ (\phi, a)$   (not independent of $A(\Cal)$) to obtain deformations of a tensor category.  In the special case where  a  tensor category $\Cal$ is semisimple,  the Auslander algebra $A(\Cal)$ is a direct product of the base field,  and the deformations of $\Cal$ come only from the changing of  the datum $(\phi, a)$. Consequently,  the classification of semisimple (or fusion) categories reduces to the classification of the Green rings (or fusion rules) and the associators.

The paper is organized as follows. In Section 1,  we  introduce $(\bbm, \bbs)$-type matrices, where $\bbm, \bbs\in \mathbb{N}^I$ are finite sequences of nonnegative integers, and define their concatenation sums.  We  recall  the tensor product of two $(\bbm, \bbs)$-type matrices as well.  In Section 2,  we study the $(\bbm,\bbs)$-type matrices over the Auslander algebra $A(\Cal)$ of a tensor category $\Cal$.  To   a tensor category $\Cal$, we associate an algebra $M(\Cal)$ defined by the Clebsch-Gordan coefficients and $(\bbm, \bbs)$-type matrices over $A(\Cal)$.  There exists a natural (but not unique) algebra map $\phi: A(\Cal)\otimes A(\Cal)\longrightarrow M(\Cal)$,  which enables us to define a new tensor product $\whot $ of  two $(\bbm, \bbs)$-type matrices. This new tensor product $\widehat{\otimes}$ will  define the tensor product of two morphisms in the associated tensor category $\widehat{\Cal}$. In Section 3, we use the data obtained in Section 2, namely the Green ring, the Auslander algebra, the algebra map $\phi$,  and the constraint $a=\{a_{ijk}\}$  to construct a skeleton tensor category $\widehat{\Cal}$ of finite rank. If the foregoing data stem from a tensor category  $\Cal$,  we show that the new obtained category $\widehat{\Cal}$ is tensor equivalent to the original category $\Cal$.  In other wards, a tensor category $\Cal$ of finite rank can be reconstructed from  (and  hence determined by)  its invariants $r(\Cal), A(\Cal)$ and $(\phi, a)$.

\section{\bf Preliminaries and Notations}\selabel{2}

Throughout, let $\mathbb{F}$ be an algebraically closed field, $\mathbb Z$ the ring of  integers, and let $\mathbb{N}$ be the set of nonnegative integers. Unless otherwise stated, all algebras are defined over $\mathbb F$.
All rings and algebras are assumed to be associative with identity. The definitions of a $\mathbb{Z}_+$-basis and a $\mathbb{Z}_+$-ring
can be found in \cite{EK, L1, Ostrik03}. For the theory of  (tensor) categories, we refer the reader to \cite{BaKir, EGNO,Ka}.

\begin{definition}\label{+ring}
(1)  Let $R$ be a ring free as a module over $\mathbb Z$.  A $\mathbb{Z}_+$-basis of  $R$
is a $\mathbb Z$-basis $B=\{b_i\}_{i\in I}$ such that for any $i, j\in I$,
$b_ib_j=\sum_lc_{ijl}b_l$ with $c_{ijl}\in{\mathbb N}$.\\
(2) A $\mathbb{Z}_+$-ring is a $\mathbb Z$-algebra with unity endowed with a
fixed $\mathbb{Z}_+$-basis.\\
(3) A $\mathbb{Z}_+$-ring with a $\mathbb{Z}_+$-basis $B$ is a unital
$\mathbb{Z}_+$-ring if $1\in B$.

%(4) A $\mathbb{Z}_+$-module over a $\mathbb{Z}_+$-ring $R$ is a $\mathbb{Z}$-free
%$R$-module $M$ endowed with a fixed basis $\{m_j\}_{j\in J}$ such that
%$b_im_j=\sum_kd_{ijk}m_k$, $d_{ijk}\in\mathbb{Z}_+$.
\end{definition}

For an $\mathbb{F}$-algebra $A$ and two positive integers $m$ and $n$, we denote by $M_{m\times n}(A)$ the $\mathbb F$-space
consisting of all $m\times n$-matrices over $A$, and by $M_m(A):=M_{m\times m}(A)$ the full matrix algebra of $m\times m$-matrices over $A$.
If $B$ is a subspace of $A$, let $M_{m\times n}(B)$ denote the subspace of $M_{m\times n}(A)$ consisting of all
$m\times n$-matrices over $A$ with entries contained in $B$.

In the subsequent, we assume that $A$ is an $\mathbb{F}$-algebra with a set of orthogonal idempotents
$\{e_i\}_{i\in I}$ satisfying $\sum_{i\in I}e_i=1$, where $I$ is a finite index set.

For any $\mathbf{m}=(m_i)_{i\in I}\in {\mathbb N}^{I}$,
let $|\mathbf{m}|:=\sum_{i\in I}m_i$.
For $\mathbf{m}, \mathbf{s}\in{\mathbb N}^{I}$ with $|\mathbf m|>0$
and $|\mathbf s|>0$, an $(\mathbf{m}, \mathbf{s})$-type matrix over $A$
is a block matrix $X=(X_{ij})_{i,j\in I}$
such that $X_{ij}\in M_{m_i\times s_j}(e_iAe_j)$, $i, j\in I$.
If some $m_i=0$ (or $s_j=0$), then there is no row $\{X_{ij}\}_{j\in I}$
(or no  column $\{X_{ij}\}_{i\in I}$) in $X$.
Note that $X$ is an $|\mathbf m|\times|\mathbf s|$-matrix over $A$.
A $(\mathbf{0}, \mathbf{m})$-type matrix over $A$ means a $1\times|\mathbf{m}|$ zero matrix over $A$,
and an $(\mathbf{m}, \mathbf{0})$-type matrix over $A$ means an $|\mathbf{m}|\times 1$ zero matrix over $A$. Of course,  a  $(\mathbf{0}, \mathbf{0})$-type matrix over $A$ means a $1\times 1$ zero matrix over $A$. Here $\mathbf{0}=(0)_{i\in I}$.

For any $i\in I$ and positive integer $m$, let $I_m$ and $I(i, m)$ denote the $m\times m$ identity matrix
over $A$ (or over $\mathbb F$) and $e_iAe_i$, respectively, i.e.,
$$I_m=\left(
\begin{array}{cccc}
1 & 0 & \cdots & 0 \\
0 & 1 & \cdots & 0 \\
\cdots & \cdots & \cdots & \cdots \\
0 & 0 & \cdots & 1 \\
\end{array}\right)_{m\times m} ,\
I(i, m)=\left(
\begin{array}{cccc}
e_i & 0 & \cdots & 0 \\
0 & e_i & \cdots & 0 \\
\cdots & \cdots & \cdots & \cdots \\
0 & 0 & \cdots & e_i \\
\end{array}\right)_{m\times m} .$$

For $\mathbf{m}, \mathbf{s}\in{\mathbb N}^I$ with $\mathbf m\neq\mathbf 0$
and $\mathbf s\neq\mathbf 0$, let $M_{\mathbf{m}\times\mathbf{s}}(A)$ be the
$\mathbb{F}$-space consisting of all $(\mathbf{m}, \mathbf{s})$-type matrices over $A$.
Then $M_{\mathbf{m}}(A):=M_{\mathbf{m}\times\mathbf{m}}(A)$ is an associative
$\mathbb{F}$-algebra with the unity $E_{\mathbf m}=(E_{ij})_{i, j\in I}$ given by
$E_{ij}=0$ for $i\neq j\in I$ with $m_im_j>0$, and $E_{ii}=I(i, m_i)$ for $i\in I$ with $m_i>0$.
For convenience, we let $E_{\mathbf 0}:=0$, the $1\times 1$ zero matrix over $A$.
Then $M_{\mathbf{0}}(A)=\{E_{\mathbf 0}\}=0$.

Now assume that $I=\{1, 2, \cdots, n\}$.
Then an element ${\mathbf m}=(m_i)_{i\in I}\in\mathbb{N}^I$ can be written as ${\mathbf m}=(m_1, m_2, \cdots, m_n)$
and an element $X=(X_{ij})_{i, j\in I}\in M_{\mathbf{m}\times\mathbf{s}}(A)$ can be written as
$$X=\left(\begin{array}{cccc}
X_{11}&X_{12}&\cdots&X_{1n}\\
X_{21}&X_{22}&\cdots&X_{2n}\\
\cdots&\cdots&\cdots&\cdots\\
X_{n1}&X_{n2}&\cdots&X_{nn}\\
\end{array}\right),$$
where $X_{ij}\in M_{m_i\times s_j}(e_iAe_j)$, $1\leqslant i, j\leqslant n$.

It is well-known that $\mathbb{N}^I$ is a monoid with respect to the addition:
$$(m_1, m_2, \cdots, m_n)+(s_1, s_2, \cdots, s_n)=(m_1+s_1, m_2+s_2, \cdots, m_n+s_n).$$

Let $\mathbf{m}$, $\mathbf{m}'$, $\mathbf{s}\in\mathbb{N}^I$.
For any $$X=\left(\begin{array}{cccc}
X_{11}&X_{12}&\cdots&X_{1n}\\
X_{21}&X_{22}&\cdots&X_{2n}\\
\cdots&\cdots&\cdots&\cdots\\
X_{n1}&X_{n2}&\cdots&X_{nn}\\
\end{array}\right)\in M_{\mathbf{m}\times\mathbf{s}}(A),\ \mbox{ where } X_{ij}\in M_{m_i\times s_j}(e_iAe_j),$$
and
$$Y=\left(\begin{array}{cccc}
Y_{11}&Y_{12}&\cdots&Y_{1n}\\
Y_{21}&Y_{22}&\cdots&Y_{2n}\\
\cdots&\cdots&\cdots&\cdots\\
Y_{n1}&Y_{n2}&\cdots&Y_{nn}\\
\end{array}\right)\in M_{\mathbf{m}'\times\mathbf{s}}(A),\ \mbox{ where } Y_{ij}\in M_{m'_i\times s_j}(e_iAe_j).$$
Define $X\underline{\oplus}Y\in M_{(\mathbf{m}+\mathbf{m}')\times\mathbf{s}}(A)$ by
$$X\underline{\oplus}Y:=\left(\begin{array}{cccc}
X_{11}&X_{12}&\cdots&X_{1n}\\
Y_{11}&Y_{12}&\cdots&Y_{1n}\\
X_{21}&X_{22}&\cdots&X_{2n}\\
Y_{21}&Y_{22}&\cdots&Y_{2n}\\
\cdots&\cdots&\cdots&\cdots\\
X_{n1}&X_{n2}&\cdots&X_{nn}\\
Y_{n1}&Y_{n2}&\cdots&Y_{nn}\\
\end{array}\right).$$
Then obviously $(X\underline{\oplus}Y)\underline{\oplus}Z=X\underline{\oplus}(Y\underline{\oplus}Z)$
for any $X\in M_{\mathbf{m}\times\mathbf{s}}(A)$, $Y\in M_{\mathbf{m}'\times\mathbf{s}}(A)$
and $Z\in M_{\mathbf{m}''\times\mathbf{s}}(A)$ with $\mathbf{m}, \mathbf{m}', \mathbf{m}'',
\mathbf{s}\in\mathbb{N}^I$. However, $X\underline{\oplus}Y\neq Y\underline{\oplus}X$ in general.

Let $P_{\mathbf{m},\mathbf{m}'}$ be an $(|\mathbf{m}|+|\mathbf{m}'|)\times(|\mathbf{m}|+|\mathbf{m}'|)$
permutation matrix defined by
$$P_{\mathbf{m},\mathbf{m}'}=\left(\begin{array}{cccccccc}
I_{m_1}&0&\cdots&0&0&0&\cdots&0\\
0&0&\cdots&0&I_{m'_1}&0&\cdots&0\\
0&I_{m_2}&\cdots&0&0&0&\cdots&0\\
0&0&\cdots&0&0&I_{m'_2}&\cdots&0\\
\cdots&\cdots&\cdots&\cdots&\cdots&\cdots&\cdots&\cdots\\
0&0&\cdots&I_{m_n}&0&0&\cdots&0\\
0&0&\cdots&0&0&0&\cdots&I_{m'_n}\\
\end{array}\right).$$
It is obvious that $X\underline{\oplus}Y=P_{\mathbf{m},\mathbf{m}'}\left(\begin{array}{c}
X\\
Y\\
\end{array}\right)$.

In a similar way, we define another sum $X\overline{\oplus}Y\in M_{\mathbf{m}\times(\mathbf{s}+\mathbf{s}')}(A)$ for two matrices
$X\in M_{\mathbf{m}\times\mathbf{s}}(A)$ and  $ Y\in M_{\mathbf{m}\times\mathbf{s}'}(A)$ as follows:
$$X\overline{\oplus}Y:=\left(\begin{array}{ccccccc}
X_{11}&Y_{11}&X_{12}&Y_{12}&\cdots&X_{1n}&Y_{1n}\\
X_{21}&Y_{21}&X_{22}&Y_{22}&\cdots&X_{2n}&Y_{2n}\\
&\cdots&\cdots&\cdots&\cdots&\cdots&\\
X_{n1}&Y_{n1}&X_{n2}&Y_{n2}&\cdots&X_{nn}&Y_{nn}\\
\end{array}\right).$$
Like the sum $\underline{\oplus}$,  the sum $\overline{\oplus}$ is associative, but not commutative in general .
Obviously, $X\overline{\oplus}Y=(X, Y)P_{\mathbf{s}, \mathbf{s}'}^T$, where $P^T$ denotes the transposed
matrix of $P$.

Let $\mathbf{m}_1, \mathbf{m}_2, \cdots, \mathbf{m}_r$, $\mathbf{s}_1, \mathbf{s}_2, \cdots, \mathbf{s}_l\in\mathbb{N}^I$.
For a matrix
$$X=\left(\begin{array}{cccc}
X_{11}&X_{12}&\cdots&X_{1l}\\
X_{21}&X_{22}&\cdots&X_{2l}\\
\cdots&\cdots&\cdots&\cdots\\
X_{r1}&X_{r2}&\cdots&X_{rl}\\
\end{array}\right)$$
over $A$ with $X_{ij}\in M_{\mathbf{m}_i\times\mathbf{s}_j}(A)$, $1\leqslant i\leqslant r$,
$1\leqslant j\leqslant l$, we define an $(\mathbf{m}, \mathbf{s})$-type matrix $\Pi(X)$ over $A$ by setting
%$$\begin{array}{rcl}
%\prod(X)&=&(X_{11}\overline{\oplus}X_{12}\overline{\oplus}\cdots\overline{\oplus}X_{1l})\\
%&&\underline{\oplus}(X_{21}\overline{\oplus}X_{22}\overline{\oplus}\cdots\overline{\oplus}X_{2l})\\
%&&\underline{\oplus}\cdots\\
%&&\underline{\oplus}(X_{r1}\overline{\oplus}X_{r2}\overline{\oplus}\cdots\overline{\oplus}X_{rl})\\
%\end{array}$$
$$\Pi(X):=(X_{11}\overline{\oplus}X_{12}\overline{\oplus}\cdots\overline{\oplus}X_{1l})
\underline{\oplus}(X_{21}\overline{\oplus}X_{22}\overline{\oplus}\cdots\overline{\oplus}X_{2l})
\underline{\oplus}\cdots
\underline{\oplus}(X_{r1}\overline{\oplus}X_{r2}\overline{\oplus}\cdots\overline{\oplus}X_{rl}),$$
where $\mathbf{m}=\mathbf{m}_1+\mathbf{m}_2+\cdots+\mathbf{m}_r$ and
$\mathbf{s}=\mathbf{s}_1+\mathbf{s}_2+\cdots+\mathbf{s}_l$. Obviously, we have
$$\Pi(X)=(X_{11}\underline{\oplus}X_{21}\underline{\oplus}\cdots\underline{\oplus}X_{r1})
\overline{\oplus}(X_{12}\underline{\oplus}X_{22}\underline{\oplus}\cdots\underline{\oplus}X_{r2})
\overline{\oplus}\cdots
\overline{\oplus}(X_{1l}\underline{\oplus}X_{2l}\underline{\oplus}\cdots\underline{\oplus}X_{rl}).$$

For $\mathbf{m}_1, \mathbf{m}_2, \cdots, \mathbf{m}_r\in\mathbb{N}^I$, define a permutation matrix
$P_{\mathbf{m}_1,\cdots,\mathbf{m}_r}$ recursively on $r$ as follows:
for $r=1$, $P_{\mathbf{m}_1}=I_{|\mathbf{m}_1|}$; for $r=2$,
$P_{\mathbf{m}_1, \mathbf{m}_2}$ is defined as before; for $r>2$,
$P_{\mathbf{m}_1,\cdots,\mathbf{m}_r}:=P_{\mathbf{m}_1+\cdots+\mathbf{m}_{r-1}, \mathbf{m}_r}
\left(\begin{array}{cc}
P_{\mathbf{m}_1,\cdots,\mathbf{m}_{r-1}}&0\\
0&I_{|\mathbf{m}_r|}\\
\end{array}\right)$.
Then one can see that
$$\Pi(X)=P_{\mathbf{m}_1,\cdots,\mathbf{m}_r}XP_{\mathbf{s}_1,\cdots,\mathbf{s}_l}^T.$$

Let $A$ and $B$ be two algebras over $\mathbb F$. Let
$$X=\left(\begin{array}{cccc}
x_{11}&x_{12}&\cdots&x_{1s}\\
x_{21}&x_{22}&\cdots&x_{2s}\\
\cdots&\cdots&\cdots&\cdots\\
x_{m1}&x_{m2}&\cdots&x_{ms}\\
\end{array}\right)
\mbox{ and }
Y=\left(\begin{array}{cccc}
y_{11}&y_{12}&\cdots&y_{1s'}\\
y_{21}&y_{22}&\cdots&y_{2s'}\\
\cdots&\cdots&\cdots&\cdots\\
y_{m'1}&y_{m'2}&\cdots&y_{m's'}\\
\end{array}\right)$$
be an $m\times s$-matrix over $A$ and an $m'\times s'$-matrix over $B$, respectively.
Then one can define an $mm'\times ss'$-matrix $X\ot_{\mathbb F}Y$ over
$A\ot_{\mathbb F}B$ in a natural way:
$$\begin{array}{l}
X\ot_{\mathbb F}Y:=\\
\left(\begin{array}{ccccccc}
x_{11}\ot_{\mathbb F} y_{11}&\cdots&x_{1s}\ot_{\mathbb F} y_{11}&\cdots
&x_{11}\ot_{\mathbb F} y_{1s'}&\cdots&x_{1s}\ot_{\mathbb F} y_{1s'}\\
&\cdots&\cdots&\cdots&\cdots&\cdots&\\
x_{m1}\ot_{\mathbb F} y_{11}&\cdots&x_{ms}\ot_{\mathbb F} y_{11}&\cdots
&x_{m1}\ot_{\mathbb F} y_{1s'}&\cdots&x_{ms}\ot_{\mathbb F} y_{1s'}\\
\vdots&\vdots&\vdots&\vdots&\vdots&\vdots&\vdots\\
x_{11}\ot_{\mathbb F} y_{m'1}&\cdots&x_{1s}\ot_{\mathbb F} y_{m'1}&\cdots
&x_{11}\ot_{\mathbb F} y_{m's'}&\cdots&x_{1s}\ot_{\mathbb F} y_{m's'}\\
&\cdots&\cdots&\cdots&\cdots&\cdots&\\
x_{m1}\ot_{\mathbb F} y_{m'1}&\cdots&x_{ms}\ot_{\mathbb F} y_{m'1}&\cdots
&x_{m1}\ot_{\mathbb F} y_{m's'}&\cdots&x_{ms}\ot_{\mathbb F} y_{m's'}\\
\end{array}\right).\\
\end{array}$$
It is obvious that the above tensor product is associative. The following property of the tensor product is easy to check.

\begin{lemma}\label{composition}
Assume that $A$ and $B$ are two $\mathbb F$-algebras. Let $X\in M_{m\times s}(A)$,
$X_1\in M_{s\times t}(A)$, $Y\in M_{m'\times s'}(B)$ and $Y_1\in M_{s'\times t'}(B)$.
Then $(X\ot_{\mathbb F}Y)(X_1\ot_{\mathbb F}Y_1)=(XX_1)\ot_{\mathbb F}(YY_1)$.
\end{lemma}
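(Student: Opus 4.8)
The plan is to verify the identity entry-by-entry, exploiting the pair-indexing of rows and columns that is implicit in the displayed definition of $X\ot_{\mathbb F}Y$. First I would fix the convention: given an $m\times s$-matrix $U$ over $A$ and an $m'\times s'$-matrix $V$ over $B$, label the rows of the $mm'\times ss'$-matrix $U\ot_{\mathbb F}V$ by pairs $(a',a)$ with $1\<a'\<m'$ and $1\<a\<m$, the pair $(a',a)$ corresponding to row number $(a'-1)m+a$, and similarly label the columns by pairs $(b',b)$ with $1\<b'\<s'$ and $1\<b\<s$, corresponding to column number $(b'-1)s+b$. Reading off the displayed formula defining $U\ot_{\mathbb F}V$, this convention gives $(U\ot_{\mathbb F}V)_{(a',a),(b',b)}=u_{ab}\ot_{\mathbb F}v_{a'b'}$, where $u_{ab}$ and $v_{a'b'}$ denote the entries of $U$ and $V$.

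Next I would compute the $((a',a),(c',c))$-entry of the left-hand side, with $1\<a'\<m'$, $1\<a\<m$, $1\<c'\<t'$, $1\<c\<t$. By the definition of matrix multiplication and the convention above,
$$\big((X\ot_{\mathbb F}Y)(X_1\ot_{\mathbb F}Y_1)\big)_{(a',a),(c',c)}=\sum_{b'=1}^{s'}\sum_{b=1}^{s}\big(x_{ab}\ot_{\mathbb F}y_{a'b'}\big)\big((x_1)_{bc}\ot_{\mathbb F}(y_1)_{b'c'}\big).$$
Since the multiplication in $A\ot_{\mathbb F}B$ satisfies $(x\ot_{\mathbb F}y)(x'\ot_{\mathbb F}y')=xx'\ot_{\mathbb F}yy'$, each summand equals $x_{ab}(x_1)_{bc}\ot_{\mathbb F}y_{a'b'}(y_1)_{b'c'}$; and because $\ot_{\mathbb F}$ is $\mathbb F$-bilinear the double sum factors, so the expression becomes
$$\Big(\sum_{b=1}^{s}x_{ab}(x_1)_{bc}\Big)\ot_{\mathbb F}\Big(\sum_{b'=1}^{s'}y_{a'b'}(y_1)_{b'c'}\Big)=(XX_1)_{ac}\ot_{\mathbb F}(YY_1)_{a'c'}.$$

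Finally, applying the convention once more to the right-hand side $(XX_1)\ot_{\mathbb F}(YY_1)$ --- here $XX_1$ is an $m\times t$-matrix and $YY_1$ an $m'\times t'$-matrix --- its $((a',a),(c',c))$-entry is exactly $(XX_1)_{ac}\ot_{\mathbb F}(YY_1)_{a'c'}$, which coincides with the value computed above for the left-hand side. Since both sides are $mm'\times tt'$-matrices over $A\ot_{\mathbb F}B$ with equal entries in every position, they are equal. The only care required is the index bookkeeping that matches the block layout of the displayed definition, together with the observation that the factorization of the double sum uses precisely the $\mathbb F$-bilinearity of $\ot_{\mathbb F}$; there is no substantive obstacle beyond this routine verification.
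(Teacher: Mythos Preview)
Your proof is correct and is precisely the routine entry-by-entry verification the paper has in mind; the paper itself gives no proof, stating only that the property ``is easy to check.'' Your pair-indexing convention matches the block layout of the displayed definition, and the computation is sound.
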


We need the following definition.

\begin{definition}
Let $r\geqslant 1$ be an integer and
$$X=\left(\begin{array}{cccc}
x_{11}&x_{12}&\cdots&x_{1s}\\
x_{21}&x_{22}&\cdots&x_{2s}\\
\cdots&\cdots&\cdots&\cdots\\
x_{m1}&x_{m2}&\cdots&x_{ms}\\
\end{array}\right)$$
be an $m\times s$-matrix over $A^{\ot_{\mathbb F} r}:=A\ot_{\mathbb F}A\ot_{\mathbb F}\cdots\ot_{\mathbb F} A$,
the tensor product algebra of $r$-folds of $A$ over ${\mathbb F}$.
If there exist
$1\leqslant i_{11}, i_{12}, \cdots, i_{1m}, i_{21}, i_{22}, \cdots, i_{2m},
\cdots, i_{r1}, i_{r2}, \cdots,$ $i_{rm}\leqslant n$ and
$1\leqslant j_{11}, i_{12}, \cdots, i_{1s}, j_{21}, j_{22}, \cdots, j_{2s},
\cdots, j_{r1}, j_{r2}, \cdots, j_{rs}\leqslant n$ such that
$$\begin{array}{rl}
x_{lt}\in&(e_{i_{1l}}\ot_{\mathbb F} e_{i_{2l}}\ot_{\mathbb F} \cdots\ot_{\mathbb F} e_{i_{rl}})
A^{\ot_{\mathbb F} r}(e_{j_{1t}}\ot_{\mathbb F} e_{j_{2t}}\ot_{\mathbb F} \cdots\ot_{\mathbb F} e_{j_{rt}})\\
&=(e_{i_{1l}}Ae_{j_{1t}})\ot_{\mathbb F}(e_{i_{2l}}Ae_{j_{2t}})\ot_{\mathbb F}\cdots\ot_{\mathbb F}(e_{i_{rl}}Ae_{j_{rt}})\\
\end{array}$$
for all $1\leqslant l\leqslant m$ and $1\leqslant t\leqslant s$, then $X$
is called a {\it homogeneous matrix} over $A^{\ot_{\mathbb F}r}$.
\end{definition}

An easy observation leads to the following lemma.

\begin{lemma}\label{homo}
If $X$ is an $(\mathbf{m}, \mathbf{s})$-type matrix over $A$, then $X$ is
a homogeneous $|\mathbf{m}|\times|\mathbf{s}|$-matrix over $A$.
If $X$ is  homogeneous over $A^{\ot r}$ and $Y$ is homogeneous
over $A^{\otimes t}$, then $X\ot_{\mathbb F}Y$ is a homogeneous matrix over $A^{\ot(r+t)}$  for $r, t\geqslant 1$.
\end{lemma}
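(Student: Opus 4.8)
The statement to prove is \leref{homo}, which has two parts. The plan is to unwind the definitions of $(\mathbf m,\mathbf s)$-type matrix and of homogeneous matrix, and then exhibit the required index choices explicitly.

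For the first assertion, let $X=(X_{ij})_{i,j\in I}$ be an $(\mathbf m,\mathbf s)$-type matrix over $A$, so that $X_{ij}\in M_{m_i\times s_j}(e_iAe_j)$. I would first fix the labelling convention: the rows of $X$ (as an $|\mathbf m|\times|\mathbf s|$-matrix) are indexed, in block order, so that the first $m_1$ rows belong to block-row $1$, the next $m_2$ rows to block-row $2$, and so on; similarly for columns. Given a row index $p$ ($1\le p\le|\mathbf m|$), let $i(p)\in I$ be the unique index with $\sum_{k<i(p)}m_k<p\le\sum_{k\le i(p)}m_k$, and define $j(q)\in I$ analogously for a column index $q$. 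Then the $(p,q)$-entry of $X$ lies in $e_{i(p)}Ae_{j(q)}$ by the block structure. Taking $r=1$ in the definition of homogeneous matrix (so $A^{\ot 1}=A$ and there is no tensor factor), I would set $i_{1l}:=i(l)$ for $1\le l\le|\mathbf m|$ and $j_{1t}:=j(t)$ for $1\le t\le|\mathbf s|$; then the condition $x_{lt}\in e_{i_{1l}}Ae_{j_{1t}}$ is exactly the block-membership just observed. Hence $X$ is homogeneous over $A$. This part is essentially a bookkeeping verification with no real obstacle.

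For the second assertion, suppose $X=(x_{lt})$ is a homogeneous $m\times s$-matrix over $A^{\ot r}$ with witnessing indices $\{i_{ul}\}$ ($1\le u\le r$, $1\le l\le m$) and $\{j_{ut}\}$ ($1\le u\le r$, $1\le t\le s$), and $Y=(y_{l't'})$ is a homogeneous $m'\times s'$-matrix over $A^{\ot t}$ with witnessing indices $\{i'_{vl'}\}$ and $\{j'_{vt'}\}$ ($1\le v\le t$). I would recall from the explicit display of $X\ot_{\mathbb F}Y$ that its rows are indexed by pairs $(l,l')$ with $1\le l\le m$, $1\le l'\le m'$ (outer index $l'$, inner index $l$ in the displayed layout), its columns by pairs $(t,t')$, and the entry at position $((l,l'),(t,t'))$ equals $x_{lt}\ot_{\mathbb F}y_{l't'}$. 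Using the identity $(e\ot f)C^{\ot(r+t)}(e'\ot f')=(eC^{\ot r}e')\ot(fC^{\ot t}f')$ for idempotents — i.e. the same factorisation already invoked in the definition — I get
$$
x_{lt}\ot_{\mathbb F}y_{l't'}\in
\big(e_{i_{1l}}\ot\cdots\ot e_{i_{rl}}\ot e'_{i'_{1l'}}\ot\cdots\ot e'_{i'_{tl'}}\big)\,
A^{\ot(r+t)}\,
\big(e_{j_{1t}}\ot\cdots\ot e_{j_{rt}}\ot e'_{j'_{1t'}}\ot\cdots\ot e'_{j'_{tt'}}\big).
$$
So defining, for the row index $(l,l')$, the tuple of length $r+t$ to be $(i_{1l},\dots,i_{rl},i'_{1l'},\dots,i'_{tl'})$ and, for the column index $(t,t')$, the tuple $(j_{1t},\dots,j_{rt},j'_{1t'},\dots,j'_{tt'})$, the defining condition for a homogeneous matrix over $A^{\ot(r+t)}$ is met. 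Therefore $X\ot_{\mathbb F}Y$ is homogeneous over $A^{\ot(r+t)}$.

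The only point requiring any care — and the one I would present most carefully — is matching the combinatorial layout of the displayed Kronecker-type product $X\ot_{\mathbb F}Y$ with the index tuples: one must be consistent about which factor is the ``outer'' block and which is the ``inner'' one, both on the row side and the column side, so that the tuple assigned to a given entry of $X\ot_{\mathbb F}Y$ is genuinely a well-defined function of that entry's row (resp. column) alone, independent of the column (resp. row). Once the convention from the displayed formula is fixed, this is automatic, but it is the step where an inattentive argument could go wrong, so I expect it to be the main (and still minor) obstacle.
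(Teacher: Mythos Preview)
Your proposal is correct and is exactly the natural unfolding of the definitions; the paper itself offers no proof beyond the remark that the lemma follows from ``an easy observation,'' and your argument is precisely that observation made explicit. One cosmetic slip: in the second part you write $e'_{i'_{1l'}},\dots$ with primes on the idempotents, but $Y$ is a matrix over $A^{\otimes t}$ for the \emph{same} algebra $A$, so the idempotents should all be $e_{-}$, not $e'_{-}$; this does not affect the argument.
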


\begin{definition}
An $(\mathbf{m}, \mathbf{s})$-type matrix $X$ over $A$ is called column-independent
if for any $(\mathbf{s}, \mathbf{l})$-type matrix $Y$ over $A$,
$XY=0$ implies $Y=0$. $X$ is called row-independent if for any $(\mathbf{l}, \mathbf{m})$-type matrix
$Y$ over $A$, $YX=0$ implies $Y=0$.
\end{definition}

\begin{definition}\label{KerCok1}
Let $X$ be an $(\mathbf{m}, \mathbf{s})$-type matrix over $A$,
$\mathbf{m}, \mathbf{s}\in{\mathbb N}^I$.

{\rm (a)} An $(\mathbf{s}, \mathbf{t})$-type matrix $Y$ over $A$ is called a right universal annihilator
of $X$ if it satisfies the following two conditions:\\
\mbox{\hspace{0.2cm}} {\rm (1)} $XY=0$,\\
\mbox{\hspace{0.2cm}} {\rm (2)} if $M$ is an $(\mathbf{s}, \mathbf{l})$-type matrix over $A$ with $XM=0$,
then there is a unique $(\mathbf{t}, \mathbf{l})$-type matrix $Z$ over $A$ such that $M=YZ$.

{\rm (b)} An $(\mathbf{t}, \mathbf{m})$-type matrix $Y$ over $A$ is called a left universal annihilator
of $X$ if it satisfies the following two conditions:\\
\mbox{\hspace{0.2cm}} {\rm (1)} $YX=0$,\\
\mbox{\hspace{0.2cm}} {\rm (2)} if $M$ is an $(\mathbf{l}, \mathbf{m})$-type matrix over $A$ with $MX=0$,
then there is a unique $(\mathbf{l}, \mathbf{t})$-type matrix $Z$ over $A$ such that $M=ZY$.
\end{definition}

\begin{remark}
For an $(\mathbf{m}, \mathbf{s})$-type matrix $X$ over $A$, if an $(\mathbf{s}, \mathbf{t})$-type matrix $Y$
is a right universal annihilator of $X$,  then $Y$ is column-independent. Furthermore,
if an $(\mathbf{s}, \mathbf{t'})$-type matrix $Y_1$ is also a right universal annihilator
of $X$, then  $\mathbf{t'}=\mathbf{t}$
and there is a unique invertible element $Y_2$ in $M_{\mathbf{t}}(A)$
such that $Y_1=YY_2$.
Similarly, if a $(\mathbf{t}, \mathbf{m})$-type matrix $Y$
is a left universal annihilator of $X$ then $Y$ is row-independent. Furthermore,
if a $(\mathbf{t'}, \mathbf{m})$-type matrix $Y_1$ is also a left universal annihilator
of $X$, then $\mathbf{t'}=\mathbf{t}$
and there is a unique invertible element $Y_2$ in $M_{\mathbf{t}}(A)$
such that $Y_1=Y_2Y$.
\end{remark}

\section{\bf Invariants of tensor categories}\selabel{4}

Throughout this section, all categories considered are
Krull-Schmidt and abelian categories over $\mathbb{F}$ with
finitely many indecomposable objects and finite dimensional Hom-spaces.
If $\mathcal C$ is a tensor category, we will assume that $\mathcal C$
is strict and the unit object $\mathbf{1}$ of
$\mathcal C$ is simple. All functors (bifunctors) are assumed to be $\mathbb{F}$-linear ($\mathbb{F}$-bilinear).

For an object $V$ of $\mathcal C$ and a positive integer $l$,
let $lV$ denote the direct sum of
$l$ copies of $V$, and $0V=0$, the zero object of $\mathcal C$.

\begin{definition}\label{Greenring}
	Let $\mathcal C$ be a tensor category. The Green ring $r(\mathcal C)$ of $\mathcal C$
	is the abelian group generated by the
	isomorphism classes $[V]$ of objects $V$ of $\mathcal C$
	modulo the relations $[M\oplus V]=[M]+[V]$.
	The multiplication of $r(\mathcal C)$
	is given by the tensor product of objects, that is,
	$[M][V]=[M\ot V]$, where $M$ and $V$ are objects of $\mathcal C$.
\end{definition}

Assume that $\mathcal C$ is a tensor category, and let $\{V_i|i\in I\}$ be a set
of representatives of the
isomorphism classes of indecomposable objects of $\mathcal C$.
We may assume that $\mathbf{1}\in\{V_i|i\in I\}$.
Then it is easy to see that $r(\mathcal C)$ is a unital $\mathbb{Z}_+$-ring
with a $\mathbb{Z}_+$-basis $\{[V_i]|i\in I\}$.
Furthermore, we assume that $U\ot V\neq 0$ for any nonzero objects $U$ and $V$
of $\mathcal C$.

Let $V=\oplus_{i\in I}V_i$, and $A(\mathcal C)={\rm End}_{\mathcal C}(V)
={\rm Hom}_{\mathcal C}(V, V)$, the Auslander algebra of $ \mathcal C$. Then $A(\mathcal C)$ is a finite
dimensional $\mathbb{F}$-algebra. By the definition of direct sum, there are canonical
morphisms $\pi_i: V\ra V_i$ and $\tau_i: V_i\ra V$, $i\in I$,
such that ${\rm id}_{V}=\sum_{i\in I}\tau_i\circ\pi_i$
and
$$\pi_i\circ\tau_j=\left\{
\begin{array}{ll}
{\rm id}_{V_i},& {\rm if }\ i=j,\\
0,& {\rm else}.\\
\end{array}\right.$$
Let $e_i=\tau_i\circ\pi_i$ in $A(\mathcal C)$, $i\in I$. Obviously, up to
$\mathbb{F}$-algebra isomorphism, $A(\mathcal C)$ is independent of the choices of
representatives $\{V_i|i\in I\}$ of the isomorphism classes of indecomposable
objects of $\mathcal C$. Then we have the following lemma.

\begin{lemma}\label{orthidem}
	$\{e_i|i\in I\}$ is a complete set of orthogonal primitive idempotents in $A(\mathcal C)$.
\end{lemma}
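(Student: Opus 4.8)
The plan is to verify three things: that the $e_i$ are idempotent, that they are orthogonal and sum to $1$, and that each $e_i$ is primitive. The first two follow immediately from the structural identities $\pi_i\circ\tau_j=\delta_{ij}\,\mathrm{id}_{V_i}$ and $\mathrm{id}_V=\sum_{i\in I}\tau_i\circ\pi_i$ recorded just before the statement: indeed $e_i e_j=\tau_i\circ\pi_i\circ\tau_j\circ\pi_j$, which is $\tau_i\circ\pi_i=e_i$ when $i=j$ and $0$ otherwise, and $\sum_{i\in I}e_i=\sum_{i\in I}\tau_i\circ\pi_i=\mathrm{id}_V=1_{A(\mathcal C)}$. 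So the set is automatically a complete set of orthogonal idempotents; the only real content is primitivity.

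For primitivity, I would identify the corner algebra $e_iA(\mathcal C)e_i$ with $\mathrm{End}_{\mathcal C}(V_i)$. Concretely, the maps $f\mapsto \pi_i\circ f\circ\tau_i$ from $e_iA(\mathcal C)e_i$ to $\mathrm{End}_{\mathcal C}(V_i)$ and $g\mapsto \tau_i\circ g\circ\pi_i$ in the other direction are mutually inverse $\mathbb{F}$-algebra isomorphisms; this is a routine check using the two identities above (in particular $\pi_i\circ\tau_i=\mathrm{id}_{V_i}$ gives one composite, and $e_i\cdot x\cdot e_i=x$ for $x\in e_iA(\mathcal C)e_i$ together with $\tau_i\circ\pi_i=e_i$ gives the other). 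An idempotent $e$ in a ring $A$ is primitive if and only if the only idempotents of $eAe$ are $0$ and $e$, equivalently (when $eAe$ is a finite-dimensional algebra) if and only if $eAe$ is local. So primitivity of $e_i$ is equivalent to $\mathrm{End}_{\mathcal C}(V_i)$ being local.

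Finally, I would invoke the hypothesis that $\mathcal C$ is a Krull--Schmidt category: by definition (or by the standard characterization of Krull--Schmidt categories), an object is indecomposable precisely when its endomorphism algebra is local. Since each $V_i$ is indecomposable, $\mathrm{End}_{\mathcal C}(V_i)$ is local, hence $e_iA(\mathcal C)e_i$ is local, hence $e_i$ is primitive in $A(\mathcal C)$. This completes the proof.

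The main obstacle, such as it is, is purely expository: making the identification $e_iA(\mathcal C)e_i\cong\mathrm{End}_{\mathcal C}(V_i)$ precise and clean, and deciding how much of the Krull--Schmidt folklore (indecomposable $\iff$ local endomorphism ring $\iff$ the relevant idempotent is primitive) to state versus cite. There is no genuine difficulty; every step is a short diagram-chase with the projection/inclusion morphisms, and the one nontrivial input — that indecomposability forces a local endomorphism ring — is built into the Krull--Schmidt hypothesis.
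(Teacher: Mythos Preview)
Your proof is correct and follows essentially the same approach as the paper: verify orthogonality and completeness directly from the identities for $\pi_i,\tau_i$, then identify $e_iA(\mathcal C)e_i\cong\mathrm{End}_{\mathcal C}(V_i)$ via $f\mapsto\tau_i f\pi_i$ (with inverse $g\mapsto\pi_i g\tau_i$) and invoke the Krull--Schmidt hypothesis to conclude this corner is local, hence $e_i$ is primitive.
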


\begin{proof}
	Obviously, $\{e_i|i\in I\}$ is a set of orthogonal idempotents in $A(\mathcal C)$ and $\sum_{i\in I}e_i=1$.
	Let $i\in I$. Then $e_iA(\mathcal C)e_i$ is an $\mathbb{F}$-algebra with the unity $e_i$.
	It is easy to check that the map ${\rm End}(V_i)\ra e_iA(\mathcal C)e_i$,
	$f\mapsto \tau_if\pi_i$ is an algebra isomorphism with the inverse given by
	$g\mapsto \pi_ig\tau_i$, $g\in e_iA(\mathcal C)e_i$.
	Since $V_i$ is an indecomposable object, ${\rm End}(V_i)$ is a local algebra, and
	so is $e_iA(\mathcal C)e_i$. It follows that $e_i$ is a primitive idempotent.
\end{proof}

In the following, we will study the properties of the Auslander algebra $A(\mathcal C)$
of  $\mathcal C$.

\begin{lemma}\label{proj-class}
	Let $i, j\in I$ with $i\neq j$. Then $e_iA(\mathcal C)e_jA(\mathcal C)e_i
	\subseteq {\rm rad}(e_iA(\mathcal C)e_i)$. In particular, $e_iA(\mathcal C)\ncong
	e_jA(\mathcal C)$ as right $A(\mathcal C)$-modules.
\end{lemma}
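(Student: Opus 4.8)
The plan is to exploit the categorical meaning of the idempotents $e_i = \tau_i\circ\pi_i$. The key observation is that an element $f \in e_iA(\mathcal{C})e_j$ corresponds, via the isomorphisms of \leref{orthidem}, to a morphism $\widetilde f \in \mathrm{Hom}_{\mathcal C}(V_j, V_i)$ (explicitly $\widetilde f = \pi_i f \tau_j$), and composition in $A(\mathcal C)$ translates into composition of these Hom-space morphisms. So the statement $e_iA(\mathcal C)e_jA(\mathcal C)e_i \subseteq \mathrm{rad}(e_iA(\mathcal C)e_i)$ amounts to: for $i\neq j$, any composite $V_i \xrightarrow{g} V_j \xrightarrow{h} V_i$ in $\mathcal C$ is a non-isomorphism, equivalently lies in $\mathrm{rad}(\mathrm{End}_{\mathcal C}(V_i))$, which is the (unique) maximal ideal of the local ring $\mathrm{End}_{\mathcal C}(V_i)$.

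First I would set up the correspondence precisely: under $e_iA(\mathcal C)e_i \cong \mathrm{End}_{\mathcal C}(V_i)$, an element $x \in e_iA(\mathcal C)e_i$ is a unit iff $\pi_i x \tau_i$ is an isomorphism in $\mathcal C$. Then, given $u = \pi_i (\sum_k a_k b_k)\tau_i$ with $a_k \in e_iA(\mathcal C)e_j$, $b_k \in e_jA(\mathcal C)e_i$, rewrite each summand as $(\pi_i a_k \tau_j)(\pi_j b_k \tau_i)$, a composite through $V_j$. Since $\mathrm{End}_{\mathcal C}(V_i)$ is local, it suffices to show each such composite $h\circ g$ with $g\colon V_i \to V_j$, $h\colon V_j \to V_i$ is not an isomorphism; then the whole ideal $e_iA(\mathcal C)e_jA(\mathcal C)e_i$, being an ideal of $e_iA(\mathcal C)e_i$ contained in the set of non-units, lies in $\mathrm{rad}(e_iA(\mathcal C)e_i)$. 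For the non-isomorphism claim I would argue by contradiction: if $h\circ g$ were an isomorphism, then $g$ would be a split monomorphism and $h$ a split epimorphism, so $V_i$ would be a direct summand of $V_j$; since both are indecomposable and nonzero, this forces $V_i \cong V_j$, hence $i = j$, contradicting $i\neq j$. Here I use the Krull–Schmidt hypothesis implicitly only in the very mild form that a split mono between indecomposables with $V_i$ a summand of $V_j$ gives $V_i\cong V_j$.

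For the ``in particular'' clause, suppose toward a contradiction that $\psi\colon e_iA(\mathcal C) \to e_jA(\mathcal C)$ is an isomorphism of right $A(\mathcal C)$-modules. Write $\psi(e_i) = e_j a$ for some $a\in A(\mathcal C)$ and $\psi^{-1}(e_j) = e_i b$; then $e_i = \psi^{-1}(e_j a) = e_i b a$ and $e_j = e_j a b$, so in particular $e_i\, (ba)\, e_i = e_i$ lies in $e_iA(\mathcal C)e_j A(\mathcal C)e_i$ (inserting $e_j = \sum$... more carefully: $e_i b\cdot a e_i$, and $b = e_i b$, $a = a e_j$ after suitable normalization, so $e_i = (e_i b e_j)(e_j a e_i) \in e_iA(\mathcal C)e_jA(\mathcal C)e_i$). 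But $e_i$ is a unit of $e_iA(\mathcal C)e_i$, contradicting the first part, which placed that ideal inside the proper ideal $\mathrm{rad}(e_iA(\mathcal C)e_i)$.

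The main obstacle is bookkeeping in the last paragraph: one must be careful that $\psi$ restricts to a map determined by where it sends $e_i$, that the relevant elements can be chosen in the correct $e_i(-)e_j$ corners, and that $e_i\in e_iA(\mathcal C)e_jA(\mathcal C)e_i$ genuinely follows. The cleaner route, which I would actually adopt, is to invoke the standard fact that for a complete set of primitive orthogonal idempotents, $e_iA \cong e_jA$ as right $A$-modules iff $e_i$ and $e_j$ are conjugate iff $e_iAe_j \not\subseteq \mathrm{rad}(A)$ and $e_jAe_i\not\subseteq\mathrm{rad}(A)$ with a matching unit — but to keep the proof self-contained I would instead just note that an isomorphism $e_iA\cong e_jA$ forces $\dim_{\mathbb F} e_iAe_i = \dim_{\mathbb F} e_jAe_i$ via multiplication and produces elements whose composite is the identity of $e_iA(\mathcal C)e_i$, directly contradicting the containment just proved. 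The categorical core (indecomposable summand forces isomorphism) is routine; the algebraic translation is the only place demanding care.
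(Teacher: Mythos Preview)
Your proposal is correct and follows essentially the same approach as the paper: translate $e_iA(\mathcal C)e_jA(\mathcal C)e_i$ into composites $V_i\to V_j\to V_i$, and use indecomposability plus Krull--Schmidt to rule out any such composite (or sum thereof) being invertible in the local ring $\mathrm{End}_{\mathcal C}(V_i)$. The only cosmetic difference is that the paper argues directly that $e_i\notin e_iA(\mathcal C)e_jA(\mathcal C)e_i$ by showing $\mathrm{id}_{V_i}=\sum_l h_lg_l$ would make $V_i$ a summand of $sV_j$, whereas you show each single $h\circ g$ is a non-unit and then invoke that non-units form the maximal ideal; the paper also leaves the ``in particular'' clause implicit, while you spell it out.
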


\begin{proof}
	By Lemma \ref{orthidem}, $e_iA(\mathcal C)e_i$ is a finite dimensional
	local algebra over $\mathbb{F}$. It follows that ${\rm rad}(e_iA(\mathcal C)e_i)$
	is the unique maximal ideal of $e_iA(\mathcal C)e_i$. Obviously,
	$e_iA(\mathcal C)e_jA(\mathcal C)e_i$ is an ideal of $e_iA(\mathcal C)e_i$.
	Hence it is enough to show that $e_i\notin e_iA(\mathcal C)e_jA(\mathcal C)e_i$.
	Indeed, suppose that there are elements $f_1, \cdots, f_s$, $g_1, \cdots, g_s$
	of $A(\mathcal C)$ such that $e_i=\sum_{l=1}^se_if_le_jg_le_i$.
	Then ${\rm id}_{V_i}=\pi_ie_i\tau_i=\sum_{l=1}^s\pi_ie_if_le_jg_le_i\tau_i
	=\sum_{l=1}^s\pi_if_l\tau_j\pi_jg_l\tau_i$. Note that $\pi_jg_l\tau_i$ is a morphism
	from $V_i$ to $V_j$, and $\pi_if_l\tau_j$ is a morphism from
	$V_j$ to $V_i$, $1\leq l\leq s$.
	Therefore, $V_i$ is isomorphic to a direct summand of $sV_j$, which contradicts
	with the hypothesis that $\mathcal C$ is a Krull-Schmidt category since
	$V_i$ and $V_j$ are non-isomorphic indecomposable objects.
\end{proof}

\begin{corollary}\label{rad}
	Let $i, j\in I$ with $i\neq j$. Then for any morphisms $f\in{\rm Hom}(V_i, V_j)$
	and $g\in{\rm Hom}(V_j, V_i)$, $gf\in{\rm rad}({\rm End}(V_i))$.
\end{corollary}

\begin{proof}
	Follows from Lemma \ref{proj-class} and the proof of Lemma \ref{orthidem}.
\end{proof}

\begin{corollary}\label{KrullSch2}
Let $\mathbf{m}, \mathbf{s}\in{\mathbb N}^I$ with $\mathbf{m}\neq\mathbf{0}$ and $\mathbf{s}\neq\mathbf{0}$.
If there is an $(\mathbf{m}, \mathbf{s})$-type matrix $X$ and an $(\mathbf{s}, \mathbf{m})$-type matrix
$Y$ over $A(\mathcal C)$ such that $XY=E_{\mathbf{m}}$ and $YX=E_{\mathbf{s}}$, then $\mathbf{m}=\mathbf{s}$.
\end{corollary}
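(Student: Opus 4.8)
The plan is to identify an $(\mathbf m,\mathbf s)$-type matrix over $A(\mathcal C)$ with a morphism in $\mathcal C$ from $V^{\mathbf s}:=\bigoplus_{i\in I}s_iV_i$ to $V^{\mathbf m}:=\bigoplus_{i\in I}m_iV_i$, in a way compatible with composition, and then to feed the resulting isomorphism into the Krull-Schmidt property of $\mathcal C$.

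We first recall from the proof of Lemma~\ref{orthidem} that for all $i,j\in I$ the assignment $g\mapsto\tau_ig\pi_j$ is an $\mathbb F$-linear isomorphism $\mathrm{Hom}_{\mathcal C}(V_j,V_i)\to e_iA(\mathcal C)e_j$, with inverse $h\mapsto\pi_ih\tau_j$. Applying this blockwise and entrywise, for every $\mathbf m,\mathbf s\in\mathbb N^I$ one obtains an $\mathbb F$-linear isomorphism
$$\Theta_{\mathbf m,\mathbf s}\colon\ M_{\mathbf m\times\mathbf s}(A(\mathcal C))\ \longrightarrow\ \mathrm{Hom}_{\mathcal C}(V^{\mathbf s},V^{\mathbf m}),$$
sending an $(\mathbf m,\mathbf s)$-type matrix $X$ with $(i,j)$-block $X_{ij}=(x^{ij}_{pq})\in M_{m_i\times s_j}(e_iA(\mathcal C)e_j)$ to the morphism whose component from the $q$-th copy of $V_j$ to the $p$-th copy of $V_i$ is $\pi_ix^{ij}_{pq}\tau_j$. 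Next we check that $\Theta$ is compatible with composition and units: if $X$ is $(\mathbf m,\mathbf s)$-type and $Y$ is $(\mathbf s,\mathbf t)$-type, then $\Theta_{\mathbf m,\mathbf t}(XY)=\Theta_{\mathbf m,\mathbf s}(X)\circ\Theta_{\mathbf s,\mathbf t}(Y)$, and $\Theta_{\mathbf m,\mathbf m}(E_{\mathbf m})=\mathrm{id}_{V^{\mathbf m}}$. Both are entrywise computations: the first reduces, block by block, to the relation $(\tau_ig\pi_j)(\tau_jh\pi_k)=\tau_i(g\circ h)\pi_k$, which holds because $\pi_j\tau_j=\mathrm{id}_{V_j}$ (after rewriting $x^{ij}_{pq}=x^{ij}_{pq}\tau_j\pi_j$); the second holds because $E_{ii}=I(i,m_i)$ corresponds to the identity of $m_iV_i$ while the off-diagonal blocks of $E_{\mathbf m}$ vanish. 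In particular $\Theta_{\mathbf m,\mathbf m}$ is an algebra isomorphism $M_{\mathbf m}(A(\mathcal C))\cong\mathrm{End}_{\mathcal C}(V^{\mathbf m})$.

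Now, given $X$ and $Y$ as in the statement, set $f:=\Theta_{\mathbf m,\mathbf s}(X)$ and $g:=\Theta_{\mathbf s,\mathbf m}(Y)$. Then $f\circ g=\Theta_{\mathbf m,\mathbf m}(XY)=\mathrm{id}_{V^{\mathbf m}}$ and $g\circ f=\Theta_{\mathbf s,\mathbf s}(YX)=\mathrm{id}_{V^{\mathbf s}}$, so $f$ is an isomorphism and $\bigoplus_{i\in I}m_iV_i\cong\bigoplus_{i\in I}s_iV_i$ in $\mathcal C$. Since $\mathcal C$ is Krull-Schmidt and the objects $V_i$ ($i\in I$) are indecomposable and pairwise non-isomorphic, uniqueness of the decomposition into indecomposables forces $m_i=s_i$ for every $i\in I$, i.e. $\mathbf m=\mathbf s$.

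The hard part will be the entrywise bookkeeping establishing the multiplicativity of $\Theta$ (equivalently, that the matrix product of an $(\mathbf m,\mathbf s)$- and an $(\mathbf s,\mathbf t)$-type matrix corresponds to the composition of the associated morphisms), though this is still routine; the Krull-Schmidt step is then immediate. Alternatively, one may avoid reintroducing $\mathcal C$ and argue with right $A(\mathcal C)$-modules: $(\mathbf m,\mathbf s)$-type matrices over $A(\mathcal C)$ are exactly the homomorphisms $\bigoplus_{j\in I}s_j(e_jA(\mathcal C))\to\bigoplus_{i\in I}m_i(e_iA(\mathcal C))$ of right $A(\mathcal C)$-modules acting by left multiplication, so $XY=E_{\mathbf m}$ and $YX=E_{\mathbf s}$ give $\bigoplus_{i\in I}m_i(e_iA(\mathcal C))\cong\bigoplus_{i\in I}s_i(e_iA(\mathcal C))$; the modules $e_iA(\mathcal C)$ are indecomposable projective and pairwise non-isomorphic by Lemma~\ref{proj-class}, and Krull-Schmidt for finitely generated modules again yields $\mathbf m=\mathbf s$.
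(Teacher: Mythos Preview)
Your proof is correct, but it takes a different route from the paper. You translate the matrix data back into an isomorphism $V^{\mathbf m}\cong V^{\mathbf s}$ in $\mathcal C$ (or, in your alternative, an isomorphism of projective $A(\mathcal C)$-modules) and then invoke the Krull--Schmidt property of the ambient category. The paper instead argues purely at the level of matrices over $A(\mathcal C)$: from $\sum_{j}X_{ij}Y_{ji}=I(i,m_i)$ it uses Lemma~\ref{proj-class} to see that the off-diagonal contribution $\sum_{j\neq i}X_{ij}Y_{ji}$ lies in $M_{m_i}(\mathrm{rad}(e_iA(\mathcal C)e_i))$ and is therefore nilpotent, forcing $X_{ii}Y_{ii}$ to be invertible in $M_{m_i}(e_iA(\mathcal C)e_i)$ and hence $m_i\leqslant s_i$; symmetry gives the other inequality.

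The practical difference is portability. The paper's argument uses only the radical condition $e_iAe_jAe_i\subseteq\mathrm{rad}(e_iAe_i)$ for $i\neq j$, which is exactly the abstract hypothesis (KS) imposed in Section~3; indeed, Lemma~\ref{iso} there is proved by repeating verbatim ``an argument similar to the proof of Corollary~\ref{KrullSch2}''. Your argument, by contrast, needs an actual Krull--Schmidt category (either $\mathcal C$ or $\mathrm{mod}\text{-}A(\mathcal C)$) sitting behind the algebra, so it does not transfer directly to that abstract setting. Conceptually your approach is cleaner; the paper's is the one that gets reused.
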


\begin{proof}
Assume that there exist $X=(X_{ij})\in M_{\mathbf{m}\times\mathbf{s}}(A(\mathcal C))$
and $Y=(Y_{ij})\in M_{\mathbf{s}\times\mathbf{m}}(A(\mathcal C))$ such that $XY=E_{\mathbf m}$ and $YX=E_{\mathbf s}$,
where $X_{ij}\in M_{m_i\times s_j}(e_iA(\mathcal C)e_j)$ and $Y_{ij}\in M_{s_i\times m_j}(e_iA(\mathcal C)e_j)$, $i, j\in I$.
Let $i\in I$ and assume $m_i>0$.  Since $XY=E_{\mathbf{m}}$, we have $\sum_{j\in I}X_{ij}Y_{ji}=I(i, m_i)$.
By Lemma \ref{proj-class}, we have $\sum_{j\neq i}X_{ij}Y_{ji}\in M_{m_i}({\rm rad}(e_iA(\mathcal C)e_i))$.
Hence $\sum_{j\neq i}X_{ij}Y_{ji}$ is a nilpotent matrix
in $M_{m_i}(e_iA(\mathcal C)e_i)$. If $s_i=0$, then $I(i, m_i)=\sum_{j\neq i}X_{ij}Y_{ji}$,
a contradiction. Therefore, $s_i>0$. Then $X_{ii}Y_{ii}=I(i, m_i)-\sum_{j\neq i}X_{ij}Y_{ji}$
is an invertible matrix in $M_{m_i}(e_iA(\mathcal C)e_i)$. It follows that $m_i\leqslant s_i$.
Similarly, one can show that $s_i\leqslant m_i$ for all $i\in I$.
Therefore, ${\mathbf m}={\mathbf s}$.
\end{proof}

\begin{proposition}\label{KerCok}
Let $X$ be an $(\mathbf{m}, \mathbf{s})$-type matrix over $A(\mathcal C)$,
$\mathbf{m}, \mathbf{s}\in{\mathbb N}^I$. Then\\
{\rm (1)} $X$ has a right universal annihilator.\\
{\rm (2)} $X$ has a left universal annihilator.
	
\end{proposition}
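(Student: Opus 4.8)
The plan is to recognise the two assertions as saying that the morphism of $\mathcal C$ represented by the matrix $X$ has a kernel and a cokernel, and then to convert these categorical objects back into matrices by means of the Krull--Schmidt decomposition.

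First I would set up the dictionary between matrices and morphisms. For $\mathbf a=(a_i)_{i\in I}\in\mathbb N^I$ put $W_{\mathbf a}:=\bigoplus_{i\in I}a_iV_i\in\mathcal C$. Assembling blockwise the isomorphisms $e_iA(\mathcal C)e_j\cong{\rm Hom}_{\mathcal C}(V_j,V_i)$ from the proof of Lemma~\ref{orthidem}, one obtains for all $\mathbf a,\mathbf b\in\mathbb N^I$ an $\mathbb F$-linear bijection
$$\Phi_{\mathbf b,\mathbf a}\colon{\rm Hom}_{\mathcal C}(W_{\mathbf a},W_{\mathbf b})\;\xrightarrow{\ \sim\ }\;M_{\mathbf b\times\mathbf a}(A(\mathcal C))$$
which is compatible with composition, in the sense that $\Phi_{\mathbf c,\mathbf a}(g\circ f)=\Phi_{\mathbf c,\mathbf b}(g)\,\Phi_{\mathbf b,\mathbf a}(f)$, and sends identity morphisms to the units $E_{\mathbf a}$ of $M_{\mathbf a}(A(\mathcal C))$. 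The degenerate cases $\mathbf a=\mathbf 0$ or $\mathbf b=\mathbf 0$ correspond to the zero object of $\mathcal C$ and match the conventions for $(\mathbf 0,\cdot)$- and $(\cdot,\mathbf 0)$-type matrices, so they require only a line of checking. Under this dictionary, every hypothesis and conclusion in Definition~\ref{KerCok1} (the equations $XY=0$, the factorisations $M=YZ$, and their uniqueness) translates verbatim into a statement about composition of morphisms in $\mathcal C$.

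For part (1), let $f:=\Phi_{\mathbf m,\mathbf s}^{-1}(X)\colon W_{\mathbf s}\to W_{\mathbf m}$. Since $\mathcal C$ is abelian, $f$ has a kernel $\kappa\colon K\hookrightarrow W_{\mathbf s}$; since $\mathcal C$ is Krull--Schmidt with indecomposable objects exactly $\{V_i\mid i\in I\}$ up to isomorphism, there are $\mathbf t\in\mathbb N^I$ and an isomorphism $\theta\colon W_{\mathbf t}\xrightarrow{\ \sim\ }K$. I would then verify that $Y:=\Phi_{\mathbf s,\mathbf t}(\kappa\circ\theta)$, an $(\mathbf s,\mathbf t)$-type matrix over $A(\mathcal C)$, is a right universal annihilator of $X$: indeed $XY$ corresponds to $f\circ\kappa\circ\theta=0$; and if $M$ is an $(\mathbf s,\mathbf l)$-type matrix with $XM=0$, then the corresponding morphism $g\colon W_{\mathbf l}\to W_{\mathbf s}$ satisfies $f\circ g=0$, so by the universal property of the kernel $g=\kappa\circ h$ for a unique $h\colon W_{\mathbf l}\to K$, whence $Z:=\Phi_{\mathbf t,\mathbf l}(\theta^{-1}\circ h)$ is a $(\mathbf t,\mathbf l)$-type matrix with $YZ=M$, and $Z$ is unique because $\kappa$ is a monomorphism and $\theta$ an isomorphism. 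Part (2) is the exact dual: one replaces the kernel by the cokernel $\varpi\colon W_{\mathbf m}\twoheadrightarrow Q$, writes $Q\cong W_{\mathbf t}$ via an isomorphism $\eta$, and takes $Y:=\Phi_{\mathbf t,\mathbf m}(\eta\circ\varpi)$; the universal property of the cokernel, together with the fact that $\varpi$ is an epimorphism, yields the factorisation $M=ZY$ and its uniqueness in exactly the same manner. (Alternatively, (2) follows from (1) applied to $\mathcal C^{\mathrm{op}}$, for which $A(\mathcal C)$ is replaced by $A(\mathcal C)^{\mathrm{op}}$ and matrix transposition interchanges left and right universal annihilators.)

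The only genuine work, and hence the main --- though modest --- obstacle, is establishing the dictionary $\Phi$ precisely: checking that the blockwise assembly of the Hom-isomorphisms is multiplicative with respect to matrix multiplication, and pinning down the degenerate $\mathbf 0$-cases so that the universal-annihilator bookkeeping of Definition~\ref{KerCok1} matches the matrix conventions set up earlier. Once this is in place, both parts are immediate from the existence of kernels and cokernels in the abelian category $\mathcal C$ together with the Krull--Schmidt decomposition of the resulting objects.
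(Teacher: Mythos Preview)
Your proof is correct and follows essentially the same approach as the paper: identify $(\mathbf m,\mathbf s)$-type matrices with morphisms $W_{\mathbf s}\to W_{\mathbf m}$ in $\mathcal C$ via the canonical isomorphisms $e_jA(\mathcal C)e_i\cong{\rm Hom}_{\mathcal C}(V_i,V_j)$, and then invoke the existence of kernels and cokernels in the abelian category $\mathcal C$ together with Krull--Schmidt to write the (co)kernel object as some $W_{\mathbf t}$. The paper's proof is a two-line sketch of exactly this idea, whereas you have spelled out the dictionary $\Phi$ and the verification of the universal property in full detail.
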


\begin{proof}
For any $i, j\in I$, there is a canonical $\mathbb{F}$-linear isomorphism
${\rm Hom}_{\mathcal C}(V_i, V_j)\ra e_jA(\mathcal C)e_i$,
$f\mapsto \tau_jf\pi_i$. Identifying ${\rm Hom}_{\mathcal C}(V_i, V_j)$ with
$e_jA(\mathcal C)e_i$ via the above canonical isomorphism,
an $(\mathbf{m}, \mathbf{s})$-type matrix over $A(\mathcal C)$ can be viewed
as a morphism from $\oplus_{i\in I}s_iV_i$
to $\oplus_{i\in I}m_iV_i$ in $\mathcal C$.
Thus, Part (1) follows from the fact that any morphism of $\mathcal C$
has a kernel, and Part (2) follows from the fact that any morphism
has a cokernel since $\mathcal C$ is an abelian category.
\end{proof}

Under the identification of morphisms with matrices described in the proof
of Proposition \ref{KerCok}, a monomorphism corresponds to a column-independent
matrix, and an epimorphism corresponds to a row-independent matrix.
Since $\mathcal C$ is an abelian category over $\mathbb F$, every morphism is
a composition of an epimorphism followed by a monomorphism. Moreover, for any morphism
$f$ of $\mathcal C$, if ${\rm ker}(f)=0$
then $f={\rm ker}({\rm coker}(f))$; if ${\rm coker}(f)=0$ then $f={\rm coker}({\rm ker}(f))$.
Hence  we have the following proposition.

\begin{proposition}\label{decom}
Let $X$ be an $(\mathbf{m}, \mathbf{s})$-type matrix over $A(\mathcal C)$,
$\mathbf{m}, \mathbf{s}\in{\mathbb N}^I$.\\
{\rm (1)} There is a column-independent $(\mathbf{m}, \mathbf{t})$-type matrix
$X_1$ and a row-independent $(\mathbf{t}, \mathbf{s})$-type matrix $X_2$
such that $X=X_1X_2$.\\
{\rm (2)} If $X$ is column-independent and $Y$ is a left universal annihilator
of $X$, then $X$ is a right universal annihilator of $Y$.\\
{\rm (3)} If $X$ is row-independent and $Y$ is a right universal annihilator
of $X$, then $X$ is a left universal annihilator of $Y$.
\end{proposition}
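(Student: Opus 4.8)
The plan is to carry everything through the identification of $(\mathbf{m},\mathbf{s})$-type matrices with morphisms $\bigoplus_{i\in I}s_iV_i\to\bigoplus_{i\in I}m_iV_i$ used in the proof of Proposition~\ref{KerCok}, under which, as already observed, column-independent matrices correspond to monomorphisms and row-independent matrices to epimorphisms. The first step I would record is a dictionary for universal annihilators: writing $f$ for the morphism attached to $X$, an $(\mathbf{s},\mathbf{t})$-type matrix $Y$ is a right universal annihilator of $X$ exactly when the corresponding morphism $g\colon\bigoplus_i t_iV_i\to\bigoplus_i s_iV_i$ is a kernel of $f$ — condition (1) says $fg=0$, and condition (2) is verbatim the universal property of ${\rm ker}(f)$, once one notes via Krull-Schmidt that every object occurring as a test object is isomorphic to some $\bigoplus_i l_iV_i$. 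Dually, an $(\mathbf{t},\mathbf{m})$-type matrix $Y$ is a left universal annihilator of $X$ exactly when the attached $g\colon\bigoplus_i m_iV_i\to\bigoplus_i t_iV_i$ is a cokernel of $f$. A choice of kernel (resp. cokernel) object can always be taken of the form $\bigoplus_i t_iV_i$, because by Krull-Schmidt it is isomorphic to such an object, and the Remark after Definition~\ref{KerCok1} guarantees that the resulting $Y$ is independent of this choice up to an invertible matrix over $A(\mathcal C)$.

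With the dictionary in hand, (1) is the image factorization: since $\mathcal C$ is abelian, $f=\iota\circ p$ with $p$ an epimorphism onto the image and $\iota$ a monomorphism; the image is isomorphic to $\bigoplus_i t_iV_i$ for a suitable $\mathbf{t}\in\mathbb N^I$ by Krull-Schmidt, and absorbing this isomorphism into $p$ and $\iota$ we may take the middle object to be $\bigoplus_i t_iV_i$ on the nose. Translating $p$ and $\iota$ back gives a row-independent $(\mathbf{t},\mathbf{s})$-type matrix $X_2$, a column-independent $(\mathbf{m},\mathbf{t})$-type matrix $X_1$, and $X=X_1X_2$.

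For (2): if $X$ is column-independent then $f$ is a monomorphism, so ${\rm ker}(f)=0$ and hence $f={\rm ker}({\rm coker}(f))$. If $Y$ is a left universal annihilator of $X$, then by the dictionary the morphism $g$ attached to $Y$ is a cokernel of $f$, so a kernel of $g$ is exactly ${\rm ker}({\rm coker}(f))=f$; reading this back through the dictionary says precisely that $X$ is a right universal annihilator of $Y$. Part (3) is the mirror argument, exchanging the roles of kernel and cokernel, monomorphism and epimorphism, and ``row'' and ``column'': $X$ row-independent forces ${\rm coker}(f)=0$, hence $f={\rm coker}({\rm ker}(f))$; a right universal annihilator $Y$ of $X$ corresponds to a kernel $g$ of $f$, and then a cokernel of $g$ is ${\rm coker}({\rm ker}(f))=f$, i.e. $X$ is a left universal annihilator of $Y$.

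The only genuinely delicate point is the bookkeeping in the dictionary: the matrix formalism forces the source and target of a morphism to be literally of the form $\bigoplus_i m_iV_i$, whereas kernels, cokernels and images are determined only up to isomorphism, so one must consistently replace them by isomorphic ``coordinate'' objects and check that the universal properties and the independence statements survive — which is exactly where the uniqueness-up-to-invertible-matrix assertion in the Remark after Definition~\ref{KerCok1} is used. Beyond that, everything reduces to the standard facts in an abelian category that a morphism with vanishing kernel is the kernel of its cokernel and a morphism with vanishing cokernel is the cokernel of its kernel.
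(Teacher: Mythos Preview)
Your proof is correct and follows essentially the same approach as the paper: the paper's argument is precisely the paragraph preceding the proposition, which invokes the matrix--morphism identification from Proposition~\ref{KerCok}, the correspondence of column/row-independence with mono/epimorphisms, the image factorization in an abelian category, and the facts $f={\rm ker}({\rm coker}(f))$ for monos and $f={\rm coker}({\rm ker}(f))$ for epis. You have simply written out the dictionary (universal annihilator $\leftrightarrow$ kernel/cokernel) and the Krull--Schmidt bookkeeping more explicitly than the paper does.
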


Now assume that $[V_i][V_j]=\sum_{k\in I}c_{ijk}[V_k]$ in $r(\mathcal C)$.
For any $i, j\in I$, let $\mathbf{c}_{ij}=(c_{ijk})_{k\in I}\in\mathbb{N}^I$.
Define a vector space $M(\mathcal C)$ over $\mathbb{F}$ as follows:
$$M(\mathcal C):=\oplus_{i,i',j,j'\in I}M_{\mathbf{c}_{i'j'}\times\mathbf{c}_{ij}}(A(\mathcal C)).$$
Then $M(\mathcal C)$ is an associative $\mathbb{F}$-algebra with the multiplication
defined as follows: if $X\in M_{\mathbf{c}_{i'j'}\times\mathbf{c}_{ij}}(A(\mathcal C))$ and
$Y\in M_{\mathbf{c}_{i''j''}\times\mathbf{c}_{i_1j_1}}(A(\mathcal C))$,
then $XY$ is the usual matrix product for $(i, j)=(i'',j'')$,
and $XY=0$ for $(i, j)\neq(i'',j'')$.
The unity of $M(\mathcal C)$ is $(E_{\mathbf{c}_{ij}})_{i,j\in I}$,
where $E_{\mathbf{c}_{ij}}\in M_{\mathbf{c}_{ij}}(A(\mathcal C))$ is given in Section 2.
Since $V_i\ot V_j\neq 0$, $\mathbf{c}_{ij}\neq\mathbf{0}$, where $i, j\in I$.

Assume that $I=\{1, 2, \cdots, n\}$ and $V_1=\mathbf{1}$.
Then $[V_1]=[\mathbf{1}]=1$, the identity of the ring $r(\mathcal C)$.
For any $1\leqslant i\leqslant n$, let ${\mathbf{e}}_i=(e_{i1}, e_{i2}, \cdots, e_{in})\in\mathbb{N}^I$, where
$e_{ij}=\d_{ij}$, the Kronecker symbols.
Then, $\mathbf{c}_{1i}=\mathbf{c}_{i1}=\mathbf{e}_{i}$ for all $1\leqslant i\leqslant n$.
In this case, $M_{\mathbf{c}_{1i}\times\mathbf{c}_{1j}}(A(\mathcal C))=M_{\mathbf{c}_{i1}\times\mathbf{c}_{j1}}(A(\mathcal C))=
M_{\mathbf{e}_{i}\times\mathbf{e}_{j}}(A(\mathcal C))=M_{1\times 1}(e_iA(\mathcal C)e_j)=e_iA(\mathcal C)e_j$ for all $1\leqslant i, j\leqslant n$.\
For a $1\times 1$-matrix $(a)$ over $A(\mathcal C)$, we simply write it as $a$.

By the associativity of the multiplication of the Green ring $r(\mathcal C)$, one obtains the following equations:
$$\hspace{3cm}\sum_{k=1}^nc_{ijk}\mathbf{c}_{kl}=\sum_{k=1}^nc_{jlk}\mathbf{c}_{ik},
\ 1\leqslant i, j, l\leqslant n. \hspace{3cm}(*)$$

\begin{proposition}\label{tom}
	There is an algebra map $\phi_{\mathcal C}: A(\mathcal C)\ot_{\mathbb F} A(\mathcal C)\ra M(\mathcal C)$
	such that the following two conditions are satisfied:\\
	{\rm (1)} $\phi_{\mathcal C}(e_i\ot_{\mathbb F}e_j)=E_{\mathbf{c}_{ij}}\in M_{\mathbf{c}_{ij}}(A(\mathcal C))$
	for all $i, j\in I$;\\
	{\rm (2)} $\phi_{\mathcal C}(e_1\ot_{\mathbb F}a)=a\in M_{\mathbf{c}_{1i}\times\mathbf{c}_{1j}}(A(\mathcal C))$
	and $\phi_{\mathcal C}(a\ot_{\mathbb F}e_1)=a\in M_{\mathbf{c}_{i1}\times\mathbf{c}_{j1}}(A(\mathcal C))$
	for all $a\in e_iA(\mathcal C)e_j$, $i, j\in I$.
\end{proposition}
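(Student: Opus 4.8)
The plan is to define $\phi_{\mathcal C}$ categorically, using the tensor product bifunctor of $\mathcal C$ together with the identification of matrices over $A(\mathcal C)$ with morphisms between direct sums of indecomposables (as in the proof of \prref{KerCok}). Concretely, for each pair $i,j\in I$ fix a decomposition $V_i\ot V_j\cong\bigoplus_{k\in I}c_{ijk}V_k$, realized by morphisms $\iota_{ij}:\bigoplus_{k}c_{ijk}V_k\ra V_i\ot V_j$ and $p_{ij}:V_i\ot V_j\ra\bigoplus_k c_{ijk}V_k$ with $p_{ij}\iota_{ij}=\mathrm{id}$; this is possible because $r(\mathcal C)$ records precisely these multiplicities and $\mathcal C$ is Krull-Schmidt. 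Now, given $f\in e_{i'}A(\mathcal C)e_i=\mathrm{Hom}(V_i,V_{i'})$ and $g\in e_{j'}A(\mathcal C)e_j=\mathrm{Hom}(V_j,V_{j'})$, the morphism $f\ot g:V_i\ot V_j\ra V_{i'}\ot V_{j'}$ can be conjugated by the chosen $\iota$'s and $p$'s to a morphism $\bigoplus_k c_{ijk}V_k\ra\bigoplus_k c_{i'j'k}V_k$, i.e. to an element of $M_{\mathbf c_{i'j'}\times\mathbf c_{ij}}(A(\mathcal C))$. I would set $\phi_{\mathcal C}(f\ot_{\mathbb F}g)$ to be this element in the $(i,j;i',j')$-component of $M(\mathcal C)$ and zero in all other components, then extend $\mathbb F$-bilinearly. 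Since $A(\mathcal C)=\bigoplus_{i,i'}e_{i'}A(\mathcal C)e_i$, this defines $\phi_{\mathcal C}$ on all of $A(\mathcal C)\ot_{\mathbb F}A(\mathcal C)$.

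Next I would check $\phi_{\mathcal C}$ is an algebra map. Multiplicativity on homogeneous tensors $(f\ot g)(f_1\ot g_1)=(ff_1)\ot(gg_1)$ follows because conjugation by the fixed isomorphisms is functorial: the two insertions $\iota_{ij}p_{ij}$ cancel in the middle exactly as $p_{ij}\iota_{ij}=\mathrm{id}$, and the grading of $M(\mathcal C)$ is set up so that the product of the $(i,j;i_1,j_1)$-block with the $(i_1,j_1;i',j')$-block lands in the $(i,j;i',j')$-block and is zero otherwise — this matches the case distinction in the definition of the multiplication of $M(\mathcal C)$. Bilinearity and the behavior on sums of idempotents then give that $\phi_{\mathcal C}$ respects products of arbitrary elements. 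For the unit: $\phi_{\mathcal C}(1\ot_{\mathbb F}1)=\sum_{i,j}\phi_{\mathcal C}(e_i\ot_{\mathbb F}e_j)$, and $\mathrm{id}_{V_i\ot V_j}$ conjugates to $\mathrm{id}_{\bigoplus_k c_{ijk}V_k}=E_{\mathbf c_{ij}}$, which is condition (1) and simultaneously shows $\phi_{\mathcal C}$ is unital.

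Condition (2) is then essentially automatic from strictness: since $V_1=\mathbf 1$ is the unit object and $\mathcal C$ is strict, $V_1\ot V_j=V_j$ and $V_1\ot V_{j'}=V_{j'}$, so one may take $\iota_{1j}=p_{1j}=\mathrm{id}$ (here $\mathbf c_{1j}=\mathbf e_j$, so $M_{\mathbf c_{1i}\times\mathbf c_{1j}}(A(\mathcal C))=e_iA(\mathcal C)e_j$). Then for $a\in e_iA(\mathcal C)e_j=\mathrm{Hom}(V_j,V_i)$ we have $e_1\ot a=\mathrm{id}_{\mathbf 1}\ot a=a:V_j\ra V_i$, whose conjugate is $a$ itself. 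The right-hand identity $\phi_{\mathcal C}(a\ot_{\mathbb F}e_1)=a$ is symmetric. The one genuine subtlety — the main obstacle — is making sure the construction is well posed and compatible enough: a priori $\phi_{\mathcal C}$ depends on the choices of $\iota_{ij},p_{ij}$, and for the multiplicativity argument one must fix these choices once and for all (different choices differ by an invertible element of $M_{\mathbf c_{ij}}(A(\mathcal C))$, which would conjugate $\phi_{\mathcal C}$ to an equivalent but not identical map — this is exactly the ``natural but not unique'' remark in the introduction, so non-uniqueness is expected and harmless). With the choices fixed, all the verifications reduce to functoriality of $\ot$ and the cancellation $p_{ij}\iota_{ij}=\mathrm{id}$, together with bookkeeping of the grading on $M(\mathcal C)$; there is no deeper difficulty, and equations $(*)$ guarantee the index set of $M(\mathcal C)$ is the right one for this to close up.
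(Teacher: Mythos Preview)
Your proposal is correct and follows essentially the same approach as the paper: the paper factors the construction as $\phi_{\mathcal C}={\rm Ind}_\theta\circ\phi_2\circ\phi_1$ (passing through ${\rm End}_{\mathcal C}(V\otimes V)$ and then splitting by the idempotents $e_i\otimes e_j$), while you go directly to the conjugation formula $f\otimes_{\mathbb F}g\mapsto p_{i'j'}(f\otimes g)\iota_{ij}$, but both amount to fixing isomorphisms $V_i\otimes V_j\cong\bigoplus_k c_{ijk}V_k$ (chosen to be the identity when $i=1$ or $j=1$) and transporting the tensor product of morphisms along them. One cosmetic slip: in your multiplicativity check the middle cancellation actually requires $\iota_{ij}p_{ij}={\rm id}_{V_i\otimes V_j}$ rather than $p_{ij}\iota_{ij}={\rm id}$, but since your $\iota_{ij},p_{ij}$ realize an isomorphism they are mutual inverses and both identities hold, so there is no real gap.
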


\begin{proof}
	At first, one can easily check that the map
	%Since $\mathcal C$ is a tensor category, there is a bilinear map
	%$${\rm Hom}_{\mathcal C}(U, U')\times{\rm Hom}_{\mathcal C}(W, W')\ra
	%{\rm Hom}_{\mathcal C}(U\otimes W, U'\otimes W'),\ (f, g)\mapsto f\otimes g$$
	%for any objects $U, U', W$, $W'$ of $\mathcal C$, which induces an $\mathbb{F}$-linear map
	%$${\rm Hom}_{\mathcal C}(U, U')\otimes{\rm Hom}_{\mathcal C}(W, W')\ra
	%{\rm Hom}_{\mathcal C}(U\otimes W, U'\otimes W'),\ f\otimes g\mapsto f\otimes g.$$
	%Putting $U=U'=W=W'=V$ in the above, one gets an algebra map
	$$\phi_1: A(\mathcal C)\ot_{\mathbb F}A(\mathcal C)\ra {\rm End}_{\mathcal C}(V\otimes V),\
	a\ot_{\mathbb F}b\mapsto a\otimes b$$
	is a well-defined algebra map.
	Since $\{e_i\otimes e_j\}_{1\leqslant i,j\leqslant n}$ is a complete set of orthogonal idempotents
	of ${\rm End}_{\mathcal C}(V\otimes V)$, we have
	$${\rm End}_{\mathcal C}(V\otimes V)=\oplus_{i',i,j',j=1}^n(e_{i'}\otimes e_{j'})
	{\rm End}_{\mathcal C}(V\otimes V)(e_i\otimes e_j).$$
	Then one can see that the maps
	$$\begin{array}{rcl}
	(e_{i'}\otimes e_{j'}){\rm End}_{\mathcal C}(V\otimes V)(e_i\otimes e_j)&\ra
	&{\rm Hom}_{\mathcal C}(V_i\otimes V_j, V_{i'}\otimes V_{j'})\\
	f&\mapsto &(\pi_{i'}\otimes \pi_{j'})f(\tau_i\otimes \tau_j)\\
	\end{array}$$
	are $\mathbb{F}$-linear isomorphisms, which induce an algebra isomorphism
	$$\phi_2: {\rm End}_{\mathcal C}(V\otimes V)\ra\oplus_{i,i',j,j'=1}^n
	{\rm Hom}_{\mathcal C}(V_i\otimes V_j, V_{i'}\otimes V_{j'})$$
	such that $\phi_2(e_i\otimes e_j)={\rm id}_{V_i\otimes V_j}\in
	{\rm End}_{\mathcal C}(V_i\otimes V_j)$, where the multiplication of $\oplus_{1\leqslant i,i',j,j'\leqslant n}
	{\rm Hom}_{\mathcal C}(V_i\otimes V_j, V_{i'}\otimes V_{j'})$ is defined as follows:
	the product $fg$ of $f\in{\rm Hom}_{\mathcal C}(V_i\otimes V_j, V_{i'}\otimes V_{j'})$
	and $g\in{\rm Hom}_{\mathcal C}(V_{i''}\otimes V_{j''}, V_{i_1}\otimes V_{j_1})$
	is given by $fg=0$ for $(i_1, j_1)\neq (i,j)$, and $fg=f\circ g$, the composition
	of $f$ after $g$ for $(i_1, j_1)=(i,j)$.
	For any $1\leqslant i, j\leqslant n$, we have $V_i\otimes V_j\cong \oplus_{k=1}^nc_{ijk}V_k$. Hence, 	one can fix an isomorphism $\theta_{ij}: V_i\otimes V_j\ra \oplus_{k=1}^nc_{ijk}V_k$
	in $\mathcal C$. Since $V_1\ot V_i=\mathbf{1}\ot V_i=V_i=V_i\ot\mathbf{1}=V_i\ot V_1$, we may choose
	$\theta_{1i}=\theta_{i1}={\rm id}_{V_i}$ for all $1\leqslant i\leqslant n$.
	Thus, $\theta=\{\theta_{ij}\}_{i, j\in I}$ induces an algebra isomorphism
	$${\rm Ind}_{\theta}:\oplus_{i,i',j,j'=1}^n
	{\rm Hom}_{\mathcal C}(V_i\otimes V_j, V_{i'}\otimes V_{j'})\ra
	\oplus_{i,i',j,j'=1}^n{\rm Hom}_{\mathcal C}(\oplus_{k=1}^nc_{ijk}V_k, \oplus_{k=1}^nc_{i'j'k}V_k)$$
	in an obvious way, where the multiplication of $\oplus_{i,i',j,j'=1}^n
	{\rm Hom}_{\mathcal C}(\oplus_{k=1}^nc_{ijk}V_k, \oplus_{k=1}^nc_{i'j'k}V_k)$
	is defined in a similar way as that of $\oplus_{i,i',j,j'=1}^n
	{\rm Hom}_{\mathcal C}(V_i\otimes V_j, V_{i'}\otimes V_{j'})$.
	Under the identifications
	$${\rm Hom}_{\mathcal C}(V_i, V_j)=e_jA(\mathcal C)e_i, \ 1\leqslant i, j\leqslant n,$$
	we may identify
	$${\rm Hom}_{\mathcal C}(\oplus_{k=1}^nc_{ijk}V_k, \oplus_{k=1}^nc_{i'j'k}V_k)
	=M_{\mathbf{c}_{i'j'}\times \mathbf{c}_{ij}}(A(\mathcal C))$$
	as stated in the proof of Proposition \ref{KerCok}. In this case, we have
	$$\oplus_{i,i',j,j'=1}^n{\rm Hom}_{\mathcal C}(\oplus_{k=1}^nc_{ijk}V_k, \oplus_{k=1}^nc_{i'j'k}V_k)
	=\oplus_{i,i',j,j'=1}^nM_{\mathbf{c}_{i'j'}\times \mathbf{c}_{ij}}(A(\mathcal C))
	=M(\mathcal C)$$ as $\mathbb{F}$-algebras. Thus, one gets an algebra map
	$$\phi_{\mathcal C}={\rm Ind}_{\theta}\circ\phi_2\circ\phi_1:
	A(\mathcal C)\ot_{\mathbb F}A(\mathcal C)\ra M(\mathcal C)$$
	such that $\phi_{\mathcal C}(e_i\ot_{\mathbb F}e_j)=E_{\mathbf{c}_{ij}}\in
	M_{\mathbf{c}_{ij}}(A(\mathcal C))$ for all $i, j\in I$.
	Moreover, we have
	$\phi_{\mathcal C}(e_1\ot_{\mathbb F}a)=a$ and $\phi_{\mathcal C}(a\ot_{\mathbb F}e_1)=a$ for any $a\in e_iA(\mathcal C)e_j$,
	where $1\leqslant i, j\leqslant n$. Note that $M_{\mathbf{c}_{1i}\times \mathbf{c}_{1j}}(A(\mathcal C))
	=M_{\mathbf{e}_{i}\times \mathbf{e}_{j}}(A(\mathcal C))=M_{1\times 1}(e_iA(\mathcal C)e_j)=e_iA(\mathcal C)e_j$,
	and similarly $M_{\mathbf{c}_{i1}\times \mathbf{c}_{j1}}(A(\mathcal C))=e_iA(\mathcal C)e_j$.
\end{proof}

\begin{remark} \label{2.9}
	By Proposition \ref{tom}, one gets a quadruple data $(r(\mathcal C), A(\mathcal C), I, \phi_{\mathcal C})$
	attached to $\mathcal C$. Note that $\phi_{\mathcal C}$ depends on the choices of the isomorphisms
	$\theta=\{\theta_{ij}\}_{i, j\in I}$. One can check that if $\theta'=\{\theta'_{ij}\}_{i, j\in I}$ is another
	family of isomorphisms and $\phi'_{\mathcal C}={\rm Ind}_{\theta'}\circ\phi_2\circ\phi_1$, then
	there is an inner automorphism $\psi$ of $M(\mathcal C)$ such that
	$\phi'_{\mathcal C}=\psi\circ\phi_{\mathcal C}$.
\end{remark}

Let $\mathbf{m}=(m_i)_{i\in I}\in\mathbb{N}^I$. For any $i\in I$ with $m_i>0$, and any $1\leqslant k\leqslant m_i$,
define a matrix $Y^{\mathbf m}_{i,k}=(y_1, y_2, \cdots, y_{|\mathbf m|})\in M_{\mathbf{e}_i\times\mathbf{m}}(A(\mathcal C))$ by
$$y_j=\left\{\begin{array}{ll}
e_i,& j=k+\sum_{1\leqslant l<i}m_l,\\
0,& \text{ otherwise},\\
\end{array}\right.$$
where we regard $\sum_{1\leqslant l<i}m_l=0$ when $i=1$. Let
$X^{\mathbf m}_{i,k}=(Y^{\mathbf m}_{i,k})^T$, the transposed matrix of $Y^{\mathbf m}_{i,k}$.
Then $X^{\mathbf m}_{i,k}\in M_{\mathbf{m}\times\mathbf{e}_i}(A(\mathcal C))$. We have
$$Y^{\mathbf m}_{i,k}X^{\mathbf m}_{i',k'}
=\left\{\begin{array}{ll}
e_i=E_{\mathbf{e}_i},& (i,k)=(i',k'),\\
0,& \text{ otherwise},\\
\end{array}\right.$$
and $\sum_{i=1}^n\sum_{1\leqslant k\leqslant m_i}X^{\mathbf m}_{i,k}Y^{\mathbf m}_{i,k}
=\sum_{1\leqslant i\leqslant n,  m_i>0}\sum_{k=1}^{m_i}X^{\mathbf m}_{i,k}Y^{\mathbf m}_{i,k}=E_{\mathbf m}$.

In what follows, for any $\mathbf{m}=(m_i)_{1\leqslant i\leqslant n}$,
$\mathbf{s}=(s_i)_{1\leqslant i\leqslant n}\in{\mathbb N}^I$,
we will identify
$${\rm Hom}_{\mathcal C}(\oplus_{i=1}^nm_iV_i, \oplus_{i=1}^ns_iV_i)
=M_{\mathbf{s}\times \mathbf{m}}(A(\mathcal C))$$
as stated in the proof of Proposition \ref{tom}.
Explicitly, for $f\in {\rm Hom}_{\mathcal C}(\oplus_{i=1}^nm_iV_i, \oplus_{i=1}^ns_iV_i)$,
the corresponding matrix
$$f=\left(\begin{array}{cccc}
f_{11}&f_{12}&\cdots&f_{1m}\\
f_{21}&f_{22}&\cdots&f_{2m}\\
\cdots&\cdots&\cdots&\cdots\\
f_{s1}&f_{s2}&\cdots&f_{sm}\\
\end{array}\right)$$
is determined by $f_{kl}=Y^{\mathbf s}_{i,k_1}fX^{\mathbf m}_{j,k_2}$
if $k=\sum_{1\leqslant t<i}s_t+k_1$ and $l=\sum_{1\leqslant t<j}m_t+k_2$,
where $s=|\mathbf s|$, $m=|\mathbf m|$, $1\leqslant k\leqslant s$,
$1\leqslant l\leqslant m$.
Under the identification, $Y^{\mathbf s}_{i,k_1}\in{\rm Hom}_{\mathcal C}(\oplus_{j=1}^ns_jV_j, V_i)$
is the projection from
$\oplus_{j=1}^ns_jV_j$ to the $k_1$-th $V_i$ of the direct summand $s_iV_i$ of
$\oplus_{j=1}^ns_jV_j$, and $X^{\mathbf m}_{j,k_2}\in{\rm Hom}_{\mathcal C}(V_j, \oplus_{i=1}^nm_iV_i)$
is the embedding of $V_j$ into the $k_2$-th $V_j$ of the direct summand $m_jV_j$ in
$\oplus_{i=1}^nm_iV_i$.

Let $\mathbf{m}_1=(m_{1i})_{1\leqslant i\leqslant n}, \mathbf{m}_2=(m_{2i})_{1\leqslant i\leqslant n},
\mathbf{s}_1=(s_{1i})_{1\leqslant i\leqslant n}, \mathbf{s}_2=(s_{2i})_{1\leqslant i\leqslant n}
\in{\mathbb N}^I$, and let $X\in M_{\mathbf{s}_1\times\mathbf{m}_1}(A(\mathcal C))$
and $Y\in M_{\mathbf{s}_2\times\mathbf{m}_2}(A(\mathcal C))$.
Then we have
$$X\in{\rm Hom}_{\mathcal C}(\oplus_{i=1}^nm_{1i}V_i, \oplus_{i=1}^ns_{1i}V_i)
\mbox{ and } Y\in{\rm Hom}_{\mathcal C}(\oplus_{i=1}^nm_{2i}V_i, \oplus_{i=1}^ns_{2i}V_i)$$
as stated above. Let $X\ot Y$ denote the tensor product of the morphisms
$X$ and $Y$ in $\mathcal C$. Then we have
$$X\ot Y\in{\rm Hom}_{\mathcal C}((\oplus_{i=1}^nm_{1i}V_i)\ot(\oplus_{i=1}^nm_{2i}V_i),
(\oplus_{i=1}^ns_{1i}V_i)\ot(\oplus_{i=1}^ns_{2i}V_i)).$$

For any homogeneous matrix $X=(x_{ij})$ over $A(\mathcal C)\ot_{\mathbb F}A(\mathcal C)$, one can see that
$(\phi_{\mathcal C}(x_{ij}))$ is a well-defined homogeneous matrix over $A(\mathcal C)$, denoted by $\phi_{\mathcal C}(X)$.

For any $\mathbf{m}=(m_i)_{1\leqslant i\leqslant n}$ and $\mathbf{s}=(s_i)_{1\leqslant i\leqslant n}$ in $\mathbb{N}^I$,
define $\mathbf{m}\whot\mathbf{s}\in\mathbb{N}^I$ by
$$\mathbf{m}\whot\mathbf{s}:=\sum_{i, j=1}^nm_is_j\mathbf{c}_{ij}.$$
%$$=(\sum_{i, j\in I}m_is_jc_{ijk})_{k\in I}=(\sum_{i, j\in I}m_is_jc_{ij1},
%\sum_{i, j\in I}m_is_jc_{ij2}, \cdots, \sum_{i, j\in I}m_is_jc_{ijn}).$$
Then $\mathbf{m}\whot\mathbf{s}=\mathbf{0}$ if and only if $\mathbf{m}=\mathbf{0}$ or $\mathbf{s}=\mathbf{0}$.
Moreover, $\mathbf{m}\whot\mathbf{e}_1=\mathbf{e}_1\whot\mathbf{m}=\mathbf{m}$.
One can easily check  that by Eq.$(*)$  $(\mathbf{m}\whot\mathbf{s})\whot\mathbf{t}
=\mathbf{m}\whot(\mathbf{s}\whot\mathbf{t})$ holds for all triples $(\mathbf{m}, \mathbf{s}, \mathbf{t})$ of elements of $\mathbb{N}^I$.

For any $X\in M_{\mathbf{s}_1\times\mathbf{m}_1}(A(\mathcal C))$
and $Y\in M_{\mathbf{s}_2\times\mathbf{m}_2}(A(\mathcal C))$,
define a homogeneous matrix $X\whot Y$ over $A(\mathcal C)$ as follows:\\
\mbox{\hspace{0.2cm}}(1) if $\mathbf{m}_1\whot\mathbf{m}_2=\mathbf{0}$ and $\mathbf{s}_1\whot\mathbf{s}_2=\mathbf{0}$
then $X\whot Y:=0\in M_{1\times 1}(A(\mathcal C))$;\\
\mbox{\hspace{0.2cm}}(2) if $\mathbf{m}_1\whot\mathbf{m}_2=\mathbf{0}$ and $\mathbf{s}_1\whot\mathbf{s}_2\neq\mathbf{0}$
then $X\whot Y:=0\in M_{|\mathbf{s}_1\whot\mathbf{s}_2|\times 1}(A(\mathcal C))$;\\
\mbox{\hspace{0.2cm}}(3) if $\mathbf{m}_1\whot\mathbf{m}_2\neq\mathbf{0}$ and $\mathbf{s}_1\whot\mathbf{s}_2=\mathbf{0}$
then $X\whot Y:=0\in M_{1\times|\mathbf{m}_1\whot\mathbf{m}_2|}(A(\mathcal C))$;\\
\mbox{\hspace{0.2cm}}(4) if $\mathbf{m}_1\whot\mathbf{m}_2\neq\mathbf{0}$ and $\mathbf{s}_1\whot\mathbf{s}_2\neq\mathbf{0}$
then $X\whot Y:=\Pi(\phi_{\mathcal C}(X\ot_{\mathbb F}Y))$.

\begin{lemma}\label{maptensor}
	With the above notations, we have $X\whot Y\in M_{(\mathbf{s}_1\whot\mathbf{s}_2)\times(\mathbf{m}_1\whot\mathbf{m}_2)}(A(\mathcal C))$.
\end{lemma}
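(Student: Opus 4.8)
The plan is to unwind the definition of $X\whot Y$ in case (4)---the only case that requires work, since cases (1)--(3) give zero matrices of the declared sizes by inspection---and to track the block sizes through the three operations $\ot_{\mathbb F}$, $\phi_{\mathcal C}$, and $\Pi$. First I would record the homogeneity data of $X\ot_{\mathbb F}Y$. Writing $X=(X_{ij})_{i,j\in I}$ with $X_{ij}\in M_{s_{1i}\times m_{1j}}(e_iA(\mathcal C)e_j)$ and similarly $Y=(Y_{kl})$ with $Y_{kl}\in M_{s_{2k}\times m_{2l}}(e_kA(\mathcal C)e_l)$, the entries of $X\ot_{\mathbb F}Y$ lie in blocks of the form $(e_iA(\mathcal C)e_j)\ot_{\mathbb F}(e_kA(\mathcal C)e_l)=(e_i\ot e_k)(A(\mathcal C)\ot A(\mathcal C))(e_j\ot e_l)$; by \leref{homo} this is a homogeneous matrix over $A(\mathcal C)^{\ot 2}$, so $\phi_{\mathcal C}(X\ot_{\mathbb F}Y)$ is a well-defined matrix over $A(\mathcal C)$ as discussed just before the statement. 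Next I would observe that $\phi_{\mathcal C}$ sends the $(i,k),(j,l)$ "super-block'' of $X\ot_{\mathbb F}Y$ into $M_{\mathbf{c}_{ik}\times\mathbf{c}_{jl}}(A(\mathcal C))$: indeed $\phi_{\mathcal C}(e_i\ot e_k)=E_{\mathbf c_{ik}}$ by \prref{tom}(1), and $\phi_{\mathcal C}$ is unital, so the block $(e_i\ot e_k)(A(\mathcal C)^{\ot2})(e_j\ot e_l)$ is carried into $E_{\mathbf c_{ik}}M(\mathcal C)E_{\mathbf c_{jl}}=M_{\mathbf c_{ik}\times\mathbf c_{jl}}(A(\mathcal C))$. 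After applying $\phi_{\mathcal C}$ entrywise, the block occupied by $X_{ij}\ot_{\mathbb F}Y_{kl}$ becomes a matrix of "type-matrix'' blocks indexed by the $s_{1i}s_{2k}$ rows and $m_{1j}m_{2l}$ columns, each block an $(\mathbf c_{ik},\mathbf c_{jl})$-type matrix.

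Then I would feed this into $\Pi$. Recall $\Pi$ takes a matrix whose $(p,q)$-entry is an $(\mathbf m_p,\mathbf s_q)$-type matrix and returns the single $(\sum_p\mathbf m_p,\sum_q\mathbf s_q)$-type matrix obtained by interleaving via $\overline\oplus$ and $\underline\oplus$. Here the row index $p$ ranges over all pairs (a row of $X$ in block $i$, a row of $Y$ in block $k$), contributing a summand $\mathbf c_{ik}$, and similarly for columns. Hence the output is an $(\mathbf M,\mathbf S)$-type matrix where
$$\mathbf M=\sum_{i,k}\sum_{\text{rows}}\mathbf c_{ik}=\sum_{i,k=1}^n s_{1i}s_{2k}\,\mathbf c_{ik},\qquad
\mathbf S=\sum_{j,l=1}^n m_{1j}m_{2l}\,\mathbf c_{jl}.$$
By the definition $\mathbf m\whot\mathbf s=\sum_{i,j}m_is_j\mathbf c_{ij}$ given just above \leref{maptensor}, this is exactly $\mathbf M=\mathbf s_1\whot\mathbf s_2$ and $\mathbf S=\mathbf m_1\whot\mathbf m_2$, which is the claim.

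The only mildly delicate point---and the place I'd be most careful---is the bookkeeping of the row/column ordering inside $\Pi$ versus the ordering inside the Kronecker product $X\ot_{\mathbb F}Y$: the definition of $X\ot_{\mathbb F}Y$ lists rows with the $Y$-index varying slowest, while $\Pi$ reassembles blocks grouping the $X$-block index first. One must check that every pair $(i,k)$ (with $s_{1i}s_{2k}>0$) actually appears as some block row in the matrix that $\Pi$ is applied to, with the correct multiplicity $s_{1i}s_{2k}$, so that the sum $\sum s_{1i}s_{2k}\mathbf c_{ik}$ comes out right; this is a direct but slightly tedious index-chase using the explicit formula for $X\ot_{\mathbb F}Y$ and for $\Pi$ in terms of the permutation matrices $P_{\mathbf m_1,\dots,\mathbf m_r}$. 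Since $\Pi$ only permutes rows and columns (it is conjugation-like, $\Pi(Z)=P\,Z\,Q^T$ with $P,Q$ permutation matrices), no entries are created or destroyed, so the size count is unaffected by whatever the precise interleaving order turns out to be; only the final shape matters, and that is determined by the multiset of block-sizes, which is $\{\mathbf c_{ik}\}$ with multiplicity $s_{1i}s_{2k}$ on the rows and $\{\mathbf c_{jl}\}$ with multiplicity $m_{1j}m_{2l}$ on the columns. This yields the asserted membership $X\whot Y\in M_{(\mathbf s_1\whot\mathbf s_2)\times(\mathbf m_1\whot\mathbf m_2)}(A(\mathcal C))$.
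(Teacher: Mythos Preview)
Your proposal is correct and follows essentially the same approach as the paper's proof: verify homogeneity of $X\ot_{\mathbb F}Y$ via \leref{homo}, use \prref{tom}(1) to identify the type of each block $\phi_{\mathcal C}(a_{lk})$ as $(\mathbf{c}_{i'j'},\mathbf{c}_{ij})$, and then sum these over all rows and columns to recover $\mathbf{s}_1\whot\mathbf{s}_2$ and $\mathbf{m}_1\whot\mathbf{m}_2$. The only difference is presentational: the paper carries out the ``slightly tedious index-chase'' you allude to explicitly, writing down the precise inequalities that pin down which $(i',j')$ and $(i,j)$ correspond to each row index $l$ and column index $k$, whereas you summarize that step conceptually.
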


\begin{proof}
	If $\mathbf{m}_1\whot\mathbf{m}_2=\mathbf{0}$ or $\mathbf{s}_1\whot\mathbf{s}_2=\mathbf{0}$,
	then obviously	$X\whot Y\in M_{(\mathbf{s}_1\whot\mathbf{s}_2)\times(\mathbf{m}_1\whot\mathbf{m}_2)}(A(\mathcal C))$.
	Now assume that $\mathbf{m}_1\whot\mathbf{m}_2\neq\mathbf{0}$ and $\mathbf{s}_1\whot\mathbf{s}_2\neq\mathbf{0}$.
	Then by the definition of $X\ot_{\mathbb F}Y$ and Lemma \ref{homo}, $X\ot_{\mathbb F}Y$ is a homogeneous
	$|\mathbf{s}_1||\mathbf{s}_2|\times|\mathbf{m}_1||\mathbf{m}_2|$-matrix over $A(\mathcal C)\ot_{\mathbb F}A(\mathcal C)$.
	Hence we may assume $X\ot_{\mathbb F}Y=(a_{lk})_{s\times m}$ with $a_{lk}\in A(\mathcal C)\ot_{\mathbb{F}}A(\mathcal C)$,
	where $s=|\mathbf{s}_1||\mathbf{s}_2|$ and $m=|\mathbf{m}_1||\mathbf{m}_2|$.
	Let $\mathbf{m}_i=(m_{i1}, m_{i2}, \cdots, m_{in})$ and $\mathbf{s}_i=(s_{i1}, s_{i2}, \cdots, s_{in})$,
	$i=1, 2$. Then by the definition of $X\ot_{\mathbb F}Y$, one knows that
	$a_{lk}\in e_{i'}A(\mathcal C)e_i\ot_{\mathbb{F}}e_{j'}A(\mathcal C)e_j\subset A(\mathcal C)\ot_{\mathbb{F}}A(\mathcal C)$ for
	$$|\mathbf{s}_1|(\sum_{0<t<j'}s_{2t}+u)+\sum_{0<t<i'}s_{1t}<l\leqslant
	|\mathbf{s}_1|(\sum_{0<t<j'}s_{2t}+u)+\sum_{0<t\leqslant i'}s_{1t}$$
	with $0\leqslant u\leqslant s_{2j'}-1$, and
	$$|\mathbf{m}_1|(\sum_{0<t<j}m_{2t}+u)+\sum_{0<t<i}m_{1t}<k\leqslant
	|\mathbf{m}_1|(\sum_{0<t<j}m_{2t}+u)+\sum_{0<t\leqslant i}m_{1t}$$
	with $0\leqslant u\leqslant m_{2j}-1$,
	where $1\leqslant i, i', j, j'\leqslant n$, and  $\sum_{0<t<1}s_{gt}=\sum_{0<t<1}m_{gt}=0$
	by convention for $g=1, 2$.
	Let $\mathbf{h}_1, \mathbf{h}_2, \cdots, \mathbf{h}_s$ be a series of elements of $\mathbb{N}^I$
	defined by $\mathbf{h}_l=\mathbf{c}_{i'j'}$ for
	$$|\mathbf{s}_1|(\sum_{0<t<j'}s_{2t}+u)+\sum_{0<t<i'}s_{1t}<l\leqslant
	|\mathbf{s}_1|(\sum_{0<t<j'}s_{2t}+u)+\sum_{0<t\leqslant i'}s_{1t},$$
	where $0\leqslant u\leqslant s_{2j'}-1$,
	and let $\mathbf{t}_1, \mathbf{t}_2, \cdots, \mathbf{t}_m$ be a series of elements of $\mathbb{N}^I$
	defined by $\mathbf{t}_k:=\mathbf{c}_{ij}$ for
	$$|\mathbf{m}_1|(\sum_{0<t<j}m_{2t}+u)+\sum_{0<t<i}m_{1t}<k\leqslant
	|\mathbf{m}_1|(\sum_{0<t<j}m_{2t}+u)+\sum_{0<t\leqslant i}m_{1t},$$
	where $0\leqslant u\leqslant m_{2j}-1$.
	Then by Proposition \ref{tom}(1), we have $\phi_{\mathcal C}(a_{lk})\in M_{\mathbf{h}_l\times \mathbf{t}_k}(A(\mathcal C))$ for all $1\leqslant l\leqslant s$
	and $1\leqslant k\leqslant m$. It follows that $\Pi(\phi_{\mathcal C}(X\ot_{\mathbb F}Y))$ is
	an $(\mathbf{s}, \mathbf{m})$-type matrix over $A(\mathcal C)$, where $\mathbf{s}:=\sum_{i=1}^s\mathbf{h}_i$
	and $\mathbf{m}:=\sum_{i=1}^m\mathbf{t}_i$.
	Finally, a straightforward computation shows that
	$\mathbf{s}=\sum_{i=1}^s\mathbf{h}_i=\sum_{i', j'=1}^ns_{1i'}s_{2j'}\mathbf{c}_{i'j'}=\mathbf{s}_1\whot\mathbf{s}_2$
	and $\mathbf{m}=\sum_{i=1}^m\mathbf{t}_i=\sum_{i, j=1}^nm_{1i}m_{2j}\mathbf{c}_{ij}=\mathbf{m}_1\whot\mathbf{m}_2$.
	Therefore, $X\whot Y\in M_{(\mathbf{s}_1\whot\mathbf{s}_2)\times(\mathbf{m}_1\whot\mathbf{m}_2)}(A(\mathcal C))$.
\end{proof}

\begin{remark}\label{permu}
	With the notations as in the proof of Lemma \ref{maptensor},
	let $P(\mathbf{s}_1, \mathbf{s}_2):=P_{\mathbf{h}_1, \mathbf{h}_2, \cdots, \mathbf{h}_s}$
	and $P(\mathbf{m}_1, \mathbf{m}_2):=P_{\mathbf{t}_1, \mathbf{t}_2, \cdots, \mathbf{t}_m}$
	be the permutation matrices defined as in the previous section. Then
	$$X\whot Y=P(\mathbf{s}_1, \mathbf{s}_2)\phi_{\mathcal C}(X\ot_{\mathbb F}Y)P(\mathbf{m}_1, \mathbf{m}_2)^T.$$
	Furthermore, Let $\ol{P}(\mathbf{s}_1, \mathbf{s}_2)=E_{\mathbf{s}_1\whot\mathbf{s}_2}P(\mathbf{s}_1, \mathbf{s}_2)$
	and $\ol{P}(\mathbf{m}_1, \mathbf{m}_2)=E_{\mathbf{m}_1\whot\mathbf{m}_2}P(\mathbf{m}_1, \mathbf{m}_2)$.
	Therefore,
	$$X\whot Y=\ol{P}(\mathbf{s}_1, \mathbf{s}_2)\phi_{\mathcal C}(X\ot_{\mathbb F}Y)\ol{P}(\mathbf{m}_1, \mathbf{m}_2)^T.$$
	
Note that  without the assumption that $V_i\ot V_j\neq 0$, $1\leqslant i, j\leqslant n$,  we would need to modify 	the definition of $X\whot Y:=\Pi(\phi_{\mathcal C}(X\ot_{\mathbb F}Y))$ when $\mathbf{m}_1\whot\mathbf{m}_2\neq\mathbf{0}$ and $\mathbf{s}_1\whot\mathbf{s}_2\neq\mathbf{0}$. In this case,
	whenever $\mathbf{h}_l=\mathbf{c}_{i'j'}=\mathbf{0}$ (resp., $\mathbf{t}_k:=\mathbf{c}_{ij}=\mathbf{0}$)
	for some $1\leqslant l\leqslant s$ (resp., $1\leqslant k\leqslant m$),
	one deletes the $l$-th row sub-matrices $(\phi_{\mathcal C}(a_{lk}))_{1\leqslant k\leqslant m}$
	(resp., $k$-th column $(\phi_{\mathcal C}(a_{lk}))_{1\leqslant l\leqslant s}$) in the matrix
	$\phi_{\mathcal C}(X\otimes_{\mathbb F}Y)=(\phi_{\mathcal C}(a_{lk}))$. The modified matrix is also denoted
	$\phi_{\mathcal C}(X\otimes_{\mathbb F}Y)$ simply. Then by rearranging the rows and columns of the matrix,
	one defines $X\whot Y:=\Pi(\phi_{\mathcal C}(X\otimes_{\mathbb F}Y))$.
	Moreover, let $\mathbf{h}_{i_1}, \mathbf{h}_{i_2}, \cdots, \mathbf{h}_{i_{s'}}$
	be all the nonzero elements of $\mathbf{h}_1, \mathbf{h}_2, \cdots, \mathbf{h}_s$
	with $1\leqslant i_1<i_2<\cdots<i_{s'}\leqslant s$,
	and let $\mathbf{t}_{j_1}, \mathbf{t}_{j_2}, \cdots, \mathbf{t}_{j_{m'}}$
	be all the nonzero elements of $\mathbf{t}_1, \mathbf{t}_2, \cdots, \mathbf{t}_m$
	with $1\leqslant j_1<j_2<\cdots<j_{m'}\leqslant m$.
	Put $P(\mathbf{s}_1, \mathbf{s}_2):=P_{\mathbf{h}_{i_1}, \mathbf{h}_{i_2}, \cdots, \mathbf{h}_{i_{s'}}}$
	and $P(\mathbf{m}_1, \mathbf{m}_2):=P_{\mathbf{t}_{j_1}, \mathbf{t}_{j_2}, \cdots, \mathbf{t}_{j_{m'}}}$
	 the permutation matrices defined as in the previous section. Then
	$$X\whot Y=P(\mathbf{s}_1, \mathbf{s}_2)\phi_{\mathcal C}(X\ot_{\mathbb F}Y)P(\mathbf{m}_1, \mathbf{m}_2)^T.$$
\end{remark}

\begin{corollary}\label{identitymap}
	Let $\mathbf m$ and $\mathbf s$ be any elements of $\mathbb{N}^I$. Then\\
	$(1)$ $E_{\mathbf m}\whot  E_{\mathbf s}=E_{\mathbf{m}\whot\mathbf{s}}$.\\
	$(2)$ $E_{\mathbf{e}_1}\whot  X=X\whot E_{\mathbf{e}_1}=X$ for any $X\in M_{\mathbf{m}\times\mathbf{s}}(A(\mathcal C))$.
\end{corollary}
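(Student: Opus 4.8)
The plan is to unwind both statements directly from the formula $X\whot Y=\Pi(\phi_{\mathcal C}(X\ot_{\mathbb F}Y))$ in case (4) of the definition, using the fact (Proposition \ref{tom}) that $\phi_{\mathcal C}$ is a \emph{unital} algebra map sending $e_i\ot_{\mathbb F}e_j$ to $E_{\mathbf c_{ij}}$ and satisfying the left/right unit normalisations $\phi_{\mathcal C}(e_1\ot_{\mathbb F}a)=\phi_{\mathcal C}(a\ot_{\mathbb F}e_1)=a$. For part (1), first note that by Lemma \ref{maptensor} we already know $E_{\mathbf m}\whot E_{\mathbf s}$ lies in $M_{(\mathbf m\whot\mathbf s)\times(\mathbf m\whot\mathbf s)}(A(\mathcal C))$, so it is the correctly-sized block matrix and it suffices to identify its entries. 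I would write out $E_{\mathbf m}\ot_{\mathbb F}E_{\mathbf s}$ explicitly: its diagonal blocks are the $e_i\ot_{\mathbb F}e_j$ arranged according to the indexing in the proof of Lemma \ref{maptensor}, and its off-diagonal entries are $0$. Applying $\phi_{\mathcal C}$ entrywise turns the diagonal blocks into $E_{\mathbf c_{ij}}$ and kills the rest, so $\phi_{\mathcal C}(E_{\mathbf m}\ot_{\mathbb F}E_{\mathbf s})$ is the block-diagonal matrix with blocks $E_{\mathbf h_l}$ (in the notation of Lemma \ref{maptensor}, with multiplicities $m_is_j$ for $\mathbf c_{ij}$), and $\Pi$ of that block-diagonal matrix is exactly $E_{\mathbf s_1\whot\mathbf s_2}$ with $\mathbf s_1=\mathbf m$, $\mathbf s_2=\mathbf s$ — i.e. $E_{\mathbf m\whot\mathbf s}$, since $\Pi$ only permutes rows and columns and a permuted block-diagonal idempotent matrix of this shape is again the canonical idempotent $E_{\mathbf m\whot\mathbf s}$.

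For part (2), I would use $\mathbf e_1\whot\mathbf m=\mathbf m\whot\mathbf e_1=\mathbf m$ (already recorded in the text) together with $\theta_{1i}=\theta_{i1}=\mathrm{id}$, which is precisely the choice that makes $\phi_{\mathcal C}(e_1\ot_{\mathbb F}a)=a$ and $\phi_{\mathcal C}(a\ot_{\mathbb F}e_1)=a$. Given $X\in M_{\mathbf m\times\mathbf s}(A(\mathcal C))$, write $X=(x_{kl})$; then $E_{\mathbf e_1}\ot_{\mathbb F}X$ is the $|\mathbf m|\times|\mathbf s|$ matrix $(e_1\ot_{\mathbb F}x_{kl})$ (here $E_{\mathbf e_1}$ is the $1\times 1$ matrix $(e_1)$), so $\phi_{\mathcal C}(E_{\mathbf e_1}\ot_{\mathbb F}X)=(x_{kl})=X$. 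It remains to check that the permutation matrices $P(\mathbf e_1,\mathbf s)$ and $P(\mathbf e_1,\mathbf m)$ appearing in Remark \ref{permu} are identities in this degenerate case: the sequences $\mathbf h_l$ and $\mathbf t_k$ reduce to $\mathbf c_{1i}=\mathbf e_i$ listed in the order already dictating the rows/columns of $X$, so $\Pi$ acts trivially and $E_{\mathbf e_1}\whot X=X$. The argument for $X\whot E_{\mathbf e_1}=X$ is identical with the roles of the two tensor factors swapped, using $\phi_{\mathcal C}(a\ot_{\mathbb F}e_1)=a$.

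The only mildly delicate point — and the one I would write most carefully — is the bookkeeping in part (1): one must verify that the particular permutation $\Pi$ (equivalently $P(\mathbf m,\mathbf s)$ and $P(\mathbf m,\mathbf s)^T$ from Remark \ref{permu}) carries the block-diagonal matrix $\phi_{\mathcal C}(E_{\mathbf m}\ot_{\mathbb F}E_{\mathbf s})$, whose blocks are the idempotents $E_{\mathbf c_{ij}}$ with multiplicity $m_is_j$, onto the standard idempotent $E_{\mathbf m\whot\mathbf s}$ and not merely onto \emph{some} conjugate of it. This follows because $\Pi$ is a simultaneous row-and-column permutation $X\mapsto P X P^T$ (shown in Section 2), and applying such a transformation to a block-diagonal matrix whose blocks are the canonical idempotents $I(i,m_i)$-type pieces again yields a block-diagonal arrangement of the same idempotent pieces; since $\mathbf m\whot\mathbf s=\sum m_is_j\mathbf c_{ij}$ counts exactly these pieces with the right multiplicities, the result is $E_{\mathbf m\whot\mathbf s}$ by the definition of the unit of $M_{\mathbf m\whot\mathbf s}(A(\mathcal C))$. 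Part (2) is then immediate and part (1) with general $\mathbf m=\mathbf 0$ or $\mathbf s=\mathbf 0$ is covered by the trivial cases (1)--(3) of the definition of $\whot$.
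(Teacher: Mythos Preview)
Your proposal is correct and follows essentially the same approach as the paper: the paper's proof simply reads ``Follows from Proposition~\ref{tom}(2) and the proof of Lemma~\ref{maptensor}'', and you have unpacked exactly those two ingredients --- condition ($\phi$1) for part (1) via the block-diagonal structure analysed in Lemma~\ref{maptensor}, and condition ($\phi$2) for part (2) together with the observation that the rearrangement $\Pi$ is trivial when one factor is $\mathbf e_1$. Your explicit check that the permutation $P_{\mathbf h_1,\dots,\mathbf h_s}$ is the identity when $\mathbf s_1=\mathbf e_1$ (since then each $\mathbf h_l=\mathbf c_{1,j'}=\mathbf e_{j'}$ already appears in the standard order) is the one detail worth keeping in a written version.
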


\begin{proof}
	Follows from Proposition \ref{tom}(2) and the proof of Lemma \ref{maptensor}.
\end{proof}

\begin{lemma}\label{tensorcomp}
	Let $X\in M_{\mathbf{s}_1\times\mathbf{m}_1}(A(\mathcal C))$,
	$Y\in M_{\mathbf{s}_2\times\mathbf{m}_2}(A(\mathcal C))$,
	$X_1\in M_{\mathbf{m}_1\times\mathbf{t}_1}(A(\mathcal C))$
	and $Y_1\in M_{\mathbf{m}_2\times\mathbf{t}_2}(A(\mathcal C))$. Then
	$(X\whot Y)(X_1\whot Y_1)=(XX_1)\whot (YY_1)$.
\end{lemma}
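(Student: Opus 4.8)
The plan is to reduce the identity to the permutation-matrix description of $\whot$ recorded in Remark~\ref{permu}, and then to combine three facts: a genuine permutation matrix $P$ satisfies $P^{T}P=I$; $\phi_{\mathcal C}$ is an algebra map, so applying it entrywise to homogeneous matrices is compatible with (block) matrix multiplication; and Lemma~\ref{composition} controls the $\ot_{\mathbb F}$-tensor of matrices.

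First I would dispose of the degenerate cases. Since $\mathbf a\whot\mathbf b=\mathbf 0$ precisely when $\mathbf a=\mathbf 0$ or $\mathbf b=\mathbf 0$, if any one of $\mathbf m_1,\mathbf m_2,\mathbf s_1,\mathbf s_2,\mathbf t_1,\mathbf t_2$ is $\mathbf 0$ then one or more of $X,Y,X_1,Y_1$ is a trivial zero matrix, and a routine run through cases (1)--(3) of the definition of $\whot$, together with Lemma~\ref{maptensor} (and the facts $XX_1\in M_{\mathbf s_1\times\mathbf t_1}(A(\mathcal C))$, $YY_1\in M_{\mathbf s_2\times\mathbf t_2}(A(\mathcal C))$), shows that both sides equal the zero matrix of $M_{(\mathbf s_1\whot\mathbf s_2)\times(\mathbf t_1\whot\mathbf t_2)}(A(\mathcal C))$. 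So I may henceforth assume all six sequences are nonzero; then every $\whot$-product occurring is nonzero and case (4) of the definition applies throughout.

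By Remark~\ref{permu} I would write
$$X\whot Y=P(\mathbf s_1,\mathbf s_2)\,\phi_{\mathcal C}(X\ot_{\mathbb F}Y)\,P(\mathbf m_1,\mathbf m_2)^{T},\qquad X_1\whot Y_1=P(\mathbf m_1,\mathbf m_2)\,\phi_{\mathcal C}(X_1\ot_{\mathbb F}Y_1)\,P(\mathbf t_1,\mathbf t_2)^{T}.$$
The crucial observation is that the permutation matrix $P(\mathbf m_1,\mathbf m_2)$ occurring as the column permutation of $X\whot Y$ is literally the same matrix as the one occurring as the row permutation of $X_1\whot Y_1$. Indeed, with the notation from the proof of Lemma~\ref{maptensor}, the successive column blocks of $\phi_{\mathcal C}(X\ot_{\mathbb F}Y)$ are labelled by the elements $\mathbf c_{ij}\in\mathbb N^I$ one obtains by letting the block index run with the outer loop over the column blocks of $Y$ (governed by $\mathbf m_2$) and the inner loop over those of $X$ (governed by $\mathbf m_1$); the successive row blocks of $\phi_{\mathcal C}(X_1\ot_{\mathbb F}Y_1)$ are labelled by exactly the same rule, with $X_1$ playing the role of $X$ (again via $\mathbf m_1$) and $Y_1$ that of $Y$ (via $\mathbf m_2$), so the two label sequences coincide. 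Since $P(\mathbf m_1,\mathbf m_2)$ is a genuine permutation matrix, $P(\mathbf m_1,\mathbf m_2)^{T}P(\mathbf m_1,\mathbf m_2)=I$, so multiplying the two displays gives
$$(X\whot Y)(X_1\whot Y_1)=P(\mathbf s_1,\mathbf s_2)\,\phi_{\mathcal C}(X\ot_{\mathbb F}Y)\,\phi_{\mathcal C}(X_1\ot_{\mathbb F}Y_1)\,P(\mathbf t_1,\mathbf t_2)^{T}.$$

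It then remains to check that $\phi_{\mathcal C}(X\ot_{\mathbb F}Y)\,\phi_{\mathcal C}(X_1\ot_{\mathbb F}Y_1)=\phi_{\mathcal C}\big((X\ot_{\mathbb F}Y)(X_1\ot_{\mathbb F}Y_1)\big)$, where $\phi_{\mathcal C}$ is applied entrywise to the matrix $(X\ot_{\mathbb F}Y)(X_1\ot_{\mathbb F}Y_1)$, which is homogeneous by Lemmas~\ref{homo} and \ref{composition}. Here the column-block labels of $\phi_{\mathcal C}(X\ot_{\mathbb F}Y)$ agree entry for entry with the row-block labels of $\phi_{\mathcal C}(X_1\ot_{\mathbb F}Y_1)$, so the product may be computed blockwise; for each value of the inner summation index the two blocks involved have matching $(i,j)$-grading (this is again the symmetry of the preceding paragraph), hence their product in $M(\mathcal C)$ is the ordinary matrix product and, by the algebra-map property of $\phi_{\mathcal C}$, equals $\phi_{\mathcal C}$ of the corresponding product in $A(\mathcal C)\ot_{\mathbb F}A(\mathcal C)$; summing over the inner index and using additivity of $\phi_{\mathcal C}$ yields the asserted equality. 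Finally Lemma~\ref{composition} rewrites $(X\ot_{\mathbb F}Y)(X_1\ot_{\mathbb F}Y_1)=(XX_1)\ot_{\mathbb F}(YY_1)$, and since $XX_1\in M_{\mathbf s_1\times\mathbf t_1}(A(\mathcal C))$ and $YY_1\in M_{\mathbf s_2\times\mathbf t_2}(A(\mathcal C))$, Remark~\ref{permu} identifies the resulting expression $P(\mathbf s_1,\mathbf s_2)\,\phi_{\mathcal C}((XX_1)\ot_{\mathbb F}(YY_1))\,P(\mathbf t_1,\mathbf t_2)^{T}$ with $(XX_1)\whot(YY_1)$, which is exactly the desired identity. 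Conceptually the lemma is nothing but the interchange law for the tensor product of morphisms in $\mathcal C$, and all the difficulty is combinatorial; I expect the main obstacle to be the step in the previous two paragraphs, namely confirming that the ``inner'' permutation matrices attached to $X\whot Y$ and to $X_1\whot Y_1$ really are the \emph{same} matrix, so that $P^{T}P$ collapses and the two $\phi_{\mathcal C}(\,\cdot\,)$'s can legitimately be multiplied blockwise.
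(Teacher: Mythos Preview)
Your proposal is correct and follows essentially the same route as the paper: the paper's proof is the one-line ``Follows from Lemma~\ref{composition} and Remark~\ref{permu}'', and you have simply unpacked those two ingredients (the $P^TP=I$ cancellation from Remark~\ref{permu} and the interchange $(X\ot_{\mathbb F}Y)(X_1\ot_{\mathbb F}Y_1)=(XX_1)\ot_{\mathbb F}(YY_1)$ from Lemma~\ref{composition}, together with the algebra-map property of $\phi_{\mathcal C}$) in the natural way.
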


\begin{proof}
	Follows from Lemma \ref{composition} and Remark \ref{permu}.
\end{proof}

\begin{lemma}\label{diag1}
	Let $i,i',j,j'\in I$, and let $x\in e_{i'}A(\mathcal C)e_i$
	and $y\in e_{j'}A(\mathcal C)e_j$. The following diagram commutes:
	$$\begin{array}{rcl}
	V_i\ot V_j&\xrightarrow{\theta_{ij}}
	&\oplus_{k=1}^nc_{ijk}V_k\\
	x\ot y\downarrow\mbox{\hspace{0.4cm}}&&\hspace{0.5cm}\downarrow x\whot y\\
	V_{i'}\ot V_{j'}&\xrightarrow{\theta_{i'j'}}
	&\oplus_{k=1}^nc_{i'j'k}V_k\\
	\end{array}.$$
\end{lemma}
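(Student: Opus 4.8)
The plan is to unwind the definition of $x\whot y$ in terms of the chain of algebra isomorphisms $\phi_{\mathcal C}={\rm Ind}_\theta\circ\phi_2\circ\phi_1$ introduced in the proof of Proposition~\ref{tom}, and to observe that each of the three arrows in that chain is, by construction, compatible with the relevant identifications. First I would record that, under the identification of ${\rm Hom}_{\mathcal C}(V_i,V_j)$ with $e_jA(\mathcal C)e_i$, the elements $x\in e_{i'}A(\mathcal C)e_i$ and $y\in e_{j'}A(\mathcal C)e_j$ correspond to morphisms $\widetilde x\colon V_i\to V_{i'}$ and $\widetilde y\colon V_j\to V_{j'}$ in $\mathcal C$, and $x\ot y$ in the statement is precisely the tensor product $\widetilde x\ot\widetilde y$ of morphisms in $\mathcal C$, i.e.\ $\phi_1(x\ot_{\mathbb F}y)=\widetilde x\ot\widetilde y\in{\rm End}_{\mathcal C}(V\ot V)$ after restricting to the appropriate idempotent corner. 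Since here $\mathbf m_1=\mathbf e_i$, $\mathbf s_1=\mathbf e_{i'}$, $\mathbf m_2=\mathbf e_j$, $\mathbf s_2=\mathbf e_{j'}$, the matrix $x\ot_{\mathbb F}y$ is a $1\times 1$ matrix, the permutation matrices $P(\mathbf s_1,\mathbf s_2)$, $P(\mathbf m_1,\mathbf m_2)$ of Remark~\ref{permu} are identities (there is a single block), and $\mathbf m_1\whot\mathbf m_2=\mathbf c_{ij}$, $\mathbf s_1\whot\mathbf s_2=\mathbf c_{i'j'}$; hence $x\whot y=\Pi(\phi_{\mathcal C}(x\ot_{\mathbb F}y))=\phi_{\mathcal C}(x\ot_{\mathbb F}y)\in M_{\mathbf c_{i'j'}\times\mathbf c_{ij}}(A(\mathcal C))$.

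Next I would trace $x\ot_{\mathbb F}y$ through $\phi_2$ and ${\rm Ind}_\theta$. Applying $\phi_2$ to $(e_{i'}\ot e_{j'})(\widetilde x\ot\widetilde y)(e_i\ot e_j)$ yields, by its very definition, the morphism $(\pi_{i'}\ot\pi_{j'})(\widetilde x\ot\widetilde y)(\tau_i\ot\tau_j)\in{\rm Hom}_{\mathcal C}(V_i\ot V_j,\,V_{i'}\ot V_{j'})$, which is nothing but $\widetilde x\ot\widetilde y$ itself once we identify $V_i\ot V_j$ with its image under $\tau_i\ot\tau_j$. Then ${\rm Ind}_\theta$ conjugates this morphism by the fixed isomorphisms $\theta_{ij}$ and $\theta_{i'j'}$: concretely, ${\rm Ind}_\theta(\widetilde x\ot\widetilde y)=\theta_{i'j'}\circ(\widetilde x\ot\widetilde y)\circ\theta_{ij}^{-1}$, viewed as an element of ${\rm Hom}_{\mathcal C}(\oplus_k c_{ijk}V_k,\,\oplus_k c_{i'j'k}V_k)=M_{\mathbf c_{i'j'}\times\mathbf c_{ij}}(A(\mathcal C))$. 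Therefore $x\whot y=\theta_{i'j'}\circ(\widetilde x\ot\widetilde y)\circ\theta_{ij}^{-1}$ as a morphism, and rearranging this identity gives $\theta_{i'j'}\circ(\widetilde x\ot\widetilde y)=(x\whot y)\circ\theta_{ij}$, which is exactly the commutativity of the stated square (with $\widetilde x\ot\widetilde y$ written as $x\ot y$ in the notation of the lemma).

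The only genuine bookkeeping point — and the step I expect to be the mildest obstacle — is to confirm carefully that the three identifications used here (the one from Proposition~\ref{KerCok} turning morphisms into $(\mathbf m,\mathbf s)$-type matrices, the one from Proposition~\ref{tom} defining $\phi_{\mathcal C}$, and the $1\times 1$-block degeneration of the $\Pi$-construction and of Remark~\ref{permu}) are being applied consistently, in particular that the normalization $\theta_{1i}=\theta_{i1}={\rm id}_{V_i}$ is not needed here but that the defining property $\phi_{\mathcal C}(e_i\ot_{\mathbb F}e_j)=E_{\mathbf c_{ij}}$ (Proposition~\ref{tom}(1)) is what makes $x\whot y$ land in $M_{\mathbf c_{i'j'}\times\mathbf c_{ij}}(A(\mathcal C))$ rather than in some larger block. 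Once this is checked, the commuting square is a one-line rearrangement of the formula $x\whot y=\phi_{\mathcal C}(x\ot_{\mathbb F}y)={\rm Ind}_\theta\bigl(\phi_2(\phi_1(x\ot_{\mathbb F}y))\bigr)$ together with the explicit description of ${\rm Ind}_\theta$ as conjugation by $\theta$.
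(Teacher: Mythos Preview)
Your proposal is correct and follows exactly the paper's approach: the paper's proof is the single sentence ``Follows from the definition of $\phi_{\mathcal C}$, since $x\whot y=\phi_{\mathcal C}(x\ot_{\mathbb F}y)$ in this case,'' and your argument is a careful unpacking of precisely that observation, tracing $\phi_{\mathcal C}={\rm Ind}_{\theta}\circ\phi_2\circ\phi_1$ and noting that ${\rm Ind}_{\theta}$ acts by conjugation with the $\theta_{ij}$.
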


\begin{proof}
	Follows from the definition of $\phi_{\mathcal C}$,
	since $x\whot y=\phi_{\mathcal C}(x\ot_{\mathbb F}y)$ in this case.
\end{proof}

For any $\mathbf{m}=(m_i)_{1\leqslant i\leqslant n}\in{\mathbb N}^I$, denote by $V^{(\mathbf{m})}$
the (ordered) direct sum $\oplus_{i=1}^nm_iV_i$ of objects of $\mathcal C$.
Then  $V^{(\mathbf{0})}=0$ and $V^{(\mathbf{e}_i)}=V_i$ for all $i\in I$.
Moreover, if $\mathbf{m}\neq\mathbf{0}$, then
$X^{\mathbf m}_{i,k}\in{\rm Hom}_{\mathcal C}(V_i, V^{(\mathbf{m})})$
and $Y^{\mathbf m}_{i,k}\in{\rm Hom}_{\mathcal C}(V^{(\mathbf{m})}, V_i)$
for any $1\leqslant i\leqslant n$ with $m_i>0$ and $1\leqslant k\leqslant m_i$.

For $\mathbf{m}_1=(m_{1i})_{1\leqslant i\leqslant n},
\mathbf{m}_2=(m_{2i})_{1\leqslant i\leqslant n}\in{\mathbb N}^I$,
define a morphism $\theta(\mathbf{m}_1, \mathbf{m}_2)$ from $V^{(\mathbf{m}_1)}\ot V^{(\mathbf{m}_2)}$
to $V^{(\mathbf{m}_1\whot\mathbf{m}_2)}$ in $\mathcal C$ by $\theta(\mathbf{m}_1, \mathbf{m}_2)=0$
if $\mathbf{m}_1=\mathbf{0}$ or $\mathbf{m}_2=\mathbf{0}$, and
$$\theta(\mathbf{m}_1, \mathbf{m}_2)
=\sum\limits_{i,j=1}^n\sum\limits_{1\leqslant k_1\leqslant m_{1i}}\sum\limits_{1\leqslant k_2\leqslant m_{2j}}
(X^{\mathbf{m}_1}_{i,k_1}\whot X^{\mathbf{m}_2}_{j,k_2})\theta_{ij}
(Y^{\mathbf{m}_1}_{i,k_1}\ot Y^{\mathbf{m}_2}_{j,k_2})$$
if $\mathbf{m}_1\neq\mathbf{0}$ and $\mathbf{m}_2\neq\mathbf{0}$.
Then $\theta(\mathbf{e}_i, \mathbf{e}_j)=\theta_{ij}$ for all $i,j\in I$.

\begin{lemma}\label{invert}
	$\theta(\mathbf{m}_1, \mathbf{m}_2)$ is an isomorphism in $\mathcal C$ for any
	$\mathbf{m}_1, \mathbf{m}_2\in\mathbb{N}^I$.
	Moreover, $\theta(\mathbf{e}_1, \mathbf{m})=\theta(\mathbf{m}, \mathbf{e}_1)=E_{\mathbf m}$.
\end{lemma}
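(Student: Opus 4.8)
The plan is to first reduce the invertibility of $\theta(\mathbf{m}_1,\mathbf{m}_2)$ to the case where both arguments are standard basis vectors $\mathbf{e}_i$, where the map is by definition the fixed isomorphism $\theta_{ij}$. The natural way to do this is to exhibit an explicit two-sided inverse. For $\mathbf{m}_1,\mathbf{m}_2\neq\mathbf{0}$, set
$$\theta(\mathbf{m}_1,\mathbf{m}_2)^{-1}
:=\sum_{i,j=1}^n\sum_{1\leqslant k_1\leqslant m_{1i}}\sum_{1\leqslant k_2\leqslant m_{2j}}
(X^{\mathbf{m}_1}_{i,k_1}\ot X^{\mathbf{m}_2}_{j,k_2})\,\theta_{ij}^{-1}\,
(Y^{\mathbf{m}_1}_{i,k_1}\whot Y^{\mathbf{m}_2}_{j,k_2}),$$
which lies in ${\rm Hom}_{\mathcal C}(V^{(\mathbf{m}_1\whot\mathbf{m}_2)}, V^{(\mathbf{m}_1)}\ot V^{(\mathbf{m}_2)})$ by Lemma \ref{maptensor} (the $\whot$-tensored matrices have the right types, and $\theta_{ij}^{-1}\in{\rm Hom}_{\mathcal C}(\oplus_k c_{ijk}V_k, V_i\ot V_j)$). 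I would then compute the two composites and show each equals the appropriate identity morphism.

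The key computational step is to multiply out $\theta(\mathbf{m}_1,\mathbf{m}_2)^{-1}\circ\theta(\mathbf{m}_1,\mathbf{m}_2)$. Expanding both sums, a typical term is
$$(X^{\mathbf{m}_1}_{i,k_1}\ot X^{\mathbf{m}_2}_{j,k_2})\,\theta_{ij}^{-1}\,
(Y^{\mathbf{m}_1}_{i,k_1}\whot Y^{\mathbf{m}_2}_{j,k_2})(X^{\mathbf{m}_1}_{i',k_1'}\whot X^{\mathbf{m}_2}_{j',k_2'})\,\theta_{i'j'}\,
(Y^{\mathbf{m}_1}_{i',k_1'}\ot Y^{\mathbf{m}_2}_{j',k_2'}).$$
By Lemma \ref{tensorcomp}, $(Y^{\mathbf{m}_1}_{i,k_1}\whot Y^{\mathbf{m}_2}_{j,k_2})(X^{\mathbf{m}_1}_{i',k_1'}\whot X^{\mathbf{m}_2}_{j',k_2'})=(Y^{\mathbf{m}_1}_{i,k_1}X^{\mathbf{m}_1}_{i',k_1'})\whot(Y^{\mathbf{m}_2}_{j,k_2}X^{\mathbf{m}_2}_{j',k_2'})$, and by the orthogonality relations $Y^{\mathbf m}_{i,k}X^{\mathbf m}_{i',k'}=\delta_{(i,k),(i',k')}E_{\mathbf{e}_i}$ together with Corollary \ref{identitymap}(1), this collapses to $E_{\mathbf{c}_{ij}}$ exactly when $(i,k_1,j,k_2)=(i',k_1',j',k_2')$ and vanishes otherwise. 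So the double sum reduces to $\sum_{i,j,k_1,k_2}(X^{\mathbf{m}_1}_{i,k_1}\ot X^{\mathbf{m}_2}_{j,k_2})\theta_{ij}^{-1}E_{\mathbf{c}_{ij}}\theta_{ij}(Y^{\mathbf{m}_1}_{i,k_1}\ot Y^{\mathbf{m}_2}_{j,k_2})$; since $E_{\mathbf{c}_{ij}}$ acts as ${\rm id}$ and $\theta_{ij}^{-1}\theta_{ij}={\rm id}_{V_i\ot V_j}$, this is $\sum_{i,j,k_1,k_2}(X^{\mathbf{m}_1}_{i,k_1}\ot X^{\mathbf{m}_2}_{j,k_2})(Y^{\mathbf{m}_1}_{i,k_1}\ot Y^{\mathbf{m}_2}_{j,k_2})$. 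Using bifunctoriality of $\ot$ and $\sum_{i,k}X^{\mathbf m}_{i,k}Y^{\mathbf m}_{i,k}=E_{\mathbf m}={\rm id}_{V^{(\mathbf m)}}$, this equals ${\rm id}_{V^{(\mathbf{m}_1)}}\ot{\rm id}_{V^{(\mathbf{m}_2)}}={\rm id}_{V^{(\mathbf{m}_1)}\ot V^{(\mathbf{m}_2)}}$. The reverse composite is handled symmetrically, the only extra ingredient being that now the collapsing step uses $(X^{\mathbf{m}_1}_{i,k_1}\whot X^{\mathbf{m}_2}_{j,k_2})(Y^{\mathbf{m}_1}_{i,k_1}\whot Y^{\mathbf{m}_2}_{j,k_2})$ summed to $E_{\mathbf{m}_1\whot\mathbf{m}_2}$, which follows from Lemma \ref{tensorcomp}, Corollary \ref{identitymap} and the same $\sum X Y = E$ identities applied inside $\whot$.

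For the degenerate cases, if $\mathbf{m}_1=\mathbf{0}$ or $\mathbf{m}_2=\mathbf{0}$ then both $V^{(\mathbf{m}_1)}\ot V^{(\mathbf{m}_2)}$ and $V^{(\mathbf{m}_1\whot\mathbf{m}_2)}$ are the zero object (using $\mathbf{m}\whot\mathbf{s}=\mathbf{0}\iff\mathbf{m}=\mathbf{0}$ or $\mathbf{s}=\mathbf{0}$, noted before Lemma \ref{maptensor}), so $\theta(\mathbf{m}_1,\mathbf{m}_2)=0$ is trivially an isomorphism. The last assertion $\theta(\mathbf{e}_1,\mathbf{m})=\theta(\mathbf{m},\mathbf{e}_1)=E_{\mathbf m}$ follows by specializing the defining formula: with $\mathbf{m}_1=\mathbf{e}_1$ only the term $i=1$, $k_1=1$ survives, $\theta_{1j}={\rm id}_{V_j}$ by the normalization chosen in the proof of Proposition \ref{tom}, $X^{\mathbf{e}_1}_{1,1}=Y^{\mathbf{e}_1}_{1,1}=e_1$, and $X^{\mathbf{e}_1}_{1,1}\whot X^{\mathbf m}_{j,k_2}=X^{\mathbf m}_{j,k_2}$ by Corollary \ref{identitymap}(2), so the sum becomes $\sum_{j,k_2}X^{\mathbf m}_{j,k_2}Y^{\mathbf m}_{j,k_2}=E_{\mathbf m}$; the case $\mathbf{m}_2=\mathbf{e}_1$ is identical. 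The main obstacle is purely bookkeeping: keeping the block/permutation structure straight when interchanging $\whot$ with ordinary composition, but Lemma \ref{tensorcomp} is precisely designed to absorb that difficulty, so no genuinely new estimate is needed.
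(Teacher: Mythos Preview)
Your proposal is correct and follows essentially the same approach as the paper: both exhibit the explicit inverse
$\theta(\mathbf{m}_1,\mathbf{m}_2)^{-1}=\sum_{i,j,k_1,k_2}(X^{\mathbf{m}_1}_{i,k_1}\ot X^{\mathbf{m}_2}_{j,k_2})\theta_{ij}^{-1}(Y^{\mathbf{m}_1}_{i,k_1}\whot Y^{\mathbf{m}_2}_{j,k_2})$
and verify it using Corollary~\ref{identitymap}, Lemma~\ref{tensorcomp}, and the orthogonality relations for $X^{\mathbf m}_{i,k},Y^{\mathbf m}_{i,k}$; the paper's proof is simply terser, citing these ingredients without writing out the collapsing computation you carry through. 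Your treatment of the degenerate case and of $\theta(\mathbf{e}_1,\mathbf{m})=\theta(\mathbf{m},\mathbf{e}_1)=E_{\mathbf m}$ also matches the paper's (which invokes $\theta_{1i}=\theta_{i1}={\rm id}_{V_i}$ and Corollary~\ref{identitymap}).
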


\begin{proof}
	If $\mathbf{m}_1=\mathbf{0}$ or $\mathbf{m}_2=\mathbf{0}$, then $\theta(\mathbf{m}_1, \mathbf{m}_2)$
	is an isomorphism since $V^{(\mathbf{m}_1)}\ot V^{(\mathbf{m}_2)}=V^{(\mathbf{m}_1\whot\mathbf{m}_2)}=0$
	in this case. If $\mathbf{m}_1\neq\mathbf{0}$ and $\mathbf{m}_2\neq\mathbf{0}$,
	then it follows from Corollary \ref{identitymap} and Lemma \ref{tensorcomp} that $\theta(\mathbf{m}_1, \mathbf{m}_2)$
	is an isomorphism with the inverse given by
	$$\theta(\mathbf{m}_1, \mathbf{m}_2)^{-1}
	=\sum\limits_{i,j=1}^n\sum\limits_{1\leqslant k_1\leqslant m_{1i}}\sum\limits_{1\leqslant k_2\leqslant m_{2j}}
	(X^{\mathbf{m}_1}_{i,k_1}\ot X^{\mathbf{m}_2}_{j,k_2})\theta_{ij}^{-1}
	(Y^{\mathbf{m}_1}_{i,k_1}\whot Y^{\mathbf{m}_2}_{j,k_2}).$$
	The second statement follows from Corollary \ref{identitymap} and $\theta_{1i}=\theta_{i1}={\rm id}_{V_i}$ for any $i\in I$.
\end{proof}

\begin{lemma}\label{diag2}
	Let $\mathbf{m}_1=(m_{1i})_{i\in I}, \mathbf{m}_2=(m_{2i})_{i\in I},
	\mathbf{s}_1=(s_{1i})_{i\in I}, \mathbf{s}_2=(s_{2i})_{i\in I}\in {\mathbb N}^I$,
	and let $X\in M_{\mathbf{s}_1\times\mathbf{m}_1}(A(\mathcal C))$
	and $Y\in M_{\mathbf{s}_2\times\mathbf{m}_2}(A(\mathcal C))$. Then the following diagram commutes:
	$$\begin{array}{rcl}
	V^{(\mathbf{m}_1)}\ot V^{(\mathbf{m}_2)}&\xrightarrow{\theta(\mathbf{m}_1, \mathbf{m}_2)}
	&V^{(\mathbf{m}_1\whot\mathbf{m}_2)}\\
	X\ot Y\downarrow\mbox{\hspace{0.9cm}}&&\hspace{0.7cm}\downarrow X\whot Y\\
	V^{(\mathbf{s}_1)}\ot V^{(\mathbf{s}_2)}&\xrightarrow{\theta(\mathbf{s}_1, \mathbf{s}_2)}
	&V^{(\mathbf{s}_1\whot\mathbf{s}_2)}\\
	\end{array}.$$
	In particular, $\theta(\mathbf{m}_1, \mathbf{m}_2)(E_{\mathbf{m}_1}\ot E_{\mathbf{m}_2})
	=(E_{\mathbf{m}_1}\whot E_{\mathbf{m}_2})\theta(\mathbf{m}_1, \mathbf{m}_2)
	=E_{\mathbf{m}_1\whot\mathbf{m}_2}\theta(\mathbf{m}_1, \mathbf{m}_2)$.
\end{lemma}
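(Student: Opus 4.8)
The plan is to reduce the commutativity of the square to \leref{diag1} by a bilinearity argument together with the ``matrix-unit'' relations $Y^{\mathbf{m}}_{i,k}X^{\mathbf{m}}_{i',k'}=\delta_{(i,k),(i',k')}E_{\mathbf{e}_i}$ and $\sum_{i,k}X^{\mathbf{m}}_{i,k}Y^{\mathbf{m}}_{i,k}=E_{\mathbf{m}}$. First I would dispose of the degenerate cases: if $\mathbf{m}_1=\mathbf{0}$ or $\mathbf{m}_2=\mathbf{0}$ then $V^{(\mathbf{m}_1)}\ot V^{(\mathbf{m}_2)}=0$ and $\mathbf{m}_1\whot\mathbf{m}_2=\mathbf{0}$, and similarly if $\mathbf{s}_1=\mathbf{0}$ or $\mathbf{s}_2=\mathbf{0}$ then $V^{(\mathbf{s}_1)}\ot V^{(\mathbf{s}_2)}=0$ and $\mathbf{s}_1\whot\mathbf{s}_2=\mathbf{0}$; in either situation one of the four objects of the square is the zero object, so $\theta(\mathbf{m}_1,\mathbf{m}_2)$, $\theta(\mathbf{s}_1,\mathbf{s}_2)$, $X\ot Y$ and $X\whot Y$ are all forced to be zero on the relevant side and both composites vanish. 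Hence I may assume $\mathbf{m}_1,\mathbf{m}_2,\mathbf{s}_1,\mathbf{s}_2$ are all nonzero.

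Next, for fixed nonzero $\mathbf{m}_1,\mathbf{m}_2,\mathbf{s}_1,\mathbf{s}_2$, the matrix $X\whot Y=\Pi(\phi_{\mathcal C}(X\ot_{\mathbb F}Y))$ is bilinear in $(X,Y)$ because $X\ot_{\mathbb F}Y$ is bilinear and $\phi_{\mathcal C}$ and $\Pi$ are linear, while $X\ot Y$ is bilinear since $\ot$ is an $\mathbb F$-bilinear bifunctor. Thus both sides of the asserted identity $(X\whot Y)\,\theta(\mathbf{m}_1,\mathbf{m}_2)=\theta(\mathbf{s}_1,\mathbf{s}_2)\,(X\ot Y)$ are bilinear in $(X,Y)$. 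Writing $X=\sum_{i',i}\sum_{k'_1,k_1}X^{\mathbf{s}_1}_{i',k'_1}\big(Y^{\mathbf{s}_1}_{i',k'_1}XX^{\mathbf{m}_1}_{i,k_1}\big)Y^{\mathbf{m}_1}_{i,k_1}$ (using $E_{\mathbf{s}_1}XE_{\mathbf{m}_1}=X$) and the analogous expansion of $Y$, I reduce to the case of single generators $X=X^{\mathbf{s}_1}_{i',k'_1}\,x\,Y^{\mathbf{m}_1}_{i,k_1}$ and $Y=X^{\mathbf{s}_2}_{j',k'_2}\,y\,Y^{\mathbf{m}_2}_{j,k_2}$ with $x\in e_{i'}A(\mathcal C)e_i$, $y\in e_{j'}A(\mathcal C)e_j$ viewed as $1\times 1$ matrices, for which $x\whot y=\phi_{\mathcal C}(x\ot_{\mathbb F}y)$ by \leref{diag1}.

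For such generators, iterating \leref{tensorcomp} gives $X\whot Y=(X^{\mathbf{s}_1}_{i',k'_1}\whot X^{\mathbf{s}_2}_{j',k'_2})(x\whot y)(Y^{\mathbf{m}_1}_{i,k_1}\whot Y^{\mathbf{m}_2}_{j,k_2})$, and bifunctoriality of $\ot$ gives $X\ot Y=(X^{\mathbf{s}_1}_{i',k'_1}\ot X^{\mathbf{s}_2}_{j',k'_2})(x\ot y)(Y^{\mathbf{m}_1}_{i,k_1}\ot Y^{\mathbf{m}_2}_{j,k_2})$. Substituting the definition of $\theta(\mathbf{m}_1,\mathbf{m}_2)$ and applying \leref{tensorcomp} together with $Y^{\mathbf{m}_1}_{i,k_1}X^{\mathbf{m}_1}_{a,l_1}=\delta_{(i,k_1),(a,l_1)}E_{\mathbf{e}_i}$, $Y^{\mathbf{m}_2}_{j,k_2}X^{\mathbf{m}_2}_{b,l_2}=\delta_{(j,k_2),(b,l_2)}E_{\mathbf{e}_j}$ and \coref{identitymap}(1), the triple sum in $(Y^{\mathbf{m}_1}_{i,k_1}\whot Y^{\mathbf{m}_2}_{j,k_2})\,\theta(\mathbf{m}_1,\mathbf{m}_2)$ collapses to the single term $\theta_{ij}\,(Y^{\mathbf{m}_1}_{i,k_1}\ot Y^{\mathbf{m}_2}_{j,k_2})$. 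Hence the left-hand side equals $(X^{\mathbf{s}_1}_{i',k'_1}\whot X^{\mathbf{s}_2}_{j',k'_2})\,(x\whot y)\,\theta_{ij}\,(Y^{\mathbf{m}_1}_{i,k_1}\ot Y^{\mathbf{m}_2}_{j,k_2})$, which by \leref{diag1} (the relation $(x\whot y)\theta_{ij}=\theta_{i'j'}(x\ot y)$) equals $(X^{\mathbf{s}_1}_{i',k'_1}\whot X^{\mathbf{s}_2}_{j',k'_2})\,\theta_{i'j'}\,(x\ot y)\,(Y^{\mathbf{m}_1}_{i,k_1}\ot Y^{\mathbf{m}_2}_{j,k_2})$. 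Symmetrically, substituting the definition of $\theta(\mathbf{s}_1,\mathbf{s}_2)$ into the right-hand side and using bifunctoriality with $Y^{\mathbf{s}_1}_{a',l'_1}X^{\mathbf{s}_1}_{i',k'_1}=\delta_{(a',l'_1),(i',k'_1)}E_{\mathbf{e}_{i'}}$, $Y^{\mathbf{s}_2}_{b',l'_2}X^{\mathbf{s}_2}_{j',k'_2}=\delta_{(b',l'_2),(j',k'_2)}E_{\mathbf{e}_{j'}}$ (so that $e_{i'}\ot e_{j'}=\mathrm{id}_{V_{i'}\ot V_{j'}}$), the double sum collapses to exactly the same expression. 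This yields commutativity of the square; the ``In particular'' statement follows by taking $\mathbf{s}_i=\mathbf{m}_i$, $X=E_{\mathbf{m}_1}$, $Y=E_{\mathbf{m}_2}$, noting $E_{\mathbf{m}_1}\ot E_{\mathbf{m}_2}=\mathrm{id}_{V^{(\mathbf{m}_1)}\ot V^{(\mathbf{m}_2)}}$ and $E_{\mathbf{m}_1}\whot E_{\mathbf{m}_2}=E_{\mathbf{m}_1\whot\mathbf{m}_2}$ by \coref{identitymap}(1).

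I expect the principal difficulty to be bookkeeping rather than conceptual: keeping the identifications of $(\mathbf{m},\mathbf{s})$-type matrices with morphisms consistent through the iterated uses of \leref{tensorcomp}, checking that the block sizes line up so that \leref{tensorcomp} and bifunctoriality genuinely apply, and confirming that the collapses of the two sums are controlled precisely by the orthogonality relations. Once those are in place, the only substantive input is \leref{diag1}.
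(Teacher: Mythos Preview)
Your proposal is correct and follows essentially the same approach as the paper: both arguments use the matrix-unit relations for $X^{\mathbf m}_{i,k},Y^{\mathbf m}_{i,k}$ together with \leref{tensorcomp} to reduce to the rank-one case and then invoke \leref{diag1}. The only cosmetic difference is that you first reduce to generators $X=X^{\mathbf{s}_1}_{i',k'_1}xY^{\mathbf{m}_1}_{i,k_1}$, $Y=X^{\mathbf{s}_2}_{j',k'_2}yY^{\mathbf{m}_2}_{j,k_2}$ via bilinearity, whereas the paper keeps $X,Y$ general and inserts the decompositions $E_{\mathbf{s}_1}=\sum X^{\mathbf{s}_1}_{i',k'_1}Y^{\mathbf{s}_1}_{i',k'_1}$ (and likewise for $\mathbf{s}_2$) inline during the computation of $(X\whot Y)\theta(\mathbf{m}_1,\mathbf{m}_2)$.
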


\begin{proof}
	It is enough to show the lemma for $\mathbf{m}_i\neq \mathbf{0}$ and $\mathbf{s}_i\neq \mathbf{0}$,
	$i=1, 2$. In this case, by Lemmas \ref{tensorcomp} and \ref{diag1}, we have
	$$\begin{array}{rl}
	&(X\whot Y)\theta(\mathbf{m}_1, \mathbf{m}_2)\\
	=&\sum\limits_{i,j=1}^n\sum\limits_{1\leqslant k_1\leqslant m_{1i}}\sum\limits_{1\leqslant k_2\leqslant m_{2j}}
	(X\whot Y)(X^{\mathbf{m}_1}_{i,k_1}\whot X^{\mathbf{m}_2}_{j,k_2})\theta_{ij}
	(Y^{\mathbf{m}_1}_{i,k_1}\ot Y^{\mathbf{m}_2}_{j,k_2})\\
	=&\sum\limits_{i,j, i', j'=1}^n\sum\limits_{1\leqslant k_1\leqslant m_{1i}}\sum\limits_{1\leqslant k_2\leqslant m_{2j}}
	\sum\limits_{1\leqslant k'_1\leqslant s_{1i'}}\sum\limits_{1\leqslant k'_2\leqslant s_{2j'}}
	(X^{\mathbf{s}_1}_{i',k'_1}\whot X^{\mathbf{s}_2}_{j',k'_2})\\
	&(Y^{\mathbf{s}_1}_{i',k'_1}XX^{\mathbf{m}_1}_{i,k_1}\whot Y^{\mathbf{s}_2}_{j',k'_2}YX^{\mathbf{m}_2}_{j,k_2})\theta_{ij}
	(Y^{\mathbf{m}_1}_{i,k_1}\ot Y^{\mathbf{m}_2}_{j,k_2})\\
	=&\sum\limits_{i,j, i', j'=1}^n\sum\limits_{1\leqslant k_1\leqslant m_{1i}}\sum\limits_{1\leqslant k_2\leqslant m_{2j}}
	\sum\limits_{1\leqslant k'_1\leqslant s_{1i'}}\sum\limits_{1\leqslant k'_2\leqslant s_{2j'}}
	(X^{\mathbf{s}_1}_{i',k'_1}\whot X^{\mathbf{s}_2}_{j',k'_2})\\
	&\theta_{i'j'}(Y^{\mathbf{s}_1}_{i',k'_1}XX^{\mathbf{m}_1}_{i,k_1}\ot Y^{\mathbf{s}_2}_{j',k'_2}YX^{\mathbf{m}_2}_{j,k_2})
	(Y^{\mathbf{m}_1}_{i,k_1}\ot Y^{\mathbf{m}_2}_{j,k_2})\\
	=&\sum\limits_{i', j'=1}^n\sum\limits_{1\leqslant k'_1\leqslant s_{1i'}}\sum\limits_{1\leqslant k'_2\leqslant s_{2j'}}
	(X^{\mathbf{s}_1}_{i',k'_1}\whot X^{\mathbf{s}_2}_{j',k'_2})\theta_{i'j'}(Y^{\mathbf{s}_1}_{i',k'_1}\ot Y^{\mathbf{s}_2}_{j',k'_2})(X\ot Y)\\
	=&\theta(\mathbf{s}_1, \mathbf{s}_2)(X\ot Y).\\
	\end{array}$$
\end{proof}

For $i,j,l\in I$,  we define a matrix
$a_{i,j,l}\in M_{\mathbf{e}_i\whot\mathbf{e}_j\whot\mathbf{e}_l}(A(\mathcal C))={\rm End}_{\mathcal C}(V^{(\mathbf{e}_i\whot\mathbf{e}_j\whot\mathbf{e}_l)})$ by
$$\begin{array}{rl}
a_{i,j,l}:=&\theta(\mathbf{e}_i, \mathbf{e}_j\whot\mathbf{e}_l)
(E_{\mathbf{e}_i}\ot\theta(\mathbf{e}_j, \mathbf{e}_l))
(\theta(\mathbf{e}_i, \mathbf{e}_j)^{-1}\ot E_{\mathbf{e}_l})
\theta(\mathbf{e}_i\whot\mathbf{e}_j, \mathbf{e}_l)^{-1}\\
=&\theta(\mathbf{e}_i, \mathbf{c}_{jl})
(e_i\ot\theta_{jl})(\theta_{ij}^{-1}\ot e_l)
\theta(\mathbf{c}_{ij}, \mathbf{e}_l)^{-1}
\in{\rm End}_{\mathcal C}(V^{(\mathbf{e}_i\whot\mathbf{e}_j\whot\mathbf{e}_l)}).\\
\end{array}$$
The following proposition holds.

\begin{proposition}\label{Ass}
	$(1)$ $a_{i,j,l}$ is invertible in $M_{\mathbf{e}_i\whot\mathbf{e}_j\whot\mathbf{e}_l}(A(\mathcal C))$
	for all $i,j,l\in I$;\\
	$(2)$ $(x\whot(y\whot z))a_{i, j, l}
	=a_{i', j', l'}((x\whot y)\whot  z)$ for all $i,j,l,i',j',l'\in I$,
	$x\in e_{i'}A(\mathcal C)e_{i}$, $y\in e_{j'}A(\mathcal C)e_{j}$
	and $z\in e_{l'}A(\mathcal C)e_{l}$;\\
	$(3)$ $a_{i, 1, j}=E_{\mathbf{c}_{ij}}$
	for any $i, j\in I$.\\
	$(4)$\ \  $\sum\limits_{j=1}^n\sum\limits_{1\leqslant k\leqslant c_{i_2i_3j}}
	(e_{i_1}\whot  a_{i_2, i_3, i_4}(X^{\mathbf{c}_{i_2i_3}}_{j,k}\whot  e_{k_4}))
	a_{i_1,j,i_4}((e_{i_1}\whot  Y^{\mathbf{c}_{i_2i_3}}_{j,k})a_{i_1,i_2,i_3}\whot  e_{i_4})$\\
	\mbox{\hspace{0.6cm}}$=\sum\limits_{j,j'=1}^n\sum\limits_{1\leqslant k\leqslant c_{i_3i_4j}}
	\sum\limits_{1\leqslant k'\leqslant c_{i_1i_2j'}}
	(e_{i_1}\whot (e_{i_2}\whot  X^{\mathbf{c}_{i_3i_4}}_{j,k}))a_{i_1, i_2, j}
	(X^{\mathbf{c}_{i_1i_2}}_{j',k'}\whot  Y^{\mathbf{c}_{i_3i_4}}_{j,k})$\\
	\mbox{\hspace{5.2cm}}$\cdot a_{j',i_3,i_4}
	((Y^{\mathbf{c}_{i_1i_2}}_{j',k'}\whot  e_{i_3})\whot  e_{i_4})$\\
	for all $i_1, i_2, i_3, i_4\in I$.
\end{proposition}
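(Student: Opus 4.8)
The four assertions of Proposition~\ref{Ass} all express that the matrices $a_{i,j,l}$, built out of the isomorphisms $\theta(\mathbf{m}_1,\mathbf{m}_2)$, behave like the components of the associativity constraint of $\mathcal C$ read through the skeletonization $\widehat{\Cal}$. The guiding principle throughout is to reduce every identity to a statement about genuine morphisms in $\mathcal C$ via the identification ${\rm Hom}_{\mathcal C}(V^{(\mathbf m)},V^{(\mathbf s)})=M_{\mathbf s\times\mathbf m}(A(\mathcal C))$, and then invoke the pentagon/hexagon-free fact that $\mathcal C$ is \emph{strict} so that the only associativity morphisms involved are the $\theta$'s themselves, together with the compatibility Lemma~\ref{diag2} and the multiplicativity Lemma~\ref{tensorcomp}.

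For part~(1), I would observe that $a_{i,j,l}$ is a composite of four morphisms each of which is invertible: $\theta(\mathbf{e}_i,\mathbf{c}_{jl})$ and $\theta(\mathbf{c}_{ij},\mathbf{e}_l)^{-1}$ are isomorphisms by Lemma~\ref{invert}, while $e_i\ot\theta_{jl}$ and $\theta_{ij}^{-1}\ot e_l$ are tensor products of isomorphisms in $\mathcal C$, hence isomorphisms. Since an isomorphism in $\mathcal C$ between $V^{(\mathbf m)}$ and $V^{(\mathbf m)}$ corresponds under the identification to an invertible matrix in $M_{\mathbf m}(A(\mathcal C))$ (use Corollary~\ref{KrullSch2} together with the fact that a two-sided inverse morphism gives a two-sided inverse matrix), $a_{i,j,l}$ is invertible in $M_{\mathbf{e}_i\whot\mathbf{e}_j\whot\mathbf{e}_l}(A(\mathcal C))$. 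For part~(3), I would substitute $\mathbf{e}_1$ for the middle index: since $\theta_{1j}=\theta_{j1}={\rm id}$ and $\theta(\mathbf{e}_1,\mathbf{m})=\theta(\mathbf{m},\mathbf{e}_1)=E_{\mathbf m}$ by Lemma~\ref{invert}, and since $\mathbf{c}_{1j}=\mathbf{e}_j$, all four factors in the definition of $a_{i,1,j}$ collapse to identity matrices, leaving $E_{\mathbf{c}_{ij}}$. Part~(2) is the naturality of $a$: writing out $(x\whot(y\whot z))a_{i,j,l}$ and $a_{i',j',l'}((x\whot y)\whot z)$ using the definition of $a$ and pushing $x,y,z$ through the $\theta$'s by repeated application of Lemma~\ref{diag2} (naturality of $\theta(\mathbf{m}_1,\mathbf{m}_2)$ with respect to $X\ot Y$ versus $X\whot Y$) and Lemma~\ref{diag1}, both sides reduce to the same composite $\theta(\mathbf{e}_{i'},\mathbf{c}_{j'l'})\,(x\ot(y\ot z))\,(\cdots)$; the key point is that in the strict category $\mathcal C$ the morphism $x\ot(y\ot z)$ literally equals $(x\ot y)\ot z$, so the two computations meet.

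Part~(4) is the substantial one: it is the pentagon identity for the family $\{a_{i,j,l}\}$, expanded into matrix form. The plan is to start from the genuine pentagon in $\mathcal C$ — which, because $\mathcal C$ is strict, is the trivial identity among the various $\theta(\mathbf{m}_1,\mathbf{m}_2)$-type morphisms applied to $V_{i_1}\ot V_{i_2}\ot V_{i_3}\ot V_{i_4}$ and its reassociations — and then insert resolutions of the identity of the form $\sum_{j,k} X^{\mathbf{c}_{jl}}_{j,k}\circ Y^{\mathbf{c}_{jl}}_{j,k}=E_{\mathbf{c}_{jl}}$ at the appropriate intermediate objects $\oplus_k c_{i_2i_3k}V_k$ and $\oplus_k c_{i_3i_4k}V_k$ and $\oplus_k c_{i_1i_2k}V_k$. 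Concretely, I would express each of the five $\theta$-morphisms appearing in the two bracketings of $V^{(\mathbf{e}_{i_1})}\ot V^{(\mathbf{e}_{i_2})}\ot V^{(\mathbf{e}_{i_3})}\ot V^{(\mathbf{e}_{i_4})}$ in terms of the $a_{i,j,l}$'s using the defining relation, translate the composites using Lemma~\ref{tensorcomp} to move $\whot$ past $\circ$, and then match the two sides term by term. The main obstacle will be purely bookkeeping: keeping track of which $X^{\mathbf c}_{j,k}$, $Y^{\mathbf c}_{j,k}$ insertions land on which tensor factor, and verifying that the index ranges on the two sides of~(4) correspond exactly after the reassociation $\mathbf{e}_{i_1}\whot(\mathbf{e}_{i_2}\whot(\mathbf{e}_{i_3}\whot\mathbf{e}_{i_4}))=((\mathbf{e}_{i_1}\whot\mathbf{e}_{i_2})\whot\mathbf{e}_{i_3})\whot\mathbf{e}_{i_4}$, which rests on the associativity of $\whot$ already noted to follow from Eq.~$(*)$. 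No new categorical input is needed beyond strictness of $\mathcal C$ and the lemmas already proved; the content is that the pentagon in $\mathcal C$, rewritten through the chosen isomorphisms $\theta_{ij}$, becomes exactly relation~(4).
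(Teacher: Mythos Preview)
Your proposal is correct and follows essentially the same approach as the paper: parts~(1) and~(3) come directly from Lemma~\ref{invert} and the choice $\theta_{1i}=\theta_{i1}={\rm id}$, part~(2) is exactly the naturality computation via Lemmas~\ref{diag1}--\ref{diag2} plus strictness of $\mathcal C$, and part~(4) is obtained in the paper precisely by expanding both sides using the definition of $a_{i,j,l}$, applying Lemma~\ref{diag2}, collapsing the insertions $\sum_{j,k}X^{\mathbf c}_{j,k}Y^{\mathbf c}_{j,k}=E$, and checking that both sides reduce to the same composite of $\theta$-maps (using $\mathbf{c}_{i_1i_2}\whot\mathbf{e}_{i_3}=\mathbf{e}_{i_1}\whot\mathbf{c}_{i_2i_3}$), which is exactly the ``trivial pentagon'' in the strict category $\mathcal C$ that you describe.
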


\begin{proof}
	(1) Follows from Lemma \ref{invert}.
	
	(2) Since $\mathcal C$ is strict, $(x\ot y)\ot z=x\ot(y\ot z)$.
	Hence by Lemmas \ref{diag1} and \ref{diag2}, we have
	$$\begin{array}{rl}
	(x\whot(y\whot z))a_{i,j,l}
	=&(x\whot(y\whot z))\theta(\mathbf{e}_i, \mathbf{c}_{jl})
	(e_i\ot\theta_{jl}))(\theta_{ij})^{-1}\ot e_l)
	\theta(\mathbf{c}_{ij}, \mathbf{e}_l)^{-1}\\
	=&\theta(\mathbf{e}_{i'}, \mathbf{c}_{j'l'})(x\ot(y\whot z))
	(e_i\ot\theta_{jl})(\theta_{ij}^{-1}\ot e_l)
	\theta(\mathbf{c}_{ij}, \mathbf{e}_l)^{-1}\\
	=&\theta(\mathbf{e}_{i'}, \mathbf{c}_{j'l'})
	(e_{i'}\ot\theta_{j'l'})(x\ot y\ot z)
	(\theta_{ij}^{-1}\ot e_l)\theta(\mathbf{c}_{ij}, \mathbf{e}_l)^{-1}\\
	=&\theta(\mathbf{e}_{i'}, \mathbf{c}_{j'l'})
	(e_{i'}\ot\theta_{j'l'})(\theta_{i'j'}^{-1}\ot e_{l'})((x\whot y)\ot z)
	\theta(\mathbf{c}_{ij}, \mathbf{e}_l)^{-1}\\
	=&\theta(\mathbf{e}_{i'}, \mathbf{c}_{j'l'})
	(e_{i'}\ot\theta_{j'l'})(\theta_{i'j'}^{-1}\ot e_{l'})
	\theta(\mathbf{c}_{i'j'}, \mathbf{e}_{l'})^{-1}((x\whot y)\whot z)\\
	=&a_{i',j',l'}((x\whot y)\whot z).\\
	\end{array}$$
		
	(3) Since $\theta_{1i}=\theta_{i1}={\rm id}_{V_i}=e_i$ for all $i\in I$,
	$a_{i,1,j}=\theta_{ij}\theta_{ij}^{-1}={\rm id}_{\oplus_{k=1}^nc_{ijk}V_k}=E_{\mathbf{c}_{ij}}$
	for all $i, j\in I$.
	
	(4) By Lemma \ref{diag2}, we have
	$$\begin{array}{rl}
	&\sum\limits_{j=1}^n\sum\limits_{1\leqslant k\leqslant c_{i_2i_3j}}
	(e_{i_1}\widehat{\ot} a_{i_2, i_3, i_4}(X^{\mathbf{c}_{i_2i_3}}_{j,k}\widehat{\ot} e_{k_4}))
	a_{i_1,j,i_4}((e_{i_1}\widehat{\ot} Y^{\mathbf{c}_{i_2i_3}}_{j,k})a_{i_1,i_2,i_3}\widehat{\ot} e_{i_4})\\
	=&\sum\limits_{j=1}^n\sum\limits_{1\leqslant k\leqslant c_{i_2i_3j}}
	(e_{i_1}\widehat{\ot} a_{i_2, i_3, i_4})
	(e_{i_1}\widehat{\ot}(X^{\mathbf{c}_{i_2i_3}}_{j,k}\widehat{\ot} e_{k_4}))
	\theta(\mathbf{e}_{i_1}, \mathbf{c}_{ji_4})(e_{i_1}\ot\theta_{j,i_4})\\
	&(\theta_{i_1, j}^{-1}\ot e_{i_4})
	\theta(\mathbf{c}_{i_1j}, \mathbf{e}_{i_4})^{-1}
	((e_{i_1}\widehat{\ot} Y^{\mathbf{c}_{i_2i_3}}_{j,k})\widehat{\ot} e_{i_4})
	(a_{i_1,i_2,i_3}\widehat{\ot} e_{i_4})\\
	=&\sum\limits_{j=1}^n\sum\limits_{1\leqslant k\leqslant c_{i_2i_3j}}
	(e_{i_1}\widehat{\ot} a_{i_2, i_3, i_4})\theta(\mathbf{e}_{i_1}, \mathbf{c}_{i_2i_3}\whot\mathbf{e}_{i_4})
	(e_{i_1}\ot(X^{\mathbf{c}_{i_2i_3}}_{j,k}\widehat{\ot} e_{k_4}))
	(e_{i_1}\ot\theta_{j,i_4})\\
	&(\theta_{i_1, j}^{-1}\ot e_{i_4})
	((e_{i_1}\widehat{\ot} Y^{\mathbf{c}_{i_2i_3}}_{j,k})\ot e_{i_4})
	\theta(\mathbf{e}_{i_1}\whot\mathbf{c}_{i_2i_3}, \mathbf{e}_{i_4})^{-1}
	(a_{i_1,i_2,i_3}\widehat{\ot} e_{i_4})\\
	=&\sum\limits_{j=1}^n\sum\limits_{1\leqslant k\leqslant c_{i_2i_3j}}
	\theta(\mathbf{e}_{i_1}, \mathbf{c}_{i_2i_3}\widehat{\ot}\mathbf{e}_{i_4})(e_{i_1}\ot a_{i_2, i_3, i_4})
	(e_{i_1}\ot\theta(\mathbf{c}_{i_2i_3},\mathbf{e}_{i_4}))\\
	&(e_{i_1}\ot(X^{\mathbf{c}_{i_2i_3}}_{j,k}\ot e_{k_4}))
	((e_{i_1}\ot Y^{\mathbf{c}_{i_2i_3}}_{j,k})\ot e_{i_4})\\
	&(\theta(\mathbf{e}_{i_1},\mathbf{c}_{i_2i_3})^{-1}\ot e_{i_4})
	(a_{i_1,i_2,i_3}\ot e_{i_4})
	\theta(\mathbf{e}_{i_1}\whot\mathbf{c}_{i_2i_3}, \mathbf{e}_{i_4})^{-1}\\
	=&\theta(\mathbf{e}_{i_1}, \mathbf{c}_{i_2i_3}\widehat{\ot}\mathbf{e}_{i_4})(e_{i_1}\ot a_{i_2, i_3, i_4})
	(e_{i_1}\ot\theta(\mathbf{c}_{i_2i_3},\mathbf{e}_{i_4}))\\
	&(\theta(\mathbf{e}_{i_1},\mathbf{c}_{i_2i_3})^{-1}\ot e_{i_4})
	(a_{i_1,i_2,i_3}\ot e_{i_4})
	\theta(\mathbf{e}_{i_1}\whot\mathbf{c}_{i_2i_3}, \mathbf{e}_{i_4})^{-1}\\
	=&\theta(\mathbf{e}_{i_1}, \mathbf{c}_{i_2i_3}\whot\mathbf{e}_{i_4})
	(e_{i_1}\ot\theta(\mathbf{e}_{i_2}, \mathbf{c}_{i_3i_4}))(e_{i_1}\ot e_{i_2}\ot\theta_{i_3,i_4})\\
	&(\theta_{i_1, i_2}^{-1}\ot e_{i_3}\ot e_{i_4})
	(\theta(\mathbf{c}_{i_1i_2}, \mathbf{e}_{i_3})^{-1}\ot e_{i_4})
	\theta(\mathbf{e}_{i_1}\whot\mathbf{c}_{i_2i_3}, \mathbf{e}_{i_4})^{-1}\\
	\end{array}$$
	and
	$$\begin{array}{rl}
	&\sum\limits_{j,j'=1}^n\sum\limits_{1\leqslant k\leqslant c_{i_3i_4j}}
	\sum\limits_{1\leqslant k'\leqslant c_{i_1i_2j'}}
	(e_{i_1}\whot (e_{i_2}\whot  X^{\mathbf{c}_{i_3i_4}}_{j,k}))a_{i_1, i_2, j}
	(X^{\mathbf{c}_{i_1i_2}}_{j',k'}\whot  Y^{\mathbf{c}_{i_3i_4}}_{j,k})\\
	&\hspace{4cm}a_{j',i_3,i_4}((Y^{\mathbf{c}_{i_1i_2}}_{j',k'}\whot  e_{i_3})\whot  e_{i_4})\\
	=&\sum\limits_{j,j'=1}^n\sum\limits_{1\leqslant k\leqslant c_{i_3i_4j}}
	\sum\limits_{1\leqslant k'\leqslant c_{i_1i_2j'}}
	(e_{i_1}\whot (e_{i_2}\whot  X^{\mathbf{c}_{i_3i_4}}_{j,k}))
	\theta(\mathbf{e}_{i_1}, \mathbf{c}_{i_2j})
	(e_{i_1}\ot\theta_{i_2,j})\\
	&\hspace{1cm}(\theta_{i_1, i_2}^{-1}\ot e_{j})
	\theta(\mathbf{c}_{i_1i_2}, \mathbf{e}_{j})^{-1}
	(X^{\mathbf{c}_{i_1i_2}}_{j',k'}\whot  Y^{\mathbf{c}_{i_3i_4}}_{j,k})
	\theta(\mathbf{e}_{j'}, \mathbf{c}_{i_3i_4})
	(e_{j'}\ot\theta_{i_3i_4})\\
	&\hspace{1cm}(\theta_{j'i_3}^{-1}\ot e_{i_4})
	\theta(\mathbf{c}_{j'i_3}, \mathbf{e}_{i_4})^{-1}
	((Y^{\mathbf{c}_{i_1i_2}}_{j',k'}\whot  e_{i_3})\whot  e_{i_4})\\
	=&\sum\limits_{j,j'=1}^n\sum\limits_{1\leqslant k\leqslant c_{i_3i_4j}}
	\sum\limits_{1\leqslant k'\leqslant c_{i_1i_2j'}}
	\theta(\mathbf{e}_{i_1}, \mathbf{e}_{i_2}\whot\mathbf{c}_{i_3i_4})
	(e_{i_1}\ot(e_{i_2}\whot  X^{\mathbf{c}_{i_3i_4}}_{j,k}))
	(e_{i_1}\ot\theta_{i_2,j})\\
	&\hspace{1cm}(\theta_{i_1, i_2}^{-1}\ot e_{j})
	(X^{\mathbf{c}_{i_1i_2}}_{j',k'}\ot Y^{\mathbf{c}_{i_3i_4}}_{j,k})
	(e_{j'}\ot\theta_{i_3i_4})\\
	&\hspace{1cm}(\theta_{j'i_3}^{-1}\ot e_{i_4})
	((Y^{\mathbf{c}_{i_1i_2}}_{j',k'}\whot  e_{i_3})\ot e_{i_4})
	\theta(\mathbf{c}_{i_1i_2}\whot\mathbf{e}_{i_3}, \mathbf{e}_{i_4})^{-1}\\
	=&\sum\limits_{j,j'=1}^n\sum\limits_{1\leqslant k\leqslant c_{i_3i_4j}}
	\sum\limits_{1\leqslant k'\leqslant c_{i_1i_2j'}}
	\theta(\mathbf{e}_{i_1}, \mathbf{e}_{i_2}\whot\mathbf{c}_{i_3i_4})
	(e_{i_1}\ot\theta(\mathbf{e}_{i_2}, \mathbf{c}_{i_3i_4}))
	(\theta_{i_1, i_2}^{-1}\ot X^{\mathbf{c}_{i_3i_4}}_{j,k})\\
	&(X^{\mathbf{c}_{i_1i_2}}_{j',k'}\ot Y^{\mathbf{c}_{i_3i_4}}_{j,k})
	(Y^{\mathbf{c}_{i_1i_2}}_{j',k'}\ot \theta_{i_3,i_4})
	(\theta(\mathbf{c}_{i_1i_2}, \mathbf{e}_{i_3})^{-1}\ot e_{i_4})
	\theta(\mathbf{c}_{i_1i_2}\whot\mathbf{e}_{i_3}, \mathbf{e}_{i_4})^{-1}\\
	=&\theta(\mathbf{e}_{i_1}, \mathbf{e}_{i_2}\whot\mathbf{c}_{i_3i_4})
	(e_{i_1}\ot\theta(\mathbf{e}_{i_2}, \mathbf{c}_{i_3i_4}))(\theta_{i_1, i_2}^{-1}\ot\theta_{i_3,i_4})\\
	&(\theta(\mathbf{c}_{i_1i_2}, \mathbf{e}_{i_3})^{-1}\ot e_{i_4})
	\theta(\mathbf{c}_{i_1i_2}\whot\mathbf{e}_{i_3}, \mathbf{e}_{i_4})^{-1}\\
	=&\theta(\mathbf{e}_{i_1}, \mathbf{e}_{i_2}\whot\mathbf{c}_{i_3i_4})
	(e_{i_1}\ot\theta(\mathbf{e}_{i_2}, \mathbf{c}_{i_3i_4}))(e_{i_1}\ot e_{i_2}\ot\theta_{i_3,i_4})\\
	&(\theta_{i_1, i_2}^{-1}\ot e_{i_3}\ot e_{i_4})
	(\theta(\mathbf{c}_{i_1i_2}, \mathbf{e}_{i_3})^{-1}\ot e_{i_4})
	\theta(\mathbf{c}_{i_1i_2}\whot\mathbf{e}_{i_3}, \mathbf{e}_{i_4})^{-1}.\\
	\end{array}$$
	This completes the proof since $\mathbf{c}_{i_1i_2}\whot\mathbf{e}_{i_3}=\mathbf{e}_{i_1}\whot\mathbf{c}_{i_2i_3}$
	and $\mathbf{c}_{i_2i_3}\ot\mathbf{e}_{i_4}=\mathbf{e}_{i_2}\ot\mathbf{c}_{i_3i_4}$.
\end{proof}

\section{\bf Reconstruction of tensor categories}\selabel{2}

Throughout this section, assume that $R$ is a $\mathbb{Z}_+$-ring with a finite unital
$\mathbb{Z}_+$-basis $\{r_i\}_{i\in I}$ such that $r_ir_j\neq 0$ for all $i, j\in I$,
and $A$ is a finite dimensional $\mathbb{F}$-algebra
with a complete set of orthogonal primitive idempotents $\{e_i\}_{i\in I}$.
Assume $I=\{1, 2, \cdots, n\}$ and $r_1=1$, the identity of $R$.
Furthermore,  we assume that $A$ satisfies the following conditions:

(KS) For any $i\neq j$ in $I$, $e_iAe_jAe_i\subseteq{\rm rad}(e_iAe_i)$, the radical of the algebra $e_iAe_i$.

(Dec) Every $(\mathbf{m}, \mathbf{s})$-type matrix $X$ over $A$ can be decomposed into
a produpct $X=X_1X_2$ of a column-independent $(\mathbf{m}, \mathbf{t})$-type matrix  $X_1$
and a row-independent $(\mathbf{t}, \mathbf{s})$-type matrix $X_2$ over $A$.

(RUA) Every $(\mathbf{m}, \mathbf{s})$-type matrix over $A$ has a right universal annihilator.

(LUA) Every $(\mathbf{m}, \mathbf{s})$-type matrix over $A$ has a left universal annihilator.

(CI) If $X$ is column-independent and $Y$ is a left universal annihilator
of $X$, then $X$ is a right universal annihilator of $Y$.

(RI) If $X$ is row-independent and $Y$ is a right universal annihilator
of $X$, then $X$ is a left universal annihilator of $Y$.

Now we construct a category $\widehat{\mathcal C}$ as follows:\\
(1) Ob$(\widehat{\mathcal C}):={\mathbb N}^I$;\\
(2) ${\rm Mor}(\widehat{\mathcal C})$: for ${\mathbf m}=(m_i)_{1\leqslant i\leqslant n}$
and ${\mathbf s}=(s_i)_{1\leqslant i\leqslant n}$ in Ob$(\widehat{\mathcal C})={\mathbb N}^I$, define
$${\rm Hom}_{\widehat{\mathcal C}}({\mathbf m}, {\mathbf s}):=M_{{\mathbf s}\times{\mathbf m}}(A);$$
(3) Composition: for ${\mathbf s}=(s_i)_{1\leqslant i\leqslant n}$,
${\mathbf m}=(m_i)_{1\leqslant i\leqslant n}$, ${\mathbf t}=(t_i)_{1\leqslant i\leqslant n}\in{\mathbb N}^I$,
define the composition
$${\rm Hom}_{\widehat{\mathcal C}}({\mathbf m}, {\mathbf s})\times {\rm Hom}_{\widehat{\mathcal C}}({\mathbf t}, {\mathbf m})
\ra {\rm Hom}_{\widehat{\mathcal C}}({\mathbf t}, {\mathbf s}),\
(X, Y)\mapsto X\circ Y$$
by $X\circ Y=XY\in M_{{\mathbf s}\times{\mathbf t}}(A)={\rm Hom}_{\widehat{\mathcal C}}({\mathbf t}, {\mathbf s})$, the usual product of matrices.

\begin{lemma}\label{directsum}
Let $\mathbf{m}$, $\mathbf{s}\in{\rm Ob}(\widehat{\mathcal C})$. Then $\mathbf{m}+\mathbf{s}$
is a direct sum object of $\mathbf{m}$ and $\mathbf{s}$ in $\widehat{\mathcal C}$.
\end{lemma}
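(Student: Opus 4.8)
The plan is to realise $\mathbf{m}+\mathbf{s}$ as a \emph{biproduct} of $\mathbf{m}$ and $\mathbf{s}$ in $\widehat{\mathcal C}$ by writing down explicit injection and projection morphisms and verifying the standard biproduct equations. First I would record that $\widehat{\mathcal C}$ is an $\mathbb{F}$-linear category: each ${\rm Hom}_{\widehat{\mathcal C}}(\mathbf{t},\mathbf{s})=M_{\mathbf{s}\times\mathbf{t}}(A)$ is an $\mathbb{F}$-space and composition is matrix multiplication, hence $\mathbb{F}$-bilinear; moreover $\mathbf{0}\in{\rm Ob}(\widehat{\mathcal C})$ is a zero object since $M_{\mathbf{0}\times\mathbf{t}}(A)=M_{\mathbf{t}\times\mathbf{0}}(A)=0$. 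In an $\mathbb{F}$-linear category it suffices to produce morphisms $\iota_1\colon\mathbf{m}\to\mathbf{m}+\mathbf{s}$, $\iota_2\colon\mathbf{s}\to\mathbf{m}+\mathbf{s}$, $p_1\colon\mathbf{m}+\mathbf{s}\to\mathbf{m}$, $p_2\colon\mathbf{m}+\mathbf{s}\to\mathbf{s}$ with $p_1\iota_1=E_{\mathbf{m}}$, $p_2\iota_2=E_{\mathbf{s}}$, $p_1\iota_2=0$, $p_2\iota_1=0$ and $\iota_1 p_1+\iota_2 p_2=E_{\mathbf{m}+\mathbf{s}}$; the universal properties of both the product and the coproduct of $\mathbf{m}$ and $\mathbf{s}$ then follow by the usual formal diagram chase for biproducts. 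Note that none of the hypotheses (KS), (Dec), (RUA), (LUA), (CI), (RI) on $A$ is used for this lemma.

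Next I would exhibit the four morphisms using the concatenation sums $\underline{\oplus}$ and $\overline{\oplus}$ of Section~1. Assume $\mathbf{m},\mathbf{s}\neq\mathbf{0}$, the remaining cases being immediate since $\mathbf{0}$ is a zero object. Set $\iota_1:=E_{\mathbf{m}}\,\underline{\oplus}\,0$ and $\iota_2:=0\,\underline{\oplus}\,E_{\mathbf{s}}$, where in each case $0$ denotes the zero matrix of the appropriate type; then $\iota_1\in M_{(\mathbf{m}+\mathbf{s})\times\mathbf{m}}(A)={\rm Hom}_{\widehat{\mathcal C}}(\mathbf{m},\mathbf{m}+\mathbf{s})$ and $\iota_2\in M_{(\mathbf{m}+\mathbf{s})\times\mathbf{s}}(A)={\rm Hom}_{\widehat{\mathcal C}}(\mathbf{s},\mathbf{m}+\mathbf{s})$. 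Dually set $p_1:=E_{\mathbf{m}}\,\overline{\oplus}\,0$ and $p_2:=0\,\overline{\oplus}\,E_{\mathbf{s}}$, so that $p_1\in M_{\mathbf{m}\times(\mathbf{m}+\mathbf{s})}(A)={\rm Hom}_{\widehat{\mathcal C}}(\mathbf{m}+\mathbf{s},\mathbf{m})$ and $p_2\in M_{\mathbf{s}\times(\mathbf{m}+\mathbf{s})}(A)={\rm Hom}_{\widehat{\mathcal C}}(\mathbf{m}+\mathbf{s},\mathbf{s})$. Concretely $\iota_1$ is the block-diagonal matrix whose $i$-th diagonal block is the $(m_i+s_i)\times m_i$ matrix obtained by stacking $I(i,m_i)$ on top of a zero block, and the other three morphisms are described analogously; I would check that all entries lie in the appropriate spaces $e_iAe_j$ and that the block sizes match, so that these are indeed morphisms of $\widehat{\mathcal C}$.

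Finally I would verify the five identities. Each of the relevant composites $p_a\iota_b$ and $\iota_a p_a$ is block-diagonal, so each identity reduces, for every $i\in I$, to the computation of its $i$-th diagonal block: $p_1\iota_1$ has $i$-th block $I(i,m_i)$ and $p_2\iota_2$ has $i$-th block $I(i,s_i)$, whence $p_1\iota_1=E_{\mathbf{m}}$ and $p_2\iota_2=E_{\mathbf{s}}$; the mixed composites $p_1\iota_2$ and $p_2\iota_1$ vanish blockwise; and the $i$-th diagonal block of $\iota_1 p_1+\iota_2 p_2$ is the sum of the ``upper-left'' idempotent block $I(i,m_i)$ and the ``lower-right'' idempotent block $I(i,s_i)$, which equals $I(i,m_i+s_i)$, the $i$-th diagonal block of $E_{\mathbf{m}+\mathbf{s}}$. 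This completes the argument. I expect no serious obstacle; the only point requiring care is the routine bookkeeping that identifies an $(\mathbf{m}+\mathbf{s})$-type matrix with a compatible pair consisting of an $\mathbf{m}$-part and an $\mathbf{s}$-part --- exactly what the operations $\underline{\oplus}$, $\overline{\oplus}$ and the permutation matrices $P_{\mathbf{m},\mathbf{s}}$ of Section~1 encode --- and this is handled cleanly by the description above.
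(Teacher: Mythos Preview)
Your proposal is correct and is essentially the same argument as the paper's: both exhibit explicit block-diagonal injection/projection matrices and verify the five biproduct identities $p_a\iota_b=\delta_{ab}E$ and $\iota_1p_1+\iota_2p_2=E_{\mathbf{m}+\mathbf{s}}$ blockwise. The only cosmetic difference is that you package the morphisms via the $\underline{\oplus}$, $\overline{\oplus}$ operations of Section~1, whereas the paper writes out the diagonal blocks $X_i=(I(i,m_i),0)$, $Y_i=(0,I(i,s_i))$ directly; the resulting matrices are identical.
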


\begin{proof}
For any $1\leqslant i\leqslant n$, let $X_i=(I(i, m_i), 0)\in M_{m_i\times(m_i+s_i)}(e_iAe_i)$,
and $Y_i=(0, I(i, s_i))\in M_{s_i\times(m_i+s_i)}(e_iAe_i)$. Then
$X_iX_i^T=I(i,m_i)$, $Y_iY^T_i=I(i,s_i)$, $X_iY^T_i=0$, $Y_iX_i^T=0$
and $X_i^TX_i+Y_i^TY_i=I(i, m_i+s_i)$, where $Z^T$ denotes
the transpose of  matrix $Z$. Now let
$$X=\left(\begin{array}{cccc}
X_{1}&0&\cdots&0\\
0&X_{2}&\cdots&0\\
\cdots&\cdots&\cdots&\cdots\\
0&0&\cdots&X_{n}\\
\end{array}\right),   \  \ \
Y=\left(\begin{array}{cccc}
Y_{1}&0&\cdots&0\\
0&Y_{2}&\cdots&0\\
\cdots&\cdots&\cdots&\cdots\\
0&0&\cdots&Y_{n}\\
\end{array}\right),$$
where $X\in M_{\mathbf{m}\times(\mathbf{m}+\mathbf{s})}(A)
={\rm Hom}_{\mathcal C}(\mathbf{m}+\mathbf{s}, \mathbf{m})$
and
$Y\in M_{\mathbf{s}\times(\mathbf{m}+\mathbf{s})}(A)
={\rm Hom}_{\mathcal C}(\mathbf{m}+\mathbf{s}, \mathbf{s})$.
Then $X^T\in M_{(\mathbf{m}+\mathbf{s})\times\mathbf{m}}(A)
={\rm Hom}_{\widehat{\mathcal C}}(\mathbf{m}, \mathbf{m}+\mathbf{s})$
and
$Y^T\in M_{(\mathbf{m}+\mathbf{s})\times\mathbf{s}}(A)
={\rm Hom}_{\widehat{\mathcal C}}(\mathbf{s},  \mathbf{m}+\mathbf{s})$.
It is straightforward to check that $XX^T=E_{\mathbf m}$, $YY^T=E_{\mathbf s}$,
$XY^T=0$, $YX^T=0$, and $X^TX+Y^TY=E_{\mathbf{m}+\mathbf{s}}$.
This shows that $\mathbf{m}+\mathbf{s}\cong\mathbf{m}\oplus\mathbf{s}$ in $\widehat{\mathcal C}$.
\end{proof}

\begin{proposition}\label{line}
$\widehat{\mathcal C}$ is an additive category over $\mathbb F$.
\end{proposition}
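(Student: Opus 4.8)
enrichment over $\mathbb{F}$-vector spaces with bilinear composition, existence of a zero object, and existence of finite biproducts. Most of this is routine once the right objects are identified, so the proof will largely be a matter of organizing the pieces already assembled.

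First I would check that $\widehat{\mathcal C}$ is $\mathbb{F}$-linear. For fixed $\mathbf m,\mathbf s\in\mathbb{N}^I$, the Hom-set ${\rm Hom}_{\widehat{\mathcal C}}(\mathbf m,\mathbf s)=M_{\mathbf s\times\mathbf m}(A)$ is an $\mathbb{F}$-subspace of the matrix space over $A$ (it is the space of block matrices $X=(X_{ij})$ with $X_{ij}\in M_{s_i\times m_j}(e_iAe_j)$, which is clearly closed under addition and $\mathbb{F}$-scaling), and when $\mathbf m=\mathbf 0$ or $\mathbf s=\mathbf 0$ it is the zero space by the conventions fixed in Section~1. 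Composition is ordinary matrix multiplication, hence $\mathbb{F}$-bilinear and associative; the identity morphism of $\mathbf m$ is $E_{\mathbf m}$, which is the unit of $M_{\mathbf m}(A)$ as recalled in Section~1, so the category axioms and the unit laws hold. One small point to note explicitly: the product of an $(\mathbf s,\mathbf m)$-type matrix with an $(\mathbf m,\mathbf t)$-type matrix is again of $(\mathbf s,\mathbf t)$-type, because $e_iAe_j\cdot e_jAe_k\subseteq e_iAe_k$, so the composition lands in the correct Hom-space.

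Next I would produce a zero object. The object $\mathbf 0\in\mathbb{N}^I$ works: for any $\mathbf m$, both ${\rm Hom}_{\widehat{\mathcal C}}(\mathbf 0,\mathbf m)=M_{\mathbf m\times\mathbf 0}(A)$ and ${\rm Hom}_{\widehat{\mathcal C}}(\mathbf m,\mathbf 0)=M_{\mathbf 0\times\mathbf m}(A)$ are zero spaces by the conventions of Section~1 (a $(\mathbf 0,\mathbf m)$-type matrix is a $1\times|\mathbf m|$ zero matrix, etc.), so $\mathbf 0$ is both initial and terminal, and in particular every Hom-set has a zero morphism compatible with composition.

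Finally, finite biproducts: by \leref{directsum}, $\mathbf m+\mathbf s$ is a direct sum (biproduct) object of $\mathbf m$ and $\mathbf s$ — the explicit matrices $X^T,Y^T$ (injections) and $X,Y$ (projections) constructed there satisfy the biproduct identities $XX^T=E_{\mathbf m}$, $YY^T=E_{\mathbf s}$, $XY^T=0=YX^T$, $X^TX+Y^TY=E_{\mathbf m+\mathbf s}$. Together with the zero object $\mathbf 0$ (the empty biproduct), iterating gives all finite biproducts. Assembling the three parts yields that $\widehat{\mathcal C}$ is additive over $\mathbb{F}$. \textbf{The only mild subtlety}, and the single place where care is needed, is the consistent bookkeeping of the empty/degenerate cases $\mathbf m=\mathbf 0$ throughout — matching the $1\times 1$ zero-matrix conventions of Section~1 so that $\mathbf 0$ genuinely behaves as the zero object and so that $E_{\mathbf 0}=0$ is the identity of the zero ring $M_{\mathbf 0}(A)$; everything else is a direct verification with no real obstacle.
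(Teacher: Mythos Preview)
Your proposal is correct and follows essentially the same approach as the paper's proof: verify that Hom-sets are $\mathbb F$-vector spaces with bilinear composition, observe that $\mathbf{0}$ is a zero object via the Section~1 conventions, and invoke \leref{directsum} for biproducts. You include a few extra sanity checks (closure of composition under the block types, identity morphisms) that the paper leaves implicit, but the argument is the same.
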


\begin{proof}
Obviously, ${\rm Hom}_{\widehat{\mathcal C}}(\mathbf{m}, \mathbf{s})=M_{\mathbf{s}\times\mathbf{m}}(A)$
is a vector space over $\mathbb F$ for all
${\mathbf m}, {\mathbf s}\in{\rm Ob}(\widehat{\mathcal C})$, and the composition
$${\rm Hom}_{\widehat{\mathcal C}}({\mathbf m}, {\mathbf s})\times {\rm Hom}_{\widehat{\mathcal C}}({\mathbf t}, {\mathbf m})
\ra {\rm Hom}_{\widehat{\mathcal C}}({\mathbf t}, {\mathbf s}),\
(X, Y)\mapsto X\circ Y$$
is $\mathbb F$-bilinear for all ${\mathbf m}, {\mathbf s}, {\mathbf t}\in{\rm Ob}(\widehat{\mathcal C})$.
Moreover, ${\rm Hom}_{\widehat{\mathcal C}}(\mathbf{0}, {\mathbf m})=M_{\mathbf{m}\times\mathbf{0}}(A)=0$
and ${\rm Hom}_{\widehat{\mathcal C}}(\mathbf{m}, {\mathbf 0})=M_{\mathbf{0}\times\mathbf{m}}(A)=0$
for all $\mathbf{m}\in{\rm Ob}(\widehat{\mathcal C})$. Hence the object $\mathbf{0}$ is a zero object of $\widehat{\mathcal C}$.
It follows from Lemma \ref{directsum} that $\widehat{\mathcal C}$ is an additive category over $\mathbb F$.
\end{proof}

Note that an additive category over $\mathbb F$ is also called an $\mathbb F$-linear category.

\begin{remark}
Let $\mathbf{m}, \mathbf{s}\in{\rm Ob}(\widehat{\mathcal C})$ and $X\in{\rm Hom}_{\widehat{\mathcal C}}(\mathbf{m}, \mathbf{s})
=M_{\mathbf{s}\times\mathbf{m}}(A)$. Then $X$ is a monomorphism in $\widehat{\mathcal C}$ if and only if
$X$ is column-independent, and $X$ is an epimorphism in $\widehat{\mathcal C}$ if and only if
$X$ is row-independent.
\end{remark}

\begin{lemma}\label{iso}
Let ${\mathbf m}=(m_i)_{1\leqslant i\leqslant n}$, ${\mathbf s}=(s_i)_{1\leqslant i\leqslant n}\in {\mathbb N}^I$.
Then ${\mathbf m}\cong {\mathbf s}$ in $\widehat{\mathcal C}$ if and only if ${\mathbf m}={\mathbf s}$.
Consequently, $\mathbf{0}$ is the unique zero object of $\widehat{\mathcal C}$.
\end{lemma}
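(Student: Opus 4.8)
The plan is to prove the two directions of the biconditional. The ``if'' direction is trivial: if $\mathbf{m}=\mathbf{s}$, then $E_{\mathbf{m}}\in M_{\mathbf{m}\times\mathbf{m}}(A)={\rm Hom}_{\widehat{\mathcal C}}(\mathbf{m},\mathbf{m})$ is its own inverse, so $\mathbf{m}\cong\mathbf{s}$. For the ``only if'' direction, suppose $\mathbf{m}\cong\mathbf{s}$ in $\widehat{\mathcal C}$. Then there are matrices $X\in{\rm Hom}_{\widehat{\mathcal C}}(\mathbf{m},\mathbf{s})=M_{\mathbf{s}\times\mathbf{m}}(A)$ and $Y\in{\rm Hom}_{\widehat{\mathcal C}}(\mathbf{s},\mathbf{m})=M_{\mathbf{m}\times\mathbf{s}}(A)$ with $XY=E_{\mathbf{s}}$ and $YX=E_{\mathbf{m}}$ (composition in $\widehat{\mathcal C}$ being ordinary matrix multiplication). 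This is exactly the hypothesis of \coref{KrullSch2}, whose statement and proof for $A(\mathcal C)$ used only the property established in \leref{proj-class}, namely that $e_iA(\mathcal C)e_jA(\mathcal C)e_i\subseteq{\rm rad}(e_iA(\mathcal C)e_i)$ for $i\neq j$. Since $A$ is assumed to satisfy condition (KS), which is precisely this property, the argument of \coref{KrullSch2} applies verbatim with $A(\mathcal C)$ replaced by $A$.

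Concretely, I would run the same computation: write $X=(X_{ij})$ and $Y=(Y_{ij})$ in block form with $X_{ij}\in M_{s_i\times m_j}(e_iAe_j)$ and $Y_{ij}\in M_{m_i\times s_j}(e_iAe_j)$. Fix $i$ with $m_i>0$. From $YX=E_{\mathbf{m}}$ one gets $\sum_{j}Y_{ij}X_{ji}=I(i,m_i)$, and condition (KS) forces $\sum_{j\neq i}Y_{ij}X_{ji}$ to lie in $M_{m_i}({\rm rad}(e_iAe_i))$, hence to be nilpotent; therefore $s_i>0$ (otherwise the identity matrix would be nilpotent) and $Y_{ii}X_{ii}=I(i,m_i)-\sum_{j\neq i}Y_{ij}X_{ji}$ is invertible in $M_{m_i}(e_iAe_i)$, giving $m_i\leqslant s_i$. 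The symmetric argument using $XY=E_{\mathbf{s}}$ gives $s_i\leqslant m_i$ for all $i$, so $\mathbf{m}=\mathbf{s}$. The final sentence --- that $\mathbf{0}$ is the unique zero object --- follows since any zero object is isomorphic to $\mathbf{0}$, and by what was just shown an object isomorphic to $\mathbf{0}$ equals $\mathbf{0}$.

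There is essentially no obstacle here; the only point requiring a moment's care is verifying that the proof of \coref{KrullSch2} genuinely uses nothing about $A(\mathcal C)$ beyond (KS) and the elementary fact that $e_iAe_i$ is a finite-dimensional (hence semiperfect, so that ${\rm rad}(e_iAe_i)$ is nilpotent and the radical of a matrix algebra over it behaves as expected) local $\mathbb{F}$-algebra --- the latter being guaranteed by the standing assumption that $A$ is finite dimensional with the $e_i$ primitive. Given that, the result is immediate by quoting \coref{KrullSch2} in the present setting.
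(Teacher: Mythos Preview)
Your proposal is correct and follows essentially the same approach as the paper: the paper's proof simply says that by (KS), an argument similar to the proof of \coref{KrullSch2} shows $\mathbf{m}=\mathbf{s}$. You have gone slightly further by actually spelling out that argument in block-matrix form, which is helpful but not a different strategy.
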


\begin{proof}
It is enough to show the necessity part.
Assume that ${\mathbf m}\cong {\mathbf s}$. Then there exist $X\in M_{\mathbf{m}\times\mathbf{s}}(A)$
and $Y\in M_{\mathbf{s}\times\mathbf{m}}(A)$ such that $XY=E_{\mathbf m}$ and $YX=E_{\mathbf s}$.
By (KS), an argument similar to the proof of Corollary \ref{KrullSch2} shows that ${\mathbf m}={\mathbf s}$.
\end{proof}

For any ${\mathbf m}\in{\rm Ob}(\widehat{\mathcal C})$ and a nonnegative integer $l$,
it follows from Lemmas \ref{directsum} and \ref{iso} that $l{\mathbf m}$ is exactly the direct sum of $l$ copies of
$\mathbf m$ in $\widehat{\mathcal C}$.

For any $1\leqslant i\leqslant n$, let ${\mathbf{e}}_i=(e_{i1}, e_{i2}, \cdots, e_{in})\in{\rm Ob}(\widehat{\mathcal C})$
 given by $e_{ij}=\d_{ij}$, the Kronecker symbols.
Then by the definition of $\widehat{\mathcal C}$, ${\rm End}_{\widehat{\mathcal C}}({\mathbf{e}}_i)\cong e_iAe_i$,
which is a local algebra since $e_i$ is a primitive idempotent.
Thus, by Lemma \ref{directsum}, one gets the following corollary.

\begin{corollary}\label{ks}
For any $\mathbf{m}=(m_1, m_2, \cdots, m_n)\in{\rm Ob}(\widehat{\mathcal C})$, we have
$$\mathbf{m}\cong m_1{\mathbf{e}}_1\oplus m_2{\mathbf{e}}_2\oplus\cdots\oplus m_n{\mathbf{e}}_n.$$
Moreover, ${\mathbf{e}}_i$ is an indecomposable object of $\widehat{\mathcal C}$ for any $1\leqslant i\leqslant n$.
\end{corollary}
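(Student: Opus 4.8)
The plan is to deduce the decomposition directly from Lemma~\ref{directsum} by an easy induction, and then to establish indecomposability of $\mathbf{e}_i$ from the fact, already recorded just before the statement, that ${\rm End}_{\widehat{\mathcal C}}(\mathbf{e}_i)\cong e_iAe_i$ is a local algebra. For the decomposition: write $\mathbf{m}=(m_1,\dots,m_n)$ and observe that $\mathbf{m}=\sum_{i=1}^n m_i\mathbf{e}_i$ in the monoid $\mathbb{N}^I$. Applying Lemma~\ref{directsum} repeatedly (its conclusion is that $\mathbf{p}+\mathbf{q}$ is a direct sum object of $\mathbf{p}$ and $\mathbf{q}$), one gets $\mathbf{m}\cong\bigoplus_{i=1}^n(m_i\mathbf{e}_i)$ in $\widehat{\mathcal C}$; and by the remark following Lemma~\ref{iso}, $m_i\mathbf{e}_i$ is the direct sum of $m_i$ copies of $\mathbf{e}_i$. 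So the first assertion is formal once one notes that direct sums in an additive category are associative and commutative up to isomorphism, which holds because $\widehat{\mathcal C}$ is additive by Proposition~\ref{line}.

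For the indecomposability of $\mathbf{e}_i$: suppose $\mathbf{e}_i\cong\mathbf{p}\oplus\mathbf{q}$ with both $\mathbf{p},\mathbf{q}$ nonzero. A direct sum decomposition of an object $X$ in an additive category corresponds to a nontrivial idempotent in ${\rm End}(X)$ (namely the composite of the projection to a summand with the inclusion), with $X$ indecomposable precisely when ${\rm End}(X)$ has no idempotents other than $0$ and the identity. Since ${\rm End}_{\widehat{\mathcal C}}(\mathbf{e}_i)\cong e_iAe_i$ is local, its only idempotents are $0$ and $1$; hence any direct summand of $\mathbf{e}_i$ is either $0$ or isomorphic to $\mathbf{e}_i$ itself, so $\mathbf{e}_i$ is indecomposable. (Alternatively one can invoke Lemma~\ref{iso}: a nontrivial decomposition $\mathbf{e}_i\cong\mathbf{p}\oplus\mathbf{q}$ would force $\mathbf{e}_i=\mathbf{p}+\mathbf{q}$ by Lemma~\ref{iso}, but $\mathbf{e}_i$ is a coordinate vector and cannot be written as a sum of two nonzero elements of $\mathbb{N}^I$, a contradiction — though this second route also needs the fact that nonzero objects are not zero objects, which is exactly the last clause of Lemma~\ref{iso}.)

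I do not expect any serious obstacle here; the corollary is essentially a bookkeeping consequence of Lemma~\ref{directsum} together with the locality of $e_iAe_i$. The only point requiring a little care is the passage from ``$\mathbf{e}_i$ has no nontrivial idempotent endomorphisms'' to ``$\mathbf{e}_i$ is indecomposable'': this uses the Krull--Schmidt-style correspondence between idempotents and summands in an additive category, which is standard and available since $\widehat{\mathcal C}$ is additive over $\mathbb F$. I would state the idempotent--summand correspondence explicitly in one line and then conclude.
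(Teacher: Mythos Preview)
Your proposal is correct and follows essentially the same approach as the paper: the decomposition comes from repeated application of Lemma~\ref{directsum} (the paper simply says ``by Lemma~\ref{directsum}, one gets the following corollary''), and the indecomposability of $\mathbf{e}_i$ is deduced from the locality of ${\rm End}_{\widehat{\mathcal C}}(\mathbf{e}_i)\cong e_iAe_i$, which the paper records in the paragraph immediately preceding the corollary. Your write-up is more explicit about the idempotent--summand correspondence, but the underlying argument is identical.
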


\begin{corollary}\label{IndOb}
Let $\mathbf{m}\in{\rm Ob}(\widehat{\mathcal C})$. Then
$\mathbf{m}$ is an indecomposable object of $\widehat{\mathcal C}$
if and only if $|\mathbf{m}|=1$ if and only if $\mathbf{m}=\mathbf{e}_i$
for some $1\leqslant i\leqslant n$.
Moreover, $\mathbf{e}_1, \mathbf{e}_2, \cdots, \mathbf{e}_n$ are all
non-isomorphic indecomposable objects of $\widehat{\mathcal C}$.
\end{corollary}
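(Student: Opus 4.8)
The plan is to reduce everything to the three structural facts already in hand --- Corollary~\ref{ks}, Lemma~\ref{directsum}, and Lemma~\ref{iso} --- after which the argument is purely combinatorial. First, the equivalence ``$|\mathbf{m}|=1$ if and only if $\mathbf{m}=\mathbf{e}_i$ for some $1\leqslant i\leqslant n$'' is immediate: writing $\mathbf{m}=(m_1,\dots,m_n)$ with $m_i\in\mathbb{N}$ and $|\mathbf{m}|=\sum_i m_i$, this sum equals $1$ exactly when one coordinate is $1$ and the rest vanish.

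For ``$\mathbf{m}$ is indecomposable if and only if $|\mathbf{m}|=1$'', the direction $\Leftarrow$ is exactly the last assertion of Corollary~\ref{ks} (each $\mathbf{e}_i$ is indecomposable). For $\Rightarrow$ I would argue contrapositively. An indecomposable object is in particular nonzero, and by Lemma~\ref{iso} the object $\mathbf{0}$ is the unique zero object of $\widehat{\mathcal C}$, so $|\mathbf{m}|\geqslant 1$; it remains to rule out $|\mathbf{m}|\geqslant 2$. If $|\mathbf{m}|\geqslant 2$, choose $\mathbf{m}',\mathbf{m}''\in\mathbb{N}^I$, both nonzero, with $\mathbf{m}=\mathbf{m}'+\mathbf{m}''$; by Lemma~\ref{directsum}, $\mathbf{m}\cong\mathbf{m}'\oplus\mathbf{m}''$, and by Lemma~\ref{iso} neither $\mathbf{m}'$ nor $\mathbf{m}''$ is a zero object, so this is a nontrivial direct-sum decomposition and $\mathbf{m}$ is decomposable. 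Hence an indecomposable $\mathbf{m}$ satisfies $|\mathbf{m}|=1$.

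For the ``moreover'' clause: the objects $\mathbf{e}_1,\dots,\mathbf{e}_n$ are indecomposable by Corollary~\ref{ks}; they are pairwise non-isomorphic because $\mathbf{e}_i\neq\mathbf{e}_j$ for $i\neq j$ and, by Lemma~\ref{iso}, isomorphic objects of $\widehat{\mathcal C}$ coincide; and they exhaust the indecomposable objects since, by the equivalence just proved, any indecomposable $\mathbf{m}$ has $|\mathbf{m}|=1$ and therefore equals some $\mathbf{e}_i$. I do not anticipate any real obstacle here --- all the substance sits in the earlier results. The only point needing a moment's care is the interplay with the zero object: one must invoke Lemma~\ref{iso} both to know that ``nonzero'' amounts to $|\mathbf{m}|\geqslant 1$ and to certify that a splitting $\mathbf{m}=\mathbf{m}'+\mathbf{m}''$ with both summands of positive length is genuinely a decomposition into nonzero objects.
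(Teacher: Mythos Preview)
Your proof is correct and follows essentially the same approach as the paper: the paper's proof is the single sentence ``It follows from Lemma~\ref{iso} and Corollary~\ref{ks},'' and your argument is simply a careful unpacking of that sentence (your use of Lemma~\ref{directsum} for the splitting could equally well be replaced by the direct-sum decomposition already recorded in Corollary~\ref{ks}).
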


\begin{proof}
It follows from Lemma \ref{iso} and Corollary \ref{ks}.
\end{proof}

\begin{proposition}\label{ksc}
$\widehat{\mathcal C}$ is a Krull-Schmidt abelian category over $\mathbb F$.
\end{proposition}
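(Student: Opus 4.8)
The plan is to verify the two halves of the Krull–Schmidt property and exactness separately, building on the structural results already established. First I would dispose of the additive and Krull–Schmidt part. By Proposition~\ref{line}, $\widehat{\mathcal C}$ is already additive over $\mathbb F$. For the Krull–Schmidt property I would argue that every object has a decomposition into indecomposables with local endomorphism rings: by Corollary~\ref{ks} any $\mathbf m=(m_1,\dots,m_n)$ decomposes as $\bigoplus_i m_i\mathbf e_i$, and ${\rm End}_{\widehat{\mathcal C}}(\mathbf e_i)\cong e_iAe_i$ is local because $e_i$ is a primitive idempotent of $A$. Since $\widehat{\mathcal C}$ is $\mathbb F$-linear with finite-dimensional Hom-spaces (each $\mathrm{Hom}_{\widehat{\mathcal C}}(\mathbf m,\mathbf s)=M_{\mathbf s\times\mathbf m}(A)$ is finite-dimensional), the Krull–Schmidt theorem applies: idempotents split (any idempotent $e\in M_{\mathbf m}(A)$ can be conjugated, using that $A$ is finite-dimensional, to a block-diagonal form realizing the splitting, and Lemma~\ref{iso} guarantees the summands are honest objects $\mathbf m'$ of the category), and decompositions into objects with local endomorphism rings are unique. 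Uniqueness of the zero object was already recorded in Lemma~\ref{iso}.

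\emph{The main work is exactness.} I would show $\widehat{\mathcal C}$ is abelian by checking: (i) kernels and cokernels exist; (ii) every monomorphism is a kernel and every epimorphism is a cokernel. For (i), given $X\in\mathrm{Hom}_{\widehat{\mathcal C}}(\mathbf m,\mathbf s)=M_{\mathbf s\times\mathbf m}(A)$, hypothesis (RUA) produces an $(\mathbf m,\mathbf t)$-type matrix $K$ with $XK=0$ enjoying the universal property of Definition~\ref{KerCok1}(a); I claim $K:\mathbf t\to\mathbf m$ is a kernel of $X$ in $\widehat{\mathcal C}$. Indeed $X\circ K=XK=0$, and if $M:\mathbf l\to\mathbf m$ satisfies $X\circ M=0$, the universal property gives a unique $Z$ with $M=KZ$, which is precisely the factorization through the kernel; uniqueness of $Z$ is exactly condition (2) of Definition~\ref{KerCok1}(a). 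Dually, (LUA) gives cokernels. For (ii), suppose $X$ is a monomorphism, i.e.\ column-independent (by the Remark preceding Lemma~\ref{iso}). Let $Y$ be a left universal annihilator of $X$ (from (LUA)); by hypothesis (CI), $X$ is then a right universal annihilator of $Y$, so by part (i) $X$ is a kernel of $Y$. Thus every monomorphism is a kernel. Dually, using (RI), every epimorphism is a cokernel of a right universal annihilator. Finally I must check the remaining abelian-category axioms that are automatic once the above hold together with additivity: every morphism $X$ factors as an epimorphism followed by a monomorphism — this is exactly hypothesis (Dec), $X=X_1X_2$ with $X_1$ column-independent (mono) and $X_2$ row-independent (epi) — and the induced map from coimage to image is an isomorphism, which follows because $X_2$ is a cokernel of its kernel and $X_1$ is a kernel of its cokernel, so both coimage and image are canonically identified with the object $\mathbf t$ appearing in the factorization.

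\textbf{The step I expect to be the main obstacle} is pinning down that the factorization object $\mathbf t$ in (Dec) is genuinely the image \emph{and} the coimage in the categorical sense, i.e.\ that the canonical morphism $\mathrm{coim}(X)\to\mathrm{im}(X)$ is an isomorphism rather than merely a mono-epi. The subtlety is that a mono-epi in a general additive category need not be invertible; here it is forced by the interplay of (Dec) with (CI)/(RI), together with the fact (provable as in Corollary~\ref{KrullSch2}, using (KS)) that a morphism which is simultaneously column- and row-independent and square must be an isomorphism. I would carry this out by showing: $X_2$ being row-independent equals $\mathrm{coker}(\ker X_2)$ (by (RI) applied to a right universal annihilator of $X_2$, which one checks is also a kernel of $X$ using that $X=X_1X_2$ with $X_1$ mono), hence $\mathbf t=\mathrm{coim}(X)$; symmetrically $X_1$ being column-independent equals $\ker(\mathrm{coker}\,X_1)$, hence $\mathbf t=\mathrm{im}(X)$; and the canonical comparison map is the identity on $\mathbf t$. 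Once this is in place, all Freyd–Mitchell-style axioms are satisfied and $\widehat{\mathcal C}$ is abelian, completing the proof together with the Krull–Schmidt part above.
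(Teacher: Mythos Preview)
Your proposal is correct and follows essentially the same approach as the paper: additivity from Proposition~\ref{line}, kernels/cokernels from (RUA)/(LUA), the mono--epi factorization from (Dec), and the identification of monos with kernels (resp.\ epis with cokernels) from (CI)/(RI), with the Krull--Schmidt property coming from Corollary~\ref{ks} and Lemma~\ref{iso}. Your ``main obstacle'' about the coimage--image comparison is not really an obstacle: once every mono is a kernel and every epi is a cokernel, the abelian axioms are satisfied by the standard equivalence, so the explicit identification of $\mathbf t$ with both image and coimage is automatic; likewise your idempotent-splitting aside is unnecessary, since the decomposition and its uniqueness are already given directly by Corollary~\ref{ks} and Lemma~\ref{iso}.
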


\begin{proof}
By Proposition \ref{line}, we  know that $\widehat{\mathcal C}$ is an additive category over $\mathbb F$.
By (RUA) and (LUA), one knows that every morphism $X\in{\rm Hom}_{\widehat{\mathcal C}}(\mathbf{m}, \mathbf{s})$
has a kernel ${\rm ker}(X)\in{\rm Mor}(\widehat{\mathcal C})$ and a cokernel ${\rm coker}(X)\in{\rm Mor}(\widehat{\mathcal C})$.
By (Dec), every morphism of $\widehat{\mathcal C}$ is a composition of an epimorphism followed by a monomorphism.
Note that ${\rm ker}(X)=0$ (resp. ${\rm coker}(X)=0$) if and only if $X$ is column-independent (resp.
row-independent). Thus, by (CI) and (RI), if ${\rm ker}(X)=0$ then $X={\rm ker}({\rm coker}(X))$;
if ${\rm coker}(X)=0$ then $X={\rm coker}({\rm ker}(X))$.
Therefore, $\widehat{\mathcal C}$ is an abelian category over $\mathbb F$.
Finally, it follows from Lemma \ref{iso} and Corollaries \ref{ks} and \ref{IndOb}
that $\widehat{\mathcal C}$ is a Krull-Schmidt category.
\end{proof}

\begin{corollary}\label{semisimple}
The following statements are equivalent:\\
{\rm (1)} $\widehat{\mathcal C}$ is a semisimple category over $\mathbb F$;\\
{\rm (2)} {\rm dim}$_{\mathbb F}(A)=n$;\\
{\rm (3)} $A\cong {\mathbb F}^n$, the direct product of $n$ copies of $\mathbb F$, as $\mathbb F$-algebra.
\end{corollary}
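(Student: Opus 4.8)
The plan is to run the cycle of implications $(1)\Rightarrow(2)\Rightarrow(3)\Rightarrow(1)$, exploiting that by Proposition~\ref{ksc} the category $\widehat{\mathcal C}$ is Krull-Schmidt and abelian, and that by Corollary~\ref{IndOb} its indecomposable objects are exactly $\mathbf{e}_1,\dots,\mathbf{e}_n$, pairwise non-isomorphic, with ${\rm End}_{\widehat{\mathcal C}}(\mathbf{e}_i)\cong e_iAe_i$ and ${\rm Hom}_{\widehat{\mathcal C}}(\mathbf{e}_i,\mathbf{e}_j)\cong e_jAe_i$, and with $A=\bigoplus_{i,j\in I}e_jAe_i$ as $\mathbb F$-spaces.

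For $(1)\Rightarrow(2)$, if $\widehat{\mathcal C}$ is semisimple then every indecomposable object is simple, since an indecomposable direct summand of a direct sum of simple objects is itself simple; in particular each $\mathbf{e}_i$ is simple. Schur's lemma in the abelian category $\widehat{\mathcal C}$ then shows that $e_iAe_i\cong{\rm End}_{\widehat{\mathcal C}}(\mathbf{e}_i)$ is a division $\mathbb F$-algebra, and being finite-dimensional over the algebraically closed field $\mathbb F$ it equals $\mathbb F e_i$; similarly $e_jAe_i\cong{\rm Hom}_{\widehat{\mathcal C}}(\mathbf{e}_i,\mathbf{e}_j)=0$ for $i\neq j$ because $\mathbf{e}_i$ and $\mathbf{e}_j$ are non-isomorphic simple objects. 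Summing block dimensions gives ${\rm dim}_{\mathbb F}(A)=n$. For $(2)\Rightarrow(3)$, since each $e_i\neq 0$ we have ${\rm dim}_{\mathbb F}(e_iAe_i)\geq 1$, so ${\rm dim}_{\mathbb F}(A)\geq\sum_{i}{\rm dim}_{\mathbb F}(e_iAe_i)\geq n$, and equality forces ${\rm dim}_{\mathbb F}(e_iAe_i)=1$ for all $i$ and $e_jAe_i=0$ for all $i\neq j$; then $A=\bigoplus_i e_iAe_i=\bigoplus_i\mathbb F e_i$ with the $e_i$ orthogonal idempotents summing to $1$, hence $A\cong\mathbb F^n$.

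For $(3)\Rightarrow(1)$, assume $A\cong\mathbb F^n$, so $e_iAe_i=\mathbb F e_i$ and $e_jAe_i=0$ for $i\neq j$. By the Krull-Schmidt property it suffices to show each $\mathbf{e}_i$ is a simple object, for then every object of $\widehat{\mathcal C}$ is a direct sum of copies of the $\mathbf{e}_i$, hence semisimple. A subobject of $\mathbf{e}_i$ is represented by a monomorphism, that is, a column-independent matrix $g\in{\rm Hom}_{\widehat{\mathcal C}}(\mathbf l,\mathbf{e}_i)=M_{\mathbf{e}_i\times\mathbf l}(A)$; in block form the only entries of $g$ lying in a possibly nonzero space are those of the $1\times l_i$ block over $e_iAe_i=\mathbb F e_i$, all other blocks being $0$. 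Column-independence rules out $l_j>0$ for any $j\neq i$ (the corresponding $\mathbf{e}_j$-summand is annihilated by $g$) and rules out $l_i\geq 2$ (an elementary choice of nonzero $Y$ with $gY=0$ over $\mathbb F$), leaving $\mathbf l=\mathbf 0$ or $\mathbf l\cong\mathbf{e}_i$ with $g$ invertible. Hence $\mathbf{e}_i$ has no proper nonzero subobjects and is simple.

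The routine part is the dimension bookkeeping in the first two implications; the step that needs genuine care is $(3)\Rightarrow(1)$, where one must translate categorical simplicity of $\mathbf{e}_i$ into column-independence of block matrices over $A$ and check it by hand. An alternative for this step, avoiding the explicit matrix argument, is to observe that under $(3)$ one has ${\rm Hom}_{\widehat{\mathcal C}}(\mathbf m,\mathbf s)=M_{\mathbf{s}\times\mathbf{m}}(A)=\bigoplus_{i=1}^n M_{s_i\times m_i}(\mathbb F)$ with componentwise composition, so $\widehat{\mathcal C}$ is equivalent to the product of $n$ copies of the category of finite-dimensional $\mathbb F$-vector spaces, which is visibly semisimple.
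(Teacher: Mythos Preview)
Your proof is correct and takes essentially the same approach as the paper. The paper compresses your cycle into the single biconditional ``$\widehat{\mathcal C}$ is semisimple if and only if ${\rm End}_{\widehat{\mathcal C}}(\mathbf{e}_i)\cong\mathbb F$ and ${\rm Hom}_{\widehat{\mathcal C}}(\mathbf{e}_i,\mathbf{e}_j)=0$ for $i\neq j$'' (citing only that $\widehat{\mathcal C}$ is Krull--Schmidt abelian), and then matches this against $\dim_{\mathbb F}A=n$ exactly as you do; your explicit verification of simplicity in $(3)\Rightarrow(1)$ and your alternative identification of $\widehat{\mathcal C}$ with a product of copies of ${\rm Vec}_{\mathbb F}$ are details the paper leaves implicit.
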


\begin{proof}
By Corollary \ref{IndOb}, $\mathbf{e}_1, \mathbf{e}_2, \cdots, \mathbf{e}_n$ are all
non-isomorphic indecomposable objects of $\widehat{\mathcal C}$.
Hence by Proposition \ref{ksc}, $\widehat{\mathcal C}$ is a semisimple category over $\mathbb F$
if and only if ${\rm End}_{\widehat{\mathcal C}}(\mathbf{e}_i)\cong\mathbb F$ and
${\rm Hom}_{\widehat{\mathcal C}}(\mathbf{e}_i, \mathbf{e}_j)=0$
for all $1\leqslant i\neq j\leqslant n$.
Since ${\rm Hom}_{\widehat{\mathcal C}}(\mathbf{e}_i, \mathbf{e}_j)=M_{\mathbf{e}_j\times\mathbf{e}_i}(A)=e_jAe_i$
and ${\rm dim}_{\mathbb F}(e_iAe_i)\geqslant 1$ for all $1\leqslant i, j\leqslant n$,
$\widehat{\mathcal C}$ is a semisimple category over $\mathbb F$
if and only if ${\rm dim}_{\mathbb F}(e_iAe_i)=1$ and ${\rm dim}_{\mathbb F}(e_jAe_i)=0$
for all $1\leqslant i\neq j\leqslant n$, which is equivalent to ${\rm dim}_{\mathbb F}(A)=n$.
This shows that (1) and (2) are equivalent, and that (1) (or (2)) implies (3).
It is obvious that (3) implies (2).
\end{proof}

Since $R$ is a $\mathbb{Z}_+$-ring with the unital $\mathbb{Z}_+$-basis
$\{r_i\}_{1\leqslant i\leqslant n}$, we have
$r_ir_j=\sum_{k=1}^nc_{ijk}r_k$ for some $c_{ijk}\in\mathbb{N}$, where $1\leqslant i, j\leqslant n$.
Let $\mathbf{c}_{ij}=(c_{ijk})_{1\leqslant k\leqslant n}=(c_{ij1}, c_{ij2}, \cdots, c_{ijn})$.
Then $\mathbf{c}_{ij}\in\mathbb{N}^I$
and $\mathbf{c}_{ij}\neq\mathbf{0}$ by $r_ir_j\neq 0$, $1\leqslant i, j\leqslant n$.
Moreover, $\mathbf{c}_{1i}=\mathbf{c}_{i1}=\mathbf{e}_i$
for all $1\leqslant i\leqslant n$.

Define  a vector space $M(R, A, I)$ over $\mathbb{F}$  by
$$M(R, A, I):=\oplus_{1\leqslant i,i',j,j'\leqslant n}M_{\mathbf{c}_{i'j'}\times\mathbf{c}_{ij}}(A).$$
Then $M(R, A, I)$ is an associative $\mathbb{F}$-algebra with the multiplication
defined  in Section 2.

In the rest of this section, we assume that dim$(e_1Ae_1)=1$, i.e., $e_1Ae_1=\mathbb{F}e_1$, and assume
that there is an $\mathbb F$-algebra map
$\phi: A\otimes_{\mathbb F}A\ra M(R, A, I)$ such that the following two conditions are satisfied:\\
\mbox{\hspace{0.2cm}} ($\phi$1) $\phi(e_i\otimes_{\mathbb F}e_j)=E_{\mathbf{c}_{ij}}\in
M_{\mathbf{c}_{ij}}(A)$ for all $1\leqslant i, j\leqslant n$;\\
\mbox{\hspace{0.2cm}} ($\phi$2) $\phi(e_1\ot_{\mathbb F}a)=a\in M_{\mathbf{c}_{1i}\times\mathbf{c}_{1j}}(A)$
	and $\phi(a\ot_{\mathbb F}e_1)=a\in M_{\mathbf{c}_{i1}\times\mathbf{c}_{j1}}(A)$
	for all $a\in e_iAe_j$, where $1\leqslant i, j\leqslant n$.\\
Then we have %$\phi(a\ot_{\mathbb F}b)=ab$ for any $a, b\in e_1Ae_1$, and
$\phi(x)\in M_{\mathbf{c}_{i'j'}\times\mathbf{c}_{ij}}(A)$ for any $x\in e_{i'}Ae_{i}\ot_{\mathbb F}e_{j'}Ae_j
\subset A\ot_{\mathbb F}A$, where $1\leqslant i', i, j', j\leqslant n$.

For any homogeneous matrix $X=(x_{ij})$ over $A\ot_{\mathbb F}A$, $\phi(X)=(\phi(x_{ij}))$
is a well-defined homogeneous matrix over $A$.
By the discussion in Section 2, one can define
$\mathbf{m}\whot\mathbf{s}:=\sum_{i, j=1}^nm_is_j\mathbf{c}_{ij}\in{\rm Ob}(\widehat{\mathcal C})$
for any $\mathbf{m}=(m_i)_{1\leqslant i\leqslant n}$ and $\mathbf{s}=(s_i)_{1\leqslant i\leqslant n}$ in Ob$(\widehat{\mathcal C})$.
Let $X\in{\rm Hom}_{\widehat{\mathcal C}}(\mathbf{m}_1, \mathbf{s}_1)$
and $Y\in{\rm Hom}_{\widehat{\mathcal C}}(\mathbf{m}_2, \mathbf{s}_2)$.
Then one also can define $X\whot Y\in {\rm Hom}_{\widehat{\mathcal C}}(\mathbf{m}_1\whot\mathbf{m}_2, \mathbf{s}_1\whot\mathbf{s}_2)$
by
$$X\whot Y:=\Pi(\phi(X\ot_{\mathbb F}Y))=P(\mathbf{s}_1, \mathbf{s}_2)\phi(X\ot_{\mathbb F}Y)P(\mathbf{m}_1, \mathbf{m}_2)^T$$
for $\mathbf{m}_1\whot\mathbf{m}_2\neq\mathbf{0}$ and $\mathbf{s}_1\whot\mathbf{s}_2\neq\mathbf{0}$,
and $X\whot Y:=0$ for otherwise (see Lemma \ref{maptensor} and Remark \ref{permu}). Corollary \ref{identitymap}
and Lemma \ref{tensorcomp} still hold in this case.

Furthermore, we make the following assumption:

{\bf (Ass)} There exists a family of matrices $a_{i,j,l}\in M_{\mathbf{e}_i\whot\mathbf{e}j\whot\mathbf{e}_l}(A)$
indexed by all triples $(i, j, l)$ of elements of $I$ such that
the following conditions are satisfied:\\
\mbox{\hspace{0.2cm}}(1) $a_{i,j,l}$ is invertible in $M_{\mathbf{e}_i\whot\mathbf{e}_j\whot\mathbf{e}_l}(A)$
for all $i,j,l\in I$;\\
\mbox{\hspace{0.2cm}}(2) $(x\whot(y\whot z))a_{i, j, l}
=a_{i', j', l'}((x\whot y)\whot  z)$
for all $i, j, l, i', j', l'\in I$,
$x\in e_{i'}Ae_{i}$, \\
\mbox{\hspace{0.6cm}} $y\in e_{j'}Ae_{j}$ and $z\in e_{l'}Ae_{l}$;\\
\mbox{\hspace{0.2cm}}(3) $a_{i, 1, j}=E_{\mathbf{c}_{ij}}$
for any $i, j\in I$.\\
\mbox{\hspace{0.2cm}}(4)\ \  $\sum\limits_{j=1}^n\sum\limits_{1\leqslant k\leqslant c_{i_2i_3j}}
(e_{i_1}\whot  a_{i_2, i_3, i_4}(X^{\mathbf{c}_{i_2i_3}}_{j,k}\whot  e_{i_4}))
a_{i_1,j,i_4}((e_{i_1}\whot  Y^{\mathbf{c}_{i_2i_3}}_{j,k})a_{i_1,i_2,i_3}\whot  e_{i_4})$\\
\mbox{\hspace{0.6cm}}$=\sum\limits_{j,j'=1}^n\sum\limits_{1\leqslant k\leqslant c_{i_3i_4j}}
\sum\limits_{1\leqslant k'\leqslant c_{i_1i_2j'}}
(e_{i_1}\whot (e_{i_2}\whot  X^{\mathbf{c}_{i_3i_4}}_{j,k}))a_{i_1, i_2, j}
(X^{\mathbf{c}_{i_1i_2}}_{j',k'}\whot  Y^{\mathbf{c}_{i_3i_4}}_{j,k})$\\
\mbox{\hspace{5.2cm}}$\cdot a_{j',i_3,i_4}
((Y^{\mathbf{c}_{i_1i_2}}_{j',k'}\whot  e_{i_3})\whot  e_{i_4})$\\
for all $i_1, i_2, i_3, i_4\in I$, where $X^{\mathbf{m}}_{j,k}$ and $Y^{\mathbf{m}}_{j,k}$
are the matrices defined  in the last section.\\

For any $\mathbf{m}=(m_i)_{i\in I}, \mathbf{s}=(s_i)_{i\in I}, \mathbf{t}=(t_i)_{i\in I}\in\mathbb{N}^I$,
define a matrix $a_{\mathbf{m}, \mathbf{s}, \mathbf{t}}\in M_{\mathbf{m}\whot\mathbf{s}\whot\mathbf{t}}(A)$ by
$a_{\mathbf{m}, \mathbf{s}, \mathbf{t}}=E_{\mathbf{0}}=0$ when $\mathbf{m}\whot\mathbf{s}\whot\mathbf{t}=\mathbf{0}$,
and

\hspace{0.4cm}$a_{\mathbf{m}, \mathbf{s}, \mathbf{t}}$
$$=\sum\limits_{
1\leqslant i, j, l\leqslant n,
m_is_jt_l>0}
\sum\limits_{k_1=1}^{m_i}\sum\limits_{k_2=1}^{s_j}\sum\limits_{k_3=1}^{t_l}
(X^{\mathbf m}_{i,k_1}\whot (X^{\mathbf s}_{j,k_2}\whot  X^{\mathbf t}_{l,k_3}))a_{i,j,l}
((Y^{\mathbf m}_{i,k_1}\whot  Y^{\mathbf s}_{j,k_2})\whot  Y^{\mathbf t}_{l,k_3})$$
when $\mathbf{m}\whot\mathbf{s}\whot\mathbf{t}\neq\mathbf{0}$.
Then we have the following proposition.

\begin{proposition}\label{ass}
$(1)$ $a_{\mathbf{e}_i, \mathbf{e}_j, \mathbf{e}_l}=a_{i,j,l}$ for all $i,j,l\in I$;\\
$(2)$ $a_{\mathbf{m}, \mathbf{s}, \mathbf{t}}$ is invertible in $M_{\mathbf{m}\whot\mathbf{s}\whot\mathbf{t}}(A)$
whenever $\mathbf{m}\whot\mathbf{s}\whot\mathbf{t}\neq\mathbf{0}$;\\
$(3)$ $(X_1\whot (X_2\whot X_3))a_{\mathbf{m}_1, \mathbf{m}_2, \mathbf{m}_3}
=a_{\mathbf{s}_1, \mathbf{s}_2, \mathbf{s}_3}((X_1\whot X_2)\whot  X_3)$
whenever $\mathbf{m}_i, \mathbf{s}_i\in\mathbb{N}^I$ and\\
\mbox{\hspace{0.5cm}} $X_i\in M_{\mathbf{s}_i\times\mathbf{m}_i}(A)={\rm Hom}_{\widehat{\mathcal C}}(\mathbf{m}_i, \mathbf{s}_i)$,
$i=1, 2, 3$;\\
$(4)$ $a_{\mathbf{m}, \mathbf{e}_1, \mathbf{s}}=E_{\mathbf{m}\whot\mathbf{s}}$
for all $\mathbf{m}, \mathbf{s}\in\mathbb{N}^I$;\\
$(5)$ \hspace{1cm}$(E_{\mathbf{m}_1}{\whot} a_{\mathbf{m}_2, \mathbf{m}_3, \mathbf{m}_4})
a_{\mathbf{m}_1, \mathbf{m}_2\whot\mathbf{m}_3, \mathbf{m}_4}
(a_{\mathbf{m}_1, \mathbf{m}_2, \mathbf{m}_3}{\whot} E_{\mathbf{m}_4})$\\
\mbox{\hspace{1.1cm}} $=a_{\mathbf{m}_1, \mathbf{m}_2, \mathbf{m}_3\whot\mathbf{m}_4}a_{\mathbf{m}_1\whot\mathbf{m}_2, \mathbf{m}_3, \mathbf{m}_4}$\\
\mbox{\hspace{0.4cm}}  for all objects $\mathbf{m}_1, \mathbf{m}_2, \mathbf{m}_3, \mathbf{m}_4$ of $\widehat{\mathcal C}$.\\
\end{proposition}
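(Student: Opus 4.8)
The plan is to deduce parts (1)--(5) from three structural facts already available in $\widehat{\mathcal C}$ --- Corollary~\ref{identitymap} (so that $E_{\mathbf{m}}\whot E_{\mathbf{s}}=E_{\mathbf{m}\whot\mathbf{s}}$ and $E_{\mathbf{e}_1}$ is a $\whot$-unit), Lemma~\ref{tensorcomp} ($(X\whot Y)(X_1\whot Y_1)=(XX_1)\whot(YY_1)$, together with $\mathbb{F}$-bilinearity of $\whot$), and the orthogonality relations $Y^{\mathbf{m}}_{i,k}X^{\mathbf{m}}_{i',k'}=\delta_{(i,k),(i',k')}e_i$, $\sum_{i,k}X^{\mathbf{m}}_{i,k}Y^{\mathbf{m}}_{i,k}=E_{\mathbf{m}}$ --- plus the assumed conditions (1)--(4) of {\bf (Ass)}. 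In each part one may assume every object in sight is nonzero, since otherwise all the matrices occurring are zero matrices of the appropriate format and the asserted identity is immediate.

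For (1), substituting $\mathbf{m}=\mathbf{e}_i$, $\mathbf{s}=\mathbf{e}_j$, $\mathbf{t}=\mathbf{e}_l$ leaves only the term with indices $i,j,l$ and $k_1=k_2=k_3=1$; since $X^{\mathbf{e}_i}_{i,1}=Y^{\mathbf{e}_i}_{i,1}=e_i=E_{\mathbf{e}_i}$ (and likewise for $j$, $l$), Corollary~\ref{identitymap} collapses $e_i\whot(e_j\whot e_l)$ and $(e_i\whot e_j)\whot e_l$ to $E_{\mathbf{e}_i\whot\mathbf{e}_j\whot\mathbf{e}_l}$, whence $a_{\mathbf{e}_i,\mathbf{e}_j,\mathbf{e}_l}=a_{i,j,l}$. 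For (4), put $\mathbf{e}_1$ in the middle slot: condition (3) of {\bf (Ass)} gives $a_{i,1,l}=E_{\mathbf{c}_{il}}$, so all the associators disappear, and $E_{\mathbf{e}_1}\whot X=X\whot E_{\mathbf{e}_1}=X$ deletes all the $e_1$-factors; the remaining $\sum_{i,k_1}\sum_{l,k_3}(X^{\mathbf{m}}_{i,k_1}\whot X^{\mathbf{s}}_{l,k_3})(Y^{\mathbf{m}}_{i,k_1}\whot Y^{\mathbf{s}}_{l,k_3})$ collapses by Lemma~\ref{tensorcomp}, bilinearity of $\whot$ and $\sum X^{\mathbf{m}}Y^{\mathbf{m}}=E_{\mathbf{m}}$ to $E_{\mathbf{m}}\whot E_{\mathbf{s}}=E_{\mathbf{m}\whot\mathbf{s}}$.

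For (2), I would exhibit the inverse explicitly as
$$b_{\mathbf{m},\mathbf{s},\mathbf{t}}:=\sum((X^{\mathbf{m}}_{i,k_1}\whot X^{\mathbf{s}}_{j,k_2})\whot X^{\mathbf{t}}_{l,k_3})\,a_{i,j,l}^{-1}\,(Y^{\mathbf{m}}_{i,k_1}\whot(Y^{\mathbf{s}}_{j,k_2}\whot Y^{\mathbf{t}}_{l,k_3})),$$
the sum running over $i,j,l$ and $1\leqslant k_1\leqslant m_i$, $1\leqslant k_2\leqslant s_j$, $1\leqslant k_3\leqslant t_l$, with $a_{i,j,l}^{-1}$ available by condition (1) of {\bf (Ass)}. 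Computing $b_{\mathbf{m},\mathbf{s},\mathbf{t}}a_{\mathbf{m},\mathbf{s},\mathbf{t}}$: the interior product $(Y^{\mathbf{m}}_{i,k_1}\whot(Y^{\mathbf{s}}_{j,k_2}\whot Y^{\mathbf{t}}_{l,k_3}))(X^{\mathbf{m}}_{i',k_1'}\whot(X^{\mathbf{s}}_{j',k_2'}\whot X^{\mathbf{t}}_{l',k_3'}))$ equals, by Lemma~\ref{tensorcomp} and orthogonality, $\delta\,E_{\mathbf{e}_i\whot\mathbf{e}_j\whot\mathbf{e}_l}$; this collapses the double sum to a single one in which the factor $a_{i,j,l}^{-1}E_{\mathbf{e}_i\whot\mathbf{e}_j\whot\mathbf{e}_l}a_{i,j,l}=E_{\mathbf{e}_i\whot\mathbf{e}_j\whot\mathbf{e}_l}$ cancels, and a final use of Lemma~\ref{tensorcomp} with $\sum X^{\mathbf{m}}Y^{\mathbf{m}}=E_{\mathbf{m}}$ yields $(E_{\mathbf{m}}\whot E_{\mathbf{s}})\whot E_{\mathbf{t}}=E_{\mathbf{m}\whot\mathbf{s}\whot\mathbf{t}}$; symmetrically $a_{\mathbf{m},\mathbf{s},\mathbf{t}}b_{\mathbf{m},\mathbf{s},\mathbf{t}}=E_{\mathbf{m}\whot\mathbf{s}\whot\mathbf{t}}$. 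For (3) (with all $\mathbf{m}_i,\mathbf{s}_i\neq\mathbf{0}$), I would expand $a_{\mathbf{m}_1,\mathbf{m}_2,\mathbf{m}_3}$ by definition, push $X_1\whot(X_2\whot X_3)$ through by Lemma~\ref{tensorcomp} so that it meets each $X^{\mathbf{m}_r}_{i_r,k_r}$ as $X_rX^{\mathbf{m}_r}_{i_r,k_r}$, rewrite $X_rX^{\mathbf{m}_r}_{i_r,k_r}=\sum_{i_r',k_r'}X^{\mathbf{s}_r}_{i_r',k_r'}(Y^{\mathbf{s}_r}_{i_r',k_r'}X_rX^{\mathbf{m}_r}_{i_r,k_r})$ with $Y^{\mathbf{s}_r}_{i_r',k_r'}X_rX^{\mathbf{m}_r}_{i_r,k_r}\in e_{i_r'}Ae_{i_r}$, and then apply condition (2) of {\bf (Ass)} to slide $a_{i_1,i_2,i_3}$ past the resulting $x\whot(y\whot z)$. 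After that, Lemma~\ref{tensorcomp} pulls $(X_1\whot X_2)\whot X_3$ out to the right and the sums over $(i_r,k_r)$ collapse against $\sum X^{\mathbf{m}_r}Y^{\mathbf{m}_r}=E_{\mathbf{m}_r}$, leaving exactly $a_{\mathbf{s}_1,\mathbf{s}_2,\mathbf{s}_3}((X_1\whot X_2)\whot X_3)$.

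The main work is (5), which I would reduce to condition (4) of {\bf (Ass)} by a naturality argument rather than a head-on expansion. Write the two sides of (5) as $L(\mathbf{a}_1,\mathbf{a}_2,\mathbf{a}_3,\mathbf{a}_4)$ and $R(\mathbf{a}_1,\mathbf{a}_2,\mathbf{a}_3,\mathbf{a}_4)$; both have source $((\mathbf{a}_1\whot\mathbf{a}_2)\whot\mathbf{a}_3)\whot\mathbf{a}_4$ and target $\mathbf{a}_1\whot(\mathbf{a}_2\whot(\mathbf{a}_3\whot\mathbf{a}_4))$. First, using part (3) (already proved) and Lemma~\ref{tensorcomp} repeatedly, I would check that $L$ and $R$ are both natural, i.e. $(f_1\whot(f_2\whot(f_3\whot f_4)))\,L(\vec{\mathbf{a}})=L(\vec{\mathbf{b}})\,(((f_1\whot f_2)\whot f_3)\whot f_4)$ for any morphisms $f_r\colon\mathbf{a}_r\to\mathbf{b}_r$, and the same for $R$. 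Next, writing $E_{\mathbf{m}_1\whot(\mathbf{m}_2\whot(\mathbf{m}_3\whot\mathbf{m}_4))}=\sum_{\vec{j},\vec{k}}\mathrm{inj}_{\vec{j},\vec{k}}\,\mathrm{proj}_{\vec{j},\vec{k}}$ with $\mathrm{inj}_{\vec{j},\vec{k}}=X^{\mathbf{m}_1}_{j_1,k_1}\whot(X^{\mathbf{m}_2}_{j_2,k_2}\whot(X^{\mathbf{m}_3}_{j_3,k_3}\whot X^{\mathbf{m}_4}_{j_4,k_4}))$ and $\mathrm{proj}_{\vec{j},\vec{k}}$ the corresponding projection (which follows from $\sum X^{\mathbf{m}_r}Y^{\mathbf{m}_r}=E_{\mathbf{m}_r}$, Corollary~\ref{identitymap} and Lemma~\ref{tensorcomp}), naturality applied with $f_r=Y^{\mathbf{m}_r}_{j_r,k_r}$ gives $L(\vec{\mathbf{m}})=\sum_{\vec{j},\vec{k}}\mathrm{inj}_{\vec{j},\vec{k}}\,L(\mathbf{e}_{j_1},\mathbf{e}_{j_2},\mathbf{e}_{j_3},\mathbf{e}_{j_4})\,\mathrm{proj}'_{\vec{j},\vec{k}}$ with $\mathrm{proj}'_{\vec{j},\vec{k}}=((Y^{\mathbf{m}_1}_{j_1,k_1}\whot Y^{\mathbf{m}_2}_{j_2,k_2})\whot Y^{\mathbf{m}_3}_{j_3,k_3})\whot Y^{\mathbf{m}_4}_{j_4,k_4}$, and the identical expression for $R(\vec{\mathbf{m}})$. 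Hence it suffices to prove $L(\mathbf{e}_{i_1},\mathbf{e}_{i_2},\mathbf{e}_{i_3},\mathbf{e}_{i_4})=R(\mathbf{e}_{i_1},\mathbf{e}_{i_2},\mathbf{e}_{i_3},\mathbf{e}_{i_4})$; expanding the $2$-associators there by their definitions --- using $\mathbf{e}_{i_2}\whot\mathbf{e}_{i_3}=\mathbf{c}_{i_2i_3}$, $\mathbf{e}_{i_3}\whot\mathbf{e}_{i_4}=\mathbf{c}_{i_3i_4}$, $\mathbf{e}_{i_1}\whot\mathbf{e}_{i_2}=\mathbf{c}_{i_1i_2}$ --- and moving terms in and out of $\whot$ with Lemma~\ref{tensorcomp} turns the two sides into precisely the two sides of condition (4) of {\bf (Ass)}. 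The only delicate point throughout (5) is the bookkeeping of parenthesizations: those of $L$ and $R$ differ only in ways rendered harmless by the associativity of $\whot$ on objects, and the degeneracies $\mathbf{c}_{ij}=\mathbf{0}$ never occur since $r_ir_j\neq 0$.
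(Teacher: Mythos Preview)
Your proposal is correct and follows essentially the same strategy as the paper. Parts (1), (2), and (4) are identical to the paper's arguments; for (3) the paper first isolates the intermediate identity $a_{\mathbf{m},\mathbf{s},\mathbf{t}}((X^{\mathbf m}_{i',k'_1}\whot X^{\mathbf s}_{j',k'_2})\whot X^{\mathbf t}_{l',k'_3})=(X^{\mathbf m}_{i',k'_1}\whot(X^{\mathbf s}_{j',k'_2}\whot X^{\mathbf t}_{l',k'_3}))a_{i',j',l'}$ and then proceeds exactly as you describe. For (5) the paper carries out the same reduction to the $\mathbf{e}$-level pentagon (its Eq.~(3.2), which it observes is equivalent to {\bf (Ass)}(4)) by a direct lengthy expansion of both sides, whereas you package the same computation as ``naturality of $L$ and $R$'' (itself an instance of part (3)) followed by the resolution of the identity; the content is the same, your organization is a bit cleaner.
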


\begin{proof}
(1) It follows from the definition of $a_{\mathbf{e}_i, \mathbf{e}_j, \mathbf{e}_l}$.

(2) Let $\mathbf{m}=(m_i)_{i\in I}, \mathbf{s}=(s_i)_{i\in I}, \mathbf{t}=(t_i)_{i\in I}\in\mathbb{N}^I$.
Assume $\mathbf{m}\whot\mathbf{s}\whot\mathbf{t}\neq\mathbf{0}$. Put\\
\hspace{0.4cm}$b_{\mathbf{m}, \mathbf{s}, \mathbf{t}}$
$$=\sum\limits_{
1\leqslant i, j, l\leqslant n,
m_is_jt_l>0}
\sum\limits_{k_1=1}^{m_i}\sum\limits_{k_2=1}^{s_j}\sum\limits_{k_3=1}^{t_l}
((X^{\mathbf m}_{i,k_1}\whot  X^{\mathbf s}_{j,k_2})\whot X^{\mathbf t}_{l,k_3})a^{-1}_{i,j,l}
(Y^{\mathbf m}_{i,k_1}\whot (Y^{\mathbf s}_{j,k_2}\whot  Y^{\mathbf t}_{l,k_3})),$$
where $a^{-1}_{i,j,l}$ is the inverse matrix of $a_{i,j,l}$ in
$M_{\mathbf{e}_i\whot\mathbf{e}_j\whot\mathbf{e}_l}(A)$ for all $i, j, l\in I$.
Obviousely,  $b_{\mathbf{m}, \mathbf{s}, \mathbf{t}}\in M_{\mathbf{m}\whot\mathbf{s}\whot\mathbf{t}}(A)$.
By Corollary \ref{identitymap} and Lemma \ref{tensorcomp}, a straightforward computation shows that
$a_{\mathbf{m}, \mathbf{s}, \mathbf{t}}b_{\mathbf{m}, \mathbf{s}, \mathbf{t}}
=E_{\mathbf{m}\whot\mathbf{s}\whot\mathbf{t}}$
and $b_{\mathbf{m}, \mathbf{s}, \mathbf{t}}a_{\mathbf{m}, \mathbf{s}, \mathbf{t}}
=E_{\mathbf{m}\whot\mathbf{s}\whot\mathbf{t}}$. Hence $a_{\mathbf{m}, \mathbf{s}, \mathbf{t}}$
is invertible in $M_{\mathbf{m}\whot\mathbf{s}\whot\mathbf{t}}(A)$.

(3) Let $\mathbf{m}=(m_i)_{i\in I}, \mathbf{s}=(s_i)_{i\in I}, \mathbf{t}=(t_i)_{i\in I}\in\mathbb{N}^I$
with $\mathbf{m}\whot\mathbf{s}\whot\mathbf{t}\neq\mathbf{0}$.
Then for any $i',j',l'\in I$ with $m_{i'}s_{j'}t_{l'}>0$, and any $1\leqslant k'_1\leqslant m_{i'}$,
$1\leqslant k'_2\leqslant s_{j'}$ and $1\leqslant k'_3\leqslant t_{l'}$,
by Corollary \ref{identitymap} and Lemma \ref{tensorcomp}, we have
$$\begin{array}{rl}
&a_{\mathbf{m}, \mathbf{s}, \mathbf{t}}
((X^{\mathbf m}_{i',k'_1}\whot  X^{\mathbf s}_{j',k'_2})\whot  X^{\mathbf t}_{l',k'_3})\\
=&\sum\limits_{1\leqslant i, j, l\leqslant n,m_is_jt_l>0}
\sum\limits_{k_1=1}^{m_i}\sum\limits_{k_2=1}^{s_j}\sum\limits_{k_3=1}^{t_l}
(X^{\mathbf m}_{i,k_1}\whot (X^{\mathbf s}_{j,k_2}\whot  X^{\mathbf t}_{l,k_3}))\\
&\cdot a_{i,j,l}((Y^{\mathbf m}_{i,k_1}\whot  Y^{\mathbf s}_{j,k_2})\whot  Y^{\mathbf t}_{l,k_3})
((X^{\mathbf m}_{i',k'_1}\whot  X^{\mathbf s}_{j',k'_2})\whot X^{\mathbf t}_{l',k'_3})\\
=&\sum\limits_{1\leqslant i, j, l\leqslant n,m_is_jt_l>0}
\sum\limits_{k_1=1}^{m_i}\sum\limits_{k_2=1}^{s_j}\sum\limits_{k_3=1}^{t_l}
(X^{\mathbf m}_{i,k_1}\whot (X^{\mathbf s}_{j,k_2}\whot  X^{\mathbf t}_{l,k_3}))\\
&\cdot a_{i,j,l}((Y^{\mathbf m}_{i,k_1}X^{\mathbf m}_{i',k'_1}\whot
Y^{\mathbf s}_{j,k_2}X^{\mathbf s}_{j',k'_2})\whot  Y^{\mathbf t}_{l,k_3}X^{\mathbf t}_{l',k'_3})\\
=&(X^{\mathbf m}_{i',k'_1}\whot (X^{\mathbf s}_{j',k'_2}\whot  X^{\mathbf t}_{l',k'_3}))
a_{i',j',l'}((E_{\mathbf{e}_{i'}}\whot  E_{\mathbf{e}_{j'}})\whot  E_{\mathbf{e}_{l'}})\\
=&(X^{\mathbf m}_{i',k'_1}\whot (X^{\mathbf s}_{j',k'_2}\whot  X^{\mathbf t}_{l',k'_3}))a_{i',j',l'}.
\hspace{3cm}(3.1)\\
\end{array}$$

Now let $\mathbf{m}_i=(m_{ij})_{j\in I}, \mathbf{s}_i=(s_{ij})_{j\in I}\in\mathbb{N}^I$ and
$X_i\in M_{\mathbf{s}_i\times\mathbf{m}_i}(A)$, $i=1, 2, 3$.
If $\mathbf{m}_1\whot\mathbf{m}_2\whot\mathbf{m}_3=\mathbf{0}$ or
$\mathbf{s}_1\whot\mathbf{s}_2\whot\mathbf{s}_3=\mathbf{0}$, then obviously
$(X_1\whot(X_2\whot X_3))a_{\mathbf{m}_1, \mathbf{m}_2, \mathbf{m}_3}
=a_{\mathbf{s}_1, \mathbf{s}_2, \mathbf{s}_3}((X_1\whot X_2)\whot  X_3)$.
Now assume that $\mathbf{m}_1\whot\mathbf{m}_2\whot\mathbf{m}_3\neq\mathbf{0}$ and
$\mathbf{s}_1\whot\mathbf{s}_2\whot\mathbf{s}_3\neq\mathbf{0}$.
Again by Corollary \ref{identitymap} and Lemma \ref{tensorcomp}, we have:
$$\begin{array}{rl}
&X_1\whot(X_2\whot X_3)\\
=&(E_{\mathbf{s}_1}\whot (E_{\mathbf{s}_2}\whot  E_{\mathbf{s}_3}))
(X_1\whot(X_2\whot X_3))(E_{\mathbf{m}_1}\whot (E_{\mathbf{m}_2}\whot  E_{\mathbf{m}_3}))\\
=&\sum\limits_{i,i',j,j',l,l'}\ \sum\limits_{k_1,k_2,k_3, k'_1, k'_2,k'_3}
(X^{\mathbf{s}_1}_{i,k_1}Y^{\mathbf{s}_1}_{i,k_1}\whot
(X^{\mathbf{s}_2}_{j,k_2}Y^{\mathbf{s}_2}_{j,k_2}\whot  X^{\mathbf{s}_3}_{l,k_3}Y^{\mathbf{s}_3}_{l,k_3}))\\
&\cdot(X_1\whot(X_2\whot X_3))(X^{\mathbf{m}_1}_{i',k'_1}Y^{\mathbf{m}_1}_{i',k'_1}\whot
(X^{\mathbf{m}_2}_{j',k'_2}Y^{\mathbf{m}_2}_{j',k'_2}\whot  X^{\mathbf{m}_3}_{l',k'_3}Y^{\mathbf{m}_3}_{l',k'_3}))\\
=&\sum\limits_{i,i',j,j',l,l'}\ \sum\limits_{k_1,k_2,k_3, k'_1, k'_2,k'_3}
(X^{\mathbf{s}_1}_{i,k_1}\whot (X^{\mathbf{s}_2}_{j,k_2}\whot X^{\mathbf{s}_3}_{l,k_3}))\\
&\cdot(Y^{\mathbf{s}_1}_{i,k_1}X_1X^{\mathbf{m}_1}_{i',k'_1}\whot
(Y^{\mathbf{s}_2}_{j,k_2}X_2X^{\mathbf{m}_2}_{j',k'_2}\whot  Y^{\mathbf{s}_3}_{l,k_3}X_3X^{\mathbf{m}_3}_{l',k'_3}))
(Y^{\mathbf{m}_1}_{i',k'_1}\whot
(Y^{\mathbf{m}_2}_{j',k'_2}\whot  Y^{\mathbf{m}_3}_{l',k'_3})).
\end{array}$$
Similarly, we have:
$$\begin{array}{rl}
&(X_1\whot X_2)\whot  X_3\\
=&\sum\limits_{i,i',j,j',l,l'}\ \sum\limits_{k_1,k_2,k_3, k'_1, k'_2,k'_3}
((X^{\mathbf{s}_1}_{i,k_1}\whot  X^{\mathbf{s}_2}_{j,k_2})\whot  X^{\mathbf{s}_3}_{l,k_3})\\
&\cdot((Y^{\mathbf{s}_1}_{i,k_1}X_1X^{\mathbf{m}_1}_{i',k'_1}\whot
Y^{\mathbf{s}_2}_{j,k_2}X_2X^{\mathbf{m}_2}_{j',k'_2})\whot  Y^{\mathbf{s}_3}_{l,k_3}X_3X^{\mathbf{m}_3}_{l',k'_3})
((Y^{\mathbf{m}_1}_{i',k'_1}\whot  Y^{\mathbf{m}_2}_{j',k'_2})\whot  Y^{\mathbf{m}_3}_{l',k'_3}),\\
\end{array}$$
where the sum notation $\sum\limits_{i,i',j,j',l,l'}\ \sum\limits_{k_1,k_2,k_3, k'_1, k'_2,k'_3}$ means
$$\sum\limits_{1\leqslant i,i',j,j',l,l'\leqslant n, s_{1i}s_{2j}s_{3l}m_{1i'}m_{2j'}m_{3l'}>0}\ \
\sum\limits_{k_1=1}^{s_{1i}}\sum\limits_{k_2=1}^{s_{2j}}\sum\limits_{k_3=1}^{s_{3l}}
\sum\limits_{k'_1=1}^{m_{1i'}}\sum\limits_{k'_2=1}^{m_{2j'}}\sum\limits_{k'_3=1}^{m_{3l'}}.$$
Note that $Y^{\mathbf{s}_1}_{i,k_1}X_1X^{\mathbf{m}_1}_{i',k'_1}\in e_iAe_{i'}$,
$Y^{\mathbf{s}_2}_{j,k_2}X_2X^{\mathbf{m}_2}_{j',k'_2}\in e_jAe_{j'}$
and $Y^{\mathbf{s}_3}_{l,k_3}X_3X^{\mathbf{m}_3}_{l',k'_3}\in e_lAe_{l'}$.
Hence by (Ass 2) and Eq.(3.1), we have
$$\begin{array}{rl}
&a_{\mathbf{s}_1, \mathbf{s}_2, \mathbf{s}_3}((X_1\whot X_2)\whot  X_3)\\
=&\sum\limits_{i,i',j,j',l,l'}\ \sum\limits_{k_1,k_2,k_3, k'_1, k'_2,k'_3}a_{\mathbf{s}_1, \mathbf{s}_2, \mathbf{s}_3}
((X^{\mathbf{s}_1}_{i,k_1}\whot  X^{\mathbf{s}_2}_{j,k_2})\whot  X^{\mathbf{s}_3}_{l,k_3})\\
&\cdot((Y^{\mathbf{s}_1}_{i,k_1}X_1X^{\mathbf{m}_1}_{i',k'_1}\whot
Y^{\mathbf{s}_2}_{j,k_2}X_2X^{\mathbf{m}_2}_{j',k'_2})\whot  Y^{\mathbf{s}_3}_{l,k_3}X_3X^{\mathbf{m}_3}_{l',k'_3})
((Y^{\mathbf{m}_1}_{i',k'_1}\whot  Y^{\mathbf{m}_2}_{j',k'_2})\whot  Y^{\mathbf{m}_3}_{l',k'_3})\\
=&\sum\limits_{i,i',j,j',l,l'}\ \sum\limits_{k_1,k_2,k_3, k'_1, k'_2,k'_3}
(X^{\mathbf{s}_1}_{i,k_1}\whot (X^{\mathbf{s}_2}_{j,k_2}\whot  X^{\mathbf{s}_3}_{l,k_3}))a_{i,j,l}\\
&\cdot((Y^{\mathbf{s}_1}_{i,k_1}X_1X^{\mathbf{m}_1}_{i',k'_1}\whot
Y^{\mathbf{s}_2}_{j,k_2}X_2X^{\mathbf{m}_2}_{j',k'_2})\whot  Y^{\mathbf{s}_3}_{l,k_3}X_3X^{\mathbf{m}_3}_{l',k'_3})
((Y^{\mathbf{m}_1}_{i',k'_1}\whot  Y^{\mathbf{m}_2}_{j',k'_2})\whot  Y^{\mathbf{m}_3}_{l',k'_3})\\
=&\sum\limits_{i,i',j,j',l,l'}\ \sum\limits_{k_1,k_2,k_3, k'_1, k'_2,k'_3}
(X^{\mathbf{s}_1}_{i,k_1}\whot (X^{\mathbf{s}_2}_{j,k_2}\whot  X^{\mathbf{s}_3}_{l,k_3}))\\
&\cdot(Y^{\mathbf{s}_1}_{i,k_1}X_1X^{\mathbf{m}_1}_{i',k'_1}\whot  (Y^{\mathbf{s}_2}_{j,k_2}X_2X^{\mathbf{m}_2}_{j',k'_2}\whot  Y^{\mathbf{s}_3}_{l,k_3}X_3X^{\mathbf{m}_3}_{l',k'_3}))
a_{i',j',l'}((Y^{\mathbf{m}_1}_{i',k'_1}\whot  Y^{\mathbf{m}_2}_{j',k'_2})\whot  Y^{\mathbf{m}_3}_{l',k'_3})\\
=&\sum\limits_{i,i',j,j',l,l'}\ \sum\limits_{k_1,k_2,k_3, k'_1, k'_2,k'_3}
(X^{\mathbf{s}_1}_{i,k_1}Y^{\mathbf{s}_1}_{i,k_1}\whot (X^{\mathbf{s}_2}_{j,k_2}Y^{\mathbf{s}_2}_{j,k_2}
\whot  X^{\mathbf{s}_3}_{l,k_3}Y^{\mathbf{s}_3}_{l,k_3}))\\
&\cdot(X_1\whot(X_2\whot X_3))(X^{\mathbf{m}_1}_{i',k'_1}\whot  (X^{\mathbf{m}_2}_{j',k'_2}\whot  X^{\mathbf{m}_3}_{l',k'_3}))
a_{i',j',l'}((Y^{\mathbf{m}_1}_{i',k'_1}\whot  Y^{\mathbf{m}_2}_{j',k'_2})\whot  Y^{\mathbf{m}_3}_{l',k'_3})\\
=&(E_{\mathbf{s}_1}\whot (E_{\mathbf{s}_2}\whot  E_{\mathbf{s}_3}))(X_1\whot(X_2\whot X_3))
a_{\mathbf{m}_1, \mathbf{m}_2, \mathbf{m}_3}\\
=&(X_1\whot(X_2\whot X_3))a_{\mathbf{m}_1, \mathbf{m}_2, \mathbf{m}_3}.\\
\end{array}$$

(4) Let $\mathbf{m}=(m_i)_{i\in I}, \mathbf{s}=(s_i)_{i\in I}\in N^I$.
Since $X^{\mathbf{e}_1}_{1,1}=Y^{\mathbf{e}_1}_{1,1}=e_1=E_{\mathbf{e}_1}$,
by Corollary \ref{identitymap} and (Ass 3), we have
$$\begin{array}{rl}
&a_{\mathbf{m}, \mathbf{e}_1, \mathbf{s}}\\
=&\sum\limits_{1\leqslant i, j\leqslant n, m_is_j>0}\sum\limits_{k_1=1}^{m_i}\sum\limits_{k_2=1}^{s_j}
(X^{\mathbf m}_{i,k_1}\whot (E_{\mathbf{e}_1}\whot  X^{\mathbf s}_{j,k_2}))a_{i,1,j}
((Y^{\mathbf m}_{i,k_1}\whot  E_{\mathbf{e}_1})\whot  Y^{\mathbf s}_{j,k_2})\\
=&\sum\limits_{1\leqslant i, j\leqslant n, m_is_j>0}\sum\limits_{k_1=1}^{m_i}\sum\limits_{k_2=1}^{s_j}
(X^{\mathbf m}_{i,k_1}\whot  X^{\mathbf s}_{j,k_2})E_{\mathbf{e}_i\whot\mathbf{e}_j}
(Y^{\mathbf m}_{i,k_1}\whot  Y^{\mathbf s}_{j,k_2})\\
=&\sum\limits_{1\leqslant i, j\leqslant n, m_is_j>0}\sum\limits_{k_1=1}^{m_i}\sum\limits_{k_2=1}^{s_j}
X^{\mathbf m}_{i,k_1}Y^{\mathbf m}_{i,k_1}\whot  X^{\mathbf s}_{j,k_2}Y^{\mathbf s}_{j,k_2}\\
=&E_{\mathbf m}\whot  E_{\mathbf s}=E_{\mathbf{m}\whot\mathbf{s}}.\\
\end{array}$$

(5) By (Ass 4) and $\mathbf{e}_i\whot\mathbf{e}_j=\mathbf{c}_{ij}$, one can check that
$$\hspace{1cm}(E_{\mathbf{e}_i}\whot  a_{\mathbf{e}_j, \mathbf{e}_l, \mathbf{e}_t})
a_{\mathbf{e}_i, \mathbf{e}_j\whot\mathbf{e}_l, \mathbf{e}_t}
(a_{\mathbf{e}_i, \mathbf{e}_j, \mathbf{e}_l}\whot  E_{\mathbf{e}_t})
=a_{\mathbf{e}_i, \mathbf{e}_j, \mathbf{e}_l\whot\mathbf{e}_t}
a_{\mathbf{e}_i\whot\mathbf{e}_j, \mathbf{e}_l, \mathbf{e}_t}\hspace{1cm} (3.2)$$
for all $i,j,l,t\in I$. Now let $\mathbf{m}_j=(m_{ji})_{i\in I}\in\mathbb{N}^I$, $j=1, 2, 3, 4$. We have
$\mathbf{m}_2\whot\mathbf{m}_3=\sum_{i,j=1}^nm_{2i}m_{3j}\mathbf{c}_{ij}$.
It follows from Part (3) above that we have
$$\begin{array}{rl}
&(E_{\mathbf{m}_1}\whot  a_{\mathbf{m}_2, \mathbf{m}_3, \mathbf{m}_4})
a_{\mathbf{m}_1, \mathbf{m}_2\whot\mathbf{m}_3, \mathbf{m}_4}
(a_{\mathbf{m}_1, \mathbf{m}_2, \mathbf{m}_3}\whot  E_{\mathbf{m}_4})\\
=&(E_{\mathbf{m}_1}\whot  a_{\mathbf{m}_2, \mathbf{m}_3, \mathbf{m}_4})
a_{\mathbf{m}_1, \mathbf{m}_2\whot\mathbf{m}_3, \mathbf{m}_4}\\
&(\sum\limits_{i,j,l}\ \sum\limits_{k_1, k_2, k_3}
(X^{\mathbf{m}_1}_{i,k_1}\whot (X^{\mathbf{m}_2}_{j, k_2}\whot  X^{\mathbf{m}_3}_{l,k_3}))
a_{i,j,l}((Y^{\mathbf{m}_1}_{i,k_1}\whot  Y^{\mathbf{m}_2}_{j, k_2})\whot  Y^{\mathbf{m}_3}_{l,k_3})
\whot  E_{\mathbf{m}_4})\\
=&\sum\limits_{i,j,l,t}\ \sum\limits_{k_1, k_2, k_3, k_4}
(E_{\mathbf{m}_1}\whot  a_{\mathbf{m}_2, \mathbf{m}_3, \mathbf{m}_4})\\
&a_{\mathbf{m}_1, \mathbf{m}_2\whot\mathbf{m}_3, \mathbf{m}_4}
((X^{\mathbf{m}_1}_{i,k_1}\whot (X^{\mathbf{m}_2}_{j, k_2}\whot  X^{\mathbf{m}_3}_{l,k_3}))
\whot  X^{\mathbf{m}_4}_{t,k_4})
\\
&(a_{i,j,l}\whot  e_{t})
(((Y^{\mathbf{m}_1}_{i,k_1}\whot  Y^{\mathbf{m}_2}_{j, k_2})\whot  Y^{\mathbf{m}_3}_{l,k_3})
\whot  Y^{\mathbf{m}_4}_{t,k_4})\\
\end{array}$$

$$\begin{array}{rl}
=&\sum\limits_{i,j,l,t}\ \sum\limits_{k_1, k_2, k_3, k_4}
(E_{\mathbf{m}_1}\whot  a_{\mathbf{m}_2, \mathbf{m}_3, \mathbf{m}_4})\\
&(X^{\mathbf{m}_1}_{i,k_1}\whot ((X^{\mathbf{m}_2}_{j, k_2}\whot  X^{\mathbf{m}_3}_{l,k_3})
\whot  X^{\mathbf{m}_4}_{t,k_4}))a_{\mathbf{e}_{i}, \mathbf{e}_{j}\whot\mathbf{e}_{l}, \mathbf{e}_{t}}\\
&(a_{i,j,l}\whot  e_{t})
(((Y^{\mathbf{m}_1}_{i,k_1}\whot  Y^{\mathbf{m}_2}_{j, k_2})\whot  Y^{\mathbf{m}_3}_{l,k_3})
\whot  Y^{\mathbf{m}_4}_{t,k_4})\\
=&(\sum\limits_{i',j',l'}\ \sum\limits_{k'_1, k'_2, k'_3}
E_{\mathbf{m}_1}\whot (X^{\mathbf{m}_2}_{i',k'_1}\whot (X^{\mathbf{m}_3}_{j', k'_2}\whot  X^{\mathbf{m}_4}_{l',k'_3}))
a_{i',j',l'}((Y^{\mathbf{m}_2}_{i',k'_1}\whot  Y^{\mathbf{m}_3}_{j', k'_2})\whot  Y^{\mathbf{m}_4}_{l',k'_3}))\\
&(\sum\limits_{i,j,l,t}\ \sum\limits_{k_1, k_2, k_3, k_4}
(X^{\mathbf{m}_1}_{i,k_1}\whot ((X^{\mathbf{m}_2}_{j, k_2}\whot  X^{\mathbf{m}_3}_{l,k_3})
\whot  X^{\mathbf{m}_4}_{t,k_4}))a_{\mathbf{e}_{i}, \mathbf{e}_{j}\whot\mathbf{e}_{l}, \mathbf{e}_{t}}\\
&(a_{i,j,l}\whot  e_{t})
(((Y^{\mathbf{m}_1}_{i,k_1}\whot  Y^{\mathbf{m}_2}_{j, k_2})\whot  Y^{\mathbf{m}_3}_{l,k_3})
\whot  Y^{\mathbf{m}_4}_{t,k_4}))\\
=&\sum\limits_{i',j',l',t',i,j,l,t}\ \sum\limits_{k'_1, k'_2, k'_3,k'_4,k_1, k_2, k_3, k_4}
(X^{\mathbf{m}_1}_{t',k'_4}\whot (X^{\mathbf{m}_2}_{i',k'_1}\whot (X^{\mathbf{m}_3}_{j', k'_2}\whot  X^{\mathbf{m}_4}_{l',k'_3})))
(e_{t'}\whot  a_{i',j',l'})\\
&(Y^{\mathbf{m}_1}_{t',k'_4}\whot ((Y^{\mathbf{m}_2}_{i',k'_1}\whot  Y^{\mathbf{m}_3}_{j', k'_2})\whot  Y^{\mathbf{m}_4}_{l',k'_3}))
(X^{\mathbf{m}_1}_{i,k_1}\whot ((X^{\mathbf{m}_2}_{j, k_2}\whot  X^{\mathbf{m}_3}_{l,k_3})
\whot  X^{\mathbf{m}_4}_{t,k_4}))\\
&a_{\mathbf{e}_{i}, \mathbf{e}_{j}\whot\mathbf{e}_{l}, \mathbf{e}_{t}}
(a_{i,j,l}\whot  e_{t})
(((Y^{\mathbf{m}_1}_{i,k_1}\whot  Y^{\mathbf{m}_2}_{j, k_2})\whot  Y^{\mathbf{m}_3}_{l,k_3})
\whot  Y^{\mathbf{m}_4}_{t,k_4})\\
=&\sum\limits_{i,j,l,t,i',j',l',t'}\ \sum\limits_{k_1, k_2, k_3,k_4,k'_1, k'_2, k'_3,k'_4}
(X^{\mathbf{m}_1}_{t',k'_4}\whot (X^{\mathbf{m}_2}_{i',k'_1}\whot (X^{\mathbf{m}_3}_{j', k'_2}\whot  X^{\mathbf{m}_4}_{l',k'_3})))
(e_{t'}\whot  a_{i',j',l'})\\
&(Y^{\mathbf{m}_1}_{t',k'_4}X^{\mathbf{m}_1}_{i,k_1}\whot
((Y^{\mathbf{m}_2}_{i',k'_1}X^{\mathbf{m}_2}_{j, k_2}\whot  Y^{\mathbf{m}_3}_{j', k'_2}X^{\mathbf{m}_3}_{l,k_3})
\whot  Y^{\mathbf{m}_4}_{l',k'_3}X^{\mathbf{m}_4}_{t,k_4}))\\
&a_{\mathbf{e}_{i}, \mathbf{e}_{j}\whot\mathbf{e}_{l}, \mathbf{e}_{t}}
(a_{i,j,l}\whot  e_{t})
(((Y^{\mathbf{m}_1}_{i,k_1}\whot  Y^{\mathbf{m}_2}_{j, k_2})\whot  Y^{\mathbf{m}_3}_{l,k_3})
\whot  Y^{\mathbf{m}_4}_{t,k_4})\\
=&\sum\limits_{i,j,l,t}\ \sum\limits_{k_1, k_2, k_3,k_4}
(X^{\mathbf{m}_1}_{i,k_1}\whot (X^{\mathbf{m}_2}_{j,k_2}\whot (X^{\mathbf{m}_3}_{l, k_3}\whot  X^{\mathbf{m}_4}_{t,k_4})))\\
&(E_{\mathbf{e}_i}\whot  a_{\mathbf{e}_j,\mathbf{e}_l,\mathbf{e}_t})
a_{\mathbf{e}_i, \mathbf{e}_j\whot\mathbf{e}_l, \mathbf{e}_t}
(a_{\mathbf{e}_i,\mathbf{e}_j,\mathbf{e}_l}\whot  E_{\mathbf{e}_t})
(((Y^{\mathbf{m}_1}_{i,k_1}\whot  Y^{\mathbf{m}_2}_{j, k_2})\whot  Y^{\mathbf{m}_3}_{l,k_3})
\whot  Y^{\mathbf{m}_4}_{t,k_4})\\
\end{array}$$
and
$$\begin{array}{rl}
&a_{\mathbf{m}_1, \mathbf{m}_2, \mathbf{m}_3\whot\mathbf{m}_4}a_{\mathbf{m}_1\whot\mathbf{m}_2, \mathbf{m}_3, \mathbf{m}_4}\\
=&\sum\limits_{i,j,l,t}\ \sum\limits_{k_1, k_2, k_3, k_4}a_{\mathbf{m}_1, \mathbf{m}_2, \mathbf{m}_3\whot\mathbf{m}_4}
((X^{\mathbf{m}_1}_{i,k_1}\whot  X^{\mathbf{m}_2}_{j,k_2})\whot (X^{\mathbf{m}_3}_{l,k_3}\whot  X^{\mathbf{m}_4}_{t,k_4}))\\
&((Y^{\mathbf{m}_1}_{i,k_1}\whot  Y^{\mathbf{m}_2}_{j,k_2})\whot (Y^{\mathbf{m}_3}_{l,k_3}\whot  Y^{\mathbf{m}_4}_{t,k_4}))
a_{\mathbf{m}_1\whot\mathbf{m}_2, \mathbf{m}_3, \mathbf{m}_4}\\
=&\sum\limits_{i,j,l,t}\ \sum\limits_{k_1, k_2, k_3, k_4}
(X^{\mathbf{m}_1}_{i,k_1}\whot (X^{\mathbf{m}_2}_{j,k_2}\whot (X^{\mathbf{m}_3}_{l,k_3}\whot  X^{\mathbf{m}_4}_{t,k_4})))
a_{\mathbf{e}_i, \mathbf{e}_j, \mathbf{e}_l\whot\mathbf{e}_t}\\
&a_{\mathbf{e}_i\whot\mathbf{e}_j, \mathbf{e}_l, \mathbf{e}_t}
(((Y^{\mathbf{m}_1}_{i,k_1}\whot  Y^{\mathbf{m}_2}_{j,k_2})\whot  Y^{\mathbf{m}_3}_{l,k_3})\whot  Y^{\mathbf{m}_4}_{t,k_4}).\\
\end{array}$$
By Eq.(3.2), one gets that
$(E_{\mathbf{m}_1}\whot  a_{\mathbf{m}_2, \mathbf{m}_3, \mathbf{m}_4})
a_{\mathbf{m}_1, \mathbf{m}_2\whot\mathbf{m}_3, \mathbf{m}_4}
(a_{\mathbf{m}_1, \mathbf{m}_2, \mathbf{m}_3}\whot  E_{\mathbf{m}_4})
=a_{\mathbf{m}_1, \mathbf{m}_2, \mathbf{m}_3\whot\mathbf{m}_4}
a_{\mathbf{m}_1\whot\mathbf{m}_2, \mathbf{m}_3, \mathbf{m}_4}$.
This completes the proof.
\end{proof}

\begin{remark}\label{Ass4}
(1) A family $\{a_{\mathbf{m}, \mathbf{s}, \mathbf{t}}\in M_{\mathbf{m}\whot\mathbf{s}\whot\mathbf{t}}(A)\}
_{\mathbf{m}, \mathbf{s}, \mathbf{t}\in\mathbb{N}^I}$ of matrices  satisfying the conditions in
Proposition \ref{ass} is uniquely determined by $\{a_{i,j,l}\}_{i,j,l\in I}$.

(2) Using the notations in the proof of Proposition \ref{ass}, the equation of (Ass 4) can be replaced
with Eq.(3.2).
\end{remark}

\begin{corollary}
Let $\mathbf{m}, \mathbf{s}\in\mathbb{N}^I$. Then
$a_{\mathbf{e}_1, \mathbf{m}, \mathbf{s}}=a_{\mathbf{m}, \mathbf{s}, \mathbf{e}_1}=E_{\mathbf{m}\whot\mathbf{s}}$.
In particular, $a_{1,i,j}=a_{i,j,1}=E_{\mathbf{e}_i\whot\mathbf{e}_j}=E_{\mathbf{c}_{ij}}$ for all
$i,j\in I$.
\end{corollary}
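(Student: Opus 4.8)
The plan is to derive everything from Proposition~\ref{ass}, which we may use freely. First I would prove the statement for the specific triples of basis vectors: $a_{\mathbf{e}_1,\mathbf{e}_j,\mathbf{e}_l}=a_{\mathbf{e}_j,\mathbf{e}_l,\mathbf{e}_1}=E_{\mathbf{c}_{jl}}$. This follows immediately from the very definition of $a_{\mathbf{e}_1,\mathbf{e}_j,\mathbf{e}_l}$ together with assumption (Ass~3) (which gives $a_{i,1,j}=E_{\mathbf{c}_{ij}}$) only after noting that we actually need the $\mathbf{e}_1$ in the \emph{first} or \emph{last} slot, not the middle. So instead I would invoke Proposition~\ref{ass}(1), which identifies $a_{\mathbf{e}_i,\mathbf{e}_j,\mathbf{e}_l}$ with $a_{i,j,l}$, and recall that by definition $e_1 A e_1=\mathbb{F}e_1$ and $X^{\mathbf{e}_1}_{1,1}=Y^{\mathbf{e}_1}_{1,1}=e_1=E_{\mathbf{e}_1}$. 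Then a direct unwinding, using Corollary~\ref{identitymap}(2) (namely $E_{\mathbf{e}_1}\whot Z=Z\whot E_{\mathbf{e}_1}=Z$) and the fact that $\mathbf{e}_1\whot\mathbf{m}=\mathbf{m}\whot\mathbf{e}_1=\mathbf{m}$, collapses the expressions defining $a_{\mathbf{e}_1,\mathbf{m},\mathbf{s}}$ and $a_{\mathbf{m},\mathbf{s},\mathbf{e}_1}$.

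More precisely, for $a_{\mathbf{e}_1,\mathbf{m},\mathbf{s}}$ I would expand the defining sum; the outer index $i$ is forced to be $1$ with $k_1=1$ since the first argument is $\mathbf{e}_1$, and the summand becomes
$(X^{\mathbf{e}_1}_{1,1}\whot(X^{\mathbf m}_{j,k_2}\whot X^{\mathbf s}_{l,k_3}))\,a_{1,j,l}\,((Y^{\mathbf{e}_1}_{1,1}\whot Y^{\mathbf m}_{j,k_2})\whot Y^{\mathbf s}_{l,k_3})$,
which by $X^{\mathbf{e}_1}_{1,1}=E_{\mathbf{e}_1}$ and Corollary~\ref{identitymap}(2) equals
$(X^{\mathbf m}_{j,k_2}\whot X^{\mathbf s}_{l,k_3})\,a_{1,j,l}\,(Y^{\mathbf m}_{j,k_2}\whot Y^{\mathbf s}_{l,k_3})$.
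Now $a_{1,j,l}=a_{\mathbf{e}_1,\mathbf{e}_j,\mathbf{e}_l}$, and I would apply Proposition~\ref{ass}(3) with $\mathbf{m}_1=\mathbf{s}_1=\mathbf{e}_1$, $X_1=E_{\mathbf{e}_1}$ (so $X_1\whot(X_2\whot X_3)=X_2\whot X_3$ and $(X_1\whot X_2)\whot X_3=X_2\whot X_3$), to get $a_{\mathbf{e}_1,\mathbf{e}_j,\mathbf{e}_l}=E_{\mathbf{e}_j\whot\mathbf{e}_l}=E_{\mathbf{c}_{jl}}$ --- alternatively, this also falls out of Proposition~\ref{ass}(5) with $\mathbf{m}_1=\mathbf{e}_1$. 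Hence the summand is $(X^{\mathbf m}_{j,k_2}\whot X^{\mathbf s}_{l,k_3})\,E_{\mathbf{c}_{jl}}\,(Y^{\mathbf m}_{j,k_2}\whot Y^{\mathbf s}_{l,k_3})=X^{\mathbf m}_{j,k_2}Y^{\mathbf m}_{j,k_2}\whot X^{\mathbf s}_{l,k_3}Y^{\mathbf s}_{l,k_3}$ by Lemma~\ref{tensorcomp}, and summing over $j,k_2,l,k_3$ gives $E_{\mathbf m}\whot E_{\mathbf s}=E_{\mathbf{m}\whot\mathbf{s}}$ by Corollary~\ref{identitymap}(1). The case $a_{\mathbf{m},\mathbf{s},\mathbf{e}_1}$ is entirely symmetric, now forcing the last index, using $a_{i,j,1}=a_{\mathbf{e}_i,\mathbf{e}_j,\mathbf{e}_1}=E_{\mathbf{c}_{ij}}$ established the same way. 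The degenerate cases where some factor is $\mathbf{0}$ are trivial since both sides are $E_{\mathbf{0}}=0$.

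The last two assertions are then immediate specializations: taking $\mathbf{m}=\mathbf{e}_i$ and $\mathbf{s}=\mathbf{e}_j$ and using Proposition~\ref{ass}(1) gives $a_{1,i,j}=a_{\mathbf{e}_1,\mathbf{e}_i,\mathbf{e}_j}=E_{\mathbf{e}_i\whot\mathbf{e}_j}=E_{\mathbf{c}_{ij}}$, and likewise $a_{i,j,1}=E_{\mathbf{c}_{ij}}$. I expect no real obstacle here; the only point requiring a little care is being consistent about which slot carries the unit object and verifying that $a_{\mathbf{e}_1,\mathbf{e}_j,\mathbf{e}_l}$ and $a_{\mathbf{e}_j,\mathbf{e}_l,\mathbf{e}_1}$ are indeed identity matrices --- but this is exactly what Proposition~\ref{ass}(3) (the case $a_{\mathbf{m},\mathbf{e}_1,\mathbf{s}}=E_{\mathbf{m}\whot\mathbf{s}}$) together with Proposition~\ref{ass}(5) deliver, so the argument is short.
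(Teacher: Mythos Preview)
Your expansion step reducing $a_{\mathbf{e}_1,\mathbf{m},\mathbf{s}}$ to the values $a_{1,j,l}$ is fine, but your first proposed way to identify $a_{1,j,l}$ with $E_{\mathbf{c}_{jl}}$ does not work. Applying Proposition~\ref{ass}(3) with $X_1=E_{\mathbf{e}_1}$ and $X_2,X_3$ identities yields only the tautology $E\cdot a_{\mathbf{e}_1,\mathbf{e}_j,\mathbf{e}_l}=a_{\mathbf{e}_1,\mathbf{e}_j,\mathbf{e}_l}\cdot E$; naturality against identity morphisms imposes no constraint on the associator. So Proposition~\ref{ass}(3) alone cannot force $a_{\mathbf{e}_1,\mathbf{e}_j,\mathbf{e}_l}=E_{\mathbf{c}_{jl}}$.

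Your fallback to Proposition~\ref{ass}(5) does work, and this is exactly the paper's route, applied more efficiently. The paper skips your expansion entirely and applies the pentagon identity directly with general $\mathbf{m},\mathbf{s}$: taking $\mathbf{m}_1=\mathbf{m}_2=\mathbf{e}_1$, $\mathbf{m}_3=\mathbf{m}$, $\mathbf{m}_4=\mathbf{s}$ in Proposition~\ref{ass}(5) and simplifying via Corollary~\ref{identitymap} and Proposition~\ref{ass}(4) gives $a_{\mathbf{e}_1,\mathbf{m},\mathbf{s}}\,a_{\mathbf{e}_1,\mathbf{m},\mathbf{s}}=a_{\mathbf{e}_1,\mathbf{m},\mathbf{s}}$; invertibility (Proposition~\ref{ass}(2)) then yields $a_{\mathbf{e}_1,\mathbf{m},\mathbf{s}}=E_{\mathbf{m}\whot\mathbf{s}}$. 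The case $a_{\mathbf{m},\mathbf{s},\mathbf{e}_1}$ is symmetric. Your detour through the defining sum is correct but redundant once you invoke the pentagon, since that argument already handles arbitrary $\mathbf{m},\mathbf{s}$ in one stroke. (Also note the minor slip: the identity $a_{\mathbf{m},\mathbf{e}_1,\mathbf{s}}=E_{\mathbf{m}\whot\mathbf{s}}$ is part~(4) of Proposition~\ref{ass}, not part~(3).)
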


\begin{proof}
By Proposition \ref{ass}(5), we have
$$(E_{\mathbf{e}_1}\whot  a_{\mathbf{e}_1, \mathbf{m}, \mathbf{s}})
a_{\mathbf{e}_1, \mathbf{e}_1\whot\mathbf{m}, \mathbf{s}}
(a_{\mathbf{e}_1, \mathbf{e}_1, \mathbf{m}}\whot  E_{\mathbf{s}})
=a_{\mathbf{e}_1, \mathbf{e}_1, \mathbf{m}\whot\mathbf{s}}
a_{\mathbf{e}_1\whot\mathbf{e}_1, \mathbf{m}, \mathbf{s}}.$$
Following  Corollary \ref{identitymap} and Proposition \ref{ass}(4), one obtains:
$a_{\mathbf{e}_1, \mathbf{m}, \mathbf{s}}
a_{\mathbf{e}_1, \mathbf{m}, \mathbf{s}}
=a_{\mathbf{e}_1, \mathbf{m}, \mathbf{s}}$, and so
$a_{\mathbf{e}_1, \mathbf{m}, \mathbf{s}}=E_{\mathbf{m}\whot\mathbf{s}}$.
Similarly, one can show that $a_{\mathbf{m}, \mathbf{s}, \mathbf{e}_1}=E_{\mathbf{m}\whot\mathbf{s}}$.
\end{proof}

\begin{theorem}\label{monoidalcat}
With the tensor products of objects and morphisms defined above,
$\widehat{\mathcal{C}}$ is a tensor category over $\mathbb F$.
\end{theorem}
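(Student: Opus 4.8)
The plan is to assemble the monoidal structure on $\widehat{\mathcal C}$ from the data already collected and then invoke the definition of a monoidal (tensor) category directly; all the substantial verifications have in fact been carried out in Lemma \ref{maptensor}, Lemma \ref{tensorcomp}, Corollary \ref{identitymap} and Proposition \ref{ass}, so the proof is largely a matter of organization.

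First I would check that $\whot$ is an $\mathbb F$-bilinear bifunctor $\widehat{\mathcal C}\times\widehat{\mathcal C}\to\widehat{\mathcal C}$. On objects it is the operation $\mathbf m\whot\mathbf s=\sum_{i,j}m_is_j\mathbf c_{ij}$, which is associative by Eq.$(*)$ and has $\mathbf e_1$ as a two-sided unit. On morphisms, for $X\in{\rm Hom}_{\widehat{\mathcal C}}(\mathbf m_1,\mathbf s_1)$ and $Y\in{\rm Hom}_{\widehat{\mathcal C}}(\mathbf m_2,\mathbf s_2)$, Lemma \ref{maptensor} shows $X\whot Y\in M_{(\mathbf s_1\whot\mathbf s_2)\times(\mathbf m_1\whot\mathbf m_2)}(A)={\rm Hom}_{\widehat{\mathcal C}}(\mathbf m_1\whot\mathbf m_2,\mathbf s_1\whot\mathbf s_2)$, so $\whot$ has the correct source and target. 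Bilinearity is immediate from the bilinearity of $\ot_{\mathbb F}$, the linearity of $\phi$, and the linearity of the shuffle operator $\Pi$ (equivalently, from the formula $X\whot Y=\ol P(\mathbf s_1,\mathbf s_2)\phi(X\ot_{\mathbb F}Y)\ol P(\mathbf m_1,\mathbf m_2)^T$ of Remark \ref{permu}). Preservation of identities is Corollary \ref{identitymap}(1), namely $E_{\mathbf m}\whot E_{\mathbf s}=E_{\mathbf m\whot\mathbf s}$, and compatibility with composition is Lemma \ref{tensorcomp}, $(X\whot Y)(X_1\whot Y_1)=(XX_1)\whot(YY_1)$. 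One also has to inspect the degenerate cases where $\mathbf m_1\whot\mathbf m_2=\mathbf 0$ or $\mathbf s_1\whot\mathbf s_2=\mathbf 0$, but there all matrices in sight are zero and every identity is trivial; I would dispatch these at the outset.

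Next I would supply the constraints. Take the unit object to be $\mathbf e_1$; since $\mathbf e_1\whot\mathbf m=\mathbf m\whot\mathbf e_1=\mathbf m$ on the nose and, by Corollary \ref{identitymap}(2), $E_{\mathbf e_1}\whot X=X\whot E_{\mathbf e_1}=X$, the left and right unitors may be taken to be the identity morphisms $E_{\mathbf m}$, and their naturality is automatic. For the associativity constraint, interpret $a_{\mathbf m_1,\mathbf m_2,\mathbf m_3}$ (defined just before Proposition \ref{ass}) as a morphism $(\mathbf m_1\whot\mathbf m_2)\whot\mathbf m_3\to\mathbf m_1\whot(\mathbf m_2\whot\mathbf m_3)$, which makes sense because both objects equal $\mathbf m_1\whot\mathbf m_2\whot\mathbf m_3$ in $\widehat{\mathcal C}$. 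Proposition \ref{ass}(2) says each $a_{\mathbf m_1,\mathbf m_2,\mathbf m_3}$ is invertible, hence an isomorphism, and Proposition \ref{ass}(3), $(X_1\whot(X_2\whot X_3))a_{\mathbf m_1,\mathbf m_2,\mathbf m_3}=a_{\mathbf s_1,\mathbf s_2,\mathbf s_3}((X_1\whot X_2)\whot X_3)$, is exactly the naturality square for $a$.

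Finally I would check the two coherence axioms. The pentagon axiom is precisely the identity of Proposition \ref{ass}(5). The triangle axiom reduces, since the unitors are identities, to $(E_{\mathbf m}\whot E_{\mathbf s})\,a_{\mathbf m,\mathbf e_1,\mathbf s}=E_{\mathbf m}\whot E_{\mathbf s}$, which follows from Proposition \ref{ass}(4), $a_{\mathbf m,\mathbf e_1,\mathbf s}=E_{\mathbf m\whot\mathbf s}$, together with Corollary \ref{identitymap}(1). Combining all of this with Proposition \ref{ksc}, which shows $\widehat{\mathcal C}$ is a Krull--Schmidt abelian $\mathbb F$-category, we conclude that $(\widehat{\mathcal C},\whot,\mathbf e_1,a)$ is a tensor category over $\mathbb F$; note also that its unit object satisfies $\mathrm{End}_{\widehat{\mathcal C}}(\mathbf e_1)=e_1Ae_1=\mathbb F e_1$. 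The only point requiring genuine care is the bifunctoriality bookkeeping — in particular tracking the permutation matrices $P(\mathbf s_1,\mathbf s_2)$ and $P(\mathbf m_1,\mathbf m_2)$ that reconcile $\Pi(\phi(X\ot_{\mathbb F}Y))$ with $\phi(X\ot_{\mathbb F}Y)$ — but since Lemmas \ref{maptensor} and \ref{tensorcomp} already absorb this, no serious obstacle remains.
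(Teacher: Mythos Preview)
Your proof is correct and follows essentially the same approach as the paper: you assemble the bifunctor from Lemma \ref{maptensor}, Corollary \ref{identitymap} and Lemma \ref{tensorcomp}, take identity unitors, and read off invertibility, naturality, the triangle and the pentagon from Proposition \ref{ass}(2)--(5), exactly as the paper does. Your treatment is slightly more detailed (explicit bilinearity, degenerate cases, the remark on $\mathrm{End}_{\widehat{\mathcal C}}(\mathbf e_1)$), but there is no substantive difference.
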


\begin{proof}
We know already  that $\mathbf{m}\whot\mathbf{s}\in{\rm Ob}(\widehat{\mathcal C})$ for any
$\mathbf{m}, \mathbf{s}\in{\rm Ob}(\widehat{\mathcal C})$, and that
$X\whot Y$ is a morphism from $\mathbf{m}_1\whot \mathbf{m}_2$ to
$\mathbf{s}_1\whot\mathbf{s}_2$ in $\widehat{\mathcal C}$ for any morphisms
$X\in{\rm Hom}_{\widehat{\mathcal C}}(\mathbf{m}_1, \mathbf{s}_1)$
and $Y\in{\rm Hom}_{\widehat{\mathcal C}}(\mathbf{m}_2, \mathbf{s}_2)$.
It follows from Corollary \ref{identitymap} and Lemma \ref{tensorcomp} that
$\whot : \widehat{\mathcal{C}}\times\widehat{\mathcal{C}}\ra\widehat{\mathcal{C}}$,
$(\mathbf{m}, \mathbf{s})\mapsto \mathbf{m}\whot\mathbf{s}$,
$(X, Y)\mapsto X\whot Y$, is an $\mathbb F$-bilinear bifunctor.

For any $\mathbf{m}=(m_i)_{1\leqslant i\leqslant n}\in{\rm Ob}(\widehat{\mathcal C})$, we have
$\mathbf{e}_1\whot\mathbf{m}=\mathbf{m}=\mathbf{m}\whot\mathbf{e}_1$.
Let $l_{\mathbf{m}}: \mathbf{e}_1\whot\mathbf{m}\ra \mathbf{m}$ and
$r_{\mathbf{m}}: \mathbf{m}\whot\mathbf{e}_1\ra \mathbf{m}$ be the identity morphisms,
i.e., $l_{\mathbf{m}}=r_{\mathbf{m}}={\rm id}_{\mathbf m}=E_{\mathbf m}$.
By Corollary \ref{identitymap}, we have ${\rm id}_{\mathbf{e}_1}\whot  X=X\whot{\rm id}_{\mathbf{e}_1}=X$
for any morphism $X\in{\rm Hom}_{\widehat{\mathcal C}}(\mathbf{m}, \mathbf{s})$.
It follows that the following two diagrams
$$\begin{array}{rclcrcl}
\mathbf{e}_1\whot\mathbf{m}&\xrightarrow{l_{\mathbf m}}&\mathbf{m}&&
\mathbf{m}\whot\mathbf{e}_1&\xrightarrow{r_{\mathbf m}}&\mathbf{m}\\
{\rm id}_{\mathbf{e}_1}\whot  X\downarrow\mbox{\hspace{0.3cm}}&&\hspace{0.1cm}\downarrow X& \mbox{ and }&
X\whot{\rm id}_{\mathbf{e}_1}\downarrow\mbox{\hspace{0.3cm}}&&\hspace{0.1cm}\downarrow X\\
\mathbf{e}_1\whot\mathbf{s}&\xrightarrow{l_{\mathbf s}}&\mathbf{s}&&
\mathbf{s}\whot\mathbf{e}_1&\xrightarrow{r_{\mathbf s}}&\mathbf{s}\\
\end{array}$$
commute for any morphism $X: \mathbf{m}\ra\mathbf{s}$ of $\widehat{\mathcal C}$.

Now let $a_{\mathbf{m}, \mathbf{s}, \mathbf{t}}$ be  as before
for any objects $\mathbf{m}, \mathbf{s}, \mathbf{t}$ of $\widehat{\mathcal C}$.
If $\mathbf{m}\whot\mathbf{s}\whot\mathbf{t}=\mathbf{0}$, then
$a_{\mathbf{m}, \mathbf{s}, \mathbf{t}}=E_{\mathbf 0}={\rm id}_{\mathbf 0}$
is an automorphism of $\mathbf{m}\whot\mathbf{s}\whot\mathbf{t}$ in $\widehat{\mathcal C}$.
If $\mathbf{m}\whot\mathbf{s}\whot\mathbf{t}\neq\mathbf{0}$, then it follows from Proposition \ref{ass}(2)
that $a_{\mathbf{m}, \mathbf{s}, \mathbf{t}}$ is an automorphism of
$\mathbf{m}\whot\mathbf{s}\whot\mathbf{t}$ in $\widehat{\mathcal C}$. Therefore, $\{a_{\mathbf{m}, \mathbf{s}, \mathbf{t}}\}$
forms a family of isomorphisms of $\widehat{\mathcal C}$.
Now let $X_1: \mathbf{m}_1\ra\mathbf{s}_1$, $X_2: \mathbf{m}_2\ra\mathbf{s}_2$ and
$X_3: \mathbf{m}_3\ra\mathbf{s}_3$ be morphisms of $\widehat{\mathcal C}$.
By Proposition \ref{ass}(3), the following diagram commutes:
$$\begin{array}{rcl}
\mathbf{m}_1\whot\mathbf{m}_2\whot\mathbf{m}_3&\xrightarrow{a_{\mathbf{m}_1, \mathbf{m}_2, \mathbf{m}_3}}&
\mathbf{m}_1\whot\mathbf{m}_2\whot\mathbf{m}_3\\
(X_1\whot X_2)\whot  X_3\downarrow\mbox{\hspace{1cm}}&&\hspace{1cm}\downarrow X_1\whot(X_2\whot X_3)\\
\mathbf{s}_1\whot\mathbf{s}_2\whot\mathbf{s}_3\mbox{\hspace{0.3cm}}&\xrightarrow{a_{\mathbf{s}_1, \mathbf{s}_2, \mathbf{s}_3}}&
\hspace{0.3cm}\mathbf{s}_1\whot\mathbf{s}_2\whot\mathbf{s}_3.\\
\end{array}$$
This shows that $\{a_{\mathbf{m}, \mathbf{s}, \mathbf{t}}\}$
is a family of natural isomorphisms of $\widehat{\mathcal C}$ indexed by all triples
$(\mathbf{m}, \mathbf{s}, \mathbf{t})$ of objects of $\widehat{\mathcal C}$.

Finally, by Proposition \ref{ass}(4) and (5), one can see that the Triangle Axioms for $a, l, r$
and the Pentagon Axiom for $a$ are satisfied.
Thus, $(\mathcal{C}, \whot , \mathbf{e}_1, a, l, r)$ is a tensor category,
where $\mathbf{e}_1$ is the unit object.
\end{proof}

\begin{remark}\label{GreenRing}
The Green ring $r(\widehat{\mathcal C})$ of $\widehat{\mathcal C}$ is isomorphic to $R$.
More precisely, the additive group homomorphism $r(\widehat{\mathcal C})\ra R$ given by
$[\mathbf{e}_i]\mapsto r_i$, $1\leqslant i\leqslant n$, induces a ring isomorphism.
Let $\mathbf{e}=(1, 1, \cdots, 1)$. Then $\mathbf{e}=\mathbf{e}_1\oplus\mathbf{e}_2\oplus
\cdots\oplus\mathbf{e}_n$ and ${\rm End}_{\widehat{\mathcal C}}(\mathbf{e})\cong A$ as $\mathbb{F}$-algebras.
\end{remark}

\begin{remark}
In  general, $\mathbf{e}_1$ is not necessarily a simple object of $\widehat{\mathcal C}$.
However, the following two statements are equivalent:\\
{\rm (1)} $\mathbf{e}_1$ is a simple object of $\widehat{\mathcal C}$;\\
{\rm (2)}  If $X\in M_{\mathbf{e}_1\times\mathbf{m}}(A)$ is a column-independent matrix,
then either $\mathbf{m}=\mathbf{0}$ and $X=0$, or $\mathbf{m}=\mathbf{e}_1$
and $X=\a E_{\mathbf{e}_1}=\a e_1$ for some $0\neq \a\in\mathbb F$.
\end{remark}

Let $\{a'_{i,j,l}\in M_{\mathbf{e}_i\whot\mathbf{e}_j\whot\mathbf{e}_l}(A)\}$
be another family of matrices indexed by all triples $(i,j,l)$
of elements of $I$ such that the conditions (Ass)(1)-(4) are satisfied.
Let $\{a'_{\mathbf{m}, \mathbf{s}, \mathbf{t}}\in M_{\mathbf{m}\whot\mathbf{s}\whot\mathbf{t}}(A)|
\mathbf{m}, \mathbf{s}, \mathbf{t}\in \mathbb{N}^I\}$
be the family of matrices determined by $\{a'_{i,j,l}\}_{i,j,l\in I}$
as before. Then by Theorem \ref{monoidalcat}, $(\widehat{\mathcal C}, \whot , \mathbf{e}_1, a', l, r)$ is a tensor category as well. We work out when the two tensor categories are tensor equivalent.

\begin{definition}\label{equivalence}
Two families $\{a_{i,j,l}\}_{i,j,l\in I}$ and $\{a'_{i,j,l}\}_{i,j,l\in I}$ of matrices are called equivalent if there exists a family of
invertible matrices $\eta(i, j)$ in $M_{\mathbf{c}_{ij}}(A)$
indexed by all couples $(i,j)$ of elements of $I$
such that $(x\whot y)\eta(i, j)=\eta(i', j')(x\whot y)$
and
$$\begin{array}{rl}
&\sum\limits_{t=1}^n\sum\limits_{1\leqslant k\leqslant c_{ijt}}
a_{i,j,l}(X^{\mathbf{c}_{ij}}_{t,k}\whot  e_l)\eta(t,l)
(Y^{\mathbf{c}_{ij}}_{t,k}\eta(i,j)\whot  e_l)\\
=&\sum\limits_{t=1}^n\sum\limits_{1\leqslant k\leqslant c_{jlt}}
(e_i\whot X^{\mathbf{c}_{jl}}_{t,k})\eta(i,t)(e_i\whot Y^{\mathbf{c}_{jl}}_{t,k}\eta(j,l))
a'_{i,j,l}\\
\end{array}$$
for all $i,j,l,i',j'\in I$,
$x\in e_{i'}Ae_i$ and $y\in e_{j'}Ae_j$.
\end{definition}

\begin{proposition}\label{equi0}
With the above notations, the tensor categories $(\widehat{\mathcal{C}}, \whot , \mathbf{e}_1, a, l, r)$ and $(\widehat{\mathcal{C}}, \whot , \mathbf{e}_1, a', l, r)$ are tensor equivalent  if $\{a_{i,j,l}\}_{i,j,l\in I}$ and $\{a'_{i,j,l}\}_{i,j,l\in I}$ are equivalent.
\end{proposition}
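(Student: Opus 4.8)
The plan is to produce a monoidal equivalence
$$(\mathrm{Id}_{\widehat{\mathcal C}},J,\varphi):(\widehat{\mathcal C},\whot,\mathbf{e}_1,a,l,r)\longrightarrow(\widehat{\mathcal C},\whot,\mathbf{e}_1,a',l,r)$$
whose underlying functor is the identity, and which is the identity on morphisms as well; such a functor is automatically an equivalence of categories, and since $\whot$ acts on morphisms independently of the chosen associator, it suffices to exhibit a family of invertible matrices $J_{\mathbf m,\mathbf s}\in M_{\mathbf m\whot\mathbf s}(A)$, natural in $(\mathbf m,\mathbf s)$, satisfying the hexagon axiom relating $a$ and $a'$, together with a unit isomorphism $\varphi:\mathbf{e}_1\to\mathbf{e}_1$ making the two unit triangles commute. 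The family $J$ will be assembled from $\eta=\{\eta(i,j)\}$ in exactly the way $\{a_{\mathbf m,\mathbf s,\mathbf t}\}$ was assembled from $\{a_{i,j,l}\}$ in Proposition \ref{ass}.

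First I would set $J_{\mathbf m,\mathbf s}:=E_{\mathbf 0}$ when $\mathbf m\whot\mathbf s=\mathbf 0$, and
$$J_{\mathbf m,\mathbf s}:=\sum_{1\leqslant i,j\leqslant n,\ m_is_j>0}\ \sum_{k_1=1}^{m_i}\sum_{k_2=1}^{s_j}(X^{\mathbf m}_{i,k_1}\whot X^{\mathbf s}_{j,k_2})\,\eta(i,j)\,(Y^{\mathbf m}_{i,k_1}\whot Y^{\mathbf s}_{j,k_2})$$
otherwise, where $X^{\mathbf m}_{i,k}$ and $Y^{\mathbf m}_{i,k}$ are the embeddings and projections of the previous section. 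Mimicking the proofs of Proposition \ref{ass}(1)--(2) — using Corollary \ref{identitymap}, Lemma \ref{tensorcomp}, and the orthogonality relations for the $X^{\mathbf m}_{i,k}$ and $Y^{\mathbf m}_{i,k}$ — one checks that $J_{\mathbf e_i,\mathbf e_j}=\eta(i,j)$ and that $J_{\mathbf m,\mathbf s}$ is invertible, its inverse being the analogous sum formed from the matrices $\eta(i,j)^{-1}$. Naturality, i.e. $(X_1\whot X_2)J_{\mathbf m_1,\mathbf m_2}=J_{\mathbf s_1,\mathbf s_2}(X_1\whot X_2)$ for $X_t\in M_{\mathbf s_t\times\mathbf m_t}(A)$, then follows from the relation $(x\whot y)\eta(i,j)=\eta(i',j')(x\whot y)$ of Definition \ref{equivalence} by the device used in the proof of Proposition \ref{ass}(3): replace $E_{\mathbf m_t}$ and $E_{\mathbf s_t}$ by the resolutions $\sum X^{\cdot}_{\cdot,\cdot}Y^{\cdot}_{\cdot,\cdot}$, commute the embeddings and projections through $\whot$ by Lemma \ref{tensorcomp}, and collapse everything to the index-level identity.

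The heart of the proof is the hexagon, which in our situation reads
$$a_{\mathbf m_1,\mathbf m_2,\mathbf m_3}\,J_{\mathbf m_1\whot\mathbf m_2,\mathbf m_3}\,(J_{\mathbf m_1,\mathbf m_2}\whot E_{\mathbf m_3})=J_{\mathbf m_1,\mathbf m_2\whot\mathbf m_3}\,(E_{\mathbf m_1}\whot J_{\mathbf m_2,\mathbf m_3})\,a'_{\mathbf m_1,\mathbf m_2,\mathbf m_3}$$
for all $\mathbf m_1,\mathbf m_2,\mathbf m_3\in{\rm Ob}(\widehat{\mathcal C})$. The key point is that the displayed equation in Definition \ref{equivalence} is \emph{precisely} this hexagon at $\mathbf m_t=\mathbf e_t$: since $\mathbf c_{ij}=\mathbf e_i\whot\mathbf e_j$, $\eta(i,j)=J_{\mathbf e_i,\mathbf e_j}$, $a_{i,j,l}=a_{\mathbf e_i,\mathbf e_j,\mathbf e_l}$ and $X^{\mathbf e_l}_{l,1}=Y^{\mathbf e_l}_{l,1}=e_l$, the left-hand sum there equals $a_{\mathbf e_i,\mathbf e_j,\mathbf e_l}\,J_{\mathbf c_{ij},\mathbf e_l}\,(J_{\mathbf e_i,\mathbf e_j}\whot E_{\mathbf e_l})$ (one uses Lemma \ref{tensorcomp} to factor $(Y^{\mathbf c_{ij}}_{t,k}\eta(i,j))\whot e_l=(Y^{\mathbf c_{ij}}_{t,k}\whot e_l)(\eta(i,j)\whot e_l)$ and to recognize $\sum_{t,k}(X^{\mathbf c_{ij}}_{t,k}\whot e_l)\eta(t,l)(Y^{\mathbf c_{ij}}_{t,k}\whot e_l)=J_{\mathbf c_{ij},\mathbf e_l}$), and likewise its right-hand sum equals $J_{\mathbf e_i,\mathbf c_{jl}}\,(E_{\mathbf e_i}\whot J_{\mathbf e_j,\mathbf e_l})\,a'_{\mathbf e_i,\mathbf e_j,\mathbf e_l}$. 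From this base case I would pass to arbitrary objects by the same bookkeeping as in the proof of Proposition \ref{ass}(5): insert resolutions of the identities $E_{\mathbf m_1},E_{\mathbf m_2},E_{\mathbf m_3}$ (and of $E_{\mathbf m_2\whot\mathbf m_3}$, $E_{\mathbf m_1\whot\mathbf m_2}$), transport the embeddings and projections past $a$, $a'$, $J$ using Lemma \ref{tensorcomp}, Corollary \ref{identitymap}, the naturality of $a$ and $a'$ (Proposition \ref{ass}(3)) and the naturality of $J$ just proved, until each summand is expressed through the $\mathbf e$-version of the hexagon, then recombine. Finally, evaluating the established hexagon at $(\mathbf e_1,\mathbf e_1,\mathbf m)$ and at $(\mathbf m,\mathbf e_1,\mathbf e_1)$ and using that $a$ and $a'$ are trivial whenever one argument is $\mathbf e_1$ (Proposition \ref{ass}(4) and its corollary), together with $e_1Ae_1=\mathbb F e_1$ and invertibility of $J$, forces $J_{\mathbf e_1,\mathbf m}=J_{\mathbf m,\mathbf e_1}=\lambda E_{\mathbf m}$ for a single nonzero scalar $\lambda$; taking $\varphi=\lambda^{-1}\mathrm{id}_{\mathbf e_1}$ then makes both unit triangles commute, so $(\mathrm{Id}_{\widehat{\mathcal C}},J,\varphi)$ is the desired tensor equivalence.

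The main obstacle is organizational rather than conceptual: the reduction of the general hexagon to its $\mathbf e$-case in the third paragraph is a long, index-heavy manipulation of the same flavour as the proof of Proposition \ref{ass}(5), and the real work is keeping careful track of how the $X^{\mathbf m}_{i,k}$ and $Y^{\mathbf m}_{i,k}$ commute past $\whot$, $a$, $a'$ and $J$. No genuinely new input beyond Definition \ref{equivalence} is needed, and the unit compatibility, which might at first look like a separate issue, turns out to be a free consequence of the hexagon.
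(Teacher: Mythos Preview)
Your proof is correct and follows essentially the same route as the paper: define the monoidal structure isomorphisms by the sum $\eta(\mathbf m,\mathbf s)=\sum(X^{\mathbf m}_{i,k_1}\whot X^{\mathbf s}_{j,k_2})\eta(i,j)(Y^{\mathbf m}_{i,k_1}\whot Y^{\mathbf s}_{j,k_2})$ and then argue, exactly as in Proposition~\ref{ass}, that invertibility, naturality, and the hexagon all reduce to their $\mathbf e$-level counterparts, the last of which is precisely the displayed identity in Definition~\ref{equivalence} (this reformulation is also recorded in Remark~\ref{reequivalence}). The paper's own proof is considerably more terse --- it only states the conclusion and points to Proposition~\ref{ass} --- so your write-up is in fact more explicit, particularly in deriving the unit compatibility $J_{\mathbf e_1,\mathbf m}=J_{\mathbf m,\mathbf e_1}=\lambda E_{\mathbf m}$ from the hexagon rather than leaving it as part of the ``straightforward verification''.
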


\begin{proof}
For any $\mathbf{m}, \mathbf{s}\in \mathbb{N}^I$, define
$\eta(\mathbf{m}, \mathbf{s})\in M_{\mathbf{m}\whot\mathbf{s}}(A)$ by
$$\eta(\mathbf{m}, \mathbf{s})=\sum\limits_{i,j=1}^n
\sum\limits_{1\leqslant k\leqslant m_i}\sum\limits_{1\leqslant k'\leqslant s_i}
(X^{\mathbf m}_{i,k}\whot  X^{\mathbf s}_{j,k'})\eta(i,j)(Y^{\mathbf m}_{i,k}\whot  Y^{\mathbf s}_{j,k'}).$$
Then an argument similar to the proof of Proposition \ref{ass} shows that
$\{\eta(\mathbf{m}, \mathbf{s})\in M_{\mathbf{m}\whot\mathbf{s}}(A)\}$
is a family of invertible matrices
indexed by all couples $(\mathbf{m}, \mathbf{s})$ of elements of $\mathbb{N}^I$
such that
$(X\whot Y)\eta(\mathbf{m}_1, \mathbf{m}_2)=\eta(\mathbf{s}_1, \mathbf{s}_2)(X\whot Y)$
and
$$\begin{array}{rl}
&a_{\mathbf{m}_1, \mathbf{m}_2, \mathbf{m}_3}\eta(\mathbf{m}_1\whot\mathbf{m}_2, \mathbf{m}_3)
(\eta(\mathbf{m}_1, \mathbf{m}_2)\whot E_{\mathbf{m}_3})\\
=&\eta(\mathbf{m}_1, \mathbf{m}_2\whot\mathbf{m}_3)(E_{\mathbf{m}_1}\whot \eta(\mathbf{m}_2, \mathbf{m}_3))
a'_{\mathbf{m}_1, \mathbf{m}_2, \mathbf{m}_3}\\
\end{array}$$
for all $\mathbf{m}_1, \mathbf{m}_2, \mathbf{m}_3, \mathbf{s}_1, \mathbf{s}_2\in\mathbb{N}^I$,
$X\in M_{\mathbf{s}_1\times\mathbf{m}_1}(A)$ and $Y\in M_{\mathbf{s}_2\times\mathbf{m}_2}(A)$.
Thus, the proposition follows from a straightforward verification.
\end{proof}

\begin{remark}\label{reequivalence}
Using the notations in the proof of Proposition \ref{equi0}, one may replace
the last equation of Definition \ref{equivalence}  with the equation:
$$a_{\mathbf{e}_i, \mathbf{e}_j, \mathbf{e}_l}\eta(\mathbf{e}_i\whot\mathbf{e}_j, \mathbf{e}_l)
(\eta(\mathbf{e}_i, \mathbf{e}_j)\whot E_{\mathbf{e}_l})
=\eta(\mathbf{e}_i, \mathbf{e}_j\whot\mathbf{e}_l)(E_{\mathbf{e}_i}\whot \eta(\mathbf{e}_j, \mathbf{e}_l))
a'_{\mathbf{e}_i, \mathbf{e}_j, \mathbf{e}_l}$$
for all $i, j,l\in I$.
\end{remark}

Assume that $\s$ is a permutation of $I$. Let $P_{\s}=(p_{ij})$ be the corresponding permutation $n\times n$-matrix over $\mathbb Z$, that is,
$$p_{ij}=\left\{\begin{array}{ll}
1,& \mbox{ if } j=\s(i),\\
0,& \mbox{ otherwise},\\
\end{array}\right.$$
where $i, j\in I$.
Let $\mathbf{m}=(m_1, m_2, \cdots, m_n)\in\mathbb{N}^I$. Put
$$\mathbf{m}^{\s}=\mathbf{m}P_{\s}=(m_{\s^{-1}(1)}, m_{\s^{-1}(2)}, \cdots, m_{\s^{-1}(n)}).$$
Then the map $\mathbb{N}^I\ra \mathbb{N}^I$, $\mathbf{m}\mapsto \mathbf{m}^{\s}$,
is an additive monoid isomorphism. Moreover, $\mathbf{e}_i^{\s}=\mathbf{e}_{\s(i)}$
for all $i\in I$.
For $\mathbf{m}=\mathbf{0}$, let $P_{\s}(\mathbf{0})=0$, the $1\times 1$ zero matrix over $\mathbb F$.
For any $\mathbf{m}\neq\mathbf{0}$, let $P_{\s}(\mathbf{m})=(P_{ij})$ be the permutation
$|\mathbf{m}|\times|\mathbf{m}|$-matrix over $\mathbb F$ defined as follows:
if $m_i=0$, then there is no  $\s(i)$-th row $(P_{\s(i)j})_{j\in I}$ and
there is no  $i$-th column $(P_{ji})_{j\in I}$ in $P_{\s}(\mathbf{m})$; if $m_{\s^{-1}(i)}m_j>0$,  then $P_{ij}$ is an $m_{\s^{-1}(i)}\times m_j$-matrix given by
$$P_{ij}=\left\{\begin{array}{ll}
I_{m_j},& \mbox{ if } j=\s^{-1}(i),\\
0,& \mbox{ otherwise }.\\
\end{array}\right.$$
Let $P_{\s}(\mathbf{m})^T$ denote the transposed matrix of $P_{\s}(\mathbf{m})$.
Then $P_{\s}(\mathbf{m})^{-1}=P_{\s}(\mathbf{m})^T=P_{\s^{-1}}(\mathbf{m}^{\s})$.
Obviously, $P_{\s}(\mathbf{e}_i)=I_1$ is the $1\times 1$ identity matrix.

Now let $R'$ be another $\mathbb{Z}_+$-ring with a finite unital
$\mathbb{Z}_+$-basis $\{r'_j\}_{j\in I'}$ such that $r'_ir'_j\neq 0$ for all $i, j\in I'$,
and $A'$ a finite dimensional $\mathbb{F}$-algebra
with a complete set of orthogonal primitive idempotents $\{e'_j\}_{j\in I'}$.
Assume that $I'=\{1, 2, \cdots, n'\}$ and $r'_1=1$, the unity of $R'$,
and that the properties (KS), (Dec), (RUA), (LUA), (RI) and (CI) are satisfied
for $A'$.

Suppose that  $r'_ir'_j=\sum_{k=1}^{n'}c'_{ijk}r'_k$ for some $c'_{ijk}\in\mathbb{Z}_+$, where $1\leqslant i, j\leqslant n'$,
and let $\mathbf{c'}_{ij}=(c'_{ijk})_{1\leqslant k\leqslant n'}=(c'_{ij1}, c'_{ij2}, \cdots, c'_{ijn'})\in \mathbb{N}^{I'}$.
Then one has an $\mathbb F$-algebra $M(R', A', I')$ defined similarly as $M(R, A, I)$  before.  We also assume that ${\rm dim}_{\mathbb F}(e'_1A'e'_1)=1$ and
there is an algebra map $\phi': A'\ot_{\mathbb F}A'\ra M(R', A', I')$
such that the conditions $(\phi 1)$ and $(\phi 2)$ are satisfied for $\phi'$.
Furthermore, assume that there is a family of matrices $\{a'_{i,j,l}\}$
indexed by all triples $(i,j,l)$ of elements of $I'$
such that the conditions $(Ass)(1)$-$(4)$ are satisfied.
Then, by Proposition \ref{ass}, there is a unique family of matrices $\{a'_{\mathbf{m}, \mathbf{s}, \mathbf{t}}\}$
indexed by all triples $(\mathbf{m}, \mathbf{s}, \mathbf{t})$ of elements of $\mathbb{N}^{I'}$ such that the conditions $(1)$-$(5)$ in Proposition \ref{ass} are satisfied.
Thus, one obtains  a tensor category $\widehat{\mathcal{C}'}$ in the same way  as  we did for $\widehat{\mathcal C}$. Denote by $\mathbf{e}'_j$, $1\leqslant j\leqslant n'$, the  corresponding indecomposable objects of $\widehat{\mathcal{C}'}$.

In the next theorem and its proof, in order to simplify the notations,  we shall   use   ${\ot}_{\widehat{\mathcal C}}$ and  ${\ot}_{\widehat{\mathcal{C}'}}$, instead of  ${\whot}_{\widehat{\mathcal C}}$ and  ${\whot}_{\widehat{\mathcal{C}'}}$, to denote  the tensor products in $\widehat{\mathcal{C}}$ and $\widehat{\mathcal{C}'}$, respectively.
For any $\mathbf{m}\in{\rm Ob}(\widehat{\mathcal{C}'})={\mathbb N}^{I'}$, let $E'_{\mathbf m}$ denote the identity element
of the algebra $M_{\mathbf m}(A')$. For any nonzero $\mathbf{m}\in\mathbb{N}^{I'}$,
let $Y'^{\mathbf{m}}_{i,k}\in M_{\mathbf{e}_i\times\mathbf{m}}(A')$
and $X'^{\mathbf{m}}_{i,k}\in M_{\mathbf{m}\times\mathbf{e}_i}(A')$ be the matrices defined in the same way as the matrices $Y^{\mathbf{m}}_{i,k}$ and $X^{\mathbf{m}}_{i,k}$ over $A$  defined after Remark \ref{2.9}   respectively.

\begin{theorem}\label{equi}
With the above notations, $\widehat{\mathcal C}$ and $\widehat{\mathcal{C}'}$ are tensor  equivalent if and only if $n'=n$ (in this case, $I'=I$ and $\mathbf{e}'_i=\mathbf{e}_i$) and there is a permutation $\s$ of $I$ such that the following
conditions are satisfied:\\
{\rm (1)} The additive group homomorphism $R\ra R'$ given by
$r_i\mapsto r'_{\sigma(i)}$, $i\in I$, is a ring isomorphism.\\
{\rm (2)} There is an $\mathbb{F}$-algebra isomorphism $\delta: A\ra A'$
satisfying $\delta(e_i)=e'_{\sigma(i)}$ for all $i\in I$.\\
{\rm (3)} There exists a nonzero scale $\a\in\mathbb F$ and a family of invertible
elements $\varphi_{i,j}$ in the algebra $M_{\mathbf{c}'_{\s(i)\s(j)}}(A')$
indexed by all couples $(i,j)$ of elements of $I$ such that
the following conditions are satisfied:\\
\mbox{\hspace{0.5cm}\rm (a)}  $\varphi_{1, i}
=\varphi_{i, 1}=\a E'_{\mathbf{e}_i^{\s}} (=\a E'_{\mathbf{e}_{\s(i)}}=\a e'_{\s(i)})$;\\
\mbox{\hspace{0.5cm}\rm (b)}
$\varphi_{i', j'}(\d(x)\ot_{\widehat{\mathcal{C}'}}\d(y))=P_{\s}(\mathbf{c}_{i'j'})\d(x\ot_{\widehat{\mathcal{C}}}y)
P_{\s}(\mathbf{c}_{ij})^T\varphi_{i,j}$;\\
\mbox{\hspace{0.5cm}\rm (c)}
$\sum\limits_{t=1}^n\sum\limits_{1\leqslant k\leqslant c_{ijt}}
\d(a_{i,j,l})\d(X^{\mathbf{c}_{ij}}_{t,k}\ot_{\widehat{\mathcal{C}}}e_l)
P_{\s}(\mathbf{c}_{tl})^T\varphi_{t, l}
(Y'^{\mathbf{c}'_{\s(i)\s(j)}}_{\s(t),k}\varphi_{i, j}\ot_{\widehat{\mathcal{C}'}}e'_{\s(l)})$\\
\mbox{\hspace{1cm}} $=\sum\limits_{t=1}^n\sum\limits_{1\leqslant k\leqslant c_{jlt}}
\d(e_i\ot_{\widehat{\mathcal{C}}}X^{\mathbf{c}_{jl}}_{t,k})
P_{\s}(\mathbf{c}_{it})^T\varphi_{i, t}
(e'_{\s(i)}\ot_{\widehat{\mathcal{C}'}}Y'^{\mathbf{c}'_{\s(j)\s(l)}}_{\s(t),k}\varphi_{j, l})
a'_{\s(i), \s(j), \s(l)}$\\
for all $i,j,l,i',j'\in I$, $x\in e_{i'}Ae_i$ and $y\in e_{j'}Ae_j$,
where $\delta: A\ra A'$ is the algebra isomorphism given in $(2)$, $\d(Z)=(\d(z_{ij}))$
for any matrix $Z=(z_{ij})$ over $A$.
\end{theorem}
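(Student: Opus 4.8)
The plan is to prove the two implications separately, the converse (``if'') direction being the substantive one: out of the data $(\s,\d,\a,\{\varphi_{i,j}\})$ I will manufacture a tensor equivalence $F\colon\widehat{\mathcal C}\ra\widehat{\mathcal{C}'}$, essentially by ``spreading out'' the family $\{\varphi_{i,j}\}$ over all objects exactly as $\theta_{ij}$ and $a_{i,j,l}$ were spread out to $\theta(\mathbf m_1,\mathbf m_2)$ and $a_{\mathbf m,\mathbf s,\mathbf t}$ in the previous sections. The direct (``only if'') direction will then amount to reading off this data from a given tensor functor and recognizing (1), (2) and (3) as the functoriality on Green rings, the restriction to endomorphism algebras, and the monoidal-coherence axioms, all evaluated on indecomposables.

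\textbf{The converse direction.} Assume $n'=n$ and fix $\s,\d,\a,\{\varphi_{i,j}\}$ as in (1)--(3). First I note that (1), applied to $r_ir_j=\sum_kc_{ijk}r_k$, forces $c_{ijk}=c'_{\s(i)\s(j)\s(k)}$, i.e. $\mathbf c_{ij}^{\s}=\mathbf c'_{\s(i)\s(j)}$; in particular (3)(a) forces $\s(1)=1$. Define $F$ on objects by $F(\mathbf m)=\mathbf m^{\s}=\mathbf mP_{\s}$ and on morphisms by $F(X)=P_{\s}(\mathbf s)\,\d(X)\,P_{\s}(\mathbf m)^{T}$ for $X\in{\rm Hom}_{\widehat{\mathcal C}}(\mathbf m,\mathbf s)=M_{\mathbf s\times\mathbf m}(A)$. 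Since $\d$ is an algebra isomorphism with $\d(e_i)=e'_{\s(i)}$ and $P_{\s}(\mathbf m)^{-1}=P_{\s}(\mathbf m)^{T}$, this $F$ is a well-defined $\mathbb F$-linear functor which is fully faithful and essentially surjective (as $\mathbf m\mapsto\mathbf m^{\s}$ is a bijection of ${\mathbb N}^I$), hence an equivalence of abelian categories by Proposition~\ref{ksc}. Using $\mathbf c_{ij}^{\s}=\mathbf c'_{\s(i)\s(j)}$ one checks $F(\mathbf m){\whot}_{\widehat{\mathcal{C}'}}F(\mathbf s)=F(\mathbf m{\whot}_{\widehat{\mathcal C}}\mathbf s)$ for all $\mathbf m,\mathbf s$, so each $\varphi_{i,j}\in M_{\mathbf c'_{\s(i)\s(j)}}(A')$ reads as an isomorphism $F(\mathbf e_i){\whot}_{\widehat{\mathcal{C}'}}F(\mathbf e_j)\ra F(\mathbf e_i{\whot}_{\widehat{\mathcal C}}\mathbf e_j)$. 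I then set, for $\mathbf m{\whot}\mathbf s\neq\mathbf 0$,
$$\xi_{\mathbf m,\mathbf s}:=\sum_{i,j=1}^{n}\ \sum_{1\leqslant k_1\leqslant m_i}\ \sum_{1\leqslant k_2\leqslant s_j}
F(X^{\mathbf m}_{i,k_1}{\whot}_{\widehat{\mathcal C}}X^{\mathbf s}_{j,k_2})\,\varphi_{i,j}\,(F(Y^{\mathbf m}_{i,k_1}){\whot}_{\widehat{\mathcal{C}'}}F(Y^{\mathbf s}_{j,k_2})),$$
and $\xi_{\mathbf m,\mathbf s}:=0$ otherwise.

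\textbf{Verifying $(F,\xi)$ is monoidal.} Mimicking the proofs of Corollary~\ref{identitymap}, Lemma~\ref{tensorcomp} and Proposition~\ref{ass}, I will check in turn: (i) $\xi_{\mathbf m,\mathbf s}$ is invertible, its inverse built from the $\varphi_{i,j}^{-1}$; (ii) condition (3)(b) yields naturality of $\xi$ in both variables; (iii) condition (3)(a) together with $\s(1)=1$ gives $\xi_{\mathbf e_1,\mathbf m}=\xi_{\mathbf m,\mathbf e_1}=\a\,E'_{\mathbf m^{\s}}$, so that, after rescaling the unit isomorphism $\mathbf e_1\ra F(\mathbf e_1)$ by $\a^{-1}$, $F$ respects the left and right unit constraints; and (iv) condition (3)(c), spread out over arbitrary objects by the reduction-to-indecomposables argument of Proposition~\ref{ass}(5) and Remark~\ref{reequivalence}, yields the hexagon identity $F(a_{\mathbf m_1,\mathbf m_2,\mathbf m_3})\,\xi_{\mathbf m_1{\whot}\mathbf m_2,\mathbf m_3}\,(\xi_{\mathbf m_1,\mathbf m_2}{\whot}E'_{\mathbf m_3^{\s}})=\xi_{\mathbf m_1,\mathbf m_2{\whot}\mathbf m_3}\,(E'_{\mathbf m_1^{\s}}{\whot}\xi_{\mathbf m_2,\mathbf m_3})\,a'_{\mathbf m_1^{\s},\mathbf m_2^{\s},\mathbf m_3^{\s}}$. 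Together these four facts say precisely that $(F,\xi)$ is a tensor equivalence $\widehat{\mathcal C}\ra\widehat{\mathcal{C}'}$.

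\textbf{The direct direction and the main obstacle.} For ``only if'', start from a tensor equivalence $(F,\xi)\colon\widehat{\mathcal C}\ra\widehat{\mathcal{C}'}$. By Corollary~\ref{IndOb} the $\mathbf e_i$ and $\mathbf e'_j$ are complete lists of indecomposables, so $F$ induces a bijection $I\ra I'$; hence $n'=n$, and writing $\s$ for the induced permutation of $I$ we have $F(\mathbf e_i)\cong\mathbf e'_{\s(i)}$ and, since $F$ preserves the unit, $\s(1)=1$. The Green-ring functoriality of $F$ together with Remark~\ref{GreenRing} gives (1); applying $F$ to ${\rm End}_{\widehat{\mathcal C}}(\mathbf e)$, $\mathbf e=(1,\dots,1)$, and transporting along fixed isomorphisms $\mu_i\colon F(\mathbf e_i)\ra\mathbf e'_{\s(i)}$ gives an algebra isomorphism $A\ra A'$ which, after adjusting the $\mu_i$, satisfies $\d(e_i)=e'_{\s(i)}$, i.e. (2); and transporting the components $\xi_{\mathbf e_i,\mathbf e_j}$ through the $\mu_i$ produces invertible $\varphi_{i,j}\in M_{\mathbf c'_{\s(i)\s(j)}}(A')$ for which (3)(b) is naturality of $\xi$, (3)(c) is the hexagon axiom restricted to the $\mathbf e_i$, and (3)(a) --- with $\a\in\mathbb F^{\times}$ the scalar by which the unit isomorphism $\mathbf e'_1\ra F(\mathbf e_1)$ differs from $\mu_1^{-1}$, using ${\rm End}_{\widehat{\mathcal{C}'}}(\mathbf e'_1)=\mathbb F e'_1$ --- is the unit compatibility. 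The main obstacle, as throughout this paper, is the coherence bookkeeping in step (iv): $\xi$ is defined only from the indecomposable data $\{\varphi_{i,j}\}$, yet the hexagon must hold for all triples of objects, and reducing this to condition (3)(c) requires the same lengthy ``spreading out'' computations as in the proof of Proposition~\ref{ass}(5). A secondary subtlety, in the direct direction, is the normalization needed to force $\d(e_i)=e'_{\s(i)}$ on the nose and to isolate the single scalar $\a$ coming from the unit constraint.
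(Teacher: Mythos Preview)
Your proposal is correct and follows essentially the same route as the paper: in the converse direction you define $F$ on objects by $\mathbf m\mapsto\mathbf m^{\s}$ and on morphisms by conjugation with the permutation matrices $P_{\s}(\cdot)$ composed with $\d$, then spread the $\varphi_{i,j}$ out to a monoidal structure $\xi_{\mathbf m,\mathbf s}$ exactly as in the paper's formula~(3.3), and verify (i)--(iv); in the forward direction you read the data $(\s,\d,\a,\{\varphi_{i,j}\})$ off a given tensor equivalence by restricting to indecomposables.

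The one place where you diverge slightly from the paper is in the forward direction: you write $F(\mathbf e_i)\cong\mathbf e'_{\s(i)}$ and then transport along auxiliary isomorphisms $\mu_i$, worrying about the normalization needed to force $\d(e_i)=e'_{\s(i)}$. The paper avoids this entirely by invoking Lemma~\ref{iso}: in the skeletal categories $\widehat{\mathcal C}$ and $\widehat{\mathcal C}'$ isomorphic objects are \emph{equal}, so $F(\mathbf e_i)=\mathbf e_{\s(i)}$ on the nose, and more generally $F(\mathbf m)=\mathbf m^{\s}$ exactly. This gives $\d(e_i)=F({\rm id}_{\mathbf e_i})={\rm id}_{\mathbf e_{\s(i)}}=e'_{\s(i)}$ for free, with no $\mu_i$'s to adjust. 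The paper does still make one normalization choice (``we may regard that $F$ preserves the order of summands in $F(m\mathbf e_i)=m\mathbf e_{\s(i)}$''), which plays the role your $\mu_i$'s would have played on the diagonal blocks, and from which the explicit formula $F(X)=P_{\s}(\mathbf m)\d(X)P_{\s}(\mathbf s)^{T}$ follows. So your argument works, but using Lemma~\ref{iso} up front removes most of what you flagged as the ``secondary subtlety''.
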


\begin{proof}
Assume that $(F, \varphi_0, \varphi_2)$ is an $\mathbb{F}$-linear tensor  equivalence  functor from $\widehat{\mathcal C}$ to $\widehat{\mathcal C'}$. Then $(F, \varphi_0, \varphi_2)$ induces a ring isomorphism
$F: r(\widehat{\mathcal C})\ra r(\widehat{\mathcal{C}'})$ given by $F([\mathbf m])=[F(\mathbf m)]$
for any object $\mathbf m$ of $\widehat{\mathcal C}$.
Since $F: \widehat{\mathcal C}\ra \widehat{\mathcal C}'$ is a category equivalence and $\{\mathbf{e}_i|i\in I\}$
is the set of all non-isomorphic indecomposable objects of $\widehat{\mathcal C}$ by Corollary \ref{IndOb},
$\{F(\mathbf{e}_i)|i\in I\}$ is a set of representatives
of the isomorphism classes of indecomposable objects of $\widehat{\mathcal{C}'}$. Again by Corollary \ref{IndOb},
we have $\{F(\mathbf{e}_i)|i\in I\}=\{\mathbf{e}'_j|j\in I'\}$. Hence there is a bijection
$\sigma: I\ra I'$ such that $F(\mathbf{e}_i)=\mathbf{e}'_{\sigma(i)}$ in $\widehat{\mathcal{C}'}$ for any $i\in I$.
This implies $n'=n$, and hence we may regard $I'=I=\{1, 2, \cdots, n\}$. In this case,
$\mathbf{e}'_j=\mathbf{e}_j$ for any $j\in I$, and $\s$ is a permutation of $I$.
Consequently, $F([\mathbf{e}_i])=[F(\mathbf{e}_i)]=[\mathbf{e}_{\sigma(i)}]$ in $r(\mathcal{C}')$, $i\in I$.
Thus, Part (1) follows from Remark \ref{GreenRing}.
Note that $F(\mathbf{e}_1)=\mathbf{e}_1$, and hence $\s(1)=1$.

Since $F: \widehat{\mathcal C}\ra\widehat{\mathcal C}'$ is an
$\mathbb{F}$-linear  equivalence, $F$ induces an $\mathbb{F}$-space isomorphism
$F: {\rm Hom}_{\widehat{\mathcal C}}(\mathbf{e}_j, \mathbf{e}_i)\ra {\rm Hom}_{\widehat{\mathcal{C}'}}(\mathbf{e}_{\s(j)}, \mathbf{e}_{\s(i)})$,
$a\mapsto F(a)$, where $i, j\in I$. In particular,
$F: {\rm End}_{\widehat{\mathcal C}}(\mathbf{e}_i)\ra {\rm End}_{\widehat{\mathcal{C}'}}(\mathbf{e}_{\s(i)})$,
$a\mapsto F(a)$, is an $\mathbb F$-algebra isomorphism for any $i\in I$.
Now we have ${\rm Hom}_{\widehat{\mathcal C}}(\mathbf{e}_j, \mathbf{e}_i)=M_{\mathbf{e}_i\times\mathbf{e}_j}(A)=e_iAe_j$
and ${\rm Hom}_{\widehat{\mathcal{C}'}}(\mathbf{e}_{\s(j)}, \mathbf{e}_{\s(i)})=M_{\mathbf{e}_{\s(i)}\times\mathbf{e}_{\s(j)}}(A')
=e'_{\s(i)}A'e'_{\s(j)}$. Moreover, we have
$$A=\oplus_{1\leqslant i, j\leqslant n}e_iAe_j\ \mbox{ and }
A'=\oplus_{1\leqslant i, j\leqslant n}e'_iA'e'_j=\oplus_{1\leqslant i, j\leqslant n}e'_{\s(i)}A'e'_{\s(j)}.$$
Therefore, the $\mathbb{F}$-linear isomorphisms above induce an $\mathbb{F}$-linear isomorphism
$$\d: A\ra A', \sum_{i,j=1}^na_{ij}\mapsto \sum_{i,j=1}^nF(a_{ij}),\ a_{ij}\in e_iAe_j.$$
Obviously, $\d$ is an $\mathbb F$-algebra isomorphism and $\d(e_i)=e'_{\s(i)}$ for any $i\in I$.
Moreover, $\d(a)=F(a)$ for any $a\in e_iAe_j$, $i, j\in I$.
This shows Part (2).

By Part (2), $\d(e_iAe_j)=e'_{\s(i)}A'e'_{\s(j)}$ for all $i, j\in I$.
For any $X=(X_{ij})\in M_{\mathbf{m}\times\mathbf{s}}(A)$ with $X_{ij}\in M_{m_i\times s_j}(e_iAe_j)$,
$i, j\in I$, we have
$$\begin{array}{rl}
&P_{\s}(\mathbf{m})\d(X)P_{\s}(\mathbf{s})^T=P_{\s}(\mathbf{m})\left(\begin{array}{cccc}
\d(X_{11})&\d(X_{12})&\cdots &\d(X_{1n})\\
\d(X_{21})&\d(X_{22})&\cdots &\d(X_{2n})\\
\cdots&\cdots&\cdots &\cdots\\
\d(X_{n1})&\d(X_{n2})&\cdots &\d(X_{nn})\\
\end{array}\right)P_{\s}(\mathbf{s})^T\\
=&\left(\begin{array}{cccc}
\d(X_{\s^{-1}(1)\s^{-1}(1)})&\d(X_{\s^{-1}(1)\s^{-1}(2)})&\cdots &\d(X_{\s^{-1}(1)\s^{-1}(n)})\\
\d(X_{\s^{-1}(2)\s^{-1}(1)})&\d(X_{\s^{-1}(2)\s^{-1}(2)})&\cdots &\d(X_{\s^{-1}(2)\s^{-1}(n)})\\
\cdots&\cdots&\cdots &\cdots\\
\d(X_{\s^{-1}(n)\s^{-1}(1)})&\d(X_{\s^{-1}(n)\s^{-1}(2)})&\cdots &\d(X_{\s^{-1}(n)\s^{-1}(n)})\\
\end{array}\right),\\
\end{array}$$
and hence $P_{\s}(\mathbf{m})\d(X)P_{\s}(\mathbf{s})^T\in M_{\mathbf{m}^{\s}\times\mathbf{s}^{\s}}(A')$.

Let $i\in I$, and let $m$ be a positive integer.
Since $F(\mathbf{e}_i)=\mathbf{e}_{\s(i)}$ in $\mathcal{C}'$, we have
$$\begin{array}{rl}
F(m\mathbf{e}_i)=&F(\mathbf{e}_i\oplus \mathbf{e}_i\oplus\cdots\oplus\mathbf{e}_i)\\
=&F(\mathbf{e}_i)\oplus F(\mathbf{e}_i)\oplus\cdots\oplus F(\mathbf{e}_i)\\
=&\mathbf{e}_{\s(i)}\oplus \mathbf{e}_{\s(i)}\oplus\cdots\oplus \mathbf{e}_{\s(i)}\\
=&m\mathbf{e}_{\s(i)}\\
\end{array}$$
by Lemma \ref{iso}. Therefore, we may regard that $F$ preserves the order of summands in the equation
$F(m\mathbf{e}_i)=m\mathbf{e}_{\s(i)}$. This means that
$F(Y^{m\mathbf{e}_i}_{i,k})=Y'^{m\mathbf{e}_{\s(i)}}_{\s(i),k}$,
and consequently, $F(X^{m\mathbf{e}_i}_{i,k})=X'^{m\mathbf{e}_{\s(i)}}_{\s(i),k}$,
where $1\leqslant k\leqslant m$.

Let $i, j\in I$, and $m$ and $s$ be positive integers. Then
${\rm Hom}_{\widehat{\mathcal C}}(s\mathbf{e}_j, m\mathbf{e}_i)=M_{m\times s}(e_iAe_j)$
and ${\rm Hom}_{\widehat{\mathcal{C}'}}(s\mathbf{e}_{\s(j)}, m\mathbf{e}_{\s(i)})=M_{m\times s}(e'_{\s(i)}A'e'_{\s(j)})$.
Hence the map
$$F: M_{m\times s}(e_iAe_j)\ra M_{m\times s}(e'_{\s(i)}A'e'_{\s(j)}), \
X\mapsto F(X)$$
is an $\mathbb F$-linear isomorphism.
Let $X=(x_{kl})\in M_{m\times s}(e_iAe_j)={\rm Hom}_{\widehat{\mathcal C}}(s\mathbf{e}_j, m\mathbf{e}_i)$
with $x_{kl}\in e_iAe_j={\rm Hom}_{\widehat{\mathcal C}}(\mathbf{e}_j, \mathbf{e}_i)$. Then
$X=\sum_{k=1}^m\sum_{l=1}^sX^{m\mathbf{e}_i}_{i,k}Y^{m\mathbf{e}_i}_{i,k}
XX^{s\mathbf{e}_j}_{j,l}Y^{s\mathbf{e}_j}_{j,l}=\sum_{k=1}^m\sum_{l=1}^sX^{m\mathbf{e}_i}_{i,k}x_{kl}
Y^{s\mathbf{e}_j}_{j,l}$,
and  hence
$$\begin{array}{rcl}
F(X)&=&\sum_{k=1}^m\sum_{l=1}^sF(X^{m\mathbf{e}_i}_{i,k})F(x_{kl})F(Y^{s\mathbf{e}_j}_{j,l})\\
&=&\sum_{k=1}^m\sum_{l=1}^sX'^{m\mathbf{e}_{\s(i)}}_{\s(i),k}\d(x_{kl})Y'^{s\mathbf{e}_{\s(j)}}_{\s(j),l}\\
&=&(\d(x_{kl}))=\d(X).\\
\end{array}$$

Let $\mathbf{m}, \mathbf{s}\in{\rm Ob}(\widehat{\mathcal C})$. Then $\mathbf{m}=\oplus_{i=1}^nm_i\mathbf{e}_i$,
and hence $F(\mathbf{m})=\oplus_{i=1}^nF(m_i\mathbf{e}_i)=\oplus_{i=1}^nm_i\mathbf{e}_{\s(i)}
\cong\oplus_{i=1}^nm_{\s^{-1}(i)}\mathbf{e}_i=\mathbf{m}^{\s}$.
Hence $F(\mathbf{m})=\mathbf{m}^{\s}$ by Lemma \ref{iso}.
If $\pi_i$ is the projection from $\mathbf{m}$ to $m_i\mathbf{e}_i$,
then $F(\pi_i)$ is the projection from $\mathbf{m}^{\s}$ to $m_i\mathbf{e}_{\s(i)}$.
If $\tau_i$ is the embedding from $m_i\mathbf{e}_i$ into $\mathbf{m}$,
then $F(\tau_i)$ is the embedding from $m_i\mathbf{e}_{\s(i)}$ into $\mathbf{m}^{\s}$.
Thus, a similar argument to the above one shows that the $\mathbb F$-linear isomorphism
$F: {\rm Hom}_{\widehat{\mathcal C}}(\mathbf{s}, \mathbf{m})=M_{\mathbf{m}\times \mathbf{s}}(A)
\ra M_{\mathbf{m}^{\s}\times \mathbf{s}^{\s}}(A')={\rm Hom}_{\widehat{\mathcal{C}'}}(\mathbf{s}^{\s}, \mathbf{m}^{\s})$
is given by
$$F(X)=P_{\s}(\mathbf{m})\d(X)P_{\s}(\mathbf{s})^T.$$
In particular, we have $F(X^{\mathbf{m}}_{i,k})=X'^{\mathbf{m}^{\s}}_{\s(i),k}$
and $F(Y^{\mathbf{m}}_{i,k})=Y'^{\mathbf{m}^{\s}}_{\s(i),k}$
for any $\mathbf{0}\neq\mathbf{m}=(m_i)_{i\in I}\in\mathbb{N}^I$,
$1\leqslant i\leqslant n$ with $m_i>0$ and $1\leqslant k\leqslant m_i$.

Since $(F, \varphi_2, \varphi_0)$ is a tensor functor from $\widehat{\mathcal C}$ to $\widehat{\mathcal{C}'}$,
$\varphi_0: \mathbf{e}_1\ra F(\mathbf{e}_1)$ is an isomorphism and
$$\varphi_2(\mathbf{m}, \mathbf{s}): F(\mathbf{m})\ot_{\widehat{\mathcal{C}'}}F(\mathbf{s})
\ra F(\mathbf{m}\ot_{\widehat{\mathcal{C}}}\mathbf{s}), \ \mathbf{m, s}\in \mathbb{N}^I$$
form a family of natural isomorphisms
subject to the following three  commutative diagrams (denoted  (D3.1), (D3.2) and (D3.3), respectively):
$$\begin{array}{ccc}
(F(\mathbf{m})\ot_{\widehat{\mathcal{C}'}}F(\mathbf{s}))\ot_{\widehat{\mathcal{C}'}}F(\mathbf{t})
&\xrightarrow{a'_{F(\mathbf{m}), F(\mathbf{s}), F(\mathbf{t})}}
&F(\mathbf{m})\ot_{\widehat{\mathcal{C}'}}(F(\mathbf{s})\ot_{\widehat{\mathcal{C}'}}F(\mathbf{t}))\\
\mbox{\hspace{2cm}}\downarrow\varphi_2(\mathbf{m}, \mathbf{s})\ot_{\widehat{\mathcal{C}'}}{\rm id}_{F(\mathbf{t})}
&&\hspace{1.8cm}\downarrow{\rm id}_{F(\mathbf m)}\ot_{\widehat{\mathcal{C}'}}\varphi_2(\mathbf{s}, \mathbf{t})\\
F(\mathbf{m}\ot_{\widehat{\mathcal{C}}}\mathbf{s})\ot_{\widehat{\mathcal{C}'}}F(\mathbf{t})
&  \mathrm{(D3.1)} &F(\mathbf{m})\ot_{\widehat{\mathcal{C}'}}F(\mathbf{s}\ot_{\widehat{\mathcal{C}}}\mathbf{t})\\
\mbox{\hspace{1.5cm}}\downarrow\varphi_2(\mathbf{m}\ot_{\widehat{\mathcal{C}}}\mathbf{s}, \mathbf{t})
& &\hspace{1.3cm}\downarrow\varphi_2(\mathbf{m}, \mathbf{s}\ot_{\widehat{\mathcal{C}}}\mathbf{t})\\
F((\mathbf{m}\ot_{\widehat{\mathcal{C}}}\mathbf{s})\ot_{\widehat{\mathcal{C}}}\mathbf{t})
&\xrightarrow{F(a_{\mathbf{m}, \mathbf{s}, \mathbf{t}})}
&F(\mathbf{m}\ot_{\widehat{\mathcal{C}}}(\mathbf{s}\ot_{\widehat{\mathcal{C}}}\mathbf{t}))\\
\end{array}  $$
$$\begin{array}{ccc}
\mathbf{e}_1\ot_{\widehat{\mathcal{C}'}}F(\mathbf{m})&\xrightarrow{l_{F(\mathbf{m})}}&F(\mathbf{m})\\
\mbox{\hspace{1.7cm}}\downarrow\varphi_0\ot_{\widehat{\mathcal{C}'}}{\rm id}_{F(\mathbf m)}&&\hspace{0.9cm}\uparrow F(l_{\mathbf m})\\
F(\mathbf{e}_1)\ot_{\widehat{\mathcal{C}'}}F(\mathbf{m})&\xrightarrow{\varphi_2(\mathbf{e}_1, \mathbf{m})}&F(\mathbf{e}_1\ot_{\widehat{\mathcal{C}}}\mathbf{m})\\
\end{array} \ \  \ \mathrm{(D3.2)}$$
and
$$\begin{array}{ccc}
F(\mathbf{m})\ot_{\widehat{\mathcal{C}'}}\mathbf{e}_1&\xrightarrow{r_{F(\mathbf{m})}}&F(\mathbf{m})\\
\mbox{\hspace{1.7cm}}\downarrow{\rm id}_{F(\mathbf m)}\ot_{\widehat{\mathcal{C}'}}\varphi_0
&&\hspace{0.9cm}\uparrow F(r_{\mathbf m})\\
F(\mathbf{m})\ot_{\widehat{\mathcal{C}'}}F(\mathbf{e}_1)&\xrightarrow{\varphi_2(\mathbf{m}, \mathbf{e}_1)}
&F(\mathbf{m}\ot_{\widehat{\mathcal{C}}}\mathbf{e}_1)\\
\end{array} \ \  \ \mathrm{(D3.3)}$$
for all objects $\mathbf{m}, \mathbf{s}, \mathbf{t}$ in $\widehat{\mathcal C}$.

Since $F(\mathbf{e}_1)=\mathbf{e}_1$, $\varphi_0$ is an automorphism of $\mathbf{e}_1$ in $\widehat{\mathcal{C}'}$.
However, ${\rm End}_{\widehat{\mathcal{C}'}}(\mathbf{e}_1)=e'_1A'e'_1={\mathbb F}e'_1\cong\mathbb{F}$.
Hence there is a nonzero scale $\b\in\mathbb F$ such that $\varphi_0=\b{\rm id}_{\mathbf{e}_1}=\b e'_1$.
Let $\a=\b^{-1}$.

By Part (1), one gets that $c_{ijk}=c'_{\s(i)\s(j)\s(k)}$ for all $i, j, k\in I$,
and hence $F(\mathbf{c}_{ij})=\mathbf{c}_{ij}^{\s}=\mathbf{c}'_{\s(i)\s(j)}$ for all $i, j\in I$.
Now for any $\mathbf{m}, \mathbf{s}\in{\rm Ob}(\widehat{\mathcal C})$, we have
$$\begin{array}{rcl}
F(\mathbf{m}\ot_{\widehat{\mathcal{C}}}\mathbf{s})&=&(\mathbf{m}\ot_{\widehat{\mathcal{C}}}\mathbf{s})^{\s}
=(\sum_{i, j=1}^nm_is_j\mathbf{c}_{ij})^{\s}\\
&=&\sum_{i, j=1}^nm_is_j\mathbf{c}_{ij}^{\s}=\sum_{i, j=1}^nm_is_j\mathbf{c}'_{\s(i)\s(j)}\\
&=&\sum_{i, j=1}^nm_{\s^{-1}(i)}s_{\s^{-1}(j)}\mathbf{c}'_{ij}=\mathbf{m}^{\s}\ot_{\widehat{\mathcal{C}'}}\mathbf{s}^{\s}\\
&=&F(\mathbf{m})\ot_{\widehat{\mathcal{C}'}}F(\mathbf{s}).\\
\end{array}$$
Hence $\varphi_2(\mathbf{m}, \mathbf{s})$ is an invertible element in the algebra
$M_{(\mathbf{m}\ot_{\widehat{\mathcal{C}}}\mathbf{s})^{\s}}(A')$.
Let $\varphi_{i,j}=\varphi_2(\mathbf{e}_i,\mathbf{e}_j)$ for any $i,j\in I$.
Since $(\mathbf{e}_i\ot_{\widehat{\mathcal{C}}}\mathbf{e}_j)^{\s}=(\mathbf{c}_{ij})^{\s}=\mathbf{c}'_{\s(i)\s(j)}$,
we see that $\varphi_{i,j}$ is an invertible matrix in $M_{\mathbf{c}'_{\s(i)\s(j)}}(A')$.

Since $\{\varphi_2(\mathbf{m}, \mathbf{s})\}$ is a family of natural isomorphisms,
we have $\varphi_2(\mathbf{e}_{i'}, \mathbf{e}_{j'})(F(x)\ot_{\widehat{\mathcal{C}'}}F(y))
=F(x\ot_{\widehat{\mathcal{C}}}y)\varphi_2(\mathbf{e}_i, \mathbf{e}_j)$ for any $i,i',j,j'\in I$, $x\in e_{i'}Ae_i$ and $y\in e_{j'}Ae_j$,
that is,
$$\varphi_{i', j'}(\d(x)\ot_{\widehat{\mathcal{C}'}}\d(y))
=P_{\s}(\mathbf{c}_{i'j'})\d(x\ot_{\widehat{\mathcal{C}}}y)
P_{\s}(\mathbf{c}_{ij})^T \varphi_{i, j}.$$
In a similar manner,  we obtain
$\varphi_2(\mathbf{m}, \mathbf{s})(F(X^{\mathbf{m}}_{i,k})\ot_{\widehat{\mathcal{C}'}}F(X^{\mathbf{s}}_{j,l}))
=F(X^{\mathbf{m}}_{i,k}\ot_{\widehat{\mathcal{C}}}X^{\mathbf{s}}_{j,l})\varphi_2(\mathbf{e}_i, \mathbf{e}_j)$ for  $\mathbf{0\neq m}, \mathbf{0\neq s}\in\mathbb{N}^I$,
that is,
$$\varphi_2(\mathbf{m}, \mathbf{s})(X'^{\mathbf{m}^{\s}}_{\s(i),k}\ot_{\widehat{\mathcal{C}'}}X'^{\mathbf{s}^{\s}}_{\s(j),l})
=P_{\s}(\mathbf{m}\ot_{\widehat{\mathcal C}}\mathbf{s})\d(X^{\mathbf{m}}_{i,k}\ot_{\widehat{\mathcal{C}}}X^{\mathbf{s}}_{j,l})
P_{\s}(\mathbf{c}_{ij})^T\varphi_{i, j}.$$
This implies that
$$\begin{array}{rl}
\hspace{1cm}\varphi_2(\mathbf{m}, \mathbf{s})=&
\sum\limits_{i,j=1}^n\sum\limits_{1\leqslant k\leqslant m_i}\sum\limits_{1\leqslant l\leqslant s_j}
P_{\s}(\mathbf{m}\ot_{\widehat{\mathcal C}}\mathbf{s})\d(X^{\mathbf{m}}_{i,k}\ot_{\widehat{\mathcal C}}X^{\mathbf{s}}_{j,l})\\
&\hspace{3cm}\cdot P_{\s}(\mathbf{c}_{ij})^T\varphi_{i, j}
(Y'^{\mathbf{m}^{\s}}_{\s(i),k}\ot_{\widehat{\mathcal{C}'}}Y'^{\mathbf{s}^{\s}}_{\s(j),l})\hspace{1cm} (3.3)\\
\end{array}$$
Now letting $\mathbf{m}=\mathbf{e}_i$, $\mathbf{s}=\mathbf{e}_j$ and $\mathbf{t}=\mathbf{e}_l$
in the diagram (D3.1) and using Eq.(3.3), one obtains:
$$\begin{array}{rl}
&\sum\limits_{t=1}^n\sum\limits_{1\leqslant k\leqslant c_{ijt}}
\d(a_{i,j,l})\d(X^{\mathbf{c}_{ij}}_{t,k}\ot_{\widehat{\mathcal C}}e_l)
P_{\s}(\mathbf{c}_{tl})^T\varphi_{t, l}
(Y'^{\mathbf{c}'_{\s(i)\s(j)}}_{\s(t),k}\varphi_{i, j}\ot_{\widehat{\mathcal{C}'}}e'_{\s(l)})\\
=&\sum\limits_{t=1}^n\sum\limits_{1\leqslant k\leqslant c_{jlt}}
\d(e_i\ot_{\widehat{\mathcal C}}X^{\mathbf{c}_{jl}}_{t,k})
P_{\s}(\mathbf{c}_{it})^T\varphi_{i, t}
(e'_{\s(i)}\ot_{\widehat{\mathcal{C}'}}Y'^{\mathbf{c}'_{\s(j)\s(l)}}_{\s(t),k}\varphi_{j, l})
a'_{\s(i), \s(j), \s(l)}.\\
\end{array}$$

Note that the left and the right unit constraints are the identities in both of $\widehat{\mathcal C}$ and $\widehat{\mathcal{C}'}$.
Putting $\mathbf{m}=\mathbf{e}_i$ in the diagrams (D3.2) and (D3.3), one gets that
$\varphi_{1,i}=\a e'_{\s(i)}$ and $\varphi_{i,1}=\a e'_{\s(i)}$, $i\in I$.
Thus, we have shown that Part (3) holds.

Conversely, assume that $n'=n$, i.e. $I'=I$, and that there is a permutation $\s$ of $I$
such that the three conditions in the theorem are satisfied.

Define a functor $F: \widehat{\mathcal C}\ra \widehat{\mathcal{C}'}$ by
$$F(\mathbf{m})=\mathbf{m}^{\s},\ F(X)=P_{\s}(\mathbf{m})\d(X)P_{\s}(\mathbf{s})^T,$$
where $\mathbf{m}, \mathbf{s}\in{\rm Ob}(\widehat{\mathcal C})=\mathbb{N}^I$,
$X\in M_{\mathbf{m}\times\mathbf{s}}(A)={\rm Hom}_{\widehat{\mathcal C}}(\mathbf{s}, \mathbf{m})$.

Since the map $\mathbb{N}^I\ra \mathbb{N}^I$, $\mathbf{m}\mapsto \mathbf{m}^{\s}$,
is an additive monoid isomorphism, $F: {\rm Ob}(\widehat{\mathcal C})\ra {\rm Ob}(\widehat{\mathcal{C}'})$
is a bijection.
Moreover, $F(\mathbf{e}_i)=\mathbf{e}_{\s(i)}$
for all $i\in I$. By Part (2), $\d(e_iAe_j)=e'_{\s(i)}A'e'_{\s(j)}$ and the restriction map
$\d: e_iAe_j \ra e'_{\s(i)}A'e'_{\s(j)}$ is an $\mathbb F$-linear isomorphism.
By the discussion before, the map
$$F: {\rm Hom}_{\widehat{\mathcal C}}(\mathbf{s}, \mathbf{m})\ra
{\rm Hom}_{\widehat{\mathcal{C}'}}(\mathbf{s}^{\s}, \mathbf{m}^{\s}),\
X\mapsto P_{\s}(\mathbf{m})\d(X)P_{\s}(\mathbf{s})^T$$
is an $\mathbb F$-linear isomorphism. Moreover,
$F(XY)=F(X)F(Y)$ for any $X\in {\rm Hom}_{\widehat{\mathcal C}}(\mathbf{s}, \mathbf{m})$
and $Y\in {\rm Hom}_{\widehat{\mathcal C}}(\mathbf{t}, \mathbf{s})$.
Thus, $F$ is a well-defined $\mathbb F$-linear functor from $\widehat{\mathcal C}$ to
$\widehat{\mathcal{C}'}$. Obviously, $F$ is a category isomorphism functor.

By Part (1), one have $\s(1)=1$ and $F(\mathbf{c}_{ij})=\mathbf{c}'_{\s(i)\s(j)}$ for all $i, j\in I$,
as shown before. Hence $F(\mathbf{e}_1)=\mathbf{e}_1$. Let $\varphi_0=\a^{-1}{\rm id}_{\mathbf{e}_1}=\a^{-1}e'_1$.
Then $\varphi_0$ is an isomorphism  from $\mathbf{e}_1$ to $F(\mathbf{e}_1)$ in $\widehat{\mathcal{C}'}$.

For any $\mathbf{m}, \mathbf{s}\in{\rm Ob}(\widehat{\mathcal C})=\mathbb{N}^I$, define a matrix
$\varphi_2(\mathbf{m}, \mathbf{s})\in M_{(\mathbf{m}\ot_{\widehat{\mathcal C}}\mathbf{s})^{\s}}(A')
=M_{\mathbf{m}^{\s}\ot_{\widehat{\mathcal{C}'}}\mathbf{s}^{\s}}(A')$ as follows:
$\varphi_2(\mathbf{m}, \mathbf{s})=0$ if $\mathbf{m}=\mathbf{0}$ or $\mathbf{s}=\mathbf{0}$;
and
$$\begin{array}{rl}
\varphi_2(\mathbf{m}, \mathbf{s})=&
\sum\limits_{i,j=1}^n\sum\limits_{1\leqslant k\leqslant m_i}\sum\limits_{1\leqslant l\leqslant s_j}
P_{\s}(\mathbf{m}\ot_{\widehat{\mathcal C}}\mathbf{s})\d(X^{\mathbf{m}}_{i,k}\ot_{\widehat{\mathcal{C}}}X^{\mathbf{s}}_{j,l})\\
&\hspace{3cm}\cdot P_{\s}(\mathbf{c}_{ij})^T\varphi_{i, j}
(Y'^{\mathbf{m}^{\s}}_{\s(i),k}\ot_{\widehat{\mathcal{C}'}}Y'^{\mathbf{s}^{\s}}_{\s(j),l})\\
\end{array}$$
if $\mathbf{m}\neq\mathbf{0}$ and $\mathbf{s}\neq\mathbf{0}$.
Then $\varphi_2(\mathbf{m}, \mathbf{s})$ can be regarded as an isomorphism
from $F(\mathbf{m})\ot_{\widehat{\mathcal{C}'}}F(\mathbf{s})$ to $F(\mathbf{m}\ot_{\widehat{\mathcal{C}}}\mathbf{s})$
in $\widehat{\mathcal{C}'}$.
By an argument similar to the proof of Proposition \ref{ass}, one can check from  Condition (3b) that the morphisms
$$\varphi_2(\mathbf{m}, \mathbf{s}): F(\mathbf{m})\ot_{\widehat{\mathcal{C}'}}F(\mathbf{s})\ra F(\mathbf{m}\ot_{\widehat{\mathcal{C}}}\mathbf{s}), \ \mathbf{m}, \mathbf{s}\in \mathbb{N}^I$$
form a family of natural isomorphisms, where $\mathbf{m}, \mathbf{s}$ run over ${\rm Ob}(\widehat{\mathcal C})$.
From Condition (3a), one can check that the diagrams (D3.2) and (D3.3) are commutative.
Similarly, from Condition (3c), one can show that the diagram (D3.1) commutes.
It follows that $(F, \varphi_0, \varphi_2)$ is a tensor functor
from $\widehat{\mathcal C}$ to $\widehat{\mathcal{C}'}$.
Consequently, $(F, \varphi_0, \varphi_2)$ is a tensor equivalence functor
from $\widehat{\mathcal C}$ to $\widehat{\mathcal{C}'}$.
\end{proof}

\begin{remark}\label{phi-2}
In Theorem \ref{equi}, by replacing $\varphi_{i,j}$
with $\a^{-1}\varphi_{i,j}$, we may assume that $\a=1$.
\end{remark}

Now we return to the case in Section 2. Let $\mathcal C$ be a Krull-Schmit abelian tensor category of finite rank.
In what follows, we keep the notations of the last section.  The Green ring $r(\mathcal C)$ of $\mathcal C$
is a $\mathbb{Z}_+$-ring with a finite unital $\mathbb{Z}_+$-basis $\{[V_i]|i\in I\}$;
the Auslander algebra $A(\mathcal C)$ of $\mathcal C$ is a finite dimensional $\mathbb F$-algebra
with a complete set $\{e_i|i\in I\}$ of orthogonal primitive
elements, where $I=\{1, 2, \cdots, n\}$, $V_1$ is the unit object $\mathbf{1}$ of $\mathcal C$.
From Lemma \ref{proj-class} and Propositions \ref{KerCok} and \ref{decom},
one knows that the conditions (KS), (RUA), (LUA), (Dec), (RI) and (CI) are satisfied for $(A(\mathcal C), \{e_i|i\in I\})$.
Furthermore, let the algebra map $\phi_{\mathcal C}$ and the family
$\{a_{i,j,l}\}_{i,j,l\in I}$ of matrices $a_{i,j,l}\in M_{\mathbf{e}_i\ot\mathbf{e}_j\ot\mathbf{e}_l}(A(\mathcal C))$
be given as in Section 2.  By Propositions \ref{tom} and \ref{Ass},
the conditions $(\phi 1)$ and $(\phi 2)$ are satisfied for $\phi_{\mathcal C}$,
and the conditions $(Ass)$ are satisfied for $\{a_{i,j,l}\}_{i,j,l\in I}$.
Thus, from the data $(r(\mathcal C), A(\mathcal C), \phi_{\mathcal C}, \{a_{i,j,l}\}_{i,j,l\in I})$,
one can construct a tensor category $\widehat{\mathcal C}$ over $\mathbb F$.
In what follows, let $\ot$ and $\whot $
denote the tensor products  in $\mathcal C$ and $\widehat{\mathcal C}$  respectively as before.
For any $\mathbf{m}, \mathbf{s}\in\mathbb{N}^I$,
$\theta(\mathbf{m}, \mathbf{s}): V^{(\mathbf m)}\ot V^{(\mathbf s)}\ra V^{(\mathbf{m}\whot\mathbf{s})}$ is
an isomorphism in $\mathcal C$ given in the last section.
Then we have the following  lemma.

\begin{lemma}\label{associator}
Let $a_{\mathbf{m}, \mathbf{s},\mathbf{t}}$ be the associativity constraint of
$\widehat{\mathcal C}$ induced by $\{a_{i,j,l}\}_{i,j,l\in I}$ as before,
$\mathbf{m}, \mathbf{s},\mathbf{t}\in\mathbb{N}^I$. Then
$$a_{\mathbf{m}, \mathbf{s}, \mathbf{t}}\theta(\mathbf{m}\whot\mathbf{s}, \mathbf{t})
(\theta(\mathbf{m}, \mathbf{s})\ot E_{\mathbf{t}})
=\theta(\mathbf{m}, \mathbf{s}\whot\mathbf{t})
(E_{\mathbf{m}}\ot\theta(\mathbf{s}, \mathbf{t})).$$
\end{lemma}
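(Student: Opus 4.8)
The plan is to recognize the desired identity, in the special case $\mathbf{m}=\mathbf{e}_i$, $\mathbf{s}=\mathbf{e}_j$, $\mathbf{t}=\mathbf{e}_l$, as exactly the defining formula for $a_{i,j,l}$, and then to propagate it to arbitrary objects by testing against the canonical embeddings $X^{\mathbf{m}}_{i,k}$. First I would dispose of the degenerate case: if one of $\mathbf{m},\mathbf{s},\mathbf{t}$ equals $\mathbf{0}$, then $V^{(\mathbf{m})}\ot V^{(\mathbf{s})}\ot V^{(\mathbf{t})}=0=V^{(\mathbf{m}\whot\mathbf{s}\whot\mathbf{t})}$ and both sides are the unique zero morphism, so there is nothing to prove; hence I may assume $\mathbf{m},\mathbf{s},\mathbf{t}\neq\mathbf{0}$. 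Since $\sum_{i,k_1}X^{\mathbf{m}}_{i,k_1}Y^{\mathbf{m}}_{i,k_1}=E_{\mathbf{m}}$ (and likewise for $\mathbf{s}$ and $\mathbf{t}$) and $\ot$ is $\mathbb{F}$-bilinear, it suffices to check that the two morphisms in the statement agree after precomposition with $X^{\mathbf{m}}_{i,k_1}\ot X^{\mathbf{s}}_{j,k_2}\ot X^{\mathbf{t}}_{l,k_3}$ for all admissible index triples.

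For the left-hand side I would use the interchange law to regroup $(\theta(\mathbf{m},\mathbf{s})\ot E_{\mathbf{t}})(X^{\mathbf{m}}_{i,k_1}\ot X^{\mathbf{s}}_{j,k_2}\ot X^{\mathbf{t}}_{l,k_3})$ as $\bigl(\theta(\mathbf{m},\mathbf{s})(X^{\mathbf{m}}_{i,k_1}\ot X^{\mathbf{s}}_{j,k_2})\bigr)\ot X^{\mathbf{t}}_{l,k_3}$, apply Lemma \ref{diag2} to rewrite $\theta(\mathbf{m},\mathbf{s})(X^{\mathbf{m}}_{i,k_1}\ot X^{\mathbf{s}}_{j,k_2})=(X^{\mathbf{m}}_{i,k_1}\whot X^{\mathbf{s}}_{j,k_2})\theta_{ij}$, and then apply Lemma \ref{diag2} once more to $\theta(\mathbf{m}\whot\mathbf{s},\mathbf{t})$ (using $\mathbf{c}_{ij}=\mathbf{e}_i\whot\mathbf{e}_j$). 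This brings the left-hand composite to the form
$$a_{\mathbf{m},\mathbf{s},\mathbf{t}}\bigl((X^{\mathbf{m}}_{i,k_1}\whot X^{\mathbf{s}}_{j,k_2})\whot X^{\mathbf{t}}_{l,k_3}\bigr)\theta(\mathbf{c}_{ij},\mathbf{e}_l)(\theta_{ij}\ot e_l).$$
Next, Proposition \ref{ass}(3) (with $X_1=X^{\mathbf{m}}_{i,k_1}$, $X_2=X^{\mathbf{s}}_{j,k_2}$, $X_3=X^{\mathbf{t}}_{l,k_3}$) together with Proposition \ref{ass}(1) gives the transport identity
$$a_{\mathbf{m},\mathbf{s},\mathbf{t}}\bigl((X^{\mathbf{m}}_{i,k_1}\whot X^{\mathbf{s}}_{j,k_2})\whot X^{\mathbf{t}}_{l,k_3}\bigr)=\bigl(X^{\mathbf{m}}_{i,k_1}\whot(X^{\mathbf{s}}_{j,k_2}\whot X^{\mathbf{t}}_{l,k_3})\bigr)a_{i,j,l},$$
which slides $a_{\mathbf{m},\mathbf{s},\mathbf{t}}$ through the triple tensor and turns it into $a_{i,j,l}$ on the right. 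Finally, unwinding $a_{i,j,l}=\theta(\mathbf{e}_i,\mathbf{c}_{jl})(e_i\ot\theta_{jl})(\theta_{ij}^{-1}\ot e_l)\theta(\mathbf{c}_{ij},\mathbf{e}_l)^{-1}$ and using $(\theta_{ij}^{-1}\ot e_l)(\theta_{ij}\ot e_l)={\rm id}_{V_i\ot V_j\ot V_l}$, the trailing factor $a_{i,j,l}\theta(\mathbf{c}_{ij},\mathbf{e}_l)(\theta_{ij}\ot e_l)$ collapses to $\theta(\mathbf{e}_i,\mathbf{c}_{jl})(e_i\ot\theta_{jl})$, so the left-hand composite equals $\bigl(X^{\mathbf{m}}_{i,k_1}\whot(X^{\mathbf{s}}_{j,k_2}\whot X^{\mathbf{t}}_{l,k_3})\bigr)\theta(\mathbf{e}_i,\mathbf{c}_{jl})(e_i\ot\theta_{jl})$.

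For the right-hand side I would run the analogous, shorter computation: the interchange law and one use of Lemma \ref{diag2} give $(E_{\mathbf{m}}\ot\theta(\mathbf{s},\mathbf{t}))(X^{\mathbf{m}}_{i,k_1}\ot X^{\mathbf{s}}_{j,k_2}\ot X^{\mathbf{t}}_{l,k_3})=\bigl(X^{\mathbf{m}}_{i,k_1}\ot(X^{\mathbf{s}}_{j,k_2}\whot X^{\mathbf{t}}_{l,k_3})\bigr)(e_i\ot\theta_{jl})$, and a second use of Lemma \ref{diag2} on $\theta(\mathbf{m},\mathbf{s}\whot\mathbf{t})$ turns the right-hand composite into $\bigl(X^{\mathbf{m}}_{i,k_1}\whot(X^{\mathbf{s}}_{j,k_2}\whot X^{\mathbf{t}}_{l,k_3})\bigr)\theta(\mathbf{e}_i,\mathbf{c}_{jl})(e_i\ot\theta_{jl})$, which is precisely the expression found for the left-hand side. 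Hence the two composites coincide for every admissible index triple, and the lemma follows. The routine parts I would not spell out are the interchange-law rearrangements $(fg)\ot(hk)=(f\ot h)(g\ot k)$ used to peel off the outer factors, and the verification that the block sizes $\mathbf{e}_i$, $\mathbf{c}_{ij}=\mathbf{e}_i\whot\mathbf{e}_j$, $\mathbf{c}_{jl}=\mathbf{e}_j\whot\mathbf{e}_l$ match up so that every invocation of Lemma \ref{diag2} is type-correct. The one genuinely load-bearing input, and the step I expect to be the crux, is the transport identity coming from Proposition \ref{ass}(3) that moves $a_{\mathbf{m},\mathbf{s},\mathbf{t}}$ across $(X^{\mathbf{m}}_{i,k_1}\whot X^{\mathbf{s}}_{j,k_2})\whot X^{\mathbf{t}}_{l,k_3}$ and converts it into $a_{i,j,l}$; everything else is bookkeeping around it.
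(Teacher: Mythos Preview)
Your proof is correct and follows essentially the same route as the paper: both arguments reduce to the indecomposable case via Lemma~\ref{diag2} and then collapse $a_{i,j,l}\theta(\mathbf{c}_{ij},\mathbf{e}_l)(\theta_{ij}\ot e_l)$ to $\theta(\mathbf{e}_i,\mathbf{c}_{jl})(e_i\ot\theta_{jl})$ using the defining formula for $a_{i,j,l}$. The only organizational difference is that the paper expands the sum definition of $a_{\mathbf{m},\mathbf{s},\mathbf{t}}$ directly, whereas you precompose with $X^{\mathbf{m}}_{i,k_1}\ot X^{\mathbf{s}}_{j,k_2}\ot X^{\mathbf{t}}_{l,k_3}$ and invoke Proposition~\ref{ass}(3) for the transport step; these amount to the same computation.
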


\begin{proof}
We may assume that $\mathbf{m}\whot\mathbf{s}\whot\mathbf{t}\neq\mathbf 0$. Then by Lemma \ref{diag2}, we have
$$\begin{array}{rl}
&a_{\mathbf{m}, \mathbf{s}, \mathbf{t}}\theta(\mathbf{m}\whot\mathbf{s}, \mathbf{t})
(\theta(\mathbf{m}, \mathbf{s})\ot E_{\mathbf{t}})\\
=&\sum\limits_{i, j, l=1}^n
\sum\limits_{1\leqslant k_1\leqslant m_i}\sum\limits_{1\leqslant k_2\leqslant s_j}
\sum\limits_{1\leqslant k_3\leqslant t_l}
(X^{\mathbf m}_{i,k_1}\whot (X^{\mathbf s}_{j,k_2}\whot X^{\mathbf t}_{l,k_3}))a_{i,j,l}\\
&((Y^{\mathbf m}_{i,k_1}\whot Y^{\mathbf s}_{j,k_2})\whot Y^{\mathbf t}_{l,k_3})
\theta(\mathbf{m}\whot\mathbf{s}, \mathbf{t})
(\theta(\mathbf{m}, \mathbf{s})\ot E_{\mathbf{t}})\\
=&\sum\limits_{i, j, l=1}^n
\sum\limits_{1\leqslant k_1\leqslant m_i}\sum\limits_{1\leqslant k_2\leqslant s_j}
\sum\limits_{1\leqslant k_3\leqslant t_l}
(X^{\mathbf m}_{i,k_1}\whot (X^{\mathbf s}_{j,k_2}\whot X^{\mathbf t}_{l,k_3}))a_{i,j,l}\\
&\theta(\mathbf{c}_{ij}, \mathbf{e}_l)
((Y^{\mathbf m}_{i,k_1}\whot Y^{\mathbf s}_{j,k_2})\ot Y^{\mathbf t}_{l,k_3})
(\theta(\mathbf{m}, \mathbf{s})\ot E_{\mathbf{t}})\\
=&\sum\limits_{i, j, l=1}^n
\sum\limits_{1\leqslant k_1\leqslant m_i}\sum\limits_{1\leqslant k_2\leqslant s_j}
\sum\limits_{1\leqslant k_3\leqslant t_l}
(X^{\mathbf m}_{i,k_1}\whot (X^{\mathbf s}_{j,k_2}\whot X^{\mathbf t}_{l,k_3}))a_{i,j,l}\\
&\theta(\mathbf{c}_{ij}, \mathbf{e}_l)(\theta_{ij}\ot e_l)
(Y^{\mathbf m}_{i,k_1}\ot Y^{\mathbf s}_{j,k_2}\ot Y^{\mathbf t}_{l,k_3})\\
=&\sum\limits_{i, j, l=1}^n
\sum\limits_{1\leqslant k_1\leqslant m_i}\sum\limits_{1\leqslant k_2\leqslant s_j}
\sum\limits_{1\leqslant k_3\leqslant t_l}
(X^{\mathbf m}_{i,k_1}\whot (X^{\mathbf s}_{j,k_2}\whot X^{\mathbf t}_{l,k_3}))
\theta(\mathbf{e}_i, \mathbf{c}_{jl})\\
&(e_i\ot\theta_{j,l})
(Y^{\mathbf m}_{i,k_1}\ot Y^{\mathbf s}_{j,k_2}\ot Y^{\mathbf t}_{l,k_3})\\
=&\sum\limits_{i, j, l=1}^n
\sum\limits_{1\leqslant k_1\leqslant m_i}\sum\limits_{1\leqslant k_2\leqslant s_j}
\sum\limits_{1\leqslant k_3\leqslant t_l}
\theta(\mathbf{m}, \mathbf{s}\whot\mathbf{t})
(X^{\mathbf m}_{i,k_1}\ot(X^{\mathbf s}_{j,k_2}\whot X^{\mathbf t}_{l,k_3}))\\
&(e_i\ot\theta_{j,l})
(Y^{\mathbf m}_{i,k_1}\ot Y^{\mathbf s}_{j,k_2}\ot Y^{\mathbf t}_{l,k_3})\\
=&\sum\limits_{i, j, l=1}^n
\sum\limits_{1\leqslant k_1\leqslant m_i}\sum\limits_{1\leqslant k_2\leqslant s_j}
\sum\limits_{1\leqslant k_3\leqslant t_l}
\theta(\mathbf{m}, \mathbf{s}\whot\mathbf{t})(E_{\mathbf m}\ot\theta(\mathbf{s},\mathbf{t}))\\
&(X^{\mathbf m}_{i,k_1}\ot X^{\mathbf s}_{j,k_2}\ot X^{\mathbf t}_{l,k_3})
(Y^{\mathbf m}_{i,k_1}\ot Y^{\mathbf s}_{j,k_2}\ot Y^{\mathbf t}_{l,k_3})\\
=&\theta(\mathbf{m}, \mathbf{s}\whot\mathbf{t})(E_{\mathbf m}\ot\theta(\mathbf{s},\mathbf{t})).\\
\end{array}$$
\end{proof}

Finally, we show that the two tensor categories $\widehat{\mathcal C}$ and $\mathcal C$ are tensor equivalent.

\begin{theorem}\label{main}
$\widehat{\mathcal C}$ and $\mathcal C$ are tensor equivalent.
\end{theorem}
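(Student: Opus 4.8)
The plan is to construct an explicit $\mathbb{F}$-linear monoidal functor $G\colon\widehat{\mathcal C}\to\mathcal C$ and verify it is a tensor equivalence. First I would recall that the hypotheses of \thref{monoidalcat} are indeed met by the data attached to $\mathcal C$: by \leref{proj-class}, \prref{KerCok}, \prref{decom}, \prref{tom} and \prref{Ass} the quadruple $(r(\mathcal C),A(\mathcal C),\phi_{\mathcal C},\{a_{i,j,l}\}_{i,j,l\in I})$ satisfies (KS), (Dec), (RUA), (LUA), (CI), (RI), ($\phi$1), ($\phi$2) and (Ass), so $\widehat{\mathcal C}$ is a tensor category. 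Now define $G$ on objects by $G(\mathbf{m})=V^{(\mathbf{m})}=\oplus_{i=1}^n m_iV_i$, and on morphisms via the identification $\mathrm{Hom}_{\widehat{\mathcal C}}(\mathbf{m},\mathbf{s})=M_{\mathbf{s}\times\mathbf{m}}(A(\mathcal C))=\mathrm{Hom}_{\mathcal C}(V^{(\mathbf{m})},V^{(\mathbf{s})})$ fixed in the proof of \prref{tom} (the one given explicitly by $f_{kl}=Y^{\mathbf s}_{i,k_1}fX^{\mathbf m}_{j,k_2}$ after \reref{2.9}), so that $G(X)$ is just $X$ read as a morphism of $\mathcal C$. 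Since composition in $\widehat{\mathcal C}$ is matrix multiplication and this corresponds under the identification to composition in $\mathcal C$, and since the identification is $\mathbb{F}$-linear with $G(E_{\mathbf m})=\mathrm{id}_{V^{(\mathbf m)}}$, $G$ is a well-defined $\mathbb{F}$-linear functor. It is fully faithful because the identification is an $\mathbb{F}$-linear bijection on every Hom-space, and essentially surjective because $\mathcal C$ is Krull–Schmidt with indecomposables $\{V_i\}_{i\in I}$, so every object is isomorphic to some $V^{(\mathbf m)}$; hence $G$ is an equivalence of $\mathbb{F}$-linear categories.

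Next I would equip $G$ with a monoidal structure. Take $\varphi_0=\mathrm{id}_{\mathbf 1}\colon \mathbf 1\to G(\mathbf e_1)=V_1=\mathbf 1$, and for the tensor constraint take $\varphi_2(\mathbf m,\mathbf s)=\theta(\mathbf m,\mathbf s)\colon V^{(\mathbf m)}\ot V^{(\mathbf s)}\to V^{(\mathbf m\whot\mathbf s)}=G(\mathbf m\whot\mathbf s)$, the morphism built in Section 2, which is an isomorphism by \leref{invert}. Naturality of $\varphi_2$ jointly in both variables is exactly \leref{diag2}, which states $(X\whot Y)\,\theta(\mathbf m_1,\mathbf m_2)=\theta(\mathbf s_1,\mathbf s_2)(X\ot Y)$ for all $X,Y$; under $G$ this is precisely the naturality square for $\varphi_2$. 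Since $\mathcal C$ is strict, its associativity and unit constraints are identities, so the associativity coherence diagram for a monoidal functor collapses to $a_{\mathbf m,\mathbf s,\mathbf t}\,\theta(\mathbf m\whot\mathbf s,\mathbf t)(\theta(\mathbf m,\mathbf s)\ot E_{\mathbf t})=\theta(\mathbf m,\mathbf s\whot\mathbf t)(E_{\mathbf m}\ot\theta(\mathbf s,\mathbf t))$, which is exactly \leref{associator} (noting that $G$ sends the associator $a_{\mathbf m,\mathbf s,\mathbf t}$ of $\widehat{\mathcal C}$ to the matrix $a_{\mathbf m,\mathbf s,\mathbf t}$ itself). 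Finally, $\theta(\mathbf e_1,\mathbf m)=\theta(\mathbf m,\mathbf e_1)=E_{\mathbf m}$ by the second statement of \leref{invert} and $\varphi_0=\mathrm{id}$, so the two unit triangles of a monoidal functor commute trivially. Therefore $(G,\varphi_0,\varphi_2)$ is a monoidal functor, and being an equivalence of the underlying categories it is a tensor equivalence.

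The genuinely substantial content — the construction of $\theta$, the naturality identity of \leref{diag2}, and the compatibility of $\theta$ with the associators in \leref{associator} (resting on \prref{Ass}) — has already been established, so the remaining work is organizational: checking functoriality of $G$, unwinding the definition of a monoidal functor against the strictness of $\mathcal C$, and orienting $\varphi_2$ correctly. The one point I expect to require care is that the identification of matrices with morphisms used to define $G$ must be literally the same identification used throughout Section 2 to define $\whot$ and $\theta$; once that is pinned down, the coherence diagrams become verbatim the displayed equations of Lemmas~\ref{invert}, \ref{diag2} and \ref{associator}, and the proof is complete.
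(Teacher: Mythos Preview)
Your proposal is correct and follows essentially the same approach as the paper: define the functor $\mathbf m\mapsto V^{(\mathbf m)}$, $X\mapsto X$, check it is a fully faithful, essentially surjective $\mathbb F$-linear functor, and equip it with the monoidal structure $(\varphi_0,\varphi_2)=(\mathrm{id}_{\mathbf 1},\theta(-,-))$, invoking \leref{diag2}, \leref{invert} and \leref{associator} for naturality, the unit triangles, and the associativity hexagon respectively. The only difference is cosmetic (the paper calls the functor $F$ and omits your preliminary paragraph recapping the verification of the axioms, which it handles in the text just before \leref{associator}).
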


\begin{proof}
For any $\mathbf{m}\in{\mathbb N}^I$, let $V^{(\mathbf m)}\in{\rm Ob}(\mathcal C)$ be defined as in Section 2. Then
$${\rm Hom}_{\mathcal C}(V^{(\mathbf m)}, V^{(\mathbf s)})=M_{{\mathbf s}\times {\mathbf m}}(A(\mathcal C))$$
for all $\mathbf{m}, \mathbf{s}\in{\mathbb N}^I$.
Define a functor $F: \widehat{\mathcal C}\ra \mathcal C$ by
$$F(\mathbf m)=V^{(\mathbf m)}, \ F(X)=X,$$
where $\mathbf{m}, \mathbf{s}\in{\rm Ob}(\widehat{\mathcal C})={\mathbb N}^I$,
$X\in M_{\mathbf{s}\times \mathbf{m}}(A(\mathcal C))$.
Obviously, $F$ is a well-defined $\mathbb F$-linear functor,
and $F(\mathbf{e}_1)=V_1=\mathbf{1}$. Moreover,
$F$ is fully faithful. Since $\mathcal C$ is a Krull-Schmidt category
and $\{V_1, V_2, \cdots, V_n\}$ is a set of representatives of isomorphism classes of
indecomposable objects of $\mathcal C$, $F$ is essentially surjective.
It follows from \cite[Proposition XI.1.5]{Ka} that $F: \widehat{\mathcal C}\ra \mathcal C$
is an equivalence of categories.

For any $\mathbf{m}, \mathbf{s}\in{\rm Ob}(\widehat{\mathcal C})$, put
$$\varphi_0={\rm id}_{\mathbf 1}=e_1, \ \varphi_2(\mathbf{m}, \mathbf{s})=\theta(\mathbf{m}, \mathbf{s}).$$
Then $\varphi_0: \mathbf{1}\ra F(\mathbf{e}_1)$ is an isomorphism in
$\mathcal C$. It follows from Lemma \ref{diag2} that
$$\varphi_2(\mathbf{m}, \mathbf{s}): F(\mathbf{m})\ot F(\mathbf{s})\ra F(\mathbf{m}\whot\mathbf{s}),\  \mathbf{m, s}\in \mathbb{N}^I$$
form a family of natural isomorphisms indexed by all couples $(\mathbf{m}, \mathbf{s})$
of objects of $\widehat{\mathcal C}$.
Note that the left and the right unit constraints are the identities in both
$\widehat{\mathcal C}$ and ${\mathcal C}$. Since $\theta(\mathbf{m}, \mathbf{e}_1)
=\theta(\mathbf{e}_1, \mathbf{m})=E_{\mathbf m}$ by Lemma \ref{invert}, the following two diagrams
$$\begin{array}{ccc}
\mathbf{1}\ot F(\mathbf{m})&\xrightarrow{l_{F(\mathbf{m})}}&F(\mathbf{m})\\
\mbox{\hspace{1.7cm}}\downarrow\varphi_0\ot{\rm id}_{F(\mathbf m)}&&\hspace{0.9cm}\uparrow F(l_{\mathbf m})\\
F(\mathbf{e}_1)\ot F(\mathbf{m})&\xrightarrow{\varphi_2(\mathbf{e}_1, \mathbf{m})}&F(\mathbf{e}_1\whot\mathbf{m})\\
\end{array}$$
and
$$\begin{array}{ccc}
F(\mathbf{m})\ot\mathbf{1}&\xrightarrow{r_{F(\mathbf{m})}}&F(\mathbf{m})\\
\mbox{\hspace{1.7cm}}\downarrow{\rm id}_{F(\mathbf m)}\ot\varphi_0
&&\hspace{0.9cm}\uparrow F(r_{\mathbf m})\\
F(\mathbf{m})\ot F(\mathbf{e}_1)&\xrightarrow{\varphi_2(\mathbf{m}, \mathbf{e}_1)}
&F(\mathbf{m}\whot\mathbf{e}_1)\\
\end{array}$$
commute for all object $\mathbf{m}$ of $\widehat{\mathcal C}$.
By Lemma \ref{associator}, we have
$$a_{\mathbf{m}, \mathbf{s}, \mathbf{t}}\theta(\mathbf{m}\whot\mathbf{s}, \mathbf{t})
(\theta(\mathbf{m}, \mathbf{s})\ot E_{\mathbf{t}})
=\theta(\mathbf{m}, \mathbf{s}\whot\mathbf{t})
(E_{\mathbf{m}}\ot\theta(\mathbf{s}, \mathbf{t}))$$
for $\mathbf{m}, \mathbf{s}, \mathbf{t}\in{\mathbb N}^I$.
Since $\mathcal C$ is a strict tensor category and
$F(a_{\mathbf{m}, \mathbf{s}, \mathbf{t}})=a_{\mathbf{m}, \mathbf{s}, \mathbf{t}}$, the above
equation shows that the following diagram
$$\begin{array}{ccc}
(F(\mathbf{m})\ot F(\mathbf{s}))\ot F(\mathbf{t})
&\xrightarrow{a_{F(\mathbf{m}), F(\mathbf{s}), F(\mathbf{t})}}
&F(\mathbf{m})\ot(F(\mathbf{s})\ot F(\mathbf{t}))\\
\mbox{\hspace{1.6cm}}\downarrow\varphi_2(\mathbf{m}, \mathbf{s})\ot{\rm id}_{F(\mathbf{t})}
&&\hspace{1.8cm}\downarrow{\rm id}_{F(\mathbf m)}\ot\varphi_2(\mathbf{s}, \mathbf{t})\\
F(\mathbf{m}\whot\mathbf{s})\ot F(\mathbf{t})
&&F(\mathbf{m})\ot F(\mathbf{s}\whot\mathbf{t})\\
\mbox{\hspace{0.9cm}}\downarrow\varphi_2(\mathbf{m}\whot\mathbf{s}, \mathbf{t})
&&\hspace{1.3cm}\downarrow\varphi_2(\mathbf{m}, \mathbf{s}\whot\mathbf{t})\\
F((\mathbf{m}\whot\mathbf{s})\whot\mathbf{t})
&\xrightarrow{F(a_{\mathbf{m}, \mathbf{s}, \mathbf{t}})}
&F(\mathbf{m}\whot(\mathbf{s}\whot\mathbf{t}))\\
\end{array}$$
commute for all objects $\mathbf{m}, \mathbf{s}, \mathbf{t}$ of $\widehat{\mathcal C}$.
Thus, we have proven that $(F, \varphi_0, \varphi_2): \widehat{\mathcal C}\ra \mathcal{C}$
is a tensor functor. It follows that
$\widehat{\mathcal C}$ and $\mathcal C$ are tensor equivalent.
\end{proof}

From Lemmas \ref{orthidem} and \ref{proj-class}, $A(\mathcal C)$ is isomorphic to
a quotient of ${\mathbb F}Q$ for some quiver $Q$ with $Q_0=I$ since $\mathbb{F}$ is algebraically closed.

In \cite{CVZ}, we computed the Green ring of Taft algebra $H_n(q)$, i.e., the Green ring of
the category $\mathcal C$ of finite dimensional left modules over $H_n(q)$, where $n$
is a positive integer greater than $1$, and $q$ is an $n$-th primitive root of unity
in $\mathbb{F}$. Let $n=2$. Then $H_2(-1)$ is exactly the Sweedler's 4-dimensional Hopf algebra $H_4$.

By \cite[Theorem 2.5]{CVZ} or \cite[Page 467]{Cib},
there are $4$ non-isomorphic indecomposable finite dimensional
$H_4$-modules: $V_1=M(1, 0)$, $V_2=M(2, 1)$, $V_3=M(1, 1)$ and $V_4=M(2, 0)$.
From the structures of these indecomposable modules, one can easily see that
dim$_{\mathbb F}{\rm Hom}_{H_4}(V_i, V_j)=1$ or $0$, where
$1\leqslant i,j\leqslant 4$. Let $V=\oplus_{i=1}^4V_i$.
Then the corresponding $\mathbb{F}$-algebra $A:=A(\mathcal C)={\rm End}_{H_4}(V)$ can be described by quiver as follows.
Let $Q$ be the quiver:
$$\begin{tikzpicture}[scale=1.0]
\path (0.2,0) node(1) {$1$} (0.4,0) node(a1) {$\bullet$} (2.4,0) node(b1) {$\bullet$} (2.6,0) node (c1) {$2$};
\path (0.2,-1.2) node(2) {$4$} (0.4,-1.2) node(a2) {$\bullet$} (2.4,-1.2) node(b2) {$\bullet$} (2.6,-1.2) node (c2) {$3$};
\path (0.15, -0.6) node(.) {$x_{14}$} (1.4, 0.15) node(.) {$x_{21}$} (1.4, -1.05) node(.) {$x_{43}$} (2.65, -0.6) node(.) {$x_{32}$};
\draw[->] (a1) --(b1);
\draw[->] (b1) --(b2);
\draw[->] (b2) --(a2);
\draw[->] (a2) --(a1);
\end{tikzpicture}$$
and let $J$ be the ideal of the quiver algebra ${\mathbb F}Q$ generated by $x_{32}x_{21}$ and $x_{14}x_{43}$.
Then $A\cong{\mathbb F}Q/J$ as $\mathbb{F}$-algebras. Regarding $A={\mathbb F}Q/J$. Then $A$ is of dimension 10
with an $\mathbb F$-basis $B=\{e_1, e_2, e_3, e_4, x_{21}, x_{32}, x_{43}, x_{14}, x_{43}x_{32}, x_{21}x_{14}\}$ subject to $x_{32}x_{21}=0$ and $x_{14}x_{43}=0$.
Hence $e_iAe_i={\mathbb F}e_i$, $1\<i\<4$, $e_2Ae_1={\mathbb F}x_{21}$, $e_3Ae_2={\mathbb F}x_{32}$,
$e_4Ae_3={\mathbb F}x_{43}$, $e_1Ae_4={\mathbb F}x_{14}$, $e_4Ae_2={\mathbb F}x_{43}x_{32}$,
$e_2Ae_4={\mathbb F}x_{21}x_{14}$ and $e_iAe_j=0$ for the other cases.
Let $V_1=\mathbb{F}v_1$, $V_3=\mathbb{F}v_3$, and let $\{v_{i1}, v_{i2}\}$ be the standard basis of $V_i$ as given in \cite{CVZ},
$i=2, 4$. Under the identification $e_iAe_j={\rm Hom}_{H_4}(V_j, V_i)$, we have that
$e_i={\rm id}_{V_i}$ and $x_{ij}\in{\rm Hom}_{H_4}(V_j, V_i)$ given by $x_{21}(v_1)=v_{22}$, $x_{43}(v_3)=v_{42}$,
$x_{32}(v_{21})=v_{3}$, $x_{32}(v_{22})=0$, $x_{14}(v_{41})=v_{1}$ and $x_{14}(v_{42})=0$.

By \cite{CVZ}, the Green ring $r(\mathcal C)$ is a commutative ring with a $\mathbb Z$-basis $\{r_i=[V_i]|1\<i\<4\}$,
where $r_1=1$, the unity of the ring $r(\mathcal C)$.
Moreover, $r_1r_i=r_i$, $1\<i\<4$, $r_3^2=r_1$, $r_2^2=r_4^2=r_2r_4=r_2+r_4$,
$r_2r_3=r_4$ and $r_3r_4=r_2$. Hence $\mathbf{c}_{1i}=\mathbf{c}_{i1}=\mathbf{e}_{i}$, $1\<i\<4$,
$\mathbf{c}_{33}=\mathbf{e}_{1}$, $\mathbf{c}_{22}=\mathbf{c}_{44}=\mathbf{c}_{24}=\mathbf{c}_{42}=\mathbf{e}_{2}+\mathbf{e}_{4}$,
$\mathbf{c}_{23}=\mathbf{c}_{32}=\mathbf{e}_{4}$ and $\mathbf{c}_{34}=\mathbf{c}_{43}=\mathbf{e}_{2}$
in ${\mathbb N}^4$.  The multiplication of the associative $\mathbb{F}$-algebra
$M(\mathcal C):=\oplus_{i,j,k,l=1}^4M_{\mathbf{c}_{ij}\times\mathbf{c}_{kl}}(A)$
is defined as follows: if $X\in M_{\mathbf{c}_{ij}\times\mathbf{c}_{kl}}(A)$ and
$Y\in M_{\mathbf{c}_{i'j'}\times\mathbf{c}_{k'l'}}(A)$,
then $XY$ is the usual matrix product for $(k, l)=(i',j')$,
and $XY=0$ for $(k, l)\neq(i',j')$.
In order to describe the algebra map $\phi_{\mathcal C}: A\ot_{\mathbb F}A\ra M(\mathcal C)$,
one need to choose a family of $H_4$-module isomorphisms
$\theta_{ij}: V_i\ot V_j\ra\oplus_{k=1}^4c_{ijk}V_k$, $1\<i,j\<4$.
In what follows, let $\ot=\ot_{\mathbb F}$ and $\phi=\phi_{\mathcal C}$. Define $H_4$-module isomorphisms $\theta_{ij}$ by
$$\begin{array}{l}
\theta_{1i}: V_1\ot V_i\ra V_i,\ v_1\ot v\mapsto v,\ \theta_{i1}: V_i\ot V_1\ra V_i,\ v\ot v_1\mapsto v,\ 1\<i\<4.\\
\theta_{32}: V_3\ot V_2\ra V_4,\ v_3\ot v_{21}\mapsto v_{41},\ v_3\ot v_{22}\mapsto v_{42}.\\
\theta_{34}: V_3\ot V_4\ra V_2,\ v_3\ot v_{41}\mapsto v_{21},\ v_3\ot v_{42}\mapsto v_{22}.\\
\theta_{23}: V_2\ot V_3\ra V_4,\ v_{21}\ot v_3\mapsto v_{41},\ v_{22}\ot v_3\mapsto -v_{42}.\\
\theta_{43}: V_4\ot V_3\ra V_2,\ v_{41}\ot v_3\mapsto v_{21},\ v_{42}\ot v_3\mapsto -v_{22}.\\
\theta_{33}: V_3\ot V_3\ra V_1,\ v_3\ot v_3\mapsto v_1.\\
\theta_{22}: V_2\ot V_2\ra V_2\oplus V_4,\ v_{21}\ot v_{22}\mapsto v_{21},\ v_{22}\ot v_{22}\mapsto v_{22},\\
 \hspace{3.9cm} v_{21}\ot v_{21}\mapsto v_{41},\ v_{22}\ot v_{21}\mapsto v_{21}-v_{42}.\\
\theta_{24}: V_2\ot V_4\ra V_2\oplus V_4,\ v_{21}\ot v_{41}\mapsto v_{21},\ v_{21}\ot v_{42}\mapsto v_{22}-v_{41},\\
 \hspace{3.9cm} v_{22}\ot v_{41}\mapsto v_{41},\ v_{22}\ot v_{42}\mapsto v_{42}.\\
\theta_{42}: V_4\ot V_2\ra V_2\oplus V_4,\ v_{41}\ot v_{21}\mapsto v_{21},\ v_{42}\ot v_{21}\mapsto v_{41}-v_{22},\\
 \hspace{3.9cm} v_{41}\ot v_{22}\mapsto v_{41},\ v_{42}\ot v_{22}\mapsto v_{42}.\\
\end{array}$$
   $$\begin{array}{l}
\theta_{44}: V_4\ot V_4\ra V_2\oplus V_4,\ v_{42}\ot v_{41}\mapsto v_{21},\ v_{42}\ot v_{42}\mapsto v_{22},\\
 \hspace{3.9cm} v_{41}\ot v_{41}\mapsto v_{41},\ v_{41}\ot v_{42}\mapsto v_{42}-v_{21}.\\
\end{array}$$
Then by the proof of Proposition \ref{tom}, $\phi: A\ot A\ra M(\mathcal C)$ is given by
$\phi(x\ot y)=\theta_{il}(x\ot y)\theta_{jk}^{-1}\in M_{\mathbf{c}_{il}\times\mathbf{c}_{jk}}(A)$
for any $x\in e_iAe_j$ and $y\in e_lAe_k$. Obviously,
$\phi(e_i\ot e_j)=E_{{\mathbf c}_{ij}}\in M_{{\mathbf c}_{ij}}(A)$,
$\phi(e_1\ot x)=x\in M_{\mathbf{c}_{1i}\times\mathbf{c}_{1j}}(A)$ and $\phi(x\ot e_1)=x\in M_{\mathbf{c}_{i1}\times\mathbf{c}_{j1}}(A)$
for all $x\in e_iAe_j$, $1\<i, j\<4$.  In general, it is not hard to find the expression of $\phi(x\ot y)$ for any $x,y\in A$. For instance,
$\phi(e_2\ot x_{32})=\left(\begin{array}{cc}
                           0 & -e_4 \\
                           \end{array}
                           \right)\in M_{{\mathbf c}_{23}\times{\mathbf c}_{22}}(A)$,
$\phi(x_{21}\ot x_{21})=\left(\begin{array}{c}
                              x_{21} \\
                              0 \\
                              \end{array}
                              \right)\in M_{{\mathbf c}_{22}\times{\mathbf c}_{11}}(A)$,
$\phi(x_{21}\ot x_{32})=-x_{43}x_{32}\in M_{{\mathbf c}_{23}\times{\mathbf c}_{12}}(A)$,
$\phi(x_{32}\ot x_{43})=x_{21}x_{14}\in M_{{\mathbf c}_{34}\times{\mathbf c}_{23}}(A)$
and $\phi(x_{43}\ot x_{32})=-x_{21}x_{14}\in M_{{\mathbf c}_{43}\times{\mathbf c}_{32}}(A)$.

\section*{ACKNOWLEDGMENTS}

We thank FWO and BOF of UHasselt for their financial support. The first author is also supported by NSFC  (Grant No.11571298, 11711530703). The second author is financed by FWO (N1518617).

\end{document}